\newcolumntype{?}{!{\vrule width 1pt}}
\newtheorem{thm}{Theorem}[section]
\newtheorem{prop}[thm]{Proposition}
\newtheorem{lem}[thm]{Lemma}
\newtheorem{cor}[thm]{Corollary}
\theoremstyle{definition}
\newtheorem{definition}[thm]{Definition}
\newtheorem{example}[thm]{Example}
\theoremstyle{remark}
\newtheorem{remark}[thm]{Remark}
\numberwithin{equation}{section}
\newcommand{\mb}{\mathbb}
\newcommand{\mc}{\mathcal}
\newcommand{\mf}{\mathfrak}
\newcommand{\mr}{\mathrm}
\newcommand{\ms}{\mathscr}
\newcommand{\A}{\mathbb{A}}
\newcommand{\As}{\mathcal{A}^\sigma}
\newcommand{\bu}{\mathbin{\raisebox{0.2ex}{\scalebox{0.6}{\textbullet}}}}
\newcommand{\C}{\mathbb{C}}
\newcommand{\Conv}{\mr{Conv}}
\newcommand{\Ds}{\mathbb{D}^\sigma}
\newcommand{\G}{\mathcal{G}}
\newcommand{\Gr}{\mathrm{Gr}}
\newcommand{\Grb}{\overline{\mathrm{Gr}}}
\newcommand{\gs}{\mathfrak{g}[t]^\sigma}
\newcommand{\hs}{\mathfrak{h}[t]^\sigma}
\newcommand{\la}{\vec{\lambda}}
\newcommand{\lb}{\bar{\lambda}}
\newcommand{\Pic}{\mathcal{P}\mathit{ic}}
\newcommand{\simto}{\mathrel{\!\tilde{\,\to}}}
\newcommand{\U}{\mathrm{U}}
\newcommand{\arxiv}[1]{\href{https://arxiv.org/abs/#1}{\texttt{arXiv:#1}}}
\begin{document}
\title{Beilinson-Drinfeld Schubert varieties of parahoric group schemes and twisted global Demazure modules}

\author{Jiuzu Hong}
\address[J.\,Hong]{Department of Mathematics, University of North Carolina at Chapel Hill, Chapel Hill, NC 27599-3250, U.S.A.}
\email{jiuzu@email.unc.edu}

\author{Huanhuan Yu}
\address[H.\,Yu]{Beijing International Center for Mathematical Research, Peking University, No. 5 Yiheyuan Road Haidian District, Beijing, P.R.China 100871}
\email{huanhuan.yu@outlook.com}

\begin{abstract}
	Let $\mathcal{G}$ be a parahoric Bruhat-Tits group scheme arising from a $\Gamma$-curve $C$ and a certain $\Gamma$-action on a simple algebraic group $G$ for some finite cyclic group $\Gamma$. We prove the flatness of Beilinson-Drinfeld Schubert varieties of $\mathcal{G}$, we determine the rigidified Picard group of the Beilinson-Drinfeld Grassmannian ${\rm Gr}_{\mathcal{G}, C^n }$ of $\mathcal{G}$, and we establish the factorizable and equivariant structures on rigidified line bundles over ${\rm Gr}_{\mathcal{G}, C^n }$. 
	
	We develop an algebraic theory of global Demazure modules of twisted current algebras, and using our geometric results we prove that when $C=\mathbb{A}^1$, the spaces of global sections of line bundles on BD Schubert varieties of $\mathcal{G}$ are dual to the twisted global Demazure modules. This generalizes the work of Dumanski-Feigin-Finkelberg in the untwisted setting.
\end{abstract}

\maketitle
\tableofcontents

\section{Introduction}
Let $G$ be a simple algebraic group of adjoint type over $\mathbb{C}$, and let $\mf{g}$ be its Lie algebra. We fix a Borel subgroup $B$ and a maximal torus $T\subseteq B$. Let $\Gr_G$ be the affine Grassmannian of $G$. A (spherical) affine Schubert variety in $\Gr_G$ is parametrized by a dominant coweight $\lambda\in X_*(T)^+$ of $G$, denoted by $\Grb_G^\lambda$. Let $\ms{L}$ be the level one line bundle on $\Gr_G$, and let $\ms{L}^c$ be its $c$-th tensor power for some $c> 0$. Then, the space $H^0(\Grb_G^\lambda, \ms{L}^c)$ of global sections is dual to a level $c$ affine Demazure module $D(c, \lambda)$, cf.\,\cite{Ku}. By the work of Fourier-Littelmann \cite{FL06}, when $G$ is of type $A, D,E$, there is a fusion product $*$ on Demazure modules with the property that $D(c,\lambda)*D(c,\mu)\simeq D(c,\lambda+\mu)$ for any two dominant coweights $\lambda, \mu$ of $G$.   When $c=1$, $D(1, \lambda)$ is a Weyl module. The Weyl modules and Demazure modules have global counterparts, defined in \cite{FeLo04} and \cite{FM19,DF21} respectively. Similarly, the global counterparts of affine Grassmanians and affine Schubert varieties are Beilinson-Drinfeld (BD) Grassmannians and BD Schubert varieties, which play a central role in geometric Langlands program. In \cite{DFF21}, Dumanski-Feigin-Finkelberg proved that there is a duality between the spaces of global sections of level $c$ line bundle on BD Schubert varieties and the level $c$ global Demazure modules of the current algebra $\mf{g}[t]$, where under this duality the factorization of line bundles is compatible with the factorization of global Demazure module.  The fusion product of Demazure modules has actually been encoded in the factorization of line bundles on BD Schubert varieties, in the sense that the fusion of Demazure modules appears in a fiber of global Demazure module. 

Let $\sigma$ be a standard automorphism on $G$ of order $m$ preserving $B$ and $T$, where $\sigma$ is simply a diagram automorphism if $G$ is not of type $A_{2n} $, see Section \ref{133740}. We consider the twisted current algebra $\gs$, where $\sigma$ acts on $\mf{g}$ and $\sigma$ acts on $t$ by $\sigma(t)=e^{-\frac{2\pi \rm{i}}{m}} t$. In fact, $\gs$ is isomorphic to a special current algebra or hyperspecial current algebra defined in \cite{CIK14,KV16}, see \cite[Section 4.2]{KV16} and Theorem \ref{thm_hyperspecial}. 
The fusion theories of twisted Demazure modules and twisted Weyl modules are extensively studied, e.g.\,in \cite{FKKS12,FK13,CIK14,KV16}. Let $X_*(T)^+_\sigma$ be the set of $\sigma$-coinvariants of all dominant coweights of $G$. For each $\bar{\lambda}\in X_*(T)^+_\sigma$, we denote by $D^\sigma(c, \bar{\lambda})$ the level $c$ twisted Demazure module of $\gs$. Similar to the untwisted case, $D^\sigma(c, \bar{\lambda})$ is dual to the global section of a level $c$ line bundle on the twisted Schubert variety $\Grb^{\bar{\lambda}}_{\ms{G}}$ of the special parahoric group scheme $\ms{G}={\rm Res}_{\mc{O}/ \bar{\mc{O}}}(G_{\mc{O}})^\sigma$, see Section \ref{029728}. In \cite{BH}, Besson-Hong determined the smooth locus of $\Grb^{\bar{\lambda}}_{\ms{G}}$, where the Demazure modules of $\gs$ and the global Schubert variety of a parahoric Bruhat-Tits group scheme $\mc{G}:={\rm Res}_{\A^1/\bar{\A}^1}(G_{\A^1})^\sigma $ are crucially used. 

Our original purpose of this paper is to develop an algebraic theory of twisted global Demazure modules of $\gs$, and generalize the work \cite{DFF21} to the twisted setting, i.e. to establish a duality between twisted global Demazure modules of $\gs$ and the spaces of global sections of line bundles on BD Schubert varieties of $\mc{G}$. In order to establish this duality, the major part of our work is devoted to proving the flatness of BD Schubert varieties of $\mc{G}$, and establishing factorizable and equivariant structures on the rigidified line bundles over the BD Grassmannians, for more general parahoric Bruhat-Tits group scheme $\G$. On this part, we closely follow the works \cite{Zhu09,Zhu14,Zhu17} of Zhu. In fact, our work also provides proofs of some geometric ingredients needed in \cite{DFF21}, which were missing in literature. 
In spite of the parallels with the untwisted case,  there are some essential differences between twisted and untwisted cases. For example, the twisted theory over $\mathbb{A}^1$ is already very involved.  We need to employ many new ideas and techniques to take care of ramifications.

 Let $\Gamma$ be the finite group acting on $G$ generated by $\sigma$, and let $C$ be an irreducible smooth algebraic curve over $\mathbb{C}$ with a faithful $\Gamma$-action such that all ramified points in $C$ are totally ramified. Let $\bar{C}$ be the quotient curve $C/\Gamma$. Let $\mc{G}$ be the $\Gamma$-fixed point subgroup scheme of the Weil restriction group $\mr{Res}_{C/\bar C}(G\times C)$. Then $\mc{G}$ is a parahoric Bruhat-Tits group scheme over $\bar C$. In Section \ref{825344}, we define Beilinson-Drinfeld Schubert varieties of $\mc{G}$ and prove their flatness. 

Let $\Gr_{\mc{G},C^n}$ (resp.\,$L^+\mc{G}_{C^n}$) denote the Beilinson-Drinfeld Grassmannian (resp.\,the jet group scheme) of $\mc{G}$ over $C^n$. For any $n$-tuple of coweights $\la=(\lambda_1,\ldots, \lambda_n)$ of $G$, we first construct a section $s_{\la}:C^n\to \Gr_{\mc{G},C^n}$, and then we define the Beilinson-Drinfeld Schubert variety $\Grb_{\mc{G},C^n}^{\la}$ to be the reduced closure of the $L^+\mc{G}_{C^n}$-orbit at $s_{\la}$. The construction of $s_{\la}$ is achieved via a construction of a locally trivial $(\Gamma, T)$-torsor attached to $\vec{\lambda}$. By the Tannakian interpretation of $(\Gamma,T)$-torsor (cf.\,Theorem \ref{926976}), the construction of this $(\Gamma,T)$-torsor is reduced to the construction of a $\Gamma$-equivariant tensor functor, see Section \ref{993431}. The main result of Section \ref{825344} is Theorem \ref{368017}, which asserts that 
\begin{thm}\label{445316}
	The BD Schubert variety $\Grb_{\mc{G},C^n}^{\la}$ is flat over $C^n$. Moreover, the schematic fiber $\Grb_{\mc{G},\vec{p}}^{\la}$ of $\Grb_{\mc{G},C^n}^{\la}$ at each $\vec{p}\in C^n$ is reduced. More precisely, $\Grb_{\mc{G},\vec{p}}^{\la}$ is a product of affine Schubert varieties (possibly twisted). 
\end{thm}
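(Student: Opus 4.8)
The plan is to follow Zhu's strategy: reduce, via the factorization structure of $\Gr_{\mc{G},C^n}$ established earlier in the paper, to the geometry of a single (possibly twisted) affine Schubert variety sitting in one fiber. Write $X:=\Grb^{\la}_{\mc{G},C^n}$. Since the parahoric $\mc{G}$ has connected fibers, the jet group $L^+\mc{G}_{C^n}$ has connected fibers over the irreducible base $C^n$, so the orbit $L^+\mc{G}_{C^n}\cdot s_{\la}(C^n)$ is irreducible; hence $X$ is an integral projective $C^n$-scheme, and it dominates $C^n$ because the projection to $C^n$ is a retraction of $s_{\la}$. The two things to prove are thus that $X\to C^n$ is flat and that every schematic fiber $\Grb^{\la}_{\mc{G},\vec p}$ is the asserted reduced product of affine Schubert varieties.

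I would first settle the case $n=1$, which fixes the pattern. Over a smooth curve an integral scheme dominating it is automatically flat, since its structure sheaf is torsion-free over the Dedekind base. For the fiber at $p\in C$: if $p$ is unramified in $C\to\bar C$ then $\mc{G}$ is the jet group of $G$ near $p$, $\Gr_{\mc{G},p}\cong\Gr_G$, and $\Grb^{\la}_{\mc{G},p}$ is the affine Schubert variety $\Grb_G^{\lambda_1}$; if $p$ is (totally) ramified then $\Gr_{\mc{G},p}$ is the twisted affine Grassmannian of $\mc{G}$ at $p$ and $\Grb^{\la}_{\mc{G},p}$ is the corresponding twisted affine Schubert variety. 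The actual content at this stage is that these \emph{schematic} fibers are reduced; this is the special-fiber statement for global/BD Schubert varieties of Bruhat--Tits group schemes (due to Zhu in the untwisted case, and available in the twisted/parahoric case from the results recalled earlier), or equivalently one produces a Frobenius splitting of $\Gr_{\mc{G},C}$ compatible with $X$ and with its fiber.

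For general $n$ I would induct on $n$ by factorization. Near a point $\vec p\in C^n$ whose coordinates fall into clusters $I_1,\dots,I_k$, the factorization isomorphism for $\Gr_{\mc{G},C^n}$ together with the multiplicativity of the $(\Gamma,T)$-torsor attached to $\la$ (which makes $s_{\la}$ factorizable) identifies $X$ étale-locally around $\vec p$ with the product $\prod_{l=1}^{k}\Grb^{\la_{I_l}}_{\mc{G},C^{|I_l|}}$ near the small diagonals. Flatness and reducedness of fibers are stable under products over $\C$, and the fiber of a product is the product of the fibers, so for $k\geq 2$ the inductive hypothesis applies to each factor; one is reduced to the case in which $\vec p$ lies on the small diagonal $C\hookrightarrow C^n$, $p\mapsto(p,\dots,p)$, and in particular $X$ is already flat with the asserted fibers over the complement of the small diagonal.

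It remains to analyze $X$ near $(p,\dots,p)$, which is the crux. Here $\Gr_{\mc{G},(p,\dots,p)}$ is a single copy of the affine Grassmannian of $\mc{G}$ at $p$, and the value of $s_{\la}$ corresponds to $\sum_i\lambda_i$ (resp.\ its image in $X_*(T)^+_\sigma$ when $p$ is ramified) by additivity of the torsor construction; so $\Grb^{\la}_{\mc{G},(p,\dots,p)}$ contains the affine Schubert variety $Z$ attached to that coweight, while conversely the product orbit $\prod_i\Gr_G^{\lambda_i}$ degenerates into $Z$ as the points collide, concretely via the convolution morphism $m$ from the twisted product $\Grb^{\lambda_1}\,\widetilde{\times}\,\cdots\,\widetilde{\times}\,\Grb^{\lambda_n}$ onto $Z$. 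Because $\dim Z$ (computed by the known dimension formulas, including the twisted case) equals $\sum_i\dim\Grb_G^{\lambda_i}$, which is the dimension of the generic fiber of $X$, every fiber of $X\to C^n$ is irreducible of the expected dimension $\dim X-n$; as $C^n$ is regular, miracle flatness gives flatness of $X\to C^n$ once $X$ is known to be Cohen--Macaulay, and then $\Grb^{\la}_{\mc{G},(p,\dots,p)}$ is forced to coincide with $Z$ scheme-theoretically provided $Z$ is reduced (e.g.\ because $\mathcal{O}_Z\to m_*\mathcal{O}$ is an isomorphism, or by a compatible Frobenius splitting). Both inputs — the Cohen--Macaulayness of $X$ and the reducedness of its special fibers — reduce to the good singularities (normality, Cohen--Macaulayness, Frobenius splitting) of affine Schubert varieties of parahoric group schemes, applied in families; pinning these down over the (small) diagonal, while excluding embedded components and an a priori larger-dimensional limiting orbit and handling ramified points on the same footing as unramified ones, is the step I expect to be the genuine obstacle.
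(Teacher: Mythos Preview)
Your reduction to the small diagonal via factorization and induction is fine, and your identification of the diagonal fiber with the single Schubert variety attached to $\sum_i\lambda_i$ (or its $\sigma$-coinvariant image) is also correct. The genuine gap is the step where you invoke miracle flatness: you need $X=\Grb^{\la}_{\mc{G},C^n}$ to be Cohen--Macaulay, and you do not establish this. Saying it ``reduces to the good singularities of affine Schubert varieties applied in families'' is exactly the point at issue---knowing that each fiber is CM does not give CM of the total space unless you already know flatness. In the paper, normality of $X$ (Theorem~\ref{551564}) is proved \emph{after} flatness, using the fiber description that flatness provides; so your argument is circular as written. Nor does a Frobenius-splitting argument obviously rescue this, since one would need a splitting of the global BD Schubert variety compatible with all diagonal fibers, which is not set up here.

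The paper avoids this circularity entirely. Instead of working directly with $X$, it introduces the convolution variety $\overline{\Conv}^{\la}_{\mc{G},C^n}$, which is an iterated fibration of global Schubert varieties $\Grb^{\lambda_i}_{\mc{G},C}$ and hence flat over $C^n$ by the $n=1$ case (Zhu's theorem). The convolution map $\mf{m}:\overline{\Conv}^{\la}_{\mc{G},C^n}\to X$ is proper and surjective, and over each $\vec p$ one checks $R^i(\mf m_{\vec p})_*\mc{O}=0$ for $i>0$ and $(\mf m_{\vec p})_*\mc{O}$ is the structure sheaf of the reduced fiber (Proposition~\ref{930915}). A generalized cohomology-and-base-change argument (Appendix~\ref{419108}, specifically Corollary~\ref{173291}) then gives that $\mf m_*\mc{O}_{\overline{\Conv}}$ is flat over $C^n$ and restricts correctly on fibers; finally one shows $\mf m_*\mc{O}_{\overline{\Conv}}\simeq\mc{O}_X$ by sandwiching between $\mc{O}_X$ and its image in $(s_{\la})_*\mc{O}_{L^+\mc{G}_{C^n}}$. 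This is the ``new ingredient'' the introduction refers to, and it replaces your missing CM hypothesis by transferring flatness from the convolution resolution.
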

In the untwisted case, this theorem is due to Zhu, see \cite{Zhu09} for $n=2$. In the twisted case, the main idea of the proof of Theorem \ref{445316} is similar. When $n=1$, it is also due to Zhu \cite{Zhu14}. For $n>1$, we need a new ingredient suggested by Zhu (by private communication), a generalization of cohomology and base change, which is formulated in Appendix \ref{419108}. This new method is of course applicable to the untwisted case. In order to employ Theorem \ref{259758}, in Section \ref{sect_conv} we define the convolution variety $\overline{\Conv}_{\G,C^n}^{\la}$ which admits a proper morphism to $\Gr_{\G,C^n}$. In fact, we give two definitions, one is defined via a sequence of fibrations of global Schubert varieties, and the other one is defined as the reduced closure of $L^+\G_{C^n}$-orbit at a section $\tilde{s}_{\la}$ of the convolution Grassmannian. In Proposition \ref{234953}, we show that these two definitions are the same. The first definition tells that $\overline{\Conv}_{\G,C^n}^{\la}$ is flat over $C^n$, and the second definition tells that the image of $\Conv_{\G,C^n}^{\la}$ in $\Gr_{\G,C^n}$ is exactly $\Grb_{\G,C^n}^{\la}$. As an application of Theorem \ref{445316}, we also prove that any BD Schubert variety is normal, see Theorem \ref{551564}. When $n=1$, this is due to Zhu, cf.\,\cite[Corollary 6.14]{Zhu14}.

Section \ref{717393} is devoted to the study of rigidified line bundles on $\Gr_{\mc{G},C^n}$. Let $\mr{Pic}^e(\Gr_{\mc{G},C^n})$ denote the Picard group of rigidified line bundles on $\Gr_{\mc{G},C^n}$, i.e. the line bundles on $\Gr_{\mc{G},C^n}$ together with a trivialization along a distinguished section $e$ over $C^n$, see Section \ref{201752}. We prove the following result in Theorem \ref{797528}. 
\begin{thm}\label{275410}
Suppose that $G$ is simply-connected. 
	\begin{enumerate}
		\item For any $n\geq 1$, the central charge map $\mr{Pic}^e(\Gr_{\mc{G},C^n})\to \mb{Z}$ is an isomorphism. 		
		\item Any line bundle in $\mr{Pic}^e(\Gr_{\mc{G},C^n})$ has factorizable property. 
	\end{enumerate}
\end{thm}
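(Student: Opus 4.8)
The strategy is to reduce everything to the one-point case $n=1$ (the affine Grassmannian of the parahoric $\mc{G}$ over a point), where the Picard group has been computed by Zhu \cite{Zhu14}, and then bootstrap to arbitrary $n$ using the factorization geometry. First I would recall that for simply-connected $G$ the fibers $\Grb_{\mc{G},\vec p}^{\la}$ of Theorem \ref{445316} are (products of, possibly twisted) affine Schubert varieties, that $\Gr_{\mc{G},C^n}$ is ind-proper and ind-flat over $C^n$ with geometrically connected fibers, and that by Zhu's computation the Picard group of a single twisted affine Grassmannian is $\mb{Z}$, generated by the ample generator whose central charge is $1$ (using simple-connectedness, so no extra torsion or non-ample classes appear). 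The central charge map is defined by restricting a rigidified line bundle to a fiber and reading off the coefficient against this generator; it is manifestly a homomorphism, and it is split because the level-one line bundle on $\Gr_{\mc{G},C^n}$ (whose existence one gets from the determinant-of-cohomology construction, or by pulling back along a representation of $\mc{G}$) has central charge $1$. So the content of part (1) is \emph{injectivity}: a rigidified line bundle that is trivial on every fiber is trivial on $\Gr_{\mc{G},C^n}$.

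For injectivity I would argue by a seesaw/cohomology-and-base-change argument over $C^n$. Let $\ms{M}$ be rigidified along $e$ with trivial central charge. Over each point $\vec p$, $\ms{M}|_{\Grb^{\la}_{\mc{G},\vec p}}$ is trivial (central charge zero on a space with Picard group $\mb{Z}$, or a product thereof — here one uses that $H^0$ of the trivial bundle on these projective, connected, reduced fibers is one-dimensional), and the rigidification pins down the trivialization. Pushing forward to $C^n$: by the flatness from Theorem \ref{445316} and the cohomology-and-base-change package of Appendix \ref{419108} (which is exactly the tool flagged in the introduction for exactly this sort of $n>1$ descent), the pushforward $\pi_*\ms{M}$ on each finite-type piece is a line bundle on $C^n$, and $\pi^*\pi_*\ms{M}\to\ms{M}$ is an isomorphism. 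The rigidification along $e$ trivializes $\pi_*\ms{M}$ as well, so $\ms{M}\cong\mc{O}$ compatibly with $e$. One subtlety to handle carefully is the ind-scheme structure: one performs this on each $\Grb^{\la}_{\mc{G},C^n}$ exhausting $\Gr_{\mc{G},C^n}$ and checks the trivializations are compatible under the transition maps, which follows from the uniqueness of the rigidified trivialization.

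For part (2) — factorizability — I would use the factorization structure of the BD Grassmannian itself: over the disjoint locus $(C^n)_{\mr{disj}}$ where the $n$ points split into two groups, $\Gr_{\mc{G},C^n}$ restricts to an external product of lower BD Grassmannians, and over the diagonal $\Gr_{\mc{G},C^n}$ restricts to $\Gr_{\mc{G},C}$ pulled back. A line bundle $\ms{M}$ of central charge $c$ restricts, over the disjoint locus, to some line bundle on the product; by part (1) applied in lower $n$ (Künneth for Picard groups of the factors, each of which is $\mb{Z}\cdot$central charge and rigidified), that restriction is forced to be the external product of the central-charge-$c$ generators, which is precisely the factorization isomorphism required. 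The compatibility (cocycle) conditions for the factorization datum then follow from the uniqueness statement in part (1): any two rigidified line bundles of the same central charge on a given $\Gr_{\mc{G},C^m}$ are canonically isomorphic, so all the diagrams commute automatically. The main obstacle I anticipate is the cohomology-and-base-change step for $n>1$: the fibers are only reduced, not irreducible (products of Schubert varieties, and the combinatorics of which product appears jumps along diagonals), so one must know $\pi_*\mc{O}=\mc{O}_{C^n}$ and $R^1\pi_*\mc{O}=0$ uniformly in families whose fibers change — this is precisely where the generalized base-change result of Appendix \ref{419108}, together with the normality from Theorem \ref{551564} and rational-singularity-type input for (twisted) affine Schubert varieties, has to be invoked with care.
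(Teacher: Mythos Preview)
Your plan has two genuine gaps, one in each direction of the isomorphism.

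\textbf{Injectivity.} You assert that if $\ms{M}$ has central charge $0$ then $\ms{M}|_{\Grb^{\la}_{\mc{G},\vec p}}$ is trivial for every $\vec p\in C^n$. But the central charge is a \emph{single} integer, defined by restricting to a fiber over the diagonal, where the fiber is one (twisted) affine Grassmannian with Picard group $\mb{Z}$. Off the diagonal the fiber is a product $\Gr_1\times\cdots\times\Gr_k$, whose Picard group is $\mb{Z}^k$; a line bundle there has a tuple of charges $(c_1,\ldots,c_k)$, and the condition ``central charge $0$'' gives you nothing a priori about the individual $c_i$. Your parenthetical ``central charge zero on a space with Picard group $\mb{Z}$, or a product thereof'' is exactly the point that fails. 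The paper closes this gap with a convolution argument: for $n=2$ one pulls $\ms{M}$ back along $\mf{m}:\Conv_{\mc{G},C^2}\to\Gr_{\mc{G},C^2}$, restricts to a fiber $\mr{pr}^{-1}(x)\simeq\Gr_{\mc{G},C}$ of the projection $\mr{pr}:\Conv_{\mc{G},C^2}\to\Gr_{\mc{G},C}$, and reads off that $c_1=c_2$ equals the diagonal central charge. Only then does one know the restriction map $\mr{Pic}^e(\Gr_{\mc{G},C^2})\to\mr{Pic}^e(\Gr_{\mc{G},C^2_\xi})\simeq\mb{Z}\times\mb{Z}$ factors through the diagonal $\mb{Z}$. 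Without this step your base-change argument never gets started, because you cannot conclude fiberwise triviality. (Incidentally, the paper does not use Appendix~\ref{419108} here at all; that appendix is for the flatness of BD Schubert varieties. The Picard computation uses the seesaw Lemma~\ref{086885}, a pole-order argument along the ramification divisors, and the convolution trick.)

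\textbf{Surjectivity.} You claim the level one line bundle comes from ``the determinant-of-cohomology construction, or by pulling back along a representation of $\mc{G}$.'' This is false: the determinant bundle $\mc{L}_{\mr{det}}$ built from the adjoint representation has central charge $2h^\vee$ (and any faithful representation gives central charge $>1$ in general), so it only shows the image contains some positive multiple. The paper's construction of a genuine level one bundle is the actual content here: for $C=\A^1$ one pulls back the ample generator of $\mr{Pic}(\mr{Bun}_{\mc{G}})\simeq\mb{Z}$ (this isomorphism over $\mb{P}^1$ is a nontrivial input from \cite{BH}); for general $C$ one covers $C$ by the unramified locus $\mathring{C}$ and $\Gamma$-stable \'etale neighborhoods $U_i\to\A^1$ of the ramified points, and glues the resulting rigidified level one bundles using Lemma~\ref{841770}. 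This gluing, done inductively in $n$ over the cover $\{(U_i)^n,\mathring{C}^n,C^n_\xi\}$, simultaneously produces the factorization isomorphisms of part~(2). Your sketch does not supply this construction, and the determinant shortcut does not work.
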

The proof of the injectivity of the central charge map is similar to the proof in the untwisted case, cf.\,\cite[Section 3]{Zhu17}. In the untwisted case, the level one line bundle can be obtained by pulling-back the level one line bundle on $\mr{Bun}_{G}$ over a projective smooth curve. However, in the twisted case the existence of the level one line bundle on $\mr{Bun}_{\G}$ is not known yet (actually it may not exist in certain cases).  Nevertheless, there is a level one line bundle $\mc{L}_{\A^n}$ on $\Gr_{\G,\A^n}$  constructed in \cite{BH}, and we are able to use this line bundle to construct line bundles on $\Gr_{\G,C^n}$.  
Our idea of the proof of Theorem \ref{275410} is as follows. 
For a general $\Gamma$-curve $C$, given any ramified point $p\in C$, there is a $\Gamma$-stable neighborhood $U$ of $p$ which admits a $\Gamma$-equivariant map to $\A^1$. Then, the pull-back of $\mc{L}_{\A^n}$ gives a level one line bundle on $\Gr_{\G,U^n\setminus Z}$ for some $Z$ closed in $U^n$, cf.\,Lemma \ref{423913}. These line bundles together with the level one line bundle over the unramified part (cf.\,Proposition \ref{284344}) can be glued via rigidity and extended to a line bundle $\mc{L}_{C^n}$ on $\Gr_{\G,C^n}$, which generates $\mr{Pic}^e(\Gr_{\mc{G},C^n})$. Moreover, this line bundle satisfies the factorization property. When $n>1$, we crucially use a relative version of seesaw principle (cf.\,Lemma \ref{086885}). 

Another main result of Section \ref{717393} is  Theorem \ref{206120}, which asserts that
\begin{thm}\label{346721}
With the same assumption as in Theorem \ref{275410},	there is a unique $L^+\mc{G}_{C^n}$-equivariant stucture on the level one line bundle $\mc{L}_{C^n}$ on $\Gr_{\mc{G},C^n}$.
\end{thm}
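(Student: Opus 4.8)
The plan is to follow the standard strategy for equipping a level one line bundle on a Beilinson--Drinfeld Grassmannian with an equivariant structure, adapting the untwisted arguments of Zhu (as in \cite{Zhu14,Zhu17}) to the parahoric setting. \emph{Existence.} First I would produce the equivariant structure locally near each ramified point and over the unramified locus, then glue, exactly mirroring the construction of $\mc{L}_{C^n}$ itself in Theorem \ref{275410}. Over a $\Gamma$-stable neighborhood $U$ of a ramified point with a $\Gamma$-equivariant map $U\to\A^1$, the line bundle $\mc{L}_{U^n}$ is pulled back from $\mc{L}_{\A^n}$ on $\Gr_{\G,\A^n}$, and on $\A^n$ the $L^+\G_{\A^n}$-equivariant structure is available from the construction in \cite{BH} (or can be obtained there by the same central-extension argument). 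Over the unramified part, $\Gr_{\G,C^n}$ restricted to that open set is an (untwisted) $\mr{Gr}_G$-type fibration, and the equivariant structure is Zhu's. The compatibility of these equivariant structures on overlaps is forced by the uniqueness statement proved simultaneously, so the glued data defines an $L^+\mc{G}_{C^n}$-equivariant structure on $\mc{L}_{C^n}$; here I would use the relative seesaw principle (Lemma \ref{086885}) to check that the cocycle conditions hold after restricting to the fibers over $C^n$.

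\emph{Uniqueness.} The difference of two $L^+\mc{G}_{C^n}$-equivariant structures on the same line bundle is a character of the group scheme $L^+\mc{G}_{C^n}$ acting on $\Gr_{\mc{G},C^n}$, or more precisely a section of the relevant $\mb{G}_m$-torsor of automorphisms; so it suffices to show that $L^+\mc{G}_{C^n}$ (or the relevant quotient acting effectively near the section $e$) has no nontrivial characters over $C^n$. Since $G$ is assumed simply-connected, the generic fiber of $\mc{G}$ is semisimple simply-connected, and the special fibers at ramified points are the parahoric group schemes $\mc{G}_p$ whose reductive quotients are again simply-connected (this is the point where simple-connectedness of $G$ is used, via the structure theory of the parahoric); such groups, and their jet groups, have no nontrivial characters. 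One then has to promote this fiberwise vanishing to the statement over $C^n$, for which I would again invoke the cohomology-and-base-change/seesaw machinery from the appendix together with the fact that $C^n$ is a smooth variety over $\C$ and the line bundle is rigidified along $e$, so any ambiguity is a global unit, hence trivial.

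The main obstacle I anticipate is the gluing/descent step for existence: one must check that the locally defined equivariant structures genuinely agree on overlaps, which is not automatic and is precisely where the uniqueness result is needed as an input — so the two halves of the theorem are intertwined and must be organized carefully (prove uniqueness first, over each piece and over $C^n$, then deduce that the local equivariant structures are canonical and therefore glue). A secondary technical point is that $L^+\mc{G}_{C^n}$ is an infinite-dimensional (pro-algebraic) group scheme over $C^n$, so statements about ``characters'' and ``$\mb{G}_m$-torsors of equivariant structures'' have to be made on suitable finite-dimensional truncations $L^k\mc{G}_{C^n}$ acting on finite-dimensional BD Schubert varieties $\Grb_{\mc{G},C^n}^{\la}$ (using Theorem \ref{445316} for flatness and reducedness of fibers), and one must verify the answer is independent of the truncation. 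Once these bookkeeping issues are handled, the core inputs — no characters on simply-connected parahoric jet groups, and base change from fibers to $C^n$ — are exactly the ingredients already assembled in the preceding sections.
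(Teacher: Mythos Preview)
Your uniqueness argument and your gluing strategy for passing from $C=\A^1$ to a general curve both match the paper's approach (Lemma \ref{770893} and the proof of Theorem \ref{206120}). The gap is in the step you treat as already done: the existence of an $L^+\mc{G}_{\A^n}$-equivariant structure on $\mc{L}_{\A^n}$. You assert this is ``available from the construction in \cite{BH} (or can be obtained there by the same central-extension argument)'', but neither claim is justified. The line bundle $\mc{L}_{\A^n}$ is defined by pulling back the ample generator from $\mr{Bun}_{\mc{G}}$; there is no evident reason that this pullback carries a jet-group equivariant structure, and the ``same central-extension argument'' is circular unless you already know the extension splits.

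The paper's route around this is the actual content of the theorem. One first constructs the determinant line bundle $\mc{L}_{\mr{det}}$ via an embedding $\Gr_{\mc{G},C^n}\to \Gr_{\mr{GL}(\mc{V}_0),C^n}$; this bundle is $L^+\mc{G}_{C^n}$-equivariant by construction. Over $\A^n$ it is rigidified, so Theorem \ref{275410} gives $\mc{L}_{\mr{det}}\simeq \mc{L}_{\A^n}^{\,l}$ for some $l>0$. One must then \emph{extract an $l$-th root of the equivariant structure}: this is Proposition \ref{882893}, whose proof is the technical core --- it works on finite-type truncations $L_k^+\mc{G}_{\A^n}$ acting on BD Schubert varieties, and for $n=1$ takes a flat closure of the splitting over $\mathring{\A}^1$ and uses simple-connectedness of $L_k^+\mc{G}_0$ to identify the special fiber; for $n=2$ it uses the $\mb{G}_m$-dilation to propagate splittings and Hartogs to fill in the origin; for $n\geq 3$ it uses Hartogs across the small diagonal. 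Your proposal anticipates the truncation bookkeeping but does not supply this $l$-th-root step, which is where the real work lies. The seesaw lemma plays no role here.
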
 
The hard part of this theorem is the existence. In the proof, we first construct a $L^+\mc{G}_{C^n}$-equivariant line bundle $\mc{L}_{\rm det}$ by embedding $\Gr_{\mc{G},C^n}$ to a BD Grassmannian of $GL(\mc{V})$ for some vector bundle $\mc{V}$ over $C$. By the \'etale base change property of BD Grassmannians (cf.\,Corollary \ref{prop_etale_base}), we are reduced to the case $C=\A^1$. In this case, the line bundle $\mc{L}_{\rm det}$ has a rigidified structure, and hence is a power of $\mc{L}_{\A^n}$, where Theorem \ref{275410} is crucially used. This in fact implies that $\mc{L}_{\A^n}$ admits a $L^+\mc{G}_{\A^n}$-equivariant structure. This step is guaranteed by Proposition \ref{882893}, whose proof is quite technical. We first prove this proposition when $n=1$ and $n=2$ separately, and for general case we use Hartogs' Lemma. In our argument, we need to restrict line bundles to BD Schubert varieties so that all arguments are in the finite type geometry. 
\medskip
 
To prove Theorem \ref{thm_main} (or Theorem \ref{055073}) in full generality, we need to work with adjoint type group $G$. In this case, it is difficult (or impossible) to construct a factorizable level one line bundle on $\Gr_{\G,C^n}$ for a general $\Gamma$-curve $C$. Nevertheless, when $C=\A^1$ with the standard action of $\Gamma$. We are able to construct level one line bundles on BD Schubert varieties which admits factorization properties compatible with the factorization of BD Schubert varieties, see Proposition \ref{324995}. Moreover, using Theorem \ref{346721} we are able to prove that these line bundles admit $L^+\mc{G}'_{C^n}$-equivariant structures, where $L^+\mc{G}'_{C^n}$ is the jet group scheme arising from the simply-connected cover of $G$, see Proposition \ref{235859}. 

As mentioned earlier, the main motivation of this work is to establish a relation between twisted global Demazure modules and line bundles on BD Schubert varieties of $\G$. 
In Section \ref{320070}, for any $\vec{\lambda}\in (X_*(T)^+)^n$, we define the twisted global Demazure module $\mathbb{D}^\sigma(c,\la)$ of $\gs$. Using the results of Fourier-Littelmann \cite{FL06} and Kus-Vankatesh \cite{KV16}, we prove the following result in Theorem \ref{398063} and Theorem \ref{616072}.
\begin{thm} Given $\vec{\lambda}=(\lambda_1,\cdots, \lambda_n)\in (X_*(T)^+)^n$, let $\lb$ be the image of $\sum \lambda_i$ in $X_*(T)^+_\sigma$. 
\begin{enumerate}
\item The fiber $\Ds(c,\la)_{0}$ at $0\in \A^n$ is the twisted Demazure module $D^\sigma(c, \lb)$. 
\item The twisted global Demazure module $\Ds(c,\la)$ is a graded free $\As(c,\vec{\lambda})$-module, where $\As(c,\vec{\lambda})$ is the weight algebra associated to the cyclic generator of $\Ds(c,\la)$.
\end{enumerate}
\end{thm}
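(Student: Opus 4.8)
The plan is to adapt the argument of Dumanski--Feigin--Finkelberg \cite{DFF21} from the current algebra $\mathfrak{g}[t]$ to the twisted current algebra $\gs$, feeding in the fusion product decompositions of untwisted Demazure modules (Fourier--Littelmann \cite{FL06}, available here since the simply-laced $\mathfrak{g}$ being folded is of type $A$, $D$ or $E$) and of twisted Demazure modules (Kus--Venkatesh \cite{KV16}, using that $\gs$ is a special or hyperspecial current algebra, Theorem~\ref{thm_hyperspecial}). Recall that $\Ds(c,\la)$ is a cyclic $\gs$-module generated by a distinguished vector $v_{\la}$ of weight $\lb$, carrying the grading induced by loop rotation, and that the weight algebra $\As(c,\la)$ is the image of $\U(\hs)$ acting on $v_{\la}$, equivalently the $\lb$-weight space of $\Ds(c,\la)$, a commutative connected graded algebra. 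My first task would be to identify $\As(c,\la)$ with a polynomial ring in variables attached to the nonzero $\lambda_i$'s --- the twisted analogue of the Chari-type description of the weight module of a global Weyl/Demazure module --- by exhibiting explicit $\U(\hs)$-operators and controlling the $\lb$-weight space. This makes available the standard criterion that a finitely generated graded $\As(c,\la)$-module $M$ satisfies $\dim\bigl(M\otimes_{\As(c,\la)}\mathbb{C}_0\bigr)\geq \mathrm{rank}_{\As(c,\la)}M$, with equality if and only if $M$ is graded free; and for the conclusion it suffices to pin down both sides.

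Step one, towards (1): produce a graded surjection $\Ds(c,\la)_0\twoheadrightarrow D^\sigma(c,\lb)$ by comparing presentations. The module $\Ds(c,\la)$ is cut out by a system of global, parameter-dependent Demazure-type relations; specialising all evaluation parameters to $0$, one checks that these imply the defining relations of the twisted local Demazure module $D^\sigma(c,\lb)$, so that the image $\bar v_{\la}$ maps onto the cyclic generator of $D^\sigma(c,\lb)$. In particular $\dim\Ds(c,\la)_0\geq \dim D^\sigma(c,\lb)$. A matching PBW-type spanning argument --- using the Demazure relations to bound which $\U(\gs)$-monomials can occur on $v_{\la}$ --- shows conversely that $\Ds(c,\la)$ is generated over $\As(c,\la)$ by at most $\dim D^\sigma(c,\lb)$ homogeneous elements, hence $\dim\Ds(c,\la)_0\leq \dim D^\sigma(c,\lb)$; combined with the surjection this already yields $\Ds(c,\la)_0\cong D^\sigma(c,\lb)$, proving (1).

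Step two, towards (2): compute $\mathrm{rank}_{\As(c,\la)}\Ds(c,\la)$ as the dimension of a generic fibre. Over a point $\vec p=(p_1,\dots,p_n)\in\A^n$ with pairwise distinct and nonzero coordinates the $\sigma$-twist trivialises on the unramified locus, and the factorisation property of the global module (the representation-theoretic counterpart of Theorem~\ref{275410}(2)) identifies $\Ds(c,\la)\otimes_{\As(c,\la)}\mathbb{C}_{\vec p}$ with the external tensor product $\bigotimes_i D(c,\lambda_i)$ of untwisted local Demazure modules; hence $\mathrm{rank}_{\As(c,\la)}\Ds(c,\la)=\prod_i\dim D(c,\lambda_i)$. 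By the fusion product decomposition of \cite{FL06} this equals $\dim D(c,\sum_i\lambda_i)$, and by the comparison between twisted and untwisted Demazure modules of \cite{KV16} and \cite{CIK14} --- together with the twisted fusion product decomposition $D^\sigma(c,\bar\mu+\bar\nu)\cong D^\sigma(c,\bar\mu)\ast D^\sigma(c,\bar\nu)$ --- this in turn equals $\dim D^\sigma(c,\lb)$. Therefore $\mathrm{rank}_{\As(c,\la)}\Ds(c,\la)=\dim D^\sigma(c,\lb)=\dim\Ds(c,\la)_0$ by (1), and the criterion above forces $\Ds(c,\la)$ to be graded free over $\As(c,\la)$, of rank $\dim D^\sigma(c,\lb)$; this proves (2) and re-derives (1).

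The \emph{main obstacle} is the chain of dimension identities underlying Step two. Proving that the generic fibre is exactly $\bigotimes_i D(c,\lambda_i)$ requires establishing the factorisation of the global module directly on the representation-theoretic side: the evaluation map gives a surjection, but the reverse direction --- that the global relations decouple into independent local relations at distinct points, and that the $\sigma$-twist is genuinely invisible away from the ramification locus --- is the delicate point. Equally delicate is the passage $\dim D(c,\sum_i\lambda_i)=\dim D^\sigma(c,\lb)$: one must keep track of the type restrictions in \cite{FL06}, of the special-versus-hyperspecial dichotomy for $\gs$, and invoke the precise statements of \cite{KV16} and \cite{CIK14} relating twisted Demazure characters to their untwisted counterparts. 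Finally, the identification of $\As(c,\la)$ with a polynomial ring, though routine in spirit, must be carried out with enough precision to license the graded criterion that a finitely generated graded module over such a ring is free exactly when its rank equals its minimal number of homogeneous generators, which is used throughout.
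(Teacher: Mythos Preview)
Your overall strategy—compare the fiber at $0$ with a generic fiber and deduce freeness from a dimension match—is the paper's, but the execution has the key surjection backwards and rests on facts that are not available.

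The module $\Ds(c,\la)$ is \emph{not} defined by relations: by Definition~\ref{234798} it is the $\U(\gs)$-submodule of $\bigotimes_i D(c,\lambda_i)[z_i]$ generated by $w=v_1[1]\otimes\cdots\otimes v_n[1]$. There is no ``system of global Demazure-type relations'' to specialise, so your surjection $\Ds(c,\la)_0\twoheadrightarrow D^\sigma(c,\lb)$ cannot be produced this way. What one \emph{can} do—and this is the technical core of Theorem~\ref{398063}—is verify that $w\otimes 1\in\Ds(c,\la)_0$ satisfies the defining relations of $D^\sigma(c,\lb)$ from Theorem~\ref{401910}(2), yielding the \emph{opposite} surjection $D^\sigma(c,\lb)\twoheadrightarrow\Ds(c,\la)_0$, hence $\dim\Ds(c,\la)_0\leq\dim D^\sigma(c,\lb)$. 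The reverse inequality comes not from a PBW spanning bound (which would be hard to carry out for a submodule) but from upper semicontinuity plus $\mb{G}_m$-homogeneity: $\vec p\mapsto\dim\Ds(c,\la)_{\vec p}$ is constant on punctured rays through $0$, so its value at $0$ dominates all others, and at generic $\vec p$ one has $\dim\Ds(c,\la)_{\vec p}\geq\prod_i\dim D(c,\lambda_i)=\dim D^\sigma(c,\lb)$ via the surjection of Proposition~\ref{063500} and the fusion result Theorem~\ref{257230}. Two further points: the generic-fiber identification with $\bigotimes_i D(c,\lambda_i)$ is a \emph{consequence} of (1), not an input (Proposition~\ref{063500} only gives a surjection a priori), so your Step~2 cannot logically precede Step~1; and identifying $\As(c,\la)$ with a polynomial ring is only known for $c=1$ and fundamental $\lambda_i$ (Theorem~\ref{cor_dem_weyl}), whereas the paper's proof of (2) needs only that $\As(c,\la)$ is reduced, connected graded and Noetherian, so that constant fiber dimension forces projectivity and graded Nakayama then gives freeness.
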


As a consequence of Theorem \ref{445316} and Proposition \ref{235859}, the space $H^0(\Grb^{\la}_{\mc{G},\A^n},\mc{L}_{\A^n})$ of global sections of the level one line bundle $\mc{L}_{\A^n}$ is a $(\gs,\C[\A^n])$-bimodule. In Section \ref{324893}, we further equip a $\mb{G}_m$-equivariant structure on the line bundle $\mc{L}_{\A^n}$, which follows that $H^0(\Grb^{\la}_{\mc{G},\A^n},\mc{L}_{\A^n})$ is a graded free $\C[\A^n]$-module, see Theorem \ref{243693}. Finally, we prove the following Borel-Weil type theorem in Theorem \ref{055073}. 
\begin{thm}\label{thm_main}
	There exists an isomorphism of $(\gs, \C[\A^n])$-bimodules
	\[ \Ds(c,\la)_{\A^n}\simeq H^0\big( \Grb_{\mc{G},\A^n}^{\la}, \mc{L}_{\A^n}^c\big)^\vee,\]
	where $\Ds(c,\la)_{\A^n}=\Ds(c,\la)\otimes_{\As(c,\vec{\lambda})}  \mathbb{C}[\A^n]$.  Moreover, $\Ds(c,\la)_{\A^n}$ has factorization property and it is compatible with the factorization of $\mc{L}_{\A^n}$ under the above isomorphism. 
\end{thm}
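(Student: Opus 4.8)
The plan is to prove the Borel--Weil type isomorphism $\Ds(c,\la)_{\A^n}\simeq H^0(\Grb_{\mc{G},\A^n}^{\la},\mc{L}_{\A^n}^c)^\vee$ by a degeneration/induction argument on $n$, matching the two sides first over the generic locus where both factorize, then extending over the whole of $\A^n$ by flatness. First I would establish the base case $n=1$: here $\Grb_{\mc{G},\A^1}^{\la}$ is (by Theorem \ref{445316}) a family of twisted affine Schubert varieties over $\A^1$ whose special fiber at $0$ is $\Grb^{\lb}_{\ms{G}}$, and by the classical Borel--Weil theorem for twisted affine Schubert varieties one has $H^0(\Grb^{\lb}_{\ms{G}},\mc{L}^c)^\vee\cong D^\sigma(c,\lb)$; combined with part (1) of the penultimate theorem, which identifies the fiber $\Ds(c,\la)_0$ with $D^\sigma(c,\lb)$, this gives the isomorphism fiberwise at $0$. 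One then needs to propagate this to a global isomorphism over $\A^1$: since $\Grb_{\mc{G},\A^1}^{\la}$ is flat over $\A^1$ (Theorem \ref{445316}) and $\mc{L}_{\A^1}^c$ restricts to a line bundle with vanishing higher cohomology on each fiber, cohomology and base change (the version in the appendix, Theorem \ref{259758}) shows $H^0(\Grb_{\mc{G},\A^1}^{\la},\mc{L}_{\A^1}^c)$ is a locally free $\C[\A^1]$-module commuting with base change; on the module side, part (2) of the penultimate theorem says $\Ds(c,\la)$ is a graded free $\As(c,\vec\lambda)$-module, and after the base change $\otimes_{\As}\C[\A^1]$ it is graded free over $\C[\A^1]$. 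A map between the two sides is induced by the $\gs$-action on global sections together with the cyclic vector, and it is an isomorphism because it is so on the fiber at $0$ and both sides are graded free of the same (finite in each degree) rank — here the $\mb{G}_m$-equivariant structure on $\mc{L}_{\A^n}$ from Theorem \ref{243693} is what makes the grading available.

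For the inductive step I would use the factorization structure. Over the open locus $U\subseteq\A^n$ where the points $p_1,\dots,p_n$ are pairwise distinct (or more precisely the strata where only prescribed subsets collide), the factorization property of BD Schubert varieties in Theorem \ref{445316} gives $\Grb_{\mc{G},\vec p}^{\la}\cong\prod_j\Grb^{\mu_j}_{\text{(twisted)}}$, and the factorization of the line bundle $\mc{L}^c$ in Proposition \ref{324995} (compatible with the factorization of Schubert varieties) gives $H^0$ as a tensor product of the corresponding spaces over the pieces. On the algebraic side, the factorization property of $\Ds(c,\la)_{\A^n}$ asserted in the statement to be proved — which must itself be verified, presumably in Section \ref{320070} — expresses $\Ds(c,\la)_{\A^n}|_U$ as a corresponding external tensor product of smaller global Demazure modules (each glued along the diagonal from twisted Demazure modules via the current algebra action). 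By the inductive hypothesis each tensor factor matches, so the isomorphism holds over $U$. The $\gs$-equivariance and $\C[\A^n]$-linearity of the comparison map are built in from the construction via the cyclic generator, so one gets a map of $(\gs,\C[\A^n])$-bimodules over all of $\A^n$ that is an isomorphism over the dense open $U$.

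To upgrade from an isomorphism over $U$ to an isomorphism over $\A^n$ I would again invoke flatness and freeness. The cokernel and kernel of the comparison map are coherent $\C[\A^n]$-modules supported on $\A^n\setminus U$; but both source and target are \emph{free} $\C[\A^n]$-modules — the source by part (2) of the penultimate theorem after base change plus the $\mb{G}_m$-grading, the target by Theorem \ref{243693} (graded freeness of $H^0$ over $\C[\A^n]$, a consequence of flatness of $\Grb_{\mc{G},\A^n}^{\la}$ over $\A^n$, vanishing of higher cohomology on fibers, and cohomology and base change from the appendix). A graded map of graded free modules that is an isomorphism on a dense open, and has the correct rank in each graded piece on the special fiber at $0\in\A^n$ (by the already-established fiberwise statement using part (1) of the penultimate theorem and the $n=1$ Borel--Weil input, applied componentwise after a further degeneration of $\A^n$ to a product of lines), must be an isomorphism: one checks that in each fixed graded degree both sides are finite free of the same rank and the determinant of the map is a nonvanishing unit. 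The compatibility of the isomorphism with the factorization of $\mc{L}$ then follows from the construction over $U$ by density.

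The main obstacle I expect is the last gluing/rank-matching step: one must arrange the $\mb{G}_m$-grading so that each graded component of $H^0(\Grb_{\mc{G},\A^n}^{\la},\mc{L}_{\A^n}^c)$ and of $\Ds(c,\la)_{\A^n}$ is finitely generated free over $\C[\A^n]$ of exactly matching rank, and then \emph{certify} that the comparison map is an isomorphism rather than merely injective or surjective — this requires knowing the map is nonzero on a fiber and controlling it in families, which in turn leans on the fiberwise Borel--Weil statement at every point of $\A^n$, not just at $0$. Establishing the fiberwise statement at the deeper diagonal strata (where several $p_i$ coincide and the fiber is a twisted affine Schubert variety attached to a partial sum of the $\lambda_i$) is where the factorization of Demazure modules of Kus--Venkatesh and the fusion results of Fourier--Littelmann are doing the real work, and checking that the geometric factorization of $\mc{L}^c$ matches the algebraic fusion product on the nose is the delicate point.
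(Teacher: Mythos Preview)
Your overall architecture (induction on $n$, factorization over the generic locus, graded freeness on both sides) matches the paper, but there are two genuine gaps in the execution that the paper handles quite differently.

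\textbf{Construction of the comparison map.} You propose to build the map by sending the cyclic generator of $\Ds(c,\la)_{\A^n}$ to a lowest weight vector in $H^0(\Grb_{\mc{G},\A^n}^{\la},\mc{L}^c)^\vee$. But $\Ds(c,\la)$ is \emph{defined as a submodule} $\U(\gs)\bu w\subset\bigotimes D(c,\lambda_i)[z_i]$, not by generators and relations, so it has no universal mapping-out property: you would need to verify that every element of $\U(\gs)$ annihilating $w$ also annihilates your target vector, and there is no direct way to do this. The paper instead embeds \emph{both} sides into a common ambient module. For $n=1$ this ambient module is $D(c,\lambda)\otimes\C[z,z^{-1}]$: the geometric side embeds via restriction $H^0(\Grb^\lambda_{\mc{G},\A^1},\mc{L}^c)^\vee\hookrightarrow H^0(\Grb^\lambda_{\mc{G},\mathring{\A}^1},\mc{L}^c)^\vee\simeq D(c,\lambda)\otimes\C[z,z^{-1}]$, and the algebraic side via the map $\phi$ of Proposition~\ref{348932}. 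The comparison map then arises by showing the two images coincide.

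\textbf{Extension across codimension-one loci.} For $n=1$ and $n=2$ the complement of the generic locus has codimension~$1$, so a map defined generically need not extend, and your ``determinant is a unit'' argument does not work: a graded injection between graded free modules of equal character \emph{is} an isomorphism (degree by degree over $\C$), but you first need the map to exist over all of $\A^n$, and a priori it could acquire poles along the boundary divisor. The paper's resolution is to pass to the \emph{lowest weight line} (for $\mf{h}^\sigma$), which on each side is a rank-one free module inside a rank-one module over the localized ring; their images therefore differ by $z^a$ (for $n=1$) or $\prod_i(z_1-\epsilon^i z_2)^{a_i}$ (for $n=2$). Equality of graded characters forces $a=0$ in the first case; for $n=2$ one additionally needs a $\Gamma$-equivariance argument (with $\Gamma$ acting on the second coordinate of $\A^2$) to show all $a_i$ are equal before the character argument finishes the job. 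Only for $n\geq 3$ does the complement $\Delta_{[n]}$ have codimension $\geq 2$, and there the paper does use Hartogs---but even then one must first glue the maps $\Phi_\xi$ over the various strata $\A^n_\xi$, and checking their compatibility on overlaps is precisely where the commutative diagram \eqref{eq_thm_diag} enters the induction (so the factorization compatibility is not a consequence but an ingredient of the proof).
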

Our proof is similar to the proof in \cite{DFF21}, while there are some distinctions, see Remark \ref{rem_difference}. As an application of this theorem, the fiber $ \Ds(c,\la)_{\vec{p}}$ at any  $\vec{p}\in \A^n$ can be determined, see Corollary \ref{367230} and Remark \ref{remark_fiber}. In fact, the fiber $ \Ds(c,\la)_{\vec{p}}$ is a tensor product of affine Demazure modules (possibly twisted) depending on the type of the point $\vec{p}$. 

The Borel-Weil type theorem for twisted global Weyl module is proved by Braverman-Finkelberg in \cite[Theorem 4.6]{BF}, where they work with twisted Zastava spaces. In fact, when the level $c=1$, $\Ds(c,\vec{\lambda})$  is isomorphic to a twisted global Weyl module, see Theorem \ref{cor_dem_weyl}. 

Finally, we make a remark when $G$ is $A_{2\ell}$. In this case,  our automorphism $\sigma$ is chosen to be the standard automorphism of order 4. The twisted current algebra $\gs$ corresponds to a special parahoric subalgebra of $A^{(2)}_{2\ell}$. In fact, there is another special parahoric subalgebra of $A^{(2)}_{2\ell}$, which can be realized as the twisted current algebra $\mf{g}[t]^\tau$ corresponding to the diagram automorphism $\tau$ of order $2$. For this automorophism $\tau$, all results in Section \ref{825344} hold, and in Section \ref{717393} similar results hold except that ${\rm Pic}^e(\Gr_{\G, C^n})$ is generated by a level $2$ rigidified line bundle. Moreover, we expect that Theorem \ref{thm_main} still hold for even levels. 

\medskip

\textbf{Acknowledgements}: We would like to thank I.\,Dumanski, E.\,Feigin, M.\,Finkelberg, G.\,Fourier and D.\,Kus for helpful email correspondences. We thank R.\,Travkin and X.\,Zhu for inspiring discussions on geometric aspects of this work.  We also thank Wyatt Reeves for helpful discussions and sharing his unpublished results.  
 J.\,Hong was partially supported by the NSF grant DMS- 2001365.

\section{Affine Demazure modules of twisted current algebra}\label{656829}
In this section, we collect some known results of affine Demazure modules, Weyl modules, and their twisted analogues, which will be used in Section \ref{716168}. 
\subsection{Notations}
Let $G$ be an almost simple algebraic group over $\C$. We fix a maximal torus $T$ and a Borel subgroup $B$ containing $T$. Let $X^*(T)$ (resp.\,$X_*(T)$) denote the weight lattice (resp.\,the coweight lattice) of $T$. Let $X_*(T)^+$ denote the set of dominant coweights of $G$ with respect to $B$. Let $R$ be the set of roots of $G$. Let $R^+$ denote the set of positive roots of $G$ with respect to $B$. Let $\{\alpha_i \mid i\in I\}$ be the set of simple roots, where $I$ is the set of vertices of the Dynkin diagram of $G$. 

Let $\mf{g}$, $\mf{b}$, $\mf{h}$ denote the Lie algebras of $G$, $B$, $T$ respectively. One has the triangular decomposition $\mf{g}=\mf{n}^-\oplus\mf{h}\oplus\mf{n}^+$, where $\mf{n}^+$ is the nilpotent radical of $\mf{b}$ and $\mf{n}^-$ is the nilpotent radical of the opposite Borel subalgebra $\mf{b}^-$. Let $\{e_{\alpha} \mid \alpha \in R \}$ be a Chevalley basis of $\mf{g}$. For simplicity of notations, we also set 
 $e_i=e_{\alpha_i}$, $f_i=e_{-\alpha_i}$, and $h_i=\alpha_i^\vee$ for any $i\in I$. 
 
In this section, we always assume that $G$ is simply-laced and of \textbf{adjoint type}. Then, the set $X_*(T)^+$ can be identified with set of all dominant coweights of $\mf{g}$. In Section \ref{656829} and Section \ref{716168}, we only deal with Lie algebras, but to be consistent with the notations in later sections on geometry, we keep using the notation $X_*(T)^+$. 

\subsection{Twisted current algebra}\label{133740}
Let $\tau$ be a nontrivial diagram automorphism preserving $\mf{b}, \mf{h}$ and the pinning $\{e_i,f_i; i\in I\}$. Let $m=2,3$ or $4$ and fix an $m$-th primitive root $\epsilon$. 
Following \cite[Section 2.1]{BH}, let $\sigma$ be a standard automorphism of order $m$ on $\mf{g}$ preserving $\mf{b}$ and $\mf{h}$. More specifically, when $\mf{g}$ is not $A_{2\ell}$, $\sigma=\tau$; when $\mf{g}$ is of type $A_{2\ell}$, 
\begin{equation}\label{eq_st_h}
\sigma=\tau\circ \epsilon^h,\end{equation}
 where $h\in \mf{h}^\tau$ such that 
\[\alpha_i(h)=
\begin{cases}
	0, & \text{if } i\neq\ell, \ell+1;\\
	1, & \text{if } i=\ell,\ell+1.
\end{cases}\]
In particular, 
\[m=
\begin{cases}
	2, & \text{if } \mf{g} \text{ is } A_{2\ell-1}, D_{\ell+1} \text{ or } E_6;\\
	3, & \text{if } \mf{g} \text{ is } D_4;\\
	4, & \text{if } \mf{g} \text{ is } A_{2\ell}.
\end{cases}\]
Let $\mf{g}^\sigma$ be the $\sigma$-fixed point Lie subalgebra in $\mf{g}$. We have the following descriptions of $\mf{g^\sigma}$,
 \begin{equation}
\label{Fix_table}
 \begin{tabular}{|c | c | c |c |c |c | c| c| c|c |c|c|c|c|c |c ||} 
 \hline
$(\mf{g}, m)$ & $(A_{2\ell-1}, 2 ) $ & $(A_{2\ell}, 4)$ & $(D_{\ell+1} , 2 ) $ & $ (D_4, 3)$ & $ (E_6, 2)$ \\ [0.8ex] 
 \hline
$ \mf{g}^\sigma $ & $ C_\ell $ & $ C_\ell $ & $B_\ell $ & $G_2 $ & $F_4 $ \\ [0.8ex] 
 \hline
\end{tabular},
\end{equation}
where by convention $C_1$ is $A_1$ and $\ell\geq 3$ for $D_{\ell+1}$.

 We extend $\sigma$ to an automorphism on the loop algebra $L(\mf{g})=\mf{g}((t))$ by
\begin{equation}\label{eq_act_1}
\sigma( x\otimes t^j ) =\epsilon^{-j}\sigma ( x) \otimes t^j.\end{equation}
We define the twisted affine algebra $\hat{L}(\mf{g},\sigma):=\mf{g}((t))^\sigma\oplus \C K$ as a central extension of $L(\mf{g})^\sigma$ with the canonical center $K$ as follows:
\[[x[f]+zK, x'[f']+z'K]=[x,x'][ff']+m^{-1}\mr{Res}_{t=0}((df)f')(x,x')K\]
for $x[f], x'[f']\in L(\mf{g})$, $z,z'\in \C$, where $\mr{Res}_{t=0}$ denotes the coefficient of $t^{-1}dt$, and $(,)$ is the normalized Killing form on $\mf{g}$ such that $(\theta,\theta)=2$ for the highest root $\theta$. 

Let $\gs$ be the twisted current algebra, i.e. the $\sigma$-fixed point subalgebra of $\mf{g}[t]$. In fact, $\gs$ is a subalgebra of $\hat{L}(\mf{g},\sigma)$. 
\begin{enumerate}[label=(\roman*)]
	\item \label{en_basis_A_2n} When $\mf{g}$ is of type $A_{2\ell}$, let $\alpha_{i,j}$ be the positive root $\alpha_i+\cdots +\alpha_j$, for $1\leq i\leq j\leq 2\ell$, and let $e_{\pm \alpha}$ be the standard basis in $\mf{g}=sl_{2\ell+1}$. Then $\gs$ has the following basis:	
	\begin{align}\label{802623}
		\begin{split}
		\quad 
		& e_{\pm \alpha,2k}:= e_{\pm\alpha}\otimes t^{2k}+(-1)^{i+j+k}e_{\pm\tau(\alpha)}\otimes t^{2k}, \text{ when } \alpha=\alpha_{i,j} \text{ with } 1\leq i\leq j <\ell\\
		& e_{\pm \alpha,2k}:= e_{\pm\alpha}\otimes t^{2k}-(-1)^{i+j+k}e_{\pm\tau(\alpha)}\otimes t^{2k}, \text{ when } \alpha=\alpha_{i,2\ell-j} \text{ with } 1\leq i\leq j <\ell\\
		& e_{\pm \alpha,4k}:= e_{\pm\alpha}\otimes t^{4k}, \text{ when } \alpha=\alpha_{i,2\ell+1-i} \text{ with } 1\leq i\leq \ell\\
		& e_{\pm \alpha,2k+1}:= e_{\pm\alpha}\otimes t^{2k+1} \pm (-1)^{i+\ell+k}e_{\pm\tau(\alpha)}\otimes t^{2k+1}, \text{ when } \alpha=\alpha_{i,\ell} \text{ with } 1\leq i\leq \ell\\
		& h_{i,2k}:= h_i\otimes t^{2k}+(-1)^k h_{2\ell+1-i}\otimes t^{2k}, \text{ for any } 1\leq i\leq \ell
		\end{split}
	\end{align}
(Note that the actions of $\tau$ and $\sigma$ agree on the roots.)
	\item When $\mf{g}$ is not of type $A_{2\ell}$, let $R^+_s$ (resp.\,$R^+_l$) be the set of roots $\alpha\in R^+$ such that the restriction $\alpha|_{\mf{h}^\sigma}$ is a short (resp.\,long) root of $\mf{g}^\sigma$. Then $\gs$ has the following basis: 
	\begin{align*}
		\quad 
		& e_{\pm \alpha,k}:= \sum_{i=0}^{m-1} e_{\pm\sigma^i(\alpha)}\otimes (\epsilon^{i}t)^{k}, \ \alpha \in R^+_{s}; & e_{\pm \alpha,mk}:= \sum_{i=0}^{m-1} e_{\pm\sigma^i(\alpha)}\otimes t^{mk}, \ \alpha\in R^+_{l};\\
		& h_{\alpha,k}:= \sum_{i=0}^{m-1} h_{\sigma^i(\alpha)}\otimes (\epsilon^i t)^{k},\ \alpha\in R^+. &
	\end{align*}
\end{enumerate}
When $\mf{g}$ is not $A_{2\ell}$, $\gs$ is isomorphic to the special twisted current algebra of $\hat{L}(\mf{g},\sigma)$, see \cite[Section 4.2]{KV16}. When $\mf{g}$ is $A_{2\ell}$, we now show $\gs$ is isomorphic to the hyperspecial twisted current algebra $\mf{Cg}$ defined in \cite[Section 2.5]{CIK14} and \cite[Section 1.5]{KV16}.

Assume that $\mf{g}$ is of type $A_{2\ell}$. Following \cite[Section 4.4]{CIK14} and \cite[Section 1.8]{KV16}, the hyperspecial current algebra $\mathfrak{C} \mathfrak{g} \subset \mf{g}[t,t^{-1}]^\tau$ consists of the following basis elements:
\begin{enumerate}[itemsep=0.5em]
	\item $e_{\pm \alpha} \otimes t^k + (-1)^{i+j} e_{\pm \tau(\alpha) } \otimes (-t)^k$, $\alpha=\alpha_{ij}$ with $1 \leq i \leq j <\ell$, and $k\geq 0$; 
	\item $e_{\pm \alpha}\otimes t^{k\pm 1} + (-1)^{i+j} e_{\pm \tau(\alpha)} \otimes (-t)^{k\pm 1}$, $\alpha=\alpha_{i,2\ell-j}$ with $1 \leq i \leq j <\ell$, and $k\geq 0$; 
	\item $e_{\pm \alpha} \otimes t^{2k\pm 1} $, $\alpha=\alpha_{i, 2\ell+1-i}$ with $1 \leq i \leq \ell$, and $k\geq 0$; 
	\item $e_{\pm \alpha} \otimes t^{ (2k+1\pm 1)/2} + (-1)^{\ell+i}e_{\pm \tau(\alpha)} \otimes (-t)^{ (2k+1\pm 1)/2}$, $\alpha=\alpha_{i\ell} $ with $1\leq i \leq \ell $, and $k\geq 0$; 
	\item $h_i\otimes t^k + h_{2\ell+1-i}\otimes (-t)^k $, $1\leq i \leq \ell$, and $k\geq 0$,
\end{enumerate}
where $\alpha_{ij}$ denotes the positive root $\alpha_i+\cdots + \alpha_j$, for any $1\leq i\leq j \leq 2\ell$. Note that $\tau(\alpha_{ij})= \alpha_{ 2\ell+1-j,2\ell+1-i }$. 
\begin{lem}
Assume that $\mf{g}$ is of type $A_{2\ell}$.
	We have the following formula
	\begin{equation}
		\sigma (e_{\pm \alpha_{ij}}) = (-1)^{j-i } \epsilon^{\pm \alpha_{ij}(h) }e_{ \pm\alpha_{2\ell+1-j,2\ell+1-i} }, \text{ for any } 1\leq i\leq j \leq 2\ell+1,
	\end{equation}
	and $\sigma(h_i)= h_{2\ell+1-i}$, for any $1\leq i\leq 2\ell+1$, where $h$ is defined in (\ref{eq_st_h}).
\end{lem}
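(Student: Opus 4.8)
The plan is to verify the two formulas directly from the definition $\sigma = \tau \circ \mathrm{i}^h$ given in (\ref{eq_st_h}), using that $\tau$ is the order-two diagram automorphism of $A_{2\ell}$ and that $\mathrm{i}^h$ is the inner automorphism $\exp(\log(\mathrm{i})\,\mathrm{ad}\,h)$ coming from $h \in \mf{h}^\tau$. First I would record the two elementary facts: the diagram automorphism sends the simple root $\alpha_i$ to $\alpha_{2\ell+1-i}$, hence $\tau(\alpha_{ij}) = \alpha_{2\ell+1-j,\,2\ell+1-i}$ (the identity already noted before the lemma), so $\tau(e_{\pm\alpha_{ij}}) = \pm\, e_{\pm\alpha_{2\ell+1-j,2\ell+1-i}}$ up to a sign $c_{ij} \in \{\pm 1\}$ coming from the chosen Chevalley basis; and the inner part acts on a root vector $e_\beta$ by the scalar $\mathrm{i}^{\beta(h)}$, since $[h, e_\beta] = \beta(h)\,e_\beta$. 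Therefore $\sigma(e_{\pm\alpha_{ij}}) = \mathrm{i}^{\pm\alpha_{ij}(h)}\, \tau(e_{\pm\alpha_{ij}})$, and it remains only to pin down the Chevalley sign $c_{ij}$ and show it equals $(-1)^{j-i}$.

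For the sign computation I would proceed inductively on the length $j - i$ of the root $\alpha_{ij}$. For $i = j$ (simple roots) one normalizes $\tau(e_{\alpha_i}) = e_{\alpha_{2\ell+1-i}}$ as part of $\tau$ preserving the pinning $\{e_i, f_i\}$, so $c_{ii} = 1 = (-1)^0$. For the inductive step, write $\alpha_{ij} = \alpha_{i,j-1} + \alpha_j$ and use $e_{\alpha_{ij}} = c\,[e_{\alpha_{i,j-1}}, e_{\alpha_j}]$ for a structure constant $c$, together with $\tau(\alpha_{i,j-1}) = \alpha_{2\ell+2-j,2\ell+1-i}$ and $\tau(\alpha_j) = \alpha_{2\ell+1-j}$, and the fact that in type $A$ all structure constants for sums of consecutive simple roots can be taken equal to $\pm 1$ with a consistent choice; applying $\tau$ to the bracket and comparing the two ways of building $\alpha_{2\ell+1-j,2\ell+1-i}$ (from the left versus from the right) produces exactly one extra sign per step, giving $c_{ij} = (-1)^{j-i}$. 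I would carry this out once for $e_{+\alpha}$; the statement for $e_{-\alpha}$ follows by applying the Chevalley involution (which commutes with $\tau$ and with $\mathrm{i}^h$ up to replacing $h$ by $-h$), accounting for the $\mathrm{i}^{\mp\alpha(h)}$ versus $\mathrm{i}^{\pm\alpha(h)}$ discrepancy. The claim $\sigma(h_i) = h_{2\ell+1-i}$ is immediate: $\mathrm{i}^h$ fixes $\mf{h}$ pointwise, and $\tau$ permutes the coroots by $h_i \mapsto h_{2\ell+1-i}$.

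The main obstacle I anticipate is bookkeeping the Chevalley structure constants consistently so that the single clean formula $c_{ij} = (-1)^{j-i}$ emerges, rather than a messier product of signs depending on the specific basis normalization; this is where one must be careful that the chosen pinning and the chosen ordering in the iterated brackets are compatible, and that the asymmetry between building $\alpha_{ij}$ "from the left" and its image "from the right" contributes precisely the alternating sign. Once that normalization is fixed, everything else is a routine application of $\tau$ being a diagram automorphism and $\mathrm{i}^h$ being diagonalizable on root spaces with eigenvalue $\mathrm{i}^{\beta(h)}$, and one checks consistency at the boundary cases $j = 2\ell+1-i$ (where $\alpha_{ij}$ is $\tau$-fixed and the formula should give $\sigma(e_{\pm\alpha}) = (-1)^{2\ell+1-2i}\mathrm{i}^{\pm\alpha(h)} e_{\pm\alpha} = -\mathrm{i}^{\pm 2} e_{\pm\alpha} = \pm e_{\pm\alpha}$, matching $\sigma^2$ acting correctly) as a sanity check.
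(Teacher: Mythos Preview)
Your approach is correct and essentially the same as the paper's: both decompose $\sigma = \tau \circ \mathrm{i}^h$, use that $\mathrm{i}^h$ acts on $e_\beta$ by the scalar $\mathrm{i}^{\beta(h)}$, and invoke the explicit formula $\tau(e_{\alpha_{ij}}) = (-1)^{j-i} e_{\tau(\alpha_{ij})}$ for the diagram automorphism on the Chevalley basis. The only difference is that the paper simply cites \cite[Section 7.10]{Kac90} for this last formula, whereas you propose to rederive it by induction on $j-i$; your inductive sketch is fine (the single extra sign per step comes from the antisymmetry of the bracket when the order of the two factors is reversed under $\tau$).

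One small slip in your sanity check: for $\alpha = \alpha_{i,2\ell+1-i}$ you have $\alpha(h) = 2$, so $\mathrm{i}^{+2} = \mathrm{i}^{-2} = -1$ and hence $\sigma(e_{\pm\alpha}) = (-1)^{2\ell+1-2i}\cdot(-1)\,e_{\pm\alpha} = e_{\pm\alpha}$ in \emph{both} sign cases, not $\pm e_{\pm\alpha}$. This does not affect the argument.
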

\begin{proof}
	In \cite[Section 7.10]{Kac90}, the formula for the action of $\tau$ on every basis vector in $\mf{g}$ is given. Combining this formula and $ \sigma= \tau  \epsilon^{h}$, we can deduce the formula for $\sigma$. 
\end{proof}

Assume that $\mf{g}$ is of type $A_{2\ell}$. There exists an isomorphism $ \eta_k: \hat{L}(\mf{g}, \tau)\simeq \hat{L}(\mf{g}, \sigma)$ due to Kac (cf. \cite[p8]{HK} and \cite[Section 8.5]{Kac90}), which is defined as follows 
\[ \eta_k( x[t^j])= x[t^{ 2j+s }], \]
for any $x$ a simultaneous $(-1)^j$-eigenvector of $\tau$, and a $s$-eigenvector of $\mr{ad}h$. 
Let $\phi$ be the Cartan involution on $\mf{g}$ such that $\phi(e_i)=-f_i$ and $\phi(h_i)=-h_i$. Notice that for any root $\alpha $, $\phi(e_{ \alpha})=-e_{- \alpha}$. Now, we define an automorphism $\eta_c: \hat{L}(\mf{g}, \tau)\to \hat{L}(\mf{g}, \tau)$ as follows, 
\[ \eta_c(x[t^s])=\phi(x)[t^s], \text{ for any } x[t^s]\in \hat{L}(\mf{g}, \tau) . \]
It is easy to verify that $\eta_c$ is well-defined, since $\eta_c\circ \tau=\tau\circ \eta_c$, where $\tau$ is the automorphism on the untwisted affine Lie algebra $\hat{\mf{g}}$ induced from the action in (\ref{eq_act_1}) . 
We define the following isomorphism of twisted affine algebras 
\begin{equation}\label{def_eta}	
	\eta:= \eta_k\circ \eta_c: \hat{L}(\mf{g}, \tau)\simeq \hat{L}(\mf{g}, \sigma) .
\end{equation}

The following theorem originally appeared in an earlier version of \cite{BH}, where some calculation mistakes are fixed in this paper. 
\begin{thm}
	\label{thm_hyperspecial}
	The map $\eta$ restricts to the following isomorphism of Lie algebras
	\[\eta: \mathfrak{C} \mathfrak{g} \simeq \mf{g}[t]^\sigma. \] 
\end{thm}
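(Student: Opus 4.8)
The plan is to verify directly that the composition $\eta=\eta_k\circ\eta_c$ carries each of the five families of basis elements of $\mathfrak{C}\mathfrak{g}\subset\mathfrak{g}[t,t^{-1}]^\tau$ listed above into the span of the five families of basis elements of $\mathfrak{g}[t]^\sigma$ given in (\ref{802623}), and conversely that every basis element of $\mathfrak{g}[t]^\sigma$ is hit. Since $\eta$ is already known to be an isomorphism $\hat{L}(\mathfrak{g},\tau)\simeq\hat{L}(\mathfrak{g},\sigma)$ of twisted affine algebras, it suffices to check the image of $\mathfrak{C}\mathfrak{g}$ is exactly $\mathfrak{g}[t]^\sigma$ on the level of the underlying vector spaces, i.e. to match up the two explicit bases; the Lie algebra structure then comes for free by restriction.

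The key computational step is to track the exponent shift. First I would record, for each basis vector $x$ of $\mathfrak{g}$ appearing in $\mathfrak{C}\mathfrak{g}$, its $\tau$-parity (i.e. whether $\tau x=\pm x$ on the relevant $\tau$-eigencombination) and its $\mathrm{ad}\,h$-eigenvalue $s$, using the preceding lemma; then $\eta_c$ replaces $e_{\pm\alpha}$ by $\mp e_{\mp\alpha}$ (so it flips the sign of roots and introduces a sign), and $\eta_k$ sends $x[t^j]\mapsto x[t^{2j+s}]$. Concretely: for $\alpha=\alpha_{ij}$ with $i\le j<\ell$ one has $s=0$ and the exponent doubles, matching $e_{\pm\alpha,2k}$; for $\alpha=\alpha_{i,2\ell-j}$ the relevant $s=\mp1$ so $t^{k\pm1}\mapsto t^{2(k\pm1)\mp1}=t^{2k\pm1}$, again even, matching the second family (after relabelling $2k\mapsto 2k$); for $\alpha=\alpha_{i,2\ell+1-i}$ we have $s=\mp1$ and $t^{2k\pm1}\mapsto t^{2(2k\pm1)\mp1}=t^{4k\pm1}$, which must land on $e_{\pm\alpha,4k}$ — here one has to be careful that the two possible exponents $4k+1$ and $4k-1$ reorganize correctly into the single family indexed by $4k$; for $\alpha=\alpha_{i\ell}$ the half-integer exponents $(2k+1\pm1)/2$ double to $2k+1\pm1$, giving odd exponents, matching $e_{\pm\alpha,2k+1}$; and the Cartan family $h_i\otimes t^k+h_{2\ell+1-i}\otimes(-t)^k$ with $s=0$ doubles to $h_{i,2k}$. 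I would then also chase the signs $(-1)^{i+j}$, $(-1)^{\ell+i}$ through $\eta_c$ and through the substitution $t\mapsto -t$ implicit in the $\tau$-eigenvector normalization, checking they reproduce the signs $(-1)^{i+j+k}$, $-(-1)^{i+j+k}$, $\pm(-1)^{i+\ell+k}$, $(-1)^k$ in (\ref{802623}).

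The main obstacle I expect is bookkeeping rather than conceptual difficulty: getting every sign right simultaneously, since three independent sources of signs interact — the intrinsic $\tau$-action signs from \cite[Section 7.10]{Kac90} and the lemma, the $\mathrm{i}^{\pm\alpha(h)}$ factors coming from $\sigma=\tau\,\mathrm{i}^h$ (which on $A_{2\ell}$ take values in $\{1,\mathrm{i},-1,-\mathrm{i}\}$ and must combine with the $t\mapsto -t$ substitutions to produce real signs), and the sign $\mp$ introduced by the Cartan involution $\eta_c$ on root vectors. The delicate case is the third family (the ``middle'' roots $\alpha_{i,2\ell+1-i}$ fixed up to sign by $\tau$), where $m=4$ genuinely enters: one must confirm that exponents $\equiv\pm1\pmod 4$ are exactly the ones surviving in $\mathfrak{g}[t]^\sigma$ for these root spaces, consistent with $\sigma$ having order $4$ there. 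Once the basis match is established family-by-family, the statement follows, and this also explains the remark that the earlier version of \cite{BH} had sign errors precisely in this computation.
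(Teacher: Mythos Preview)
Your approach is exactly the paper's: a direct family-by-family check that $\eta=\eta_k\circ\eta_c$ carries the listed basis of $\mathfrak{C}\mathfrak{g}$ onto the listed basis of $\mathfrak{g}[t]^\sigma$, after which the Lie algebra isomorphism follows by restriction of the ambient isomorphism $\hat L(\mathfrak{g},\tau)\simeq\hat L(\mathfrak{g},\sigma)$.

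One correction to your exponent bookkeeping, though, since it would derail the actual computation. For $\alpha=\alpha_{i,2\ell-j}$ (with $i\le j<\ell$) and for $\alpha=\alpha_{i,2\ell+1-i}$, the root $\alpha$ contains \emph{both} $\alpha_\ell$ and $\alpha_{\ell+1}$, so $\alpha(h)=2$, not $1$. After $\eta_c$ swaps $e_{\pm\alpha}\mapsto -e_{\mp\alpha}$, the $\mathrm{ad}\,h$-eigenvalue feeding into $\eta_k$ is $\mp2$. Hence for the second family $t^{k\pm1}\mapsto t^{2(k\pm1)\mp2}=t^{2k}$ (even, as it must be), and for the third family $t^{2k\pm1}\mapsto t^{2(2k\pm1)\mp2}=t^{4k}$; there is no residual $\pm1$ to ``reorganize''. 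Likewise for $\alpha=\alpha_{i\ell}$ one has $\alpha(h)=1$, and both choices of sign in $(2k+1\pm1)/2$ are sent to $t^{2k+1}$, not $t^{2k+1\pm1}$. With these eigenvalues in hand the exponent match is immediate and the remaining work is, as you say, purely the sign chase.
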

\begin{proof}
	From the definition \eqref{def_eta}, we have the following calculations: 
	\begin{enumerate}
		\item When $\alpha=\alpha_i+\dots +\alpha_j$ with $1 \leq i \leq j <\ell$, and $k\geq 0$, we have
		\[ \eta(e_{\pm \alpha} \otimes t^k + (-1)^{i+j} e_{\pm \tau(\alpha) } \otimes (-t)^k)=-(e_{\mp\alpha}\otimes t^{2k}+(-1)^{i+j+k}e_{\mp\tau(\alpha)}\otimes t^{2k}) . \]
		\item When $\alpha=\alpha_{i,2\ell-j}$ with $1 \leq i \leq j <\ell$, and $k\geq 0$, we have 
		\[ \eta(e_{\pm \alpha}\otimes t^{k\pm 1} + (-1)^{i+j} e_{\pm \tau(\alpha)} \otimes (-t)^{k\pm 1} )=- (e_{\mp\alpha}\otimes t^{2k}-(-1)^{i+j+k}e_{\mp\tau(\alpha)}\otimes t^{2k} ). \]
		\item When $\alpha=\alpha_{i, 2\ell+1-i}$ with $1 \leq i \leq \ell$, and $k\geq 0$, we have
		\[ \eta( e_{\pm \alpha} \otimes t^{2k\pm 1} ) =-e_{\mp \alpha} \otimes t^{4k}. \]
		\item When $\alpha=\alpha_{i\ell} $ with $1\leq i \leq \ell $, and $k\geq 0$, we have 
		\[ \eta(e_{\pm \alpha} \otimes t^{ (2k+1\pm 1)/2} + (-1)^{\ell+i}e_{\pm \tau(\alpha)} \otimes (-t)^{ (2k+1\pm 1)/2} ) =- (e_{\mp\alpha}\otimes t^{2k+1} \mp (-1)^{i+\ell+k}e_{\mp\tau(\alpha)}\otimes t^{2k+1} ). \]
		\item For any $1\leq i \leq \ell $, and $k\geq 0$, we have
		\[ \eta(h_i\otimes t^k + h_{2\ell+1-i}\otimes (-t)^k ) =-(h_i\otimes t^{2k}+(-1)^k h_{2\ell+1-i}\otimes t^{2k}) .\]
	\end{enumerate}
	Then, it is easy to see that the image of the described basis of the hyperspecial current algebra $\mf{Cg}$ under the map $\eta$ is exactly the basis of $\mf{g}[t]^\sigma$ described above. 
\end{proof}

\subsection{Weyl modules}\label{427932}
We denote by $P$ (resp.\,$P_\sigma$) the weight lattice of $\mf{g}$ (resp. $\mf{g}^\sigma$), and by $P^+$ (resp.\,$P^+_\sigma$) the set of dominant weights of $\mf{g}$ (resp.\,$\mf{g}^\sigma$) with respect to the Borel subalgebra $\mf{b}$ (resp. $\mf{b}^\sigma$). We define the map $\iota: X_*(T)\to P$ given by 
\[\iota(\lambda)(h)=(\lambda, h), \ \forall \lambda \in X_*(T), h\in \mf{h},\]
where $\lambda$ is regarded as an element in $\mf{h}$ and $(\ ,\ )$ is the normalized Killing form on $\mf{h}$. By restricting $\iota(\lambda)$ to $\mf{h}^\sigma$, it induces a map $X_*(T)\to P_\sigma$, and it descends to the map $X_*(T)_\sigma \to P_\sigma$. We still denote it by $\iota$.

Following \cite{FL06}, for any dominant weight $\mu\in P^+$ of $\mf{g}$, we define the Weyl module $W(\mu)$ of $\mf{g}[t]$ to be the cyclic module generated by an element $w_{\mu}$ subject to the following relations
\[\mf{n}^-\otimes \mathbb{C}[t]\cdot w_{\mu}=0, \ \mf{h}\otimes t\C[t]\cdot w_{\mu}=0,\ (h\otimes 1)\cdot w_\mu =-\mu(h)w_{\mu},\ (e_{\alpha_i}\otimes 1)^{\mu(\alpha_i^\vee)+1}\cdot w_{\mu}=0,\]
for any simple roots $\alpha_i\in R^+$, and any $h\in \mf{h}$.

Following \cite{KV16}, for each dominant weight $\bar{\mu}\in P^+_\sigma$ of $\mf{g}^\sigma$, we define the twisted Weyl module $W^\sigma(\bar{\mu})$ of $\gs$ to be the cyclic module generated by an element $w_{\bar{\mu}}$ subject to the following relations
\begin{equation}\label{989363}
	\mf{n}^-[t]^\sigma\cdot w_{\bar{\mu}}=0,\ (\mf{h}\otimes t\C[t])^\sigma\cdot w_{\bar{\mu}}=0,\ (h\otimes 1)\cdot v =-\bar{\mu}(h)w_{\bar{\mu}},\ (e_{\alpha,0})^{k_\alpha+1}\cdot w_{\bar{\mu}}=0 ,
 \end{equation}
for any $h\in \mf{h}^\sigma$, ${\alpha}\in R^+$, and $k_\alpha=\bar{\mu}( \bar{\alpha}^\vee)$, where $ \bar{\alpha}$ is the root ${\alpha}|_{\mf{h}^\sigma}$ of $\mf{g}^\sigma$. 

We define the twisted global Weyl module $\mb{W}^\sigma(\bar{\mu})$ to be the cyclic $\U(\gs)$-module generated by an element $w_{\bar{\mu}}$ with the following relations
\begin{equation*}
	\mf{n}^-[t]^\sigma\cdot w_{\bar{\mu}}=0,\ (h\otimes 1)\cdot w_{\bar{\mu}} =-\bar{\mu}(h)w_{\bar{\mu}},\ (e_{\alpha,0})^{k_\alpha+1}\cdot w_{\bar{\mu}}=0 ,
\end{equation*}
where $h\in \mf{h}^\sigma$, ${\alpha}\in R^+$, and $k_\alpha=\bar{\mu}( \bar{\alpha}^\vee)$. Similar to the untwisted case in \cite{CFK10}, there is a well-defined right action of $\U(\hs)$ on $\mb{W}^\sigma(\bar\mu)$ given by 
\[(u\cdot w_{\bar \mu})\ h:= u\cdot (h\cdot w_{\bar\mu})\]
where $u\in \U(\gs), h\in \U(\hs)$. This action commutes with $\U(\gs)$-action. Let ${A}^\sigma(\bar\mu):=\U(\hs)/\mr{Ann}(w_{\bar\mu})$ be the quotient of $\U(\hs)$ by the annihilator of $w_{\bar\mu}$.
\begin{remark}
In this paper,  the (twisted) Weyl modules and the twisted global counterparts are defined using the lowest weight cyclic generators. By composing the map $\eta$ (when $\mf{g}$ is $A_{2\ell}$) or a Cartan involution (otherwise), our definitions agree with the usual definitions in literature.  Also see Remark \ref{rem_dem_def} on the definitions of Demazure modules. 
\end{remark}
Let $I_\sigma$ be the set of vertices of the Dynkin diagram of $\mf{g}^\sigma$. We follow the Bourbaki labeling of the vertices of the Dynkin diagram, which gives a total order on $I$ and $I_\sigma$. Let $\{\beta_j\}_{j\in I_\sigma}$ be the set of simple roots of $\mf{g}^\sigma$. Then, there is a map $\eta:I\to I_\sigma$ such that 
\begin{equation}\label{328480}
	\check\beta_i=\sum_{j\in \eta^{-1}(i)} \check\alpha_j
\end{equation}
for any simple coroot $\check\beta_i$, cf.\,\cite[Section 2.1]{BH}. Let $\omega_j$ be the fundamental weight of $\mf{g}$ associated to $j\in I$. For any $i \in I_\sigma$, let $\bar{\omega}_i:=\omega_j|_{\mf{h}^\sigma}$ for some $j\in \eta^{-1}(i)$. Then $\{\bar{\omega}_i\}_{i\in I_\sigma}$ gives the set of fundamental weights of $\mf{g}^\sigma$, cf.\,\cite[Section 2.1]{BH}. Moreover, by \cite[Lemma 3.4]{BH}, we have 
\begin{equation}
\label{eq_match_wt}
 \iota(\check{\omega}_j)=\bar{\omega}_i, \quad  \forall j\in \eta^{-1}(i).  \end{equation}

For each $i\in I_\sigma$, set $m_i:=\frac{m}{|\eta^{-1}(i)|}$. For example, when $\mf{g}$ is $A_{2\ell}$, we have $m_i=2$ for any $i\in I_\sigma$. The following theorem on twisted global Weyl modules was proven in \cite{CIK14}, and it will be used in Section \ref{sect_t_weyl}.
\begin{thm}\label{350320} 
	Let $\bar\mu=\sum_{i\in I_\sigma} n_i\bar\omega_i\in P^+_\sigma$. 
	\begin{enumerate}
		\item \label{thm_part_1} The algebra $A^\sigma(\bar\mu)$ is a graded polynomial algebra in variables $T_{i,r}$ of grade $m_ir$, where $i\in I_\sigma$ and $1\leq r\leq n_i$. In particular, when $\mf{g}$ is $A_{2\ell}$, $m_i=2$ for any $i\in I_\sigma$.
		
		\item Let $I_{\bar\mu,0}$ be the unique graded maximal ideal in $A^\sigma(\bar\mu)$. Then, there is a $\gs$-isomorphism
		\[\mb{W}^\sigma(\bar\mu)\otimes_{A^\sigma(\bar\mu)} (A^\sigma(\bar\mu)/I_{\bar\mu,0})\simeq W^\sigma(\bar\mu).\]
		
		\item $\mb{W}^\sigma(\bar\mu)$ is a free $A^\sigma(\bar\mu)$-module of rank equal to
		$\prod_{i\in I_\sigma} \big(\dim W^\sigma(\bar\omega_i)\big)^{n_i}$.
	\end{enumerate}
\end{thm}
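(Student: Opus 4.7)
The strategy follows the Chari--Fourier--Loktev roadmap adapted to the twisted setting with lowest-weight cyclic generators. The fundamental observation is that since $\mf{n}^-[t]^\sigma\cdot w_{\bar\mu}=0$, the extremal weight space $\mb{W}^\sigma(\bar\mu)_{-\bar\mu}$ is a cyclic $\hs$-module generated by $w_{\bar\mu}$, so $A^\sigma(\bar\mu)$ acts faithfully on it and is manifestly commutative. Moreover, by (\ref{eq_match_wt}) the Cartan directions dual to the $\bar\omega_i$ are the only ones that carry nontrivial information for the action on $w_{\bar\mu}$.

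For part (1), the plan is to produce generators via a twisted Garland identity: for each $i\in I_\sigma$, pick a positive root $\alpha\in R^+$ with $\bar\alpha=\bar\omega_i$ and expand $(e_{-\alpha,0})^{N}(e_{\alpha,k})^{N}\cdot w_{\bar\mu}$ using the relation $(e_{\alpha,0})^{k_\alpha+1}\cdot w_{\bar\mu}=0$ from (\ref{989363}); extracting the Cartan coefficients yields polynomial relations that express the grade-$m_ir$ mode of the dual Cartan element for $r>n_i$ as a polynomial in lower-grade modes. This bounds the number of generators by $\sum_i n_i$ and forces the grading $m_ir$, since the $\sigma$-eigenvalue of the Cartan line dual to $\bar\omega_i$ has order $m_i$ so only $t^{m_ir}$-modes survive in $\hs$. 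Algebraic independence of the resulting $T_{i,r}$ is established by exhibiting a quotient on which they act as algebraically independent functions, namely a fusion product obtained by specializing $w_{\bar\mu}$ at $n_i$ generic $\Gamma$-orbits in $\A^1$, one orbit per copy of $\bar\omega_i$. Part (2) is then a direct comparison of defining relations: the image of $(\mf{h}\otimes t\C[t])^\sigma$ in $A^\sigma(\bar\mu)$ coincides with $I_{\bar\mu,0}$ (it is the graded augmentation ideal of the polynomial algebra produced in (1)), so imposing this ideal on $\mb{W}^\sigma(\bar\mu)$ is exactly the extra relation in (\ref{989363}) that cuts out $W^\sigma(\bar\mu)$.

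Part (3) is the core difficulty. The plan is a dimension sandwich: an upper bound on the rank follows from a character computation realizing $\mb{W}^\sigma(\bar\mu)$ as a quotient of $\bigotimes_{i\in I_\sigma}\mb{W}^\sigma(\bar\omega_i)^{\otimes n_i}$, and a matching lower bound comes from the fusion-product construction above, whose generic specialization has dimension exactly $\prod_i(\dim W^\sigma(\bar\omega_i))^{n_i}$. Combined with finite generation over $A^\sigma(\bar\mu)$, the graded Nakayama lemma applied to the graded polynomial ring $A^\sigma(\bar\mu)$ then upgrades equality of generic and special fiber dimensions to freeness. The main obstacle is the fusion construction in the twisted setting: one must choose base points compatibly with the $\Gamma$-action, control what happens at $\Gamma$-orbits of size less than $m$ (particularly at the origin, where the fundamental twisted Weyl modules $W^\sigma(\bar\omega_i)$ appear), and verify that the specialization is associative and independent of the ordering of orbits --- precisely the combinatorial content developed in the CIK framework.
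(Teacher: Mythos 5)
The paper does not prove this theorem; its proof is a one-line citation to Theorems 1, 4, 8, and 11 of \cite{CIK14}, with the convention mismatch (your lowest-weight cyclic generators versus the standard ones in \cite{CIK14}) handled implicitly via the Cartan involution or the map $\eta$ of \eqref{def_eta}, as recorded in the remark following Corollary~\ref{623012}. What you have written is therefore not a comparison with the paper's argument but an attempted reconstruction of the cited proof. Your high-level roadmap — twisted Garland identities to bound the generators, a fusion-product specialization for algebraic independence, a dimension sandwich between generic and special fibers, graded Nakayama for freeness — does follow the CFK/CIK strategy, and your reading of part (2) as a restatement of the definitions agrees with the paper's ``It follows from the definition.''

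There are nonetheless concrete inaccuracies. In part (1) you write ``pick a positive root $\alpha\in R^+$ with $\bar\alpha=\bar\omega_i$'': since $\bar\alpha$ is the restriction of a root of $\mf{g}$ to $\mf{h}^\sigma$, it is a root of $\mf{g}^\sigma$ and cannot equal the fundamental weight $\bar\omega_i$; what you need is $\bar\alpha=\beta_i$, the simple root indexed by $i$, because the generators of $A^\sigma(\bar\mu)$ are built out of the Cartan modes $h_{i,m_ik}$ dual to simple coroots. In part (3), the claimed upper bound ``realizing $\mb{W}^\sigma(\bar\mu)$ as a quotient of $\bigotimes_{i\in I_\sigma}\mb{W}^\sigma(\bar\omega_i)^{\otimes n_i}$'' is not available: that tensor product is not itself a cyclic module with the relevant defining relations, and no such natural surjection exists. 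The genuine upper bound is $\dim W^\sigma(\bar\mu)$ (the special fiber at $I_{\bar\mu,0}$, by part (2)), and the identity $\dim W^\sigma(\bar\mu)=\prod_i(\dim W^\sigma(\bar\omega_i))^{n_i}$ is itself the hardest input to the whole argument — essentially the twisted local Weyl module dimension theorem of \cite{CIK14,FK13} — so it cannot be deferred to the character computation you allude to. For the purposes of this paper, the appropriate move is the one the authors make: cite \cite{CIK14} and match conventions.
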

\begin{proof}
For part (1), see \cite[Theorem 1]{CIK14} when $\mf{g}$ is $A_{2\ell}$ and \cite[Theorem 8]{CIK14} when $\mf{g}$ is not $A_{2\ell}$. For part (2), it follows from the definition of twisted global Weyl module. For part (3),  see \cite[Theorem 4, Theorem11]{CIK14}. 
\end{proof}

\subsection{Affine Demazure modules}\label{925386}
The integrable highest weight modules of $\hat{L}(\mf{g},\sigma)$ of level $c$ can be parameterized by a set $P(\sigma,c)$ of certain highest weights of $\mf{g}^\sigma$, see \cite[Section 2]{HK}. In particular, for any $\kappa \in P(\sigma,1)$, we always have $c\kappa \in P(\sigma, c)$. We denote by $\ms{H}_c(\bar\mu)$ the integrable highest weight module associated to a highest weight $\bar\mu\in P(\sigma, c)$. 

Set $\ms{H}_c=\oplus_{\kappa\in P(\sigma,1)} \ms{H}_c(c\kappa)$. For any $\bar{\lambda}\in X_*(T)_\sigma^+$, let $v_{\bar{\lambda}}\in \ms{H}_c$ be the extremal weight vector whose $\mf{h}^\sigma$-weight is $-c\iota(\bar{\lambda})$, see \cite[Lemma 3.6]{BH}. We define the twisted affine Demazure $D^\sigma(c,\bar{\lambda})$ as the following $\gs$-module,
\begin{equation}\label{eq_Demazure}
 D^\sigma(c,\bar{\lambda}):=\U(\gs) v_{\bar{\lambda}}\subseteq \ms{H}_c.\end{equation}
For any $\lambda\in X_*(T)^+$,  the untwisted affine Demazure module $D(c,\lambda)$ is defined in a similar way by simply taking $\sigma=\mr{id}$.
\begin{remark}\label{rem_dem_def}
	The affine Demazure module $D(c,\lambda)$ defined here can be identified with the module $D(c, \lambda)$ defined in \cite[Section 2.2]{FL06} with a $\mf{g}[t]$-action via the Cartan involution $\eta_c: \mf{g}[t]\to \mf{g}[t]$. Similarly, our twisted affine Demazure module $D^\sigma(c,\bar{\lambda}) $ can be identified with the module $D(c, c \iota(\bar{\lambda}))$ defined in \cite[Section 3.2]{KV16} via a twist of the Cartan involution $\eta_c:\gs\to\gs$ when $\mf{g}$ is not $A_{2\ell}$, or via a twist of the map $\eta: \mf{Cg}\to \gs$ defined in (\ref{def_eta}) when $\mf{g}$ is of type $A_{2\ell}$.
\end{remark}
Thanks to the works \cite{FL06,KV16} of Fourier-Littelmann and Kus-Venkatesh, we have the following theorem, which will be used in the proof of Theorem \ref{398063}.
\begin{thm}\label{401910} 
	Let $\mf{g}$ be a simply-laced simple Lie algebra with a standard automorphism $\sigma$. Let $\lambda$ be a dominant coweight of $\mf{g}$. Denote by $\lb$ the image of $\lambda$ under the projection map $X_*(T)^+\to X_*(T)^+_\sigma$. Then 
	\begin{enumerate}
		\item The untwisted Demazure module $D(c,\lambda)$ is isomorphic to the quotient of the Weyl module $W(c\iota(\lambda))$ by the submodule generated by 
		\[\{(e_{\alpha}\otimes t^s)^{k_{\alpha,s}+1}\cdot w_{\lambda}\mid\alpha \in R^+, s\geq 0 \},\]
		where $k_{\alpha,s}=c\cdot \max\{0,(\lambda,\alpha^\vee)-s\}$, and $w_{\lambda}$ is the generator whose $\mf{h}$-weight is $-c\iota(\lambda)$.
		\item The twisted Demazure module $D^\sigma(c,\bar{\lambda})$ is isomorphic to the quotient of the twisted Weyl module $W^\sigma(c\iota(\bar{\lambda}))$ by the submodule generated by 
		\begin{align*}
		& \{e_{\alpha,m s_{\alpha}}\cdot w_{\lb} \mid \alpha \in R^+_{l} \} \cup \{ e_{\alpha, s_{\alpha} }\cdot w_{\lb} \mid \alpha \in R^+_{s}\}, \text{ when } \mf{g} \text{ is not } A_{2\ell}; \\[0.5em]
		& \{e_{\alpha,4s_\alpha}\cdot w_{\lb}\mid \alpha=\alpha_{i,2\ell+1-i}, 1\leq i<\ell \} \cup \{e_{\alpha, 2s_{2\alpha}+1}\cdot w_{\lb} \mid \alpha=\alpha_{i,\ell}, 1\leq i\leq \ell \}\\
		& \cup \{e_{\alpha,2s_\alpha}\cdot w_{\lb} \mid \alpha=\alpha_{i,j} \text{ or } \alpha_{i,2\ell-j}, 1\leq i<\ell\}, \text{ when } \mf{g} \text{ is } A_{2\ell},
		\end{align*}
		where $s_\alpha=c\cdot (\lambda, \bar{\alpha}^\vee)$, and $w_{\bar \lambda}$ is the generator whose $\mf{h}^\sigma$-weight is $-c\iota(\bar\lambda)$. Here, $(, )$ is the normalized Killing form on $\mf{g}$ and $\bar{\alpha}^\vee$ is regarded as an element in $\mf{h}^\sigma$.
	\end{enumerate}
\end{thm}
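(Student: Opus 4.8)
The plan is to deduce both parts from the (known) characterization of affine Demazure modules in the Kac–Moody setting, combined with the dictionary between the current-algebra picture used here and the conventions of \cite{FL06,KV16}. First I would recall that an affine Demazure module $D(c,\Lambda_w)$ inside an integrable highest weight module $\ms{H}_c(c\kappa)$ of the (possibly twisted) affine algebra is, by a theorem of Joseph and Mathieu (cf.\,\cite{Ku}), the $\U(\mf{b}_{\mr{aff}})$-submodule generated by an extremal weight vector, and that it admits a presentation in terms of the Demazure character and the action of the real root vectors. Under the Cartan-involution identification recorded in Remark \ref{rem_dem_def}, the vector $v_{\bar\lambda}$ of $\mf{h}^\sigma$-weight $-c\iota(\bar\lambda)$ corresponds to the generator $w_{\bar\lambda}$ of weight $-c\iota(\bar\lambda)$, and the module $\U(\gs)v_{\bar\lambda}$ matches the module $D(c,c\iota(\bar\lambda))$ in \cite{KV16} (resp.\ $D(c,c\iota(\lambda))$ in \cite{FL06} in the untwisted case). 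So the content of the theorem is exactly the presentation of those modules, and the strategy is to quote the presentations from \cite{FL06} and \cite{KV16} and translate the relations through the basis of $\gs$ (resp.\,the hyperspecial basis of $\mf{Cg}$) written out in Section \ref{133740}.

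For part (1), the key point is that $D(c,\lambda)$ is the quotient of the (lowest-weight) Weyl module $W(c\iota(\lambda))$: the Weyl module already imposes $\mf{n}^-[t]\cdot w_\lambda=0$, $(\mf{h}\otimes t\C[t])\cdot w_\lambda=0$, the correct $\mf{h}$-weight, and the finite Serre-type relations $(e_{\alpha_i}\otimes 1)^{\lambda(\alpha_i^\vee)+1}w_\lambda=0$; what remains to cut down to the Demazure module is precisely the family of relations $(e_\alpha\otimes t^s)^{k_{\alpha,s}+1}w_\lambda=0$ with $k_{\alpha,s}=c\max\{0,(\lambda,\alpha^\vee)-s\}$, which is the standard description (see \cite[Section 2.2]{FL06} and the ``Demazure flag''/fusion literature, e.g.\,\cite{CL15}). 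I would verify that the exponents $k_{\alpha,s}$ are exactly the ones dictated by the affine Weyl group element cutting out the Demazure module, i.e.\ that $\langle c\Lambda_{w\cdot\lambda}, \alpha^\vee+s\delta\rangle$ gives these values; this is a finite computation with the standard coordinates on the affine weight lattice.

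For part (2), the same scheme applies but the ``enough relations'' assertion must be read off from the explicit twisted basis. I would start from the twisted global Weyl module presentation in \cite{KV16} / Theorem \ref{350320}, specialize to obtain $W^\sigma(c\iota(\bar\lambda))$, and then identify the submodule to quotient by. Here the root vectors $e_{\alpha,k}$, $e_{\alpha,mk}$ of $\gs$ from Section \ref{133740} and the hyperspecial generators of $\mf{Cg}$ from the list before Theorem \ref{thm_hyperspecial} are the relevant objects: for a long root $\alpha\in R^+_l$ the lowest ``bad'' power occurs at degree $m s_\alpha$, for a short root at degree $s_\alpha$, and the $A_{2\ell}$ case breaks into the three families of roots $\alpha_{i,j}$, $\alpha_{i,2\ell-j}$, $\alpha_{i,2\ell+1-i}$, $\alpha_{i,\ell}$ according to how $\sigma$ and $\mr{ad}\,h$ act, which is exactly the partition visible in the basis \eqref{802623}. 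The translation $s_\alpha=c(\lambda,\bar\alpha^\vee)$ follows from $\iota$ and \eqref{eq_match_wt}. Using Remark \ref{rem_dem_def}, I would then match the resulting quotient with Kus–Venkatesh's description of $D(c,c\iota(\bar\lambda))$ term by term, being careful about the shift $\eta_k(x[t^j])=x[t^{2j+s}]$ when $\mf{g}=A_{2\ell}$, which is what produces the degrees $4s_\alpha$, $2s_{2\alpha}+1$, $2s_\alpha$ in the three $A_{2\ell}$ families.

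The main obstacle is the $A_{2\ell}$ case: one must keep the map $\eta=\eta_k\circ\eta_c$ of \eqref{def_eta} straight, track how the ``standard'' automorphism $\sigma=\tau\circ\mr{i}^h$ and its eigenvalue decomposition interact with the doubling-and-shift in $\eta_k$, and confirm that the resulting relation degrees are exactly $4s_\alpha$, $2s_{2\alpha}+1$ and $2s_\alpha$. This is precisely where the earlier calculation error in \cite{BH} lived, so it deserves care; everything else (the non-$A_{2\ell}$ case and part (1)) is a direct transcription of \cite{FL06,KV16} through the conventions fixed in Section \ref{133740} and Remark \ref{rem_dem_def}.
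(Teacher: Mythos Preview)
Your proposal is correct and takes essentially the same approach as the paper: both parts are quoted from the literature, part (1) from \cite{FL06} and part (2) from \cite{KV16}, with the translation to the present conventions governed by Remark \ref{rem_dem_def}. The paper's own proof is in fact just the two citations (``See \cite[Lemma 4, Corollary 1]{FL06} for part (1), and \cite[Theorem 3, Theorem 5]{KV16} for part (2)''), so your sketch of how the dictionary works---especially the role of $\eta=\eta_k\circ\eta_c$ in producing the $A_{2\ell}$ degrees---is more detailed than what the paper actually records, but it is the right verification to have in mind.
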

\begin{proof}
	See \cite[Lemma 4, Corollary 1]{FL06} for part (1), and \cite[Theorem 3, Theorem 5]{KV16} for part (2).
\end{proof}
\begin{cor}\label{623012} 
	With the same setup and assumption as in Theorem \ref{401910}, we have
	\begin{enumerate}
		\item For any $\lambda\in X_*(T)^+$, the Demazure module $D(1,\lambda)$ is isomorphic to the Weyl module $W(\iota(\lambda))$. 
		\item For any $\bar{\lambda}\in X_*(T)_\sigma^+$, the twisted Demazure module $D^\sigma(1,\bar{\lambda})$ is isomorphic to the twisted Weyl module $W^\sigma(\iota(\bar{\lambda}))$. 
	\end{enumerate}
\end{cor}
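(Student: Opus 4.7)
My plan is to derive both parts directly from Theorem~\ref{401910} by showing that when $c=1$, the extra relations cutting out the Demazure module as a quotient of the appropriate Weyl module are already consequences of the defining Weyl module relations, so the canonical surjection is an isomorphism.

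For part (1), Theorem~\ref{401910}(1) presents $D(1,\lambda)$ as the quotient of $W(\iota(\lambda))$ by the submodule generated by $(e_\alpha \otimes t^s)^{k_{\alpha,s}+1}\cdot w_\lambda$ for $\alpha\in R^+$, $s\ge 0$, where $k_{\alpha,s}=\max\{0,(\lambda,\alpha^\vee)-s\}$. I would verify vanishing in $W(\iota(\lambda))$ by splitting into two cases. When $s=0$ the relation is $(e_\alpha\otimes 1)^{(\lambda,\alpha^\vee)+1}\cdot w_\lambda =0$; for simple $\alpha=\alpha_i$ this is part of the defining Weyl module relations, and for general $\alpha\in R^+$ it follows by $\mathfrak{sl}_2$-theory applied to the $\alpha$-root subalgebra, since the restriction of $W(\iota(\lambda))$ to $\mathfrak{g}$ is integrable (being a quotient of the $\mathfrak{g}$-Weyl module $V(\iota(\lambda))$). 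When $s\ge 1$, the plan is to apply the $\mathfrak{sl}_2$-triple $\{e_\alpha\otimes t^s,\ f_\alpha\otimes t^{-s}\!\!\mod K,\ h_\alpha - s c K\}$ inside the affine algebra, projected back to $\mathfrak{g}[t]$: using the Weyl module relations $\mathfrak{n}^-\otimes\mathbb{C}[t]\cdot w_\lambda=0$ and $\mathfrak{h}\otimes t\mathbb{C}[t]\cdot w_\lambda=0$, together with the bracket $[f_\alpha\otimes 1,\ e_\alpha\otimes t^s]=-h_\alpha\otimes t^s$ (which annihilates $w_\lambda$ for $s\ge 1$), one reduces $(e_\alpha\otimes t^s)^{k_{\alpha,s}+1}\cdot w_\lambda$ to a combination of terms already killed, by induction on the exponent $k_{\alpha,s}+1$.

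For part (2), the strategy is entirely parallel using Theorem~\ref{401910}(2). At $c=1$ the extra relations are first-order in the $e_{\alpha,*}$, so one only needs to verify $e_{\alpha,s_\alpha}\cdot w_{\bar\lambda}=0$ (and its long/short root and $A_{2\ell}$ analogues) inside $W^\sigma(\iota(\bar\lambda))$. Each such element can be expanded via the explicit basis of $\gs$ recalled in Section~\ref{133740}, and then paired with the Weyl module relations $\mathfrak{n}^-[t]^\sigma\cdot w_{\bar\lambda}=0$ and $(\mathfrak{h}\otimes t\mathbb{C}[t])^\sigma \cdot w_{\bar\lambda}=0$, together with the $\mathfrak{sl}_2^\sigma$-integrability $(e_{\alpha,0})^{k_\alpha+1}\cdot w_{\bar\lambda}=0$, through commutator computations inside $\gs$. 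The three cases listed for $A_{2\ell}$ all reduce to the same template.

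The main obstacle is the $s\ge 1$ verification in part~(1): it requires a careful book-keeping of $\mathfrak{sl}_2$-chains along the affine root $s\delta - \alpha$ combined with the fact that $\mathfrak{h}\otimes t^j\cdot w_\lambda=0$ for $j\ge 1$, and one must check that the inductive reduction terminates with the exponent predicted by $k_{\alpha,s}$. Once this computation is completed (the twisted analogue being a $\sigma$-equivariant version of the same argument), both surjections $W(\iota(\lambda))\twoheadrightarrow D(1,\lambda)$ and $W^\sigma(\iota(\bar\lambda))\twoheadrightarrow D^\sigma(1,\bar\lambda)$ have trivial kernel, proving the corollary.
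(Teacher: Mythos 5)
The paper's proof of this corollary is a pure citation: part~(1) is quoted from Fourier--Littelmann \cite[Theorem~7]{FL06}, and part~(2) from \cite[Theorem~5.1]{FK13}, \cite[Theorem~2, Theorem~9(ii)]{CIK14} and \cite[Corollary~3.3]{KV16}. Your proposal instead tries to re-derive those results directly from the presentation in Theorem~\ref{401910} by showing the extra Demazure relations are already forced by the Weyl module relations. This is not what the paper does, and, more importantly, your sketch has a gap that I do not think can be closed along the lines you describe.

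The crucial step in your plan for $s\ge 1$ invokes the affine $\mathfrak{sl}_2$-triple $\{e_\alpha\otimes t^s,\ f_\alpha\otimes t^{-s},\ h_\alpha-scK\}$ and then proposes to ``project back to $\mathfrak{g}[t]$.'' But $f_\alpha\otimes t^{-s}$ does \emph{not} lie in $\mathfrak{g}[t]$, so this $\mathfrak{sl}_2$ does not act on the Weyl module $W(\iota(\lambda))$ at all; it acts on integrable $\hat L(\mf g)$-modules, in which the Demazure module sits. Using it inside $W(\iota(\lambda))$ presupposes that $W(\iota(\lambda))$ already embeds in the integrable module --- which is precisely what one is trying to prove. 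The residual commutator strategy you describe (reducing $(e_\alpha\otimes t^s)^{k_{\alpha,s}+1}w_\lambda$ via Leibniz with $\mathfrak n^-[t]$ and $\mathfrak h\otimes t\mathbb C[t]$) does not close in general: already for $\mathfrak{sl}_2$ and $\lambda=2\omega$, applying $f\otimes1$ and $e\otimes1$ to $v=(e\otimes t^2)w$ only yields $fv=ev=hv=0$, which is insufficient to force $v=0$ without an independent dimension or integrability input. Likewise your ``$s=0$ for non-simple $\alpha$'' step appeals to $\mathfrak g$-integrability of $W(\iota(\lambda))$, but that integrability is itself a nontrivial theorem (it is not a formal consequence of the cyclic presentation, since the Weyl module is generated over $\mathfrak{g}[t]$, not $\mathfrak g$). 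In short, the equality $D(1,\lambda)\simeq W(\iota(\lambda))$ is a genuine theorem whose known proofs go through a character/dimension or fusion-product comparison; it is not a relation chase, and the paper deliberately cites it rather than reproving it.
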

\begin{proof}
	For part (1), see \cite[Theorem 7]{FL06}. For part (2), see \cite[Theorem 5.1]{FK13} and \cite[Theorem 9(ii)]{CIK14} when $\mf{g}$ is not type $A_{2\ell}$; see \cite[Theorem 2]{CIK14} and \cite[Corollary 3.3]{KV16} when $\mf{g}$ is $A_{2\ell}$.
\end{proof}
\begin{remark}
	When $\mf{g}$ is of type $A_{2\ell}$, part (2) of Corollary \ref{623012} does not hold for the diagram automorphism. This is one of the reasons that we use the `standard' automorphism $\sigma$ instead of the diagram automorphism.
\end{remark}
In view of (\ref{eq_match_wt}), this corollary implies that any Weyl module of $\gs$ is isomorphic to a twisted Demazure module.

\section{Global Demazure modules of twisted current algebra}\label{716168}
In this section, we define twisted global Demazure modules and study their basic properties. We also prove that when the level is $1$, the global Demazure modules can be identified with twisted global Weyl modules. 
\subsection{General twisted global modules}\label{109490}
Let $V$ be a cyclic graded finite dimensional $\mf{g}[t]$-module with a cyclic vector $v_\mu$ of weight $\mu\in P^+$ such that 
\begin{equation}\label{324482}
	t\mf{h}[t]\cdot v_\mu=0,
\end{equation}
and the degree of $t$ is $1$. For any $x\in \mf{g}$ and $s\geq0$, we denote by $xt^s$ the element $x\otimes t^s$ in $\mf{g}[t]$. We denote by $v[f]$ the vector $v\otimes f$ in the vector space $V[z]:=V\otimes \C[z]$, for any $v\in V$ and $f\in \mathbb{C}[z]$. Following \cite[subsection 1.3]{FM19}, we define a $\mf{g}[t]$-module structure on $V[z]$ as follows,
\begin{equation}\label{554908}
	xt^s \bu v[f]=\sum_{j=0}^{s} \binom{s}{j} ( xt^j\cdot v)[f\cdot z^{s-j}]
\end{equation}
where $s\geq 0, x\in \mf{g}, v[f]\in V[z]$. It was shown in \cite[Section 1.3]{FM19} that $V[z]$ is a cyclic module with the cyclic generator $v[1]$.

Consider the cyclic graded finite dimensional $\mf{g}[t]$-modules $V_1,\ldots, V_n$, where $V_i$ has a cyclic vector $v_i$ of weight $\mu_i$ such that $t\mf{h}[t]\cdot v_i=0.$ Set 
\begin{equation}
\label{eq_w}
\textstyle w:=v_1[1]\otimes v_2[1]\otimes \cdots \otimes v_n[1]\in \bigotimes_i V_i[z_i]. \end{equation}
\begin{definition}
	Similar to the untwisted case in \cite{DF21}, we define the twisted global module $R^\sigma(V_1,\ldots, V_n)$ to be a cyclic $\U(\gs)$-module generated by the vector $w$ as follows,
\[R^\sigma(V_1,\ldots, V_n):=\U(\gs) \bu w \subseteq \bigotimes_{i=1}^n V_i[z_i].\]
Moreover, we define a right $\U(\hs)$-action on $R^\sigma(V_1,\ldots, V_n)$ by
\begin{equation}\label{936369}
	(u\bu w)\ h=u \bu (h\bu w),
\end{equation}
where $u\in \U(\gs), h\in\U(\hs)$.
\end{definition}
\begin{lem}
\label{lem_well_def}
	This action \eqref{936369} is well-defined.
\end{lem}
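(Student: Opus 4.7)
The plan is to unpack the proposed right action and reduce well-definedness to a stability statement. Since $\hs \subseteq \gs$, the associativity of the left $\U(\gs)$-action on $\bigotimes_i V_i[z_i]$ gives $u \bu (h \bu w) = (uh) \bu w$ for $u \in \U(\gs)$ and $h \in \U(\hs)$, so the formula \eqref{936369} is well-defined precisely when the annihilator $\mr{Ann}_{\U(\gs)}(w)$ is stable under right multiplication by $\U(\hs)$, i.e.\ when $u_0 \bu w = 0$ implies $u_0 \bu (h \bu w) = 0$.

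The core of the argument is a two-step computation inside $\bigotimes_i V_i[z_i] \cong (\bigotimes_i V_i) \otimes \C[z_1, \ldots, z_n]$. The first step is to check, directly from formula \eqref{554908}, that on a single factor $V[z]$ the action of any $xt^s \in \mf{g}[t]$ commutes with multiplication by an arbitrary monomial $z^m$: both $xt^s \bu (z^m \cdot v[f])$ and $z^m \cdot (xt^s \bu v[f])$ reduce to the same sum $\sum_j (-1)^{s-j}\binom{s}{j}(xt^j v)[fz^{m+s-j}]$. Since multiplication by $z_i$ affects only the $i$-th tensor factor while the coproduct spreads $xt^s$ across single factors, this commutation extends factor by factor to the full tensor product, and hence the left $\U(\gs)$-action commutes with multiplication by $\C[z_1, \ldots, z_n]$.

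The second step uses the hypothesis $t\mf{h}[t] \cdot v_i = 0$ from \eqref{324482} to show that $h \bu w \in \C[z_1, \ldots, z_n] \cdot w$ for every $h \in \U(\hs)$. For a monomial $h_0 t^k$ with $h_0 \in \mf{h}$, the hypothesis kills every summand with $j \geq 1$ in \eqref{554908} applied to $v_i[1]$, leaving a scalar multiple of $v_i[z_i^k]$; applying the coproduct across the tensor factors yields $(h_0 t^k) \bu w = q(z_1,\ldots,z_n) \cdot w$ for an explicit polynomial $q$. By linearity this holds for every $h \in \hs$, and by induction on degree in $\U(\hs)$, invoking the commutation from the first step at each stage to pass multiplication by a previously-produced polynomial through a further application of $\hs$, I obtain a polynomial $p_h(z) \in \C[z_1,\ldots, z_n]$ with $h \bu w = p_h(z) \cdot w$ for all $h \in \U(\hs)$.

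With both ingredients in place the conclusion is immediate: if $u_0 \bu w = 0$, then
\[
u_0 \bu (h \bu w) \;=\; u_0 \bu \bigl(p_h(z) \cdot w\bigr) \;=\; p_h(z) \cdot (u_0 \bu w) \;=\; 0,
\]
which is exactly what well-definedness demands. The only nontrivial step is the commutation in the second paragraph: the algebra is elementary, but it requires careful bookkeeping of the shifts in \eqref{554908}, and it is the one place where the explicit form of the $\mf{g}[t]$-action on $V[z]$ is used. The remainder is formal and mirrors the untwisted computation in \cite{DF21}, with the $\sigma$-invariant setting entering only through the inclusion $\hs \subseteq \gs$.
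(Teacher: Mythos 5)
Your proof is correct and takes essentially the same route as the paper: both reduce well-definedness to stability of $\mr{Ann}(w)$ under right multiplication by $\hs$, use $t\mf{h}[t]\cdot v_i=0$ to rewrite $ht^s \bu w$ as a polynomial multiple of $w$, and conclude via $\C[z_1,\ldots,z_n]$-linearity of the $\gs$-action on $\bigotimes_i V_i[z_i]$. The only differences are that you state this $\C[z]$-linearity explicitly (the paper leaves it implicit in the final equality of its displayed computation) and you carry out an inductive extension to all of $\U(\hs)$, which is unnecessary once stability under the generators $\hs$ is established.
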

\begin{proof}
	It suffices to show that if $u\bu w=0$ for some $u\in \U(\gs)$, then 
	\[u\bu (ht^{s}\bu w)=0\]
	for any $ht^s\in \hs$. It follows from the definition \eqref{554908} with the following calculations:
	\begin{align}\label{423239}
		\begin{split}
		u\bu (ht^{s}\bu w) 
		& = u\bu \big(ht^{s}\bu ( v_1[1]\otimes\cdots\otimes v_n[1]) \big)\\
		& = u\bu \bigg(\sum_{i=1}^n v_1[1]\otimes \cdots \otimes (ht^{s}\bu v_i[1])\otimes \cdots\otimes v_n[1]\bigg)\\
		& = u\bu\bigg(\sum_{i=1}^n \mu_i(h) v_1[1]\otimes \cdots \otimes v_i[z_i^{s}]\otimes \cdots\otimes v_n[1]\bigg)\\
		& = \sum_{i=1}^n  \mu_i(h) u\bu \big(v_1[1]\otimes \cdots \otimes v_i[z_i^{s}]\otimes \cdots\otimes v_n[1]\big)\\
		& = 0 \qedhere
		\end{split}
	\end{align}
\end{proof}
From the proof of Lemma \ref{lem_well_def}, we know the action \eqref{936369} commutes with the $\gs$-action, and only depends on the weights $\mu_1,\ldots,\mu_n$. Set $\vec{\mu}=(\mu_1,\ldots,\mu_n)$ and we define
\begin{equation}\label{749134}
	\As(\vec\mu):=\U(\hs)/\mr{Ann}(w)
\end{equation}
as the quotient of $\U(\hs)$ by the annihilator of $w$ defined in (\ref{eq_w}). Following \cite[Section 2.2]{DFF21}, we call $\As(\vec\mu)$ a twisted weight algebra associated to $w$. Then, the twisted global module $R^\sigma(V_1,\ldots, V_n)$ is a $(\gs,\As(\vec\mu))$-bimodule. 

Consider the following embedding
\begin{equation}\label{234983}
	\As(\vec\mu)\simeq \As(\vec\mu)\bu w\hookrightarrow \C[z_1,z_2,\ldots, z_n],
\end{equation}
given by $ht^s\mapsto \mu_1(h)z_1^s+\cdots +\mu_n(h) z_n^s$. Then, $\As(\vec\mu)$ can be realized as the algebra generated by the following elements
\[\{\mu_1(h)z_1^{ms+j} +\mu_2(h)z_2^{ms+j} +\cdots +\mu_n(h)z_n^{ms+j} \mid h\in \mf{h}_j, 0\leq j \leq m-1, s\geq 0\},\]
where $\mf{h}_j:=\{h\in \mf{h} \mid \sigma(h)=\epsilon^j h\}$. 

\begin{example}\label{ex_algebra}
	Let $v$ be a vector of weight $\omega_j$, where $j\in I$. For any $ht^s\in \hs$, $ht^s\bu v[1]=\omega_j(h)v[z^s]$ is non-zero if and only if $m_j\mid s$, where $m_j$ is the order of the stablizer of the vertex $j$ under the action of the cyclic group $\langle \sigma\rangle$. Thus, $\As(\omega_j)\simeq \C[z^{m_j}]$. In particular, when $\mf{g}$ is $A_{2\ell}$, $m_j=2$ for any $j$.
\end{example}

Similar to the untwisted case in \cite{DF21}, we have the following result.
\begin{prop}
	$\As(\vec\mu)$ is a Noetherian ring. 
\end{prop}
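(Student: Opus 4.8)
The plan is to reduce the Noetherianity of $\As(\vec\mu)$ to the finite generation of an explicit subalgebra of a polynomial ring, and then invoke the Hilbert basis theorem. First I would use the embedding \eqref{234983}, which identifies $\As(\vec\mu)$ with the subalgebra $S$ of $\C[z_1,\dots,z_n]$ generated by the elements $p_{h,s}:=\mu_1(h)z_1^s+\cdots+\mu_n(h)z_n^s$ as $ht^s$ ranges over $\hs$; concretely, after decomposing $\mf{h}=\bigoplus_{j=0}^{m-1}\mf{h}_j$ into $\sigma$-eigenspaces, $S$ is generated by $\{\mu_1(h)z_1^{ms'+j}+\cdots+\mu_n(h)z_n^{ms'+j}\mid h\in\mf{h}_j,\ 0\le j\le m-1,\ s'\ge 0\}$. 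Since $\C[z_1,\dots,z_n]$ is Noetherian, it suffices to show that $S$ is a finitely generated $\C$-algebra: a finitely generated algebra over a Noetherian ring is Noetherian by the Hilbert basis theorem.

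For the finite generation of $S$, the key observation is that $S$ is a \emph{graded} subalgebra of $\C[z_1,\dots,z_n]$ (with $z_i$ in degree $1$), and it contains the power sums $p_d:=z_1^d+\cdots+z_n^d$ for all $d\ge 0$ for which there exists $h$ with all $\mu_i(h)$ equal and nonzero — and in any case it contains enough symmetric-type elements to control growth. More robustly: let $V\subseteq\C^n$ be the linear span of the vectors $(\mu_1(h),\dots,\mu_n(h))$ as $h$ runs over $\mf{h}$; for each $j$ let $V_j$ be the span coming from $\mf{h}_j$. Then in degree $d$ with $d\equiv j\pmod m$, the degree-$d$ piece $S_d$ is spanned by $\{c_1 z_1^d+\cdots+c_n z_n^d\mid (c_1,\dots,c_n)\in V_j\}$, a fixed finite-dimensional space depending only on $j$. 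Thus $S$ is generated as an algebra by the finitely many elements of $S_1,\dots,S_N$ for $N$ large enough — the point being that for $d\ge$ some bound, each monomial $z_1^{a_1}\cdots z_n^{a_n}$ of degree $d$ appearing can be obtained as a product of lower-degree generators once we know the $V_j$ for all residues $j$; this is essentially the statement that a subalgebra of a polynomial ring generated by all homogeneous elements of degrees lying in a fixed arithmetic-progression-closed set, and built from fixed coefficient subspaces in each residue class, is finitely generated because $\C[z_1,\dots,z_n]$ is a finite module over its subring $\C[z_1^m,\dots,z_n^m]$ together with the degree-$\le m$ pieces. I would make this precise by noting that $\C[z_1,\dots,z_n]$ is module-finite over the subalgebra generated by $\{z_i^m\}$ and degrees $<m$, and $S$ contains a Noetherian subalgebra over which it is finite, or more directly appeal to the analogous untwisted argument in \cite{DF21} and \cite[Section 2.2]{DFF21}, which treats exactly the untwisted weight algebra $A(\vec\mu)$; the twisted case differs only by restricting to $\sigma$-eigenvectors in each graded piece and introducing the congruence condition on degrees, which does not affect the structural argument.

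The main obstacle I anticipate is not the Hilbert basis theorem step, which is formal, but verifying cleanly that the generators in degrees up to some explicit $N$ (in terms of $m$ and $n$) really do generate all of $S$ — i.e. that the "arithmetic progression" structure of the degrees does not obstruct finite generation. I would handle this by the module-finiteness remark above: the subalgebra $S_0':=\C[z_1^m,\dots,z_n^m]\cap S$, or rather the full intersection $S\cap\C[z_1^{m},\dots,z_n^m]$ suitably enlarged, is Noetherian (being a finitely generated subalgebra of a polynomial ring by the untwisted case applied to $m$-th powers), and $S$ is a finite module over it because $\C[z_1,\dots,z_n]$ is a finite module over $\C[z_1^m,\dots,z_n^m]$ and $S$ is a submodule of a finite module over a Noetherian ring. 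Hence $S\cong\As(\vec\mu)$ is Noetherian, which is the claim. I would keep the write-up short, citing \cite{DF21} for the template and indicating only the modifications forced by $\sigma$.
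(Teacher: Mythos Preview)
Your overall strategy---sandwich $\As(\vec\mu)$ between a small Noetherian subalgebra and a finitely generated overring that is module-finite over the small one, then conclude by an Artin--Tate-type argument---is exactly the paper's approach. But your execution has a genuine gap in the module-finiteness step. You take $S_0'=S\cap\C[z_1^m,\dots,z_n^m]$ and assert two things: that $S_0'$ is Noetherian ``by the untwisted case,'' and that $S$ is finite over $S_0'$ because $\C[z_1,\dots,z_n]$ is finite over $\C[z_1^m,\dots,z_n^m]$. Neither follows. The intersection $S_0'$ is not an untwisted weight algebra: its degree-$ms$ generators come only from $h\in\mf{h}^\sigma$, not from all of $\mf{h}$, and it may also contain products of generators from other residue classes---so you are assuming a statement of the same type you are trying to prove. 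And finiteness of $\C[z_1,\dots,z_n]$ over $\C[z_1^m,\dots,z_n^m]$ gives you nothing over the strictly smaller ring $S_0'$; a submodule of a finite module over a \emph{larger} ring need not be finite over a subring. Your earlier direct argument (``generators in degrees $\le N$ suffice'') is just a restatement of the claim and is not justified either.

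The paper fixes both issues in one stroke. It chooses a single $h\in\mf{h}^\sigma$ generic in the sense that $(\mu_{i_1}+\cdots+\mu_{i_k})(h)\neq 0$ for every nonempty subset, and takes the subalgebra $\mc{A}^\sigma(\vec\mu)_h\subseteq\As(\vec\mu)$ generated by the single family $\{\mu_1(h)z_1^{ms}+\cdots+\mu_n(h)z_n^{ms}:s\ge 0\}$. The crucial input, which you are missing, is \cite[Proposition~2.6]{BCES16}: under this genericity condition the polynomial ring $\bigotimes_i\mc{A}^\sigma(\mu_i)$ (each factor $\simeq\C[z_i^{m_i}]$) is module-finite over $\mc{A}^\sigma(\vec\mu)_h$. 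Since $\mc{A}^\sigma(\vec\mu)_h$ is visibly Noetherian and $\As(\vec\mu)$ sits between it and $\bigotimes_i\mc{A}^\sigma(\mu_i)$, the Artin--Tate lemma gives that $\As(\vec\mu)$ is finitely generated over $\mc{A}^\sigma(\vec\mu)_h$, hence Noetherian. The point is that the nontrivial finiteness statement lives in \cite{BCES16}; your sketch tries to manufacture it from general principles, and that is where it breaks.
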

\begin{proof}
	Consider an element $h\in\mf{h}^\sigma$ such that $(\mu_{i_1}+\cdots+\mu_{i_k})(h)\neq 0$ for any subset $\{i_1,\ldots i_k\}\subset \{1,\ldots, n\}$. Let $\mc{A}^\sigma(\vec\mu)_h$ be the subalgebra of $\mb{C}[z_1,z_2,\ldots, z_n]$ generated by
	\[\{\mu_1(h)z_1^{ms} +\mu_2(h)z_2^{ms} +\cdots +\mu_n(h)z_n^{ms} \mid s\geq 0\}.\]
	Then we have the following embeddings
	\[\mc{A}^\sigma(\vec{\mu})_h \hookrightarrow \As(\vec{\mu}) \hookrightarrow \bigotimes_{i=1}^n \mc{A}^\sigma(\mu_i). \]
	It was shown in \cite[Proposition 2.6]{BCES16} that $\bigotimes_{i=1}^n \mc{A}^\sigma(\mu_i)$ is finite over $\mc{A}^\sigma(\vec\mu)_h $, and hence is finite over $\As(\vec\mu)$. Note that $\mc{A}^\sigma(\vec\mu)_h $ is Noetherian. By Artin-Tate Lemma \cite[Proposition 7.8]{AM69}, $\As(\vec\mu)$ is finitely generated over $\mc{A}^\sigma(\vec\mu)_h $. Thus, $\As(\vec\mu)$ is also Noetherian.
\end{proof}
\begin{remark}
	The definitions and results for $R^\sigma(V_1,\ldots, V_n)$ and $\As(\vec{\mu})$ in this section also works when every $\mu_i$ is anti-dominant.
\end{remark}

For a point $p\in \A^1-\{0\}$ and a $\mf{g}[t]$-module $V$, we denote by $V_p$ the module $V$ with a new $\mf{g}[t]$-action with a shift by $p$, i.e. it is given by 
\begin{equation}\label{342834}
	xt^s\bu v:=x(t+p)^s\cdot v,
\end{equation}
where $xt^s\in \mf{g}[t]$, $v\in V$.
\begin{prop} 
\label{prop_cyclic_mod}
	Let $V_1$ be a cyclic finite dimensional graded $\gs$-module, $V_2,\ldots, V_n$ be cyclic finite dimensional graded $\mf{g}[t]$-modules. Let $\vec{p}\in \A^n$ such that $p_1=0, p_i^m\neq p_j^m$ for any $i\neq j$. Then, the tensor product $V_1\otimes \bigotimes_{i=2}^n V_{i,{p_i}}$ is a cyclic $\U(\gs)$-module.
\end{prop}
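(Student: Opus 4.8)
The plan is to prove cyclicity by exhibiting the tensor product of cyclic vectors as a generator, following the strategy of the untwisted case (compare \cite[Section 2.2]{DFF21} and \cite{DF21}). Write $v_i$ for the cyclic generator of $V_i$ (of weight $\mu_i$), and set $w:=v_1\otimes v_2\otimes\cdots\otimes v_n$. I claim $\U(\gs)\bu w=V_1\otimes\bigotimes_{i=2}^n V_{i,p_i}$. The key input is that the shifts $p_1=0,p_2,\ldots,p_n$ have pairwise distinct $m$-th powers, so that the $\sigma$-fixed polynomials can still separate the points: concretely, for each $i$ the evaluation-at-$p_i$ map $\C[t]^{\langle\text{twist}\rangle}\to\C$ (where $\sigma$ acts by $t\mapsto\epsilon^{-1}t$) is surjective onto the appropriate Cartan-eigenspace component, and the combined map detects each factor separately once the $p_i^m$ are distinct. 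This is exactly the twisted analogue of the partial-fraction/Chinese-remainder argument in the untwisted setting.

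First I would reduce to the ``associated graded'' statement: it suffices to show that $\U(\gs)\bu w$ surjects onto each factor in turn, i.e. that for any choice of $u_i\in V_i$ we can find an element of $\U(\gs)$ sending $w$ to (something congruent to) $u_1\otimes v_2\otimes\cdots\otimes v_n$, etc., modulo lower filtration pieces, and then conclude by a standard triangularity/induction-on-degree argument. To hit the $i$-th factor alone, I would first build, using $\sigma$-fixed elements $h\otimes f(t)\in\hs$ acting diagonally by $f$ evaluated at the shifted points $\mu_j(h)f(p_j)$, a projector onto the $i$-th tensor slot: since $p_1^m,\ldots,p_n^m$ are distinct and $\mu_i\neq 0$ on a suitable $h$, one can choose $f$ in the $\sigma$-fixed part so that $\mu_j(h)f(p_j)=\delta_{ij}$. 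Applying such an element to $w$ isolates $v_1\otimes\cdots\otimes v_i\otimes\cdots\otimes v_n$ up to scalars depending only on weights. Then, since each $V_i$ is itself cyclic over $\gs$ (for $i=1$ directly, and for $i\geq 2$ the shifted module $V_{i,p_i}$ is still cyclic over $\gs$ because shifting by $p_i$ is an automorphism of $\mf{g}[t]$ restricting to $\gs$, using $p_i\neq 0$), we can further move $v_i$ to an arbitrary $u_i$ inside the $i$-th slot by an element of $\gs$ that acts trivially (in top degree) on the other slots — again arranged by multiplying by a $\sigma$-fixed $f(t)$ vanishing to high order at the $p_j$ for $j\neq i$. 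Iterating over $i=1,\ldots,n$ and using that the ambient tensor product is finite-dimensional, a downward induction on the filtration degree shows $\U(\gs)\bu w$ is everything.

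The main obstacle I anticipate is the bookkeeping in the twisted setting: unlike the untwisted case, a $\sigma$-fixed element $h\otimes f(t)$ is only allowed when $f$ lies in the correct $\epsilon$-eigenspace matching the $\sigma$-eigenvalue of $h$, so the interpolation argument must be carried out eigenspace-by-eigenspace, and one must check that the resulting constraints on $f$ (prescribed values $\delta_{ij}$ at the $p_j$, plus membership in $t^{j}\C[t^m]$ for the relevant $j$) are simultaneously solvable — this is where the hypothesis $p_i^m\neq p_j^m$ (rather than merely $p_i\neq p_j$) is essential, since it is $p_j^m$ that is the natural coordinate on the quotient line. A secondary subtlety is ensuring that when we act on one slot we genuinely do not disturb the others in top degree; this is handled by choosing $f$ to vanish to sufficiently high order at the other shifted points, which is again possible precisely because those points are distinct after taking $m$-th powers. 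Once these interpolation lemmas are in place the rest is the same triangular induction as in \cite{DF21, DFF21}, so I would state the interpolation facts as a short lemma and then run the induction.
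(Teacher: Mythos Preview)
Your overall strategy—separating the tensor factors by an interpolation/Chinese Remainder argument exploiting that the $p_i^m$ are pairwise distinct—is the standard one, and is precisely what the paper's cited references \cite[Section 6.3]{KV16} and \cite[Proposition 1.4]{FeLo99} do. The cleanest packaging avoids the eigenspace-by-eigenspace bookkeeping you worry about: since the points $0$ and $\gamma p_i$ ($i\ge2$, $\gamma\in\Gamma$) are pairwise distinct, CRT gives a $\sigma$-equivariant isomorphism of $\mf{g}[t]$ modulo the product of the corresponding powers, and taking $\sigma$-invariants (exact in characteristic $0$) yields a surjection
\[
\gs\;\twoheadrightarrow\;(\mf{g}[t]/t^{mN}\mf{g}[t])^\sigma\times\prod_{i\ge2}\mf{g}[t]/(t-p_i)^N\mf{g}[t].
\]
For $N$ large each tensor factor is a module over the corresponding summand, so $\U(\gs)$ surjects onto the tensor product of the enveloping algebras of the summands, and cyclicity follows immediately from cyclicity of each $V_i$.

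Two points in your sketch need correction. First, the claim that ``shifting by $p_i$ is an automorphism of $\mf{g}[t]$ restricting to $\gs$'' is false: for $h\in\mf{h}^\sigma$ the element $h\otimes(t-p_i)^m$ has nonzero components $h\otimes t^k$ with $m\nmid k$, which lie outside $\gs$ when $p_i\neq0$. The correct reason $V_{i,p_i}$ is $\gs$-cyclic is the surjection displayed above (with the other factors omitted), not any compatibility of the shift with $\sigma$. Second, the ``projector via $h\otimes f(t)\in\hs$'' step does not do what you say: such an element acts on $w=v_1\otimes\cdots\otimes v_n$ by a \emph{scalar} (Leibniz rule applied to weight vectors), so it cannot isolate a tensor slot. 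You should drop that step entirely and argue directly from the Lie-algebra surjection above; the induction on slots you describe afterwards is then exactly the passage from $\U(\prod_i A_i)\cong\bigotimes_i\U(A_i)$ to cyclicity of the external tensor product.
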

\begin{proof} 
	See \cite[Section 6.3]{KV16}. One can also prove this proposition by using a similar method in \cite[Proposition 1.4]{FeLo99}.
\end{proof}

When $V_1$ is a trivial module, we get the following special case.
\begin{cor}\label{906266}
	Let $\vec{p}\in \A^n$ such that $p_i^m\neq p_j^m\neq 0$ for any $i\neq j$. The tensor product $\bigotimes_{i=1}^n V_{i,p_i}$ is a cyclic $\U(\gs)$-module. \qed
\end{cor}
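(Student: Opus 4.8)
The plan is to deduce this directly from Proposition \ref{prop_cyclic_mod} by taking $V_1$ to be the trivial one-dimensional $\gs$-module. First I would observe that the hypothesis $p_i^m \neq p_j^m \neq 0$ for all $i \neq j$ means in particular that none of the $p_i$ vanishes, so there is no distinguished index playing the role of $0$; to fit the setup of Proposition \ref{prop_cyclic_mod}, which requires $p_1 = 0$, I would enlarge the tuple by prepending the point $p_0 = 0$ and inserting $V_0 = \mathbb{C}$ the trivial module as the first factor. Then the new tuple $(p_0, p_1, \ldots, p_n) = (0, p_1, \ldots, p_n)$ satisfies exactly the hypothesis of Proposition \ref{prop_cyclic_mod}: $p_0 = 0$, and $p_i^m \neq p_j^m$ for all $0 \le i < j \le n$ (the cases $i \ge 1$ are the assumption, and $p_0^m = 0 \neq p_j^m$ for $j \ge 1$ is also assumed). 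Applying Proposition \ref{prop_cyclic_mod} with this data gives that $V_0 \otimes \bigotimes_{i=1}^n V_{i,p_i}$ is a cyclic $\U(\gs)$-module.

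The second and final step is to identify $V_0 \otimes \bigotimes_{i=1}^n V_{i,p_i}$ with $\bigotimes_{i=1}^n V_{i,p_i}$ as $\gs$-modules. Since $V_0 = \mathbb{C}$ is the trivial module, the natural map $v_0 \otimes x \mapsto x$ is an isomorphism of vector spaces, and it is $\gs$-equivariant because $\gs$ acts by zero on the tensor factor $V_0$; hence the coproduct-induced action on $V_0 \otimes \bigotimes_{i=1}^n V_{i,p_i}$ restricts on the image to the action on $\bigotimes_{i=1}^n V_{i,p_i}$. A cyclic generator of the former thus maps to a cyclic generator of the latter, which proves the claim.

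I do not expect any genuine obstacle here: the content is entirely in Proposition \ref{prop_cyclic_mod}, and Corollary \ref{906266} is the degenerate case where the ``anchored'' factor is trivial. The only point requiring a word of care is the bookkeeping with the shift convention \eqref{342834} for the extra point $p_0 = 0$: with $p_0 = 0$ the shifted action $xt^s \bu v := x(t-0)^s \cdot v = xt^s \cdot v$ on $V_0$ is just the original (trivial) action, so no inconsistency arises, and the proof is complete.
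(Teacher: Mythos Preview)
Your proof is correct and follows exactly the paper's approach: the paper states just before the corollary that ``When $V_1$ is a trivial module, we get the following special case,'' which is precisely your argument of inserting the trivial $\gs$-module at the point $0$ and applying Proposition~\ref{prop_cyclic_mod}. The bookkeeping you spell out (re-indexing, verifying the hypotheses, and identifying $\mathbb{C}\otimes(-)$ with $(-)$) is correct and only makes explicit what the paper leaves implicit.
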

The $\U(\gs)$-module $ \bigotimes_{i=1}^n V_{i,{p_i}}$ is cyclic, and hence it has a filtration induced from the $t$-grading on $\gs$. Let $ \mr{gr}\, \big(\bigotimes_{i=1}^n V_{i,{p_i}}\big)$ denote the associated graded $\gs$-module of $ \bigotimes_{i=1}^n V_{i,{p_i}}$. 

\begin{prop}\label{063500}
	Let $\vec{p}\in \A^n$ such that $p_i^m\neq p_j^m\neq 0$ for any $i\neq j$.  For each $1\leq i\leq n$, let $V_i$ be a cyclic graded finite dimensional $\mf{g}[t]$-module with a cyclic vector $v_{i}$ of weight $\mu_i\in P^+$ such that $t\mf{h}[t]\cdot v_i=0$.  There is a surjective morphism of $\gs$-modules
	\begin{equation}\label{719856} 
		R^\sigma(V_1,\ldots,V_n)\otimes_{\As(\vec\mu)} \C_{\vec{p}}\twoheadrightarrow \bigotimes_{i=1}^n V_{i,{p_i}}.
	\end{equation}
\end{prop}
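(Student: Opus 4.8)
The plan is to construct the surjection \eqref{719856} by exhibiting a $\gs$-module map on the global module $R^\sigma(V_1,\ldots,V_n)$ whose target is $\bigotimes_{i=1}^n V_{i,p_i}$, show it factors through the fiber at $\vec p$, and then invoke surjectivity of the cyclic generator. First I would note that $R^\sigma(V_1,\ldots,V_n)\subseteq\bigotimes_i V_i[z_i]$ and consider the evaluation map $\mr{ev}_{\vec p}:\bigotimes_i V_i[z_i]\to\bigotimes_i V_i$ sending $z_i\mapsto p_i$; concretely $v_1[f_1]\otimes\cdots\otimes v_n[f_n]\mapsto f_1(p_1)v_1\otimes\cdots\otimes f_n(p_n)v_n$. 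The key computation is that with respect to the $\bu$-action of \eqref{554908} on the source and the shifted action \eqref{342834} on each $V_{i,p_i}$ in the target, $\mr{ev}_{\vec p}$ intertwines the $\gs$-actions: since $xt^s\bu v[f]=\sum_{j}(-1)^{s-j}\binom{s}{j}(xt^j\cdot v)[fz^{s-j}]$, evaluating at $z=p$ gives $\sum_j(-1)^{s-j}\binom{s}{j}p^{s-j}(xt^j\cdot v)f(p)=(x(t-p)^s\cdot v)f(p)$ by the binomial theorem, which is exactly $(xt^s\bu v)\otimes f(p)$ in $V_{i,p_i}$. On a tensor of several factors the Leibniz rule for $\gs$ acting on $\bigotimes V_i[z_i]$ matches the Leibniz rule on $\bigotimes V_{i,p_i}$ factor by factor, so $\mr{ev}_{\vec p}$ restricted to $R^\sigma(V_1,\ldots,V_n)$ is $\gs$-equivariant.

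Next I would check that $\mr{ev}_{\vec p}$ kills $R^\sigma(V_1,\ldots,V_n)\cdot I_{\vec p}$, where $I_{\vec p}$ is the maximal ideal of $\As(\vec\mu)$ corresponding to $\vec p\in\A^n$ under the embedding \eqref{234983}; this is where the weight-algebra structure enters. Under \eqref{234983} the generator $ht^s\in\hs$ maps to the polynomial $(-1)^s(\mu_1(h)z_1^s+\cdots+\mu_n(h)z_n^s)$, and by \eqref{936369} the right action of such an element on $w$ produces $(u\bu w)h = u\bu(h\bu w)$ which, as in the computation \eqref{423239}, equals multiplication by that polynomial in the $z_i$. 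Evaluating at $\vec p$ then multiplies by the scalar $(-1)^s\sum_i\mu_i(h)p_i^s$, which is precisely the value of the coordinate function; so $\mr{ev}_{\vec p}$ annihilates $R^\sigma(V_1,\ldots,V_n)\cdot\ker(\As(\vec\mu)\to\C_{\vec p})$ and hence descends to a $\gs$-map $R^\sigma(V_1,\ldots,V_n)\otimes_{\As(\vec\mu)}\C_{\vec p}\to\bigotimes_i V_{i,p_i}$. It remains to see this map is surjective: the image of the cyclic generator $w\otimes 1$ is $v_1\otimes v_2\otimes\cdots\otimes v_n$, and by Corollary \ref{906266} (the hypothesis $p_i^m\neq p_j^m\neq 0$ is exactly what is assumed here, with all $p_i$ nonzero and distinct up to $m$-th powers) the tensor product $\bigotimes_i V_{i,p_i}$ is a cyclic $\U(\gs)$-module generated by $v_1\otimes\cdots\otimes v_n$. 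Since the map is $\gs$-equivariant and hits the cyclic generator of the target, it is surjective.

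The main obstacle I expect is verifying the intertwining property cleanly for the multi-factor tensor product: the $\bu$-action on $\bigotimes_i V_i[z_i]$ is defined by first extending to each factor via \eqref{554908} and then using the coproduct of $\U(\gs)$, while on $\bigotimes_i V_{i,p_i}$ the action is again via the coproduct but now with each factor already shifted by $p_i$. One must be careful that the shift operator $t\mapsto t-p_i$ on factor $i$ is an algebra automorphism of $\mf{g}[t]$ compatible with the coproduct only because each factor uses its own shift $p_i$ and the coproduct of $\gs$ is the restriction of the diagonal; a single $\gs$-element $xt^s$ acts on the $i$-th factor as $x(t-p_i)^s$, and the binomial identity above shows this agrees with $\mr{ev}_{z_i=p_i}$ of the $\bu$-action on that factor. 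Once the single-factor statement is pinned down, the multi-factor case follows formally from comultiplicativity. Everything else — equivariance over the base, descent to the fiber, surjectivity — is then a direct consequence of the earlier results, with no new input required beyond Corollary \ref{906266}.
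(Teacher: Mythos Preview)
Your proposal is correct and follows essentially the same approach as the paper: construct the evaluation map from $\bigotimes_i V_i[z_i]$ to $\bigotimes_i V_{i,p_i}$, check it is $\gs$-equivariant and factors through the fiber at $\vec p$, and conclude surjectivity from Corollary \ref{906266}. The paper's proof is terser---it simply writes the composition $R^\sigma(V_1,\ldots,V_n)\otimes_{\As(\vec\mu)}\C_{\vec p}\to\big(\bigotimes_i V_i[z_i]\big)\otimes_{\As(\vec\mu)}\C_{\vec p}\twoheadrightarrow\bigotimes_i V_{i,p_i}$ and invokes the corollary---leaving the binomial-theorem verification of equivariance and the descent to the fiber implicit, whereas you spell these out.
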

\begin{proof}
	Consider the composition of morphisms of $\gs$-modules
	\[ R^\sigma(V_1,\ldots,V_n)\otimes_{\As(\vec\mu)} \C_{\vec{p}}\to \bigg(\bigotimes_{i=1}^n V_i[z_i]\bigg)\otimes_{\As(\vec\mu)} \C_{\vec{p}} \twoheadrightarrow \bigotimes_{i=1}^n V_{i,{p_i}}.\]
	The proposition follows from Corollary \ref{906266} which asserts that the right hand side is a cyclic $\gs$-module.
\end{proof}
We will show in Theorem \ref{616072} that the map \eqref{719856} is an isomorphism when $V_i$ are Demazure modules. The following theorem is a special case of \cite[Theorem 7]{KV16}, which will be crucially used in Section \ref{320070}. 
\begin{thm}\label{257230}
	Let $\vec{p}\in \A^n$ such that $p_i^m\neq p_j^m\neq0$ for any $i\neq j$. Given $\lambda_1,\ldots,\lambda_n \in X_*(T)^+$, let $\bar{\lambda}$ be the image of $\sum_i\lambda_i $ in $X_*(T)_\sigma$. Then, there is an isomorphism of $\gs$-modules
	\[ \pushQED{\qed} D^\sigma(c,\bar{\lambda})\simeq \mr{gr}\,\bigg( \bigotimes_{i=1}^n D(c,\lambda_i)_{p_i}\bigg). \qedhere\popQED \]
\end{thm}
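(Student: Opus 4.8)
The statement I want to prove, Theorem \ref{257230}, identifies the twisted affine Demazure module $D^\sigma(c,\bar\lambda)$ with the associated graded of a tensor product of shifted untwisted affine Demazure modules $\bigotimes_{i=1}^n D(c,\lambda_i)_{p_i}$ at a ``generic'' point $\vec p$ (meaning $p_i^m\neq p_j^m\neq 0$). The plan is to reduce this to the cited result \cite[Theorem 7]{KV16} by carefully matching conventions. The subtlety is that \cite{KV16} works with current algebras $\mf{g}[t]^\Gamma$ for $\Gamma=\langle\sigma\rangle$ acting on both $\mf{g}$ and on the parameter $t$, and phrases its fusion/localization statements in terms of $\Gamma$-orbits of points of $\A^1\setminus\{0\}$ rather than individual points; our hypothesis $p_i^m\neq p_j^m\neq 0$ is precisely the condition that the $\Gamma$-orbits $\Gamma\cdot p_1,\dots,\Gamma\cdot p_n$ are pairwise disjoint and disjoint from $\{0\}$. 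So the first step is to translate: a shifted untwisted module $D(c,\lambda_i)_{p_i}$ in the sense of \eqref{342834} corresponds, after inducing/restricting along $\mf{g}[t]\hookrightarrow$ the appropriate localized algebra, to the ``evaluation at the orbit of $p_i$'' datum appearing in \cite{KV16}.

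Concretely, here is how I would carry it out. First, recall from Corollary \ref{906266} that under the genericity hypothesis the tensor product $\bigotimes_{i=1}^n D(c,\lambda_i)_{p_i}$ is a cyclic $\U(\gs)$-module, generated by the tensor product $\bar v$ of the cyclic generators; hence it carries the PBW-type filtration coming from the $t$-grading on $\gs$, and $\mr{gr}\,\big(\bigotimes_i D(c,\lambda_i)_{p_i}\big)$ is a well-defined graded cyclic $\gs$-module. Second, I would invoke \cite[Theorem 7]{KV16}, which asserts (in its notation) that the associated graded of such a fusion product of level-$c$ Demazure modules over the $\Gamma$-orbits of the $p_i$ is isomorphic to the twisted Demazure module attached to the sum of the highest weights; under the weight-matching dictionary $\iota: X_*(T)^+\to P^+_\sigma$ of Section \ref{427932} and Remark \ref{rem_dem_def}, the relevant highest weight is $c\,\iota(\bar\lambda)$ with $\bar\lambda$ the image of $\sum_i\lambda_i$ in $X_*(T)_\sigma$, so the target is exactly $D^\sigma(c,\bar\lambda)$ as we have defined it in \eqref{eq_Demazure}. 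Third, I would check that \cite{KV16}'s hypotheses on $\mf{g}$ (simply-laced, the choice of standard/special or hyperspecial automorphism) are met: for $\mf{g}$ not of type $A_{2\ell}$ we use Remark \ref{rem_dem_def} and the identification of $\gs$ with the special current algebra, and for $A_{2\ell}$ we use Theorem \ref{thm_hyperspecial} together with the map $\eta$ of \eqref{def_eta} to transport \cite{KV16}'s statement for the hyperspecial current algebra $\mf{Cg}$ to our $\gs$; this transport is compatible with the Cartan-twist used to define $D^\sigma(c,\bar\lambda)$ via lowest-weight generators.

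The main obstacle I anticipate is bookkeeping rather than a genuine mathematical difficulty: one must verify that the isomorphism of \cite[Theorem 7]{KV16} is stated for a fusion product defined exactly as our $\bigotimes_i D(c,\lambda_i)_{p_i}$ (same shift convention \eqref{342834}, same ordering independence, same cyclic generator), and that the ``generic point'' condition used there — disjointness of $\Gamma$-orbits — coincides with $p_i^m\neq p_j^m\neq 0$. A second point of care is the $A_{2\ell}$ case, where the order-$4$ automorphism and the hyperspecial current algebra intervene: here I would explicitly trace through $\eta$ on the cyclic generators (lowest weight vectors) so that the weight $-c\iota(\bar\lambda)$ on the $\gs$-side matches the highest weight used in \cite{KV16}, using the Cartan involution $\eta_c$ as in Remark \ref{rem_dem_def}. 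Once these identifications are in place the theorem is immediate, so the proof is essentially a one-line appeal to \cite[Theorem 7]{KV16} preceded by this dictionary; I would present it as ``Cf.\,\cite[Theorem 7]{KV16}'' with a sentence indicating the genericity translation and, for $A_{2\ell}$, the use of Theorem \ref{thm_hyperspecial}.
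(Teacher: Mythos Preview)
Your proposal is correct and matches the paper's approach exactly: the paper presents this theorem without proof, stating just before it that it ``is a special case of \cite[Theorem 7]{KV16}'' and closing the statement with a \qed. Your dictionary-matching discussion (genericity as disjoint $\Gamma$-orbits, the $A_{2\ell}$ transport via Theorem \ref{thm_hyperspecial} and Remark \ref{rem_dem_def}) is precisely the bookkeeping the paper leaves implicit in that one-line citation.
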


\subsection{Twisted global Demazure modules}\label{320070}
In this subsection, we define the global Demazure modules of twisted current algebras, and study their basic properties. 

Given $\la=(\lambda_1,\ldots,\lambda_n)\in (X_*(T)^+)^n$, for each $i$, let $v_i$ be the cyclic generator of the Demazure module $D(c,\lambda_i)$ of $\mf{g}[t]$ which is of weight $-c\iota(\lambda_i)$, as in $(\ref{eq_Demazure})$. Recall Section \ref{109490}, one can define the following twisted global module 
\begin{equation*}
R^\sigma\big( D(c,\lambda_1), \ldots, D(c,\lambda_n)\big)=\U(\gs)\bu w,
\end{equation*}
where $w=v_{1}[1]\otimes\cdots\otimes v_{n}[1]\in \bigotimes D(c,\lambda_i)[z_i]$. Moreover, this global module has a right $\mc{A}^\sigma(c,\la)$-action, where $\As(c,\la)=\U(\hs)/\mr{Ann}(w)$ is the twisted weight algebra associated to $w$. We remark that $\As(c,\la)=\As(-c\iota(\la))$, see \eqref{749134}. By \eqref{234983}, we have an embedding $\As(c,\la)\hookrightarrow \C[\A^n]:=\C[z_1,\ldots,z_n]$
\begin{definition}\label{234798}
	The twisted global Demazure module $\Ds(c,\la)$ is defined to be the following $(\gs, \As(c,\la))$-bimodule
	\begin{equation}\label{eq_D_tilde}
		 \Ds(c,\la):= R^\sigma\big( D(c,\lambda_1), \ldots, D(c,\lambda_n)\big).
	\end{equation}
	Let $\Ds(c,\la)_{\vec{p} }:=\Ds(c,\la)\otimes_{\As(c,\la)} \C_{\vec{p}}$ denote the fiber at a point $\vec{p}\in \A^n$.
\end{definition}
We now determine the generic fibers of the twisted global Demazure module.
\begin{lem}
	With the same setup as in Theorem \ref{257230}, there is a surjective $\gs$-morphism
\begin{equation}\label{485770}
	\Ds(c,\la)_{\vec{p}} \twoheadrightarrow \bigotimes_{i=1}^n D (c, \lambda_i )_{p_i}. 
\end{equation}
As a consequence, there is an inequality
\begin{equation}\label{394520}
	\dim \big(\Ds(c,\la)_{\vec{p}}\big)\geq \dim \bigg(\bigotimes_{i=1}^n D (c, \lambda_i )_{p_i} \bigg) = \dim \big( D^{\sigma} (c, \bar{\lambda}) \big) .
\end{equation}
\end{lem}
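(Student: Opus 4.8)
The plan is to derive the surjection \eqref{485770} by combining Proposition \ref{063500} with the identification of $\Ds(c,\la)$ as the twisted global module $R^\sigma\big(D(c,\lambda_1),\ldots,D(c,\lambda_n)\big)$ from Definition \ref{234798}, and then to read off the dimension inequality \eqref{394520} together with the equality at the end using Theorem \ref{257230}. Concretely, first I would observe that the hypotheses of the present lemma (``the same setup in Theorem \ref{257230}'') are exactly $\vec{p}\in\A^n$ with $p_i^m\neq p_j^m\neq 0$ for $i\neq j$, which are precisely the hypotheses needed to invoke Proposition \ref{063500} with $V_i=D(c,\lambda_i)$, each of which is a cyclic graded finite-dimensional $\mf{g}[t]$-module with cyclic generator $v_i$ of weight $\mu_i=-c\iota(\lambda_i)\in -P^+$ satisfying $t\mf{h}[t]\cdot v_i=0$ (the last by the defining relations of Demazure modules, or by the Remark noting that the results hold for anti-dominant weights). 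Proposition \ref{063500} directly gives a surjective $\gs$-morphism
\[
R^\sigma\big(D(c,\lambda_1),\ldots,D(c,\lambda_n)\big)\otimes_{\As(-c\iota(\la))}\C_{\vec{p}}\twoheadrightarrow \bigotimes_{i=1}^n D(c,\lambda_i)_{p_i},
\]
and since $\As(c,\la)=\As(-c\iota(\la))$ and $\Ds(c,\la)=R^\sigma\big(D(c,\lambda_1),\ldots,D(c,\lambda_n)\big)$ by definition, the left-hand side is exactly $\Ds(c,\la)_{\vec{p}}$. This is the statement \eqref{485770}.

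Next, for the dimension statement \eqref{394520}, surjectivity of a morphism of finite-dimensional vector spaces immediately yields $\dim\big(\Ds(c,\la)_{\vec{p}}\big)\ge \dim\big(\bigotimes_{i=1}^n D(c,\lambda_i)_{p_i}\big)$; here one should note $\Ds(c,\la)_{\vec{p}}$ is finite-dimensional because it is a quotient of $\bigotimes_i D(c,\lambda_i)[z_i]$ tensored down to a point, or more simply because it surjects onto the finite-dimensional module on the right and we only need the inequality in that direction (strictly speaking finite-dimensionality of the source is not even required for the inequality as stated). Finally, the equality $\dim\big(\bigotimes_{i=1}^n D(c,\lambda_i)_{p_i}\big)=\dim\big(D^\sigma(c,\bar\lambda)\big)$ follows from Theorem \ref{257230}: that theorem gives a $\gs$-isomorphism $D^\sigma(c,\bar\lambda)\simeq \mr{gr}\,\big(\bigotimes_{i=1}^n D(c,\lambda_i)_{p_i}\big)$, and passing to the associated graded module preserves dimension, so the two have equal dimension.

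I do not anticipate a genuine obstacle here: the lemma is essentially a bookkeeping consequence of results already established (Proposition \ref{063500} and Theorem \ref{257230}) once the notational identifications $\As(c,\la)=\As(-c\iota(\la))$ and $\Ds(c,\la)=R^\sigma(D(c,\lambda_1),\ldots,D(c,\lambda_n))$ are made explicit. The only point requiring a moment's care is checking that the Demazure generators $v_i$ genuinely satisfy the hypothesis $t\mf{h}[t]\cdot v_i=0$ needed to apply the construction of $R^\sigma$ and Proposition \ref{063500}; this holds because $v_i$ is an extremal weight vector in an integrable highest weight module, so $\mf{h}\otimes t\C[t]$ kills it (equivalently, it is inherited from the Weyl module relations via the surjection $W(c\iota(\lambda_i))\twoheadrightarrow D(c,\lambda_i)$ of Theorem \ref{401910}(1)). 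With that in hand the proof is a two-line invocation of the cited results.

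\begin{proof}
	By Definition \ref{234798}, $\Ds(c,\la)=R^\sigma\big(D(c,\lambda_1),\ldots,D(c,\lambda_n)\big)$, and $\As(c,\la)=\As(-c\iota(\la))$ is the weight algebra associated to $w=v_1[1]\otimes\cdots\otimes v_n[1]$, where $v_i$ is the cyclic generator of $D(c,\lambda_i)$ of weight $\mu_i=-c\iota(\lambda_i)$. Each $D(c,\lambda_i)$ is a cyclic graded finite-dimensional $\mf{g}[t]$-module, and $t\mf{h}[t]\cdot v_i=0$ since $v_i$ is an extremal weight vector in an integrable highest weight module of $\hat{L}(\mf{g},\mr{id})$. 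Hence Proposition \ref{063500} applies with $V_i=D(c,\lambda_i)$ and $\vec{p}$ satisfying $p_i^m\neq p_j^m\neq 0$ for $i\neq j$, giving the surjective $\gs$-morphism \eqref{485770}:
	\[ \Ds(c,\la)_{\vec{p}}=R^\sigma\big(D(c,\lambda_1),\ldots,D(c,\lambda_n)\big)\otimes_{\As(-c\iota(\la))}\C_{\vec{p}}\twoheadrightarrow \bigotimes_{i=1}^n D(c,\lambda_i)_{p_i}. \]
	Since the target is finite-dimensional, surjectivity yields $\dim\big(\Ds(c,\la)_{\vec{p}}\big)\geq \dim\big(\bigotimes_{i=1}^n D(c,\lambda_i)_{p_i}\big)$. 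Finally, by Theorem \ref{257230} there is a $\gs$-isomorphism $D^\sigma(c,\bar\lambda)\simeq \mr{gr}\,\big(\bigotimes_{i=1}^n D(c,\lambda_i)_{p_i}\big)$, and passing to the associated graded module preserves dimension, so
	\[ \dim\bigg(\bigotimes_{i=1}^n D(c,\lambda_i)_{p_i}\bigg)=\dim\big(D^\sigma(c,\bar\lambda)\big), \]
	which completes the proof of \eqref{394520}.
\end{proof}
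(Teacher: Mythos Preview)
Your proposal is correct and follows exactly the same approach as the paper: the surjection is obtained by invoking Proposition \ref{063500} (after unwinding Definition \ref{234798}), and the dimension equality comes from Theorem \ref{257230}. The paper's proof is a two-line citation of these same results; you have simply made explicit the verification that each $D(c,\lambda_i)$ satisfies the hypotheses of Proposition \ref{063500} and spelled out why passing to the associated graded preserves dimension.
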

\begin{proof}
	The surjective map is given by Proposition \ref{063500}. The inequality follows from the surjection \eqref{485770} and Theorem \ref{257230}.
\end{proof}
\begin{thm}\label{398063} 
	Given $\lambda_1,\ldots,\lambda_n \in X_*(T)^+$, let $\bar{\lambda}$ be the image of $\lambda:=\sum_i\lambda_i $ in $X_*(T)^+_\sigma$. There is a $\gs$-isomorphism $\Ds(c,\la)_0 \simeq D^\sigma(c,\lb)$.
\end{thm}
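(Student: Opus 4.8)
The plan is to prove the isomorphism $\Ds(c,\la)_0 \simeq D^\sigma(c,\lb)$ by establishing inequalities in both directions on dimensions, combined with a natural surjection. First I would construct a surjective $\gs$-morphism from $\Ds(c,\la)_0$ onto $D^\sigma(c,\lb)$. Since $\Ds(c,\la)=\U(\gs)\bu w$ is cyclic with generator $w=v_1[1]\otimes\cdots\otimes v_n[1]$, its fiber $\Ds(c,\la)_0$ is a cyclic $\gs$-module, generated by the image $\bar w$ of $w$, whose $\mf h^\sigma$-weight is $-c\iota(\lb)$ (this follows from the embedding \eqref{234983}, which at $\vec p = \vec 0$ sends $ht^s \mapsto 0$ for $s>0$, and records only $\sum_i \iota(\lambda_i)|_{\mf h^\sigma}=\iota(\lb)$ at $s=0$). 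I would then check that $\bar w$ satisfies all the defining relations of the twisted Demazure module $D^\sigma(c,\lb)$ as described via Theorem \ref{401910}(2): the relations $\mf n^-[t]^\sigma\bar w=0$ and $(\mf h\otimes t\C[t])^\sigma\bar w = 0$ follow because each $v_i$ is a Demazure (hence Weyl) module generator satisfying $\mf n^-[t]\cdot v_i=0$ and $t\mf h[t]\cdot v_i=0$, and the $\bu$-action formula \eqref{554908} propagates these to $w$ and hence to the fiber; the generation/nilpotency relations $(e_{\alpha,0})^{k_\alpha+1}\bar w = 0$ and the extra Demazure relations come from the corresponding relations on each $D(c,\lambda_i)$ together with an induction on the tensor factors using the Leibniz-type expansion of $e_{\alpha,s}$ acting on a tensor product. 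This gives a surjection $D^\sigma(c,\lb)\twoheadrightarrow \Ds(c,\la)_0$, hence $\dim \Ds(c,\la)_0 \leq \dim D^\sigma(c,\lb)$.

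For the reverse inequality, I would use semicontinuity of fiber dimension together with the generic fiber computation already in hand. The module $\Ds(c,\la)$ is a finitely generated module over the Noetherian ring $\As(c,\la)$ (the finite generation of $\Ds(c,\la)$ over $\As(c,\la)$ should be argued as in the untwisted case of \cite{DF21}, using that each $D(c,\lambda_i)$ is finite-dimensional so $\bigotimes D(c,\lambda_i)[z_i]$ is finite over $\bigotimes \C[z_i]$ and hence over $\As(c,\la)$). Therefore $\vec p \mapsto \dim \Ds(c,\la)_{\vec p}$ is upper semicontinuous on $\Spec \As(c,\la)$, i.e. on (the image of) $\A^n$. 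By the inequality \eqref{394520}, for generic $\vec p$ with $p_i^m\neq p_j^m\neq 0$ we have $\dim \Ds(c,\la)_{\vec p}\geq \dim D^\sigma(c,\lb)$; combined with semicontinuity this gives $\dim \Ds(c,\la)_0 \geq \dim D^\sigma(c,\lb)$. Actually, more care is needed: semicontinuity gives $\dim$ of the special fiber $\geq$ generic, so $\dim \Ds(c,\la)_0 \geq \dim \Ds(c,\la)_{\vec p}\geq \dim D^\sigma(c,\lb)$ for generic $\vec p$. Combining with the first paragraph's surjection forces equality of dimensions, and since the surjection $D^\sigma(c,\lb)\twoheadrightarrow \Ds(c,\la)_0$ is between finite-dimensional spaces of equal dimension, it is an isomorphism.

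The main obstacle I anticipate is the careful verification that the fiber generator $\bar w$ satisfies \emph{all} the twisted Demazure relations, in particular the full list of generators of the ideal in Theorem \ref{401910}(2) — especially in the $A_{2\ell}$ case where the basis elements $e_{\alpha,s}$ of $\gs$ have the intricate form \eqref{802623} and the relation exponents $s_\alpha = c(\lambda,\bar\alpha^\vee)$ mix contributions from the different $\lambda_i$. One must check that when $e_{\alpha,s}$ (an explicit element of $\gs\subset\mf g[t]$) acts via $\bu$ on $w=\bigotimes v_i[1]$ and is then reduced mod $\bigoplus_{s>0}(\cdots)[z_i^s]$, the surviving term is (up to scalar) $\big(\text{the relevant untwisted generator}\big)^{?}$ applied across the tensor factors, and vanishes by the Demazure relations on each $D(c,\lambda_i)$ together with the standard "Demazure property under tensor/fusion" of Fourier–Littelmann \cite{FL06}. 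This is where I would lean on Theorem \ref{257230} as a consistency check and, if a direct relation-chasing argument proves too delicate, fall back entirely on the dimension-counting strategy: the surjection in one direction (which only needs the \emph{cyclicity} and \emph{weight} of $\bar w$, plus that it is a quotient of the appropriate Weyl module, which is cleaner to check) together with semicontinuity and \eqref{394520} already pins down the isomorphism without needing the precise list of Demazure relations on $\bar w$.
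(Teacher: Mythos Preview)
Your approach is essentially the same as the paper's: construct a surjection $D^\sigma(c,\lb)\twoheadrightarrow \Ds(c,\la)_0$ by verifying that the image of $w$ satisfies the relations of Theorem~\ref{401910}(2), and combine this with the semicontinuity bound $\dim\Ds(c,\la)_0\geq\dim D^\sigma(c,\lb)$ coming from \eqref{394520}. The paper does exactly this, including the explicit relation check in the $A_{2\ell}$ case that you flag as the main obstacle.

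Two small corrections. First, your opening sentence has the direction reversed: checking that $\bar w$ satisfies the defining relations of $D^\sigma(c,\lb)$ produces a map \emph{from} $D^\sigma(c,\lb)$ \emph{to} $\Ds(c,\la)_0$, as you write correctly a few lines later. Second, your proposed fallback---checking only the Weyl-module relations and relying on dimension counting---does not work for $c>1$: it only gives $\dim D^\sigma(c,\lb)\leq\dim\Ds(c,\la)_0\leq\dim W^\sigma(c\iota(\lb))$, and since $D^\sigma(c,\lb)$ is in general a proper quotient of $W^\sigma(c\iota(\lb))$, this does not pin down the isomorphism. You really do need to verify the extra Demazure relations, and the paper does so directly.
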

 Our proof is similar to the proofs in \cite[Proposition 4.9]{DFF21} and \cite[Proposition 2.13]{DF21}.
\begin{proof}
	It is known that the function $\varphi(\vec{p}):=\dim \big(\Ds(c,\la)_{\vec{p}}\big)$ is upper semicontinuous, cf.\,\cite[Expample 12.7.2]{Hartshorne77}. Note that for any $\vec{p}\neq 0$, the function $\varphi$ is a constant on the punctured line $\{a\vec{p}\mid a\in \C^*\}$. Thus,
	\begin{equation}\label{213578}
		\varphi({0})\geq \varphi(\vec{p}), \text{ for any } \vec{p}\in \A^n.
	\end{equation}
	Combining with \eqref{394520}, we get
	\begin{equation}\label{634830}
		\dim \big(\Ds(c,\la)_0\big)\geq \dim \big(D^\sigma(c,\lb)\big).
	\end{equation}
	From Theorem \ref{401910}, $D^\sigma( c, \bar{\lambda} )$ is isomorphic to the quotient of twisted Weyl module $W^{\sigma}(c\iota(\bar{\lambda}))$ by a submodule $M$, where $M$ is generated by the elements in Theorem \ref{401910} part (2). To prove the theorem, it suffices to show there is a surjective map 
	\[ D^\sigma(c,\lb) \simto W^{\sigma}(c\iota(\bar{\lambda}))/M \twoheadrightarrow \Ds(c,\la)_0.\]

	We first show that the map 
	\begin{equation}\label{428476}
		\psi: W^{\sigma}(c\iota(\bar{\lambda})) \twoheadrightarrow \Ds(c,\la)\otimes_{\As(c,\la)} \C_0
	\end{equation}
	sending $w_{\bar{\lambda}}$ to $w\otimes 1$ is well-defined. It is clear that the first three defining relations in \eqref{989363} for $w_{\bar{\lambda}}$ also holds for $w\otimes 1$. The reason is that $w=v_1[1]\otimes v_2[1]\otimes \cdots \otimes v_n[1]$ and by part (1) of Theorem \ref{401910} $v_i$ satisfies the following relations
	\begin{equation}\label{428623}
		\mf{n}^-[t]\cdot v_i=0, \ \mf{h}\otimes t\C[t]\cdot v_i=0 ,\ (h\otimes 1)\cdot v =-c(\lambda_i,h)v_i,\ (e_{\alpha}\otimes t^s)^{k_{\alpha,s}+1}\cdot v_i=0, 
	\end{equation}
	where $\alpha \in R^+$, $k_{\alpha,s}=c\max\{0,(\lambda,\alpha^\vee)-s\}$. 

	We now check the last relation in \eqref{989363}. When $\mf{g}$ is not $A_{2\ell}$, for any $\alpha\in R^+$, if $\alpha$ is not fixed by $\sigma$, then the roots $\alpha, \sigma(\alpha),\ldots, \sigma^{m-1}(\alpha)$ are all distinct and orthogonal, cf.\,\cite[Lemma 1]{Springer06}. In this case, we have $k_\alpha=c(\lambda,\bar{ \alpha}^\vee)\geq c\sum_{j=0}^{m-1} ({\lambda},\ {\sigma^{j}(\alpha^\vee)})$, and
	\begin{equation}\label{eq_last_rel}
		(e_{\alpha,0})^{k_\alpha+1}\bu w= \bigg(\sum_{j=0}^{m-1} e_{\sigma^j(\alpha)}\otimes 1\bigg)^{k_\alpha+1}\bu w=\sum_{\sum s_j =k_\alpha+1} \prod_{j=0}^{m-1} (e_{\sigma^j(\alpha)}\otimes 1)^{s_j} \bu w.
	\end{equation}
	In each term of the right side, there must be an index $j$ such that $s_j\geq c(\lambda, \sigma^{j}(\alpha^\vee))+1=k_{\sigma^j(\alpha),0}+1$. For this $s_j$, by the last relation in \eqref{428623}, we have $(e_{\sigma^j(\alpha)}\otimes 1)^{s_j} \bu (v_i[1])=0$, $1\leq i\leq n$. This shows that the right side of \eqref{eq_last_rel} is $0$. Hence, the last relation in \eqref{989363} holds in this case. When $\alpha$ is fixed by $\sigma$, this relation can be checked similarly (actually easier). 
	
	When $\mf{g}$ is of type $A_{2\ell}$ and $m=4$, the element $e_{\alpha,0}\in \gs$ if and only if $\alpha\neq\alpha_{i,\ell}$, see Section \ref{133740}. Thus, $\alpha$ and $\sigma(\alpha) $ are either equal or orthogonal. By the same argument as above, we still have $(e_{{\alpha},0})^{k_\alpha+1}\bu w= 0$, where $\alpha\neq\alpha_{i,\ell}$ and $k_\alpha=c(\lambda, \bar{\alpha}^\vee)$. This finishes the checking of all relations in \eqref{989363}. Thus, the map \eqref{428476} is well-defined.

	To complete the proof, we still need to show $\psi(M)=0$. When $\mf{g}$ is not $A_{2\ell}$, the submodule $M$ is generated by the following elements (see Theorem \ref{401910})
	\[\{e_{\alpha,m s_{\alpha}}\cdot w_{\lb} \mid \alpha \in R^+_{l} \} \cup \{ e_{\alpha, s_{\alpha} } \cdot w_{\lb} \mid \alpha \in R^+_{s}\}. \]
	By the equalities in \cite[(5.1.1)]{Kac90} and \eqref{328480}, we always have $s_\alpha=c(\lambda,\bar{ \alpha}^\vee)\geq c(\lambda, \sigma^j(\alpha^\vee))$ for any $\alpha\in R^+$ and $0\leq j\leq m-1$. When $\alpha\in R^+_l$, by definition $\bar{\alpha}$ is a long root of $\mf{g}^\sigma$. Then, $(\lambda,\sigma^j(\alpha^\vee))-ms_{\alpha}\leq (\lambda,\sigma^j(\alpha^\vee))-mc({\lambda},\ {\sigma^{j}(\alpha^\vee) })\leq 0$, which implies
	\begin{equation}\label{eq_psi_M}
	e_{\alpha,ms_\alpha}\bu w= \sum_{i=0}^{m-1} e_{\sigma^i(\alpha)}\otimes t^{ms_\alpha}\bu w=0.
	\end{equation}
	Similarly, one can show that $ e_{\alpha,s_\alpha}\bu w=0$ for any $\alpha\in R^+_s$. Thus, $\psi(M)=0$.

	When $\mf{g}$ is of type $A_{2\ell}$, the submodule $M$ is generated by the following elements
	\begin{align*}
		&\{e_{\alpha,4s_\alpha}\cdot w_{\lb} \mid \alpha=\alpha_{i,2\ell+1-i}, 1\leq i<\ell \} \cup \{e_{\alpha, 2s_{2\alpha}+1}\cdot w_{\lb} \mid \alpha=\alpha_{i,\ell}, 1\leq i\leq \ell \}\\
		&\cup \{e_{\alpha,2s_\alpha}\cdot w_{\lb} \mid \alpha=\alpha_{i,j} \text{ or } \alpha_{i,2\ell-j}, 1\leq i<\ell\} .
	\end{align*}
	When $\alpha\neq \alpha_{i,\ell}$, the relations can be checked similarly as in (\ref{eq_psi_M}). Now, suppose $\alpha=\alpha_{i,\ell}$, and let $s=s_{2\alpha}=c(\lambda,\alpha^\vee+\tau(\alpha^\vee))$. We have
	\[(e_{\alpha, 2s+1})\bu w= (e_{\alpha}\otimes t^{2s+1} + (-1)^{i+\ell+s}e_{\tau(\alpha)}\otimes t^{2s+1})\bu w=0,\]
	since $(\lambda,\tau^p(\alpha^\vee))-(2s+1)=(\lambda,\tau^p(\alpha^\vee) )-2 c(\lambda, \alpha^\vee+\tau(\alpha^\vee))-1< 0$ for $p=0$, or $1$. Thus, in this case we also have $\psi(M)=0$.
	
	Now, the map $\psi$ induces a surjective morphism 
	\[\bar\psi: D^\sigma(c,\lb) \simto W^{\sigma}(c\iota(\bar{\lambda}))/M \twoheadrightarrow \Ds(c,\la)_0.\]
	By the dimension inequality \eqref{634830}, $\bar\psi$ must be an isomorphism.
\end{proof}
\begin{thm}\label{616072}
	 Given $\lambda_1,\ldots,\lambda_n \in X_*(T)^+$, let $\bar{\lambda}$ be the image of $\sum_i\lambda_i $ in $X_*(T)^+_\sigma$. 
	\begin{enumerate}
		\item For any $\vec{p}\in \A^n$ such that $p_i^m\neq p_j^m\neq0$ for any $i\neq j$, there is an isomorphism of $\gs$-modules
		\[\Ds(c,\la)_{\vec{p}} \simeq \bigotimes_{i=1}^n D (c, \lambda_i )_{p_i}.\]
		\item The twisted global Demazure module $\Ds(c,\la)$ is free over $\As(c,\la)$.
	\end{enumerate}
\end{thm}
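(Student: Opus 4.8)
The plan is to prove both parts simultaneously by a dimension-counting argument analogous to \cite[Proposition 4.9]{DFF21}, using semicontinuity together with the already-established lower and upper bounds. First I would recall from the preceding Lemma the surjection $\Ds(c,\la)_{\vec{p}} \twoheadrightarrow \bigotimes_{i=1}^n D(c,\lambda_i)_{p_i}$ valid whenever $p_i^m \neq p_j^m \neq 0$, and from Theorem \ref{257230} that the right-hand side has dimension $\dim D^\sigma(c,\lb)$, giving $\dim \Ds(c,\la)_{\vec{p}} \geq \dim D^\sigma(c,\lb)$ for all such generic $\vec{p}$. By Theorem \ref{398063} we have $\dim \Ds(c,\la)_0 = \dim D^\sigma(c,\lb)$, and by upper semicontinuity of $\vec{p}\mapsto \dim\Ds(c,\la)_{\vec{p}}$ (\cite[Example 12.7.2]{Hartshorne77}) together with the fact that this function is constant along punctured lines through the origin, we get $\dim\Ds(c,\la)_{\vec{p}} \leq \dim\Ds(c,\la)_0 = \dim D^\sigma(c,\lb)$ for every $\vec{p}$. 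Combining the two inequalities, $\dim\Ds(c,\la)_{\vec{p}} = \dim D^\sigma(c,\lb)$ for all generic $\vec{p}$, so the surjection in part (1) is an isomorphism of $\gs$-modules.

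For part (2), the strategy is to show $\Ds(c,\la)$ is a free graded module over the graded Noetherian ring $\As(c,\la)$. The key point is that the fiber dimension $\dim\Ds(c,\la)_{\vec{p}}$ is now known to be constant — equal to $\dim D^\sigma(c,\lb)$ — on the dense open locus where the $z_i^m$ are distinct and nonzero; combined with upper semicontinuity and the equality at $0$, I would argue the function is in fact constant on all of $\mathrm{Spec}\,\As(c,\la)$, using that $\As(c,\la)$ embeds in $\C[z_1,\dots,z_n]$ with the generic locus being dense. A coherent sheaf on a reduced (indeed this ring is a domain, being a subring of a polynomial ring) Noetherian scheme with locally constant fiber dimension is locally free; together with the grading, a graded module over a graded connected algebra that is locally free is in fact graded free. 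Alternatively, and perhaps more cleanly in the graded setting, one invokes the graded Nakayama lemma: lift a homogeneous basis of the fiber $\Ds(c,\la)_0 \cong D^\sigma(c,\lb)$ over $\As(c,\la)/I_0$ (where $I_0$ is the graded maximal ideal) to homogeneous generators of $\Ds(c,\la)$, producing a surjection from a graded free module $F \twoheadrightarrow \Ds(c,\la)$ with $\mathrm{rank}\,F = \dim D^\sigma(c,\lb)$; tensoring with $\C_{\vec{p}}$ at a generic point and comparing with part (1) forces the kernel to vanish generically, hence everywhere since $\As(c,\la)$ is a domain and $F$ is torsion-free.

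The main obstacle I anticipate is the freeness in part (2): the semicontinuity argument gives constancy of fiber dimension only \emph{a priori} on the generic locus and at the origin, and one must genuinely rule out jumps along intermediate strata (e.g. where some $z_i = z_j$ but both nonzero, or where several $z_i$ vanish simultaneously). The cleanest way to handle this is via the graded Nakayama/torsion-free argument just sketched, which bypasses the need to check constancy pointwise everywhere: one only needs equality of ranks at \emph{one} generic point, which part (1) supplies, plus the integral-domain property of $\As(c,\la)$ to conclude the kernel of $F \to \Ds(c,\la)$ — being a graded submodule of a free module annihilated after inverting a suitable element — is zero. I would also need to be careful that the grading conventions on $\Ds(c,\la)$ (induced from the $t$-grading on $\gs$ and the $z_i$-degrees) are compatible with the grading on $\As(c,\la)$ under the right action, but this follows from the construction in Section \ref{109490} and the observation in \eqref{234983} that the right $\U(\hs)$-action is realized via $ht^s \mapsto (-1)^s\sum_i \mu_i(h) z_i^s$, which is homogeneous of degree $s$.
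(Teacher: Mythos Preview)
Your argument for part (1) is exactly the paper's. For part (2), your approach (a) is also the paper's route, and your worry about intermediate strata is unnecessary: you already have $\dim\Ds(c,\la)_{\vec{p}}\le\dim D^\sigma(c,\lb)$ for \emph{every} $\vec{p}$ (from the punctured-line argument and Theorem~\ref{398063}), and upper semicontinuity applied to the equality on the dense generic locus gives $\dim\Ds(c,\la)_{\vec{p}}\ge\dim D^\sigma(c,\lb)$ for every $\vec{p}$ as well. The two together force the fiber dimension to be constant on all of $\A^n$, hence $\Ds(c,\la)$ is graded projective over $\As(c,\la)$, and graded Nakayama upgrades this to graded free---this is precisely what the paper does.

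Your alternative (b) via lifting a homogeneous basis at $0$ and killing the kernel by a torsion-free argument over the domain $\As(c,\la)$ is also correct and slightly more economical, since it needs the fiber dimension only at $0$ and at a single generic point rather than constancy everywhere. Either route works; the paper chose the first.
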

\begin{proof}
	By Theorem \ref{398063}, we have $\dim \big(\Ds(c,\la)_0\big)= \dim \big( D^{\sigma} (c, \bar{\lambda}) \big)$. Combining with \eqref{213578}, we get
	\begin{equation}\label{528923}
			\dim \big( D^{\sigma} (c, \bar{\lambda}) \big) \geq \dim \big(\Ds(c,\la)_{\vec{p}}\big) 
	\end{equation}
	for any $\vec{p}\in\A^n$. It follows that all three terms in the inequality \eqref{394520} are equal. Hence the surjection \eqref{485770} is an isomorphism. This proves part (1).
	
	From part (1), we get $\dim \big(\Ds(c,\la)_{\vec{p}}\big)= \dim \big( D^{\sigma} (c, \bar{\lambda}) \big)$ for any $\vec{p}$ such that $p_i^m\neq p_j^m\neq 0, \forall i\neq j$. By the semicontinuity of the function $\varphi(\vec{p}):=\dim \big(\Ds(c,\la)_{\vec{p}}\big)$, we get
	\[ \dim \big(\Ds(c,\la)_{\vec{p}}\big)\geq \dim \big( D^{\sigma} (c, \bar{\lambda}) \big)\]
	for any $\vec{p}\in \A^n$. Combining with the inequality \eqref{528923}, the function $\varphi(\vec{p})$ must be constant. Thus, $\Ds(c,\la)$ is a graded projective $\As(c,\la)$-module. By the graded Nakayama lemma (cf.\,\cite[\href{https://stacks.math.columbia.edu/tag/0EKB}{Tag 0EKB}]{stacks-project}), $\Ds(c,\la)$ is a free $\As(c,\la)$-module.
\end{proof}
\begin{remark}
In Section \ref{753118}, we will use the geometry of Beilinson-Drinfeld Schubert varieties to fully describe the fibers of $\Ds(c,\la)$ at any point in $\A^n$, see Corollary \ref{367230}.
\end{remark}
We now introduce some notations that will be frequently used throughout this paper. 
\begin{definition}\label{524794}
	Denote by $[n]$ the finite set $\{1, 2,\ldots, n\}$. Let $\xi=(I_1, I_2,\ldots, I_k)$ be a partition of $[n]$. We define $\A^n_{\xi}$ to be the following open subset in $\A^n$,
	\begin{equation}\label{345790}
		\A^n_{\xi}=\{ \vec{p}\in\A^n \mid {p}_i^m \neq {p}_j^m,\ \forall i\in I_\alpha, j\in I_\beta, \alpha\neq\beta \}.
	\end{equation}
	For any subset $I=\{i_1,\ldots,i_s\}\subseteq [n]$, set $\A^{I} ={\rm Spec\,}\mathbb{C}[z_{i_1}, \ldots, z_{i_s}] $. 
\end{definition}
\begin{remark}
In this paper, the partition $\xi=(I_1,\cdots, I_k)$ is always ordered by lexicographical order, i.e. $\min(I_1)<\min(I_2)<\cdots <\min(I_k)$.
\end{remark}
\begin{example}
	When $\xi=([n])$ is the trivial partition of $[n]$, we have $\A^n_{\xi}=\A^n$; when $\xi_0=(\{1\},\{2\},\ldots,\{n\})$ is the finest partition, we have $\A^n_{\xi_0}=\{ \vec{p}\in\A^n \mid \bar{p}_i\neq \bar{p}_j,\ \forall i\neq j \}$.
\end{example}

Given a partition $\xi=(I_1, I_2,\ldots, I_k)$ of $[n]$, for any $1\leq j\leq k$, we label elements in $I_j=\{j_1,\ldots,j_s\}$ by the natural order of integers, i.e.\,$j_1<j_2<\cdots <j_s$, where $s$ is the cardinality of $I_j$.
For any point $\vec{p}=(p_1,\ldots,p_n) \in \A^n_{\xi}$ and $\la\in (X_*(T)^+)^n$, we make the following notations:
\begin{gather}
	\label{053076} \vec{p}_{I_j}=(p_{j_1},\ldots,p_{j_s})\in\A^{I_j},\\
	\label{249296} \vec{\lambda}_{I_j}=(\lambda_{j_1},\ldots,\lambda_{j_s}),\\
	\nonumber w_{I_j}=v_{j_1}[1]\otimes \cdots\otimes v_{j_s}[1]\in \bigotimes_{i=1}^s D(c,\lambda_{j_i})[z_{j_i}].
\end{gather}
For simplicity, we denote by $w_{I_j}[1]$ the generator $w_{I_j}\otimes 1$ of \[\Ds(c,\la_{I_j})_{\A^{I_j}}:=\Ds(c,\la_{I_j})\otimes_{\As(c,\la_{I_j})} \C[\A^{I_j}].\]
\begin{prop}\label{348932} 
	\begin{enumerate}
		\item For any $\lambda\in X_*(T)^+$, there is an embedding of $\gs$-modules: 
		\[\phi: \Ds(c,\lambda)_{\A^1}\hookrightarrow D(c, \lambda)[z]\]
		defined by $(u\bu w)[f]\mapsto u\bu (v[f])$ for any $u\in \gs$ and $f\in\C[z]$, where $w:=v[1]$ and $v$ is the cyclic generator of $D(c, \lambda)$ as in (\ref{eq_Demazure}).
		\item Let $\xi=(I,J)$ be a partition of $[n]$. There is an embedding of $\gs$-modules: 
		\begin{equation}\label{eq_psi}
		\psi_\xi:\Ds(c,\la)_{\A^n}\otimes_{\C[\A^n]} \C[\A^n_{\xi}]\hookrightarrow \big(\Ds(c,\vec{\lambda}_I)_{\A^I} \otimes \Ds(c,\vec{\lambda}_J)_{\A^J} \big)\otimes_{\C[\A^I\times \A^J]} \C[\A^n_{\xi}]\end{equation}
 		defined by $(u\bu w)[f]\otimes g\mapsto \big((u\bu w_I)[1]\otimes (u\bu w_J)[1]\big)\otimes (fg)$ for any $u\in\gs$, $f\in \C[\A^n]$, and $g\in \C[\A^n_\xi]$.
	\end{enumerate}
\end{prop}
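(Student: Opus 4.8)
The plan is to present $\phi$ and $\psi_\xi$ as base changes of tautological submodule inclusions, and then to obtain injectivity from a rank count plus a computation at one generic fibre.

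\emph{Part (1).} By construction $\Ds(c,\lambda)=\U(\gs)\bu w$ with $w=v[1]$ is a $\gs$-submodule of $D(c,\lambda)[z]$. Right multiplication $v[f']\mapsto v[zf']$ makes $D(c,\lambda)[z]$ a right $\C[z]$-module commuting with the $\gs$-action, and a direct check with \eqref{554908} shows that the right $\U(\hs)$-action of \eqref{936369} on $\Ds(c,\lambda)$, which factors through $\As(c,\lambda)$ and is embedded into $\C[z]$ by \eqref{234983}, is precisely the restriction of this $\C[z]$-action; in particular $\Ds(c,\lambda)\hookrightarrow D(c,\lambda)[z]$ is $\As(c,\lambda)$-linear. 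Then $\phi$ is the composite of the base change $\Ds(c,\lambda)\otimes_{\As(c,\lambda)}\C[z]\to D(c,\lambda)[z]\otimes_{\As(c,\lambda)}\C[z]$ with the multiplication map $D(c,\lambda)[z]\otimes_{\As(c,\lambda)}\C[z]\to D(c,\lambda)[z]$; this is well-defined, $\gs$-equivariant, and visibly agrees with the stated formula. For injectivity I would observe that $\Ds(c,\lambda)_{\A^1}$ is free over $\C[z]$ (by Theorem \ref{616072}, $\Ds(c,\lambda)$ is free over $\As(c,\lambda)$) of rank $\dim\Ds(c,\lambda)_0=\dim D^\sigma(c,\bar\lambda)=\dim D(c,\lambda)$ — the last equality from Theorems \ref{257230} and \ref{398063} with $n=1$ — whereas $D(c,\lambda)[z]=D(c,\lambda)\otimes_\C\C[z]$ is free of the same rank. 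A map of free $\C[z]$-modules of equal finite rank is injective as soon as it is injective after reduction at one closed point, and I would use $p\in\A^1\setminus\{0\}$: there $D(c,\lambda)[z]_p$ is, as a $\gs$-module, the shifted module $D(c,\lambda)_p$ of \eqref{342834} (through $xt^s\bu v|_{z=p}=x(t-p)^s\cdot v$), which is cyclic over $\gs$ by Corollary \ref{906266} with cyclic vector $w|_{z=p}=v$; hence $\phi_p\colon\Ds(c,\lambda)_p\to D(c,\lambda)_p$ is a $\gs$-morphism taking the cyclic generator $w|_{z=p}$ to $v$, so it is surjective, and since $\dim\Ds(c,\lambda)_p=\dim D(c,\lambda)$ by Theorem \ref{616072}(1) it is an isomorphism. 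Therefore $\phi$ is injective.

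\emph{Part (2).} Regrouping $\bigotimes_{i\in[n]}D(c,\lambda_i)[z_i]=\big(\bigotimes_{i\in I}D(c,\lambda_i)[z_i]\big)\otimes_\C\big(\bigotimes_{j\in J}D(c,\lambda_j)[z_j]\big)$ and writing $w=w_I\otimes w_J$, the subspace $\Ds(c,\vec\lambda_I)\otimes_\C\Ds(c,\vec\lambda_J)$ is a $\gs$-submodule (the diagonal action over $[n]$ is the diagonal of the actions over $I$ and over $J$), so $\Ds(c,\la)=\U(\gs)\bu(w_I\otimes w_J)\subseteq\Ds(c,\vec\lambda_I)\otimes_\C\Ds(c,\vec\lambda_J)$; as in part (1) this inclusion is $\As(c,\la)$-linear via $\As(c,\la)\hookrightarrow\C[\A^n]=\C[\A^I]\otimes_\C\C[\A^J]$. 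Composing with $\Ds(c,\vec\lambda_I)\otimes_\C\Ds(c,\vec\lambda_J)\to\Ds(c,\vec\lambda_I)_{\A^I}\otimes_\C\Ds(c,\vec\lambda_J)_{\A^J}$, extending along $\As(c,\la)\hookrightarrow\C[\A^n]$, and base-changing along $\C[\A^n]\to\C[\A^n_\xi]$ gives $\psi_\xi$, which is $\gs$-equivariant and — reading $(u\bu w_I)\otimes(u\bu w_J)$ as $u$ acting diagonally on $w_I\otimes w_J$, consistent with $\Ds(c,\la)=\U(\gs)\bu w$ — is the stated map. For injectivity, both $\Ds(c,\la)_{\A^n}\otimes_{\C[\A^n]}\C[\A^n_\xi]=\Ds(c,\la)\otimes_{\As(c,\la)}\C[\A^n_\xi]$ and the target are free over the domain $\C[\A^n_\xi]$ (Theorem \ref{616072}(2) and base change), of the same rank: by Theorems \ref{257230} and \ref{398063}, $\dim D^\sigma(c,\bar\lambda)=\prod_{i\in[n]}\dim D(c,\lambda_i)$ and likewise $\dim D^\sigma(c,\bar\lambda_I)\cdot\dim D^\sigma(c,\bar\lambda_J)=\prod_{i\in I}\dim D(c,\lambda_i)\cdot\prod_{j\in J}\dim D(c,\lambda_j)$, which coincide. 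I would then reduce at a closed point $\vec p\in\A^n_\xi$ with the $p_i^m$ pairwise distinct and nonzero (such points exist, since $\A^n_\xi$ only constrains indices in different blocks): Theorem \ref{616072}(1) identifies the source fibre with $\bigotimes_{i\in[n]}D(c,\lambda_i)_{p_i}$ and the target fibre with $\big(\bigotimes_{i\in I}D(c,\lambda_i)_{p_i}\big)\otimes_\C\big(\bigotimes_{j\in J}D(c,\lambda_j)_{p_j}\big)$, and $\psi_\xi|_{\vec p}$ is then the $\gs$-morphism sending the cyclic vector $w|_{\vec p}$ to $w_I|_{\vec p_I}\otimes w_J|_{\vec p_J}$, the same vector. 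Since the source fibre is cyclic over $\gs$ (Corollary \ref{906266}), $\psi_\xi|_{\vec p}$ is surjective, hence an isomorphism by the equality of dimensions, and $\psi_\xi$ is injective.

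\emph{The main obstacle.} The conceptual content is light; the labor is in the bookkeeping. The two delicate points I expect are: (i) verifying that the right $\U(\hs)$-action \eqref{936369} really is the restriction of polynomial multiplication, so that the tensor products over $\As(c,\lambda)$ resp.\,$\As(c,\la)$ are the intended ones and the composites defining $\phi$ and $\psi_\xi$ are well-defined $\gs$-maps; and (ii) identifying the specializations $\phi_p$ and $\psi_\xi|_{\vec p}$ with the tautological maps, which rests on the $\gs$-module identification $D(c,\lambda)[z]_p\cong D(c,\lambda)_p$ of \eqref{342834} and on matching the distinguished cyclic vectors. Granting these, the freeness in Theorem \ref{616072}, the dimension equalities in Theorem \ref{257230}, and the cyclicity in Corollary \ref{906266} finish the proof.
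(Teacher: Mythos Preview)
Your proof is correct and follows essentially the same approach as the paper: both establish well-definedness and then deduce injectivity from Theorem \ref{616072} by checking that the map is an isomorphism on generic fibres. The paper's argument is terser---it invokes Theorem \ref{616072}(1) to say the fibre map is an isomorphism at every $p\neq 0$ (resp.\ every $\vec p$ with the $p_i^m$ distinct and nonzero) and concludes injectivity directly from freeness of the source---whereas you additionally verify that source and target have equal rank and then use the determinant argument at a single closed point; this is a minor variation of the same idea and does not constitute a genuinely different route.
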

\begin{proof}
	(1) By Theorem \ref{616072}, the map $\phi$ restricts to an isomorphism $\Ds(c,\lambda)_p \simeq D(c,\lambda)_p$ at any $p\in \A^1\backslash \{0\}$. Thus, $\phi$ is injective. 
	
	(2) The map $\psi_\xi$ is well defined since $u$ acts on $w$ via Leibniz rule. Again, by Theorem \ref{616072}, it restricts to an isomorphism 
	\[\Ds(c,\la)_{\vec{p}}\simeq \Ds(c,\la_{I})_{\vec{p}_I}\otimes \Ds(c,\la_{J})_{\vec{p}_J},\]
	for any $\vec{p}$ such that $p_i^m\neq p_j^m\neq0, \forall i\neq j$. It follows that the map $\psi_\xi$ is injective.
\end{proof}
\begin{remark}
	In Section \ref{753118}, we will show that the injective map $\psi_\xi$ is actually an isomorphism, see Theorem \ref{055073}.
\end{remark}

\subsection{Twisted global Weyl modules}\label{sect_t_weyl}
In this subsection, we identify the twisted global Weyl module defined in Section \ref{427932} with the level one twisted global Demazure module. 

Taking $c=1$ in Theorem \ref{398063} and Theorem \ref{616072} and applying Corollary \ref{623012}, we get the following results.
\begin{lem}\label{342893}
	Given a tuple of weights $\vec\mu=(\mu_1,\ldots,\mu_n)\in (P^+)^n$, let $W(\mu_i)$ be the local Weyl module defined in Section \ref{427932}. Then
	\begin{enumerate}
		\item The  global module $R^\sigma(W(\mu_1),\ldots,W(\mu_n))$ is a graded free $\As(-\vec\mu)$-module.
		\item Let $\bar\mu\in P_\sigma^+$ be the restriction of $\sum \mu_i$ to $\mf{h}^\sigma$. There is a $\gs$-isomorphism 
		\[\pushQED{\qed} R^\sigma(W(\mu_1),\ldots,W(\mu_n))\otimes_{\As(-\vec\mu)}\C_0\simeq W^\sigma(\bar\mu). \qedhere\popQED \]
	\end{enumerate}
\end{lem}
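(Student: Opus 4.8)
The plan is to deduce both statements from the level-one case of Theorem \ref{398063} and Theorem \ref{616072}, combined with the identification of level-one (twisted) Demazure modules with (twisted) Weyl modules furnished by Corollary \ref{623012}. Since $\mf{g}$ is simply-laced and $G$ is of adjoint type, the normalized Killing form identifies $X_*(T)$ with $P$, so the additive map $\iota\colon X_*(T)\to P$ is a bijection carrying $X_*(T)^+$ onto $P^+$; hence for each $i$ there is a unique $\lambda_i\in X_*(T)^+$ with $\iota(\lambda_i)=\mu_i$. Set $\la=(\lambda_1,\ldots,\lambda_n)$.

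First I would check that $R^\sigma(W(\mu_1),\ldots,W(\mu_n))$ is literally the twisted global Demazure module $\Ds(1,\la)$. By Corollary \ref{623012}(1) there is, for each $i$, a graded $\mf{g}[t]$-isomorphism $D(1,\lambda_i)\simeq W(\iota(\lambda_i))=W(\mu_i)$; moreover the cyclic generator $v_i$ of $D(1,\lambda_i)$ is, by Theorem \ref{401910}(1), the $\mf{h}$-weight vector of weight $-\mu_i$ annihilated by $\mf{n}^-[t]$ and by $\mf{h}\otimes t\C[t]$, i.e.\ it satisfies $t\mf{h}[t]\cdot v_i=0$ and is characterized up to scalar by the same data as the cyclic generator $w_{\mu_i}$ of $W(\mu_i)$. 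Thus the isomorphisms above match generators, and since the $\bu$-action of \eqref{554908} on $V_i[z_i]$ depends only on the $\mf{g}[t]$-module structure while the cyclic vector and the right $\hs$-action depend only on the chosen generators and their weights, the construction of Definition \ref{234798} applied to $D(1,\la)$ is identified, as a $(\gs,\U(\hs))$-bimodule, with $R^\sigma(W(\mu_1),\ldots,W(\mu_n))$. Hence
\[ R^\sigma(W(\mu_1),\ldots,W(\mu_n))\simeq \Ds(1,\la), \]
and, since $\As(c,\la)=\As(-c\iota(\la))$ by definition (cf.\ the remark preceding Definition \ref{234798}) and $\iota(\la)=\vec\mu$, the corresponding weight algebras agree: $\As(1,\la)=\As(-\vec\mu)$.

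With this identification, part (1) is immediate from Theorem \ref{616072}(2), which gives that $\Ds(1,\la)$ is a graded free module over $\As(1,\la)=\As(-\vec\mu)$. For part (2), Theorem \ref{398063} with $c=1$ yields a $\gs$-isomorphism $\Ds(1,\la)_0\simeq D^\sigma(1,\lb)$, where $\lb$ is the image of $\sum_i\lambda_i$ in $X_*(T)^+_\sigma$, and Corollary \ref{623012}(2) gives $D^\sigma(1,\lb)\simeq W^\sigma(\iota(\lb))$. By the compatibility of the two incarnations of $\iota$ (restriction to $\mf{h}^\sigma$) and its additivity, $\iota(\lb)=\iota\!\big(\textstyle\sum_i\lambda_i\big)\big|_{\mf{h}^\sigma}=\big(\textstyle\sum_i\mu_i\big)\big|_{\mf{h}^\sigma}=\bar\mu$. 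Composing the displayed isomorphisms gives $R^\sigma(W(\mu_1),\ldots,W(\mu_n))\otimes_{\As(-\vec\mu)}\C_0\simeq W^\sigma(\bar\mu)$.

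Essentially all the real content is already in Theorems \ref{398063} and \ref{616072} and Corollary \ref{623012}, so I expect no serious obstacle. The only point needing care is the bookkeeping in the second paragraph: one must verify that the chosen cyclic generators of $D(1,\lambda_i)$ and $W(\mu_i)$ correspond under Corollary \ref{623012}(1) — so that the $R^\sigma$-construction transports across — and that the weight algebra $\As(-\vec\mu)$ built from the $\mu_i$ coincides with the $\As(1,\la)$ attached to the Demazure generators; both follow from the fact that these generators share the common $\mf{h}$-weights $-\mu_i$ and satisfy $t\mf{h}[t]\cdot(-)=0$.
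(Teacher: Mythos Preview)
Your proposal is correct and follows exactly the approach indicated in the paper: the lemma is stated immediately after the sentence ``Taking $c=1$ in Theorem \ref{398063} and Theorem \ref{616072} and applying Corollary \ref{623012}, we get the following results,'' and your argument simply unpacks that sentence. The bookkeeping you carry out --- matching generators, identifying $\As(1,\la)$ with $\As(-\vec\mu)$, and checking $\iota(\lb)=\bar\mu$ --- is exactly what is implicit in the paper's one-line reference.
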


\begin{prop}\label{234924}
	Given any fundamental weights $\mu_1,\ldots,\mu_n\in P^+$, let $\bar\mu\in P_\sigma^+$ be the restriction of $\sum \mu_i$ to $\mf{h}^\sigma$. Then, there is a $\gs$-isomorphism
	\[\mb{W}^\sigma(\bar\mu)\simeq R^\sigma(W(\mu_1),\ldots,W(\mu_n)),\]
	and an isomorphism $A^\sigma(\bar\mu)\simeq \As(-\vec{\mu})$ of $\C$-algebras, where $\vec{\mu}=(\mu_1,\ldots,\mu_n)$.
\end{prop}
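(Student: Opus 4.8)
The plan is to produce a surjection $\mb{W}^\sigma(\bar\mu)\twoheadrightarrow R^\sigma(W(\mu_1),\ldots,W(\mu_n))$ of $\gs$-modules, then to see that it is also compatible with the respective right actions of the weight algebras, and finally to check it is an isomorphism by a dimension/rank count using the freeness results already at our disposal. First I would set $w=v_1[1]\otimes\cdots\otimes v_n[1]\in\bigotimes_i W(\mu_i)[z_i]$, so that $R^\sigma(W(\mu_1),\ldots,W(\mu_n))=\U(\gs)\bu w$. The cyclic generator $w_{\bar\mu}$ of $\mb{W}^\sigma(\bar\mu)$ is defined by the relations $\mf{n}^-[t]^\sigma\cdot w_{\bar\mu}=0$, $(h\otimes 1)\cdot w_{\bar\mu}=-\bar\mu(h)w_{\bar\mu}$, and $(e_{\alpha,0})^{k_\alpha+1}\cdot w_{\bar\mu}=0$ with $k_\alpha=\bar\mu(\bar\alpha^\vee)$. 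So it suffices to verify these three relations hold for $w$. The first is immediate since each $v_i$ satisfies $\mf{n}^-[t]\cdot v_i=0$ and the $\bu$-action is by Leibniz rule, so $\mf{n}^-[t]^\sigma$ kills $w$. The Cartan relation holds because $(h\otimes 1)\bu w=\sum_i(\mu_i(h))\cdot(\text{stuff})$ summed into $-(\sum_i\mu_i)(h)w=-\bar\mu(h)w$ on restricting $h$ to $\mf{h}^\sigma$ (here one uses that $\bar\mu$ is the restriction of $\sum\mu_i$). The third relation is exactly the computation already carried out in the proof of Theorem \ref{398063} (the argument around equation \eqref{eq_last_rel}): expanding $e_{\alpha,0}=\sum_j e_{\sigma^j(\alpha)}\otimes 1$ and raising to the power $k_\alpha+1$, every monomial contains some factor $(e_{\sigma^j(\alpha)}\otimes 1)^{s_j}$ with $s_j>\mu_i(\sigma^j(\alpha)^\vee)$ for every $i$ simultaneously — since $k_\alpha=\bar\mu(\bar\alpha^\vee)=\sum_i\mu_i(\bar\alpha^\vee)\geq\sum_i\sum_j\mu_i(\sigma^j(\alpha)^\vee)$ by the same inequalities from \cite[(5.1.1)]{Kac90} and \eqref{328480} — and each $v_i$ has $(e_{\alpha_k}\otimes 1)^{\mu_i(\alpha_k^\vee)+1}\cdot v_i=0$ built into the Weyl module relations (for $A_{2\ell}$, one restricts attention to $\alpha\neq\alpha_{i,\ell}$ as before, where $\sigma(\alpha)$ is either $\alpha$ or orthogonal). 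This gives a well-defined $\gs$-surjection $\Phi:\mb{W}^\sigma(\bar\mu)\twoheadrightarrow R^\sigma(W(\mu_1),\ldots,W(\mu_n))$.

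Next I would check that $\Phi$ intertwines the right $\U(\hs)$-actions, which are given in both cases by $(u\cdot w_\bullet)\,h=u\cdot(h\cdot w_\bullet)$; since $\Phi$ is a $\gs$-map sending generator to generator, and both right actions commute with the $\gs$-action, $\Phi$ descends to a surjection $A^\sigma(\bar\mu)\twoheadrightarrow\As(-\vec\mu)$ of $\C$-algebras (the annihilator of $w_{\bar\mu}$ maps into the annihilator of $w$). To get that this is an isomorphism, I would compare the two algebras directly: by Theorem \ref{350320}(1), $A^\sigma(\bar\mu)$ is a graded polynomial algebra in variables $T_{i,r}$ of grade $m_ir$ for $i\in I_\sigma$, $1\leq r\leq n_i$ where $n_i$ is the coefficient of $\bar\omega_i$ in $\bar\mu$; on the other hand, when each $\mu_k$ is a fundamental weight $\omega_{j_k}$, Example \ref{ex_algebra} identifies $\As(-\omega_{j_k})\simeq\C[z^{m_{j_k}}]$ and the embedding \eqref{234983} realizes $\As(-\vec\mu)$ as a subalgebra of $\C[z_1,\ldots,z_n]$ generated by power sums $\sum_k\omega_{j_k}(h)z_k^{ms+j}$; a Newton's-identity/symmetric-function count (organizing the $z_k$ by their image $i=\eta(j_k)\in I_\sigma$ and using $m_{j_k}=m_i$ when $\eta(j_k)=i$) shows $\As(-\vec\mu)$ is itself a polynomial ring on generators of exactly the matching grades $m_i r$, $1\leq r\leq n_i$ — so a graded surjection between two graded polynomial rings with the same Hilbert series is an isomorphism.

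Finally, with $A^\sigma(\bar\mu)\simeq\As(-\vec\mu)$ in hand, I would conclude by a rank comparison: by Theorem \ref{350320}(3), $\mb{W}^\sigma(\bar\mu)$ is free over $A^\sigma(\bar\mu)$ of rank $\prod_{i\in I_\sigma}(\dim W^\sigma(\bar\omega_i))^{n_i}$; by Lemma \ref{342893}(1), $R^\sigma(W(\mu_1),\ldots,W(\mu_n))$ is graded free over $\As(-\vec\mu)$, of rank computed by specializing at $0$, which by Lemma \ref{342893}(2) gives $\dim W^\sigma(\bar\mu)$, and since for fundamental weights this equals $\prod_{i\in I_\sigma}(\dim W^\sigma(\bar\omega_i))^{n_i}$ (by iterating Corollary \ref{623012}, relating twisted Weyl modules to twisted Demazure modules of level one, together with the fusion/tensor-product behavior; alternatively this falls out of comparing the $0$-fiber of the surjection $\Phi$ with the isomorphism of Lemma \ref{342893}(2) and Theorem \ref{350320}(2)), the surjection $\Phi$ is a surjection of free modules of the same finite rank over the same Noetherian ring, hence an isomorphism. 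The main obstacle I expect is the bookkeeping in the middle step: matching the polynomial generators of $\As(-\vec\mu)$ with the $T_{i,r}$ of $A^\sigma(\bar\mu)$, i.e. checking that the power-sum subalgebra picked out by \eqref{234983} is genuinely polynomial on generators in the predicted grades when the $z_k$'s are grouped according to $\eta$ with multiplicities $m_i$ — everything else is either formal (the three defining relations for $w$, already essentially done in Theorem \ref{398063}) or a direct appeal to Theorem \ref{350320} and Lemma \ref{342893}.
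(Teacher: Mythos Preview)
Your proposal is correct and follows essentially the same architecture as the paper's proof: build the surjection $\mb{W}^\sigma(\bar\mu)\twoheadrightarrow R^\sigma(W(\mu_1),\ldots,W(\mu_n))$ by checking relations (exactly as in the proof of Theorem~\ref{398063}), deduce the surjection $A^\sigma(\bar\mu)\twoheadrightarrow\As(-\vec\mu)$, upgrade it to an isomorphism, and then finish by freeness and a fiber comparison at $0$.

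Two small points of comparison. For the algebra step you propose to identify $\As(-\vec\mu)$ abstractly as a polynomial ring with matching Hilbert series; the paper instead traces the explicit generators $T_{i,r}$ (via the recursively defined $P_{i,r}\in\U(\hs)$) through the composed map $A^\sigma(\bar\mu)\to\As(-\vec\mu)\hookrightarrow\C[z_1,\ldots,z_n]$ and checks their images are the elementary symmetric polynomials in suitably rescaled variables $(\delta_k z_k)^2$ under a Young subgroup, hence algebraically independent. This is exactly the ``bookkeeping'' you anticipate, and the signs $\delta_k$ (depending on whether $\mu_k=\omega_i$ or $\omega_{2\ell+1-i}$ in the $A_{2\ell}$ case) are the subtlety hiding in your Newton's-identities sketch. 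For the final step, your parenthetical alternative is what the paper does and is cleaner than your primary route: both $0$-fibers are $W^\sigma(\bar\mu)$ by Theorem~\ref{350320}(2) and Lemma~\ref{342893}(2), so the two graded free modules have the same graded character, and the surjection is an isomorphism---no need to invoke the product formula $\prod_i(\dim W^\sigma(\bar\omega_i))^{n_i}$ separately.
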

\begin{proof}
	Let $w_i\in W(\mu_i)$ (resp.\,$w_{\bar\mu}\in \mb{W}^\sigma(\bar\mu)$) denote the cyclic generator with weight $-\mu_i$ (resp.\,$-\bar\mu$). Similar to the map (\ref{428476}), we have a surjective $\gs$-morphism
	\begin{equation}\label{234571}
		\mb{W}^\sigma(\bar\mu)\twoheadrightarrow R^\sigma(W(\mu_1),\ldots,W(\mu_n)),
	\end{equation}
	given by $w_{\bar{\mu}}\mapsto w:=w_1[1]\otimes\cdots\otimes w_n[1]$. This induces a surjective map
	\begin{equation}\label{324821}
		A^\sigma(\bar\mu)=\U(\hs)/\mr{Ann}(w_{\bar\mu})\twoheadrightarrow \U(\hs)/\mr{Ann}(w)=\As(-\vec\mu).
	\end{equation}
	We claim this is an isomorphism. We only prove the case when $\mf{g}$ is of type $A_{2\ell}$. The other types can be checked similarly.
	
	Recall Theorem \ref{350320} that $A^\sigma(\bar\mu)$ is a a graded polynomial algebra in variables $T_{i,r}$ of grade $2r$, where $1\leq i\leq \ell$ and $1\leq r\leq \bar\mu(\beta_i^\vee)$. Set $n_i:=\bar\mu(\beta_i^\vee)$. Then, $n=\sum_{i=1}^\ell n_i$. In view of \cite[Section 5.2 and 5.3]{CIK14}, $T_{i,r}$ is the image of $P_{i,r}\in \U(\hs)$ which is defined as follows. For any $1\leq i\leq \ell$, set $P_{i,1}:=h_{i,2}$, which is defined in \eqref{802623}. For $r\geq 2$, we inductively define 
	\[P_{i,r}:=\frac{1}{r}\sum_{s=0}^r h_{i,2s+2}P_{i,r-s-1}.\]
	Now, we determine the image of $T_{i,r}$ under the map \eqref{324821} by examining the action of $P_{i,r}$ on $w$, where $1\leq i\leq \ell$ and $1\leq r\leq n_i$. Suppose $n_i\geq 1$, otherwise $P_{i,r}$ will not appear. From \cite[Section 5.4]{CIK14}, we have 
	\begin{equation}\label{324983}
		P_{i,r}\bu w=\sum_{j_1+\cdots+j_n=r} \big(P_{i,j_1}\bu w_1[1]\big)\otimes\cdots\otimes \big(P_{i, j_n}\bu w_n[1]\big),
	\end{equation}
	where $j_k\geq 0$ and by convention $P_{i,0}=1$. Observe that $h_{i,2s+2}\bu w_k[1]=\mu_k(h_i+(-1)^{s+1}h_{2\ell+1-i})w_k[z_k^{2s+2}]$ is nonzero only when $\mu_k=\omega_i \text{ or } \omega_{2\ell+1-i}$. Without loss of generality, suppose $\mu_k=\omega_i$ for $1\leq k\leq n_i'$, and $\mu=\omega_{2\ell+1-i}$ for $n_i'+1\leq k\leq n_i$, where $n_i'$ is the number of $\omega_i$ appearing in $\vec{\mu}$. Then, $P_{i,j_k}\bu w_k[1]=0$ whenever $j_k>1$, or $k>r$ and $j_k\geq 1$. Thus, the equation \eqref{324983} becomes 
	\begin{equation}\label{432983}
		 P_{i,r}\bu w= \sum_{j_1+\cdots+j_{n_i}=r} (-1)^{a}\big(w_1[z_1^{2j_1}]\otimes\cdots\otimes w_{n_i}[z_{n_i}^{2j_{n_i}}]\otimes w_{n_i+1}[1]\otimes \cdots\otimes w_{n}[1]\big), 
	\end{equation}
	where $0\leq j_k\leq 1$ and $a=j_{n_i'+1}+\cdots+j_{n_i}$. 
	
	Recall the embedding $\As(-\vec\mu)\hookrightarrow \C[z_1,\ldots,z_n]$ defined in \eqref{234983}. Composing it with the map \eqref{324821}, we get a morphism 
	\begin{equation}\label{298232}
		A^\sigma(\bar\mu)\to \C[z_1,\ldots,z_n].
	\end{equation}
	Given $1\leq i\leq \ell$, we define $K_i= \{1\leq k\leq n\mid \mu_k=\omega_i\text{ or } \omega_{2\ell+1-i}\}$, which is of cardinality $n_i$. The symmetric group $S_{n_i}$ permutes $K_i$. Let $S$ be the Young subgroup $S_{n_1}\times\cdots\times S_{n_\ell}$ of  $S_n$. For any $1\leq k\leq n$, set $\delta_k=1$ if $\mu_k=\omega_i$; $\delta_k=\sqrt{-1}$ if $\mu_k=\omega_{2\ell+1-i}$. We define the following polynomial 
	\[f_{i,r}:=\sum_{k\in K_i,\ {\scriptscriptstyle\sum} j_k=r} (\delta_kz_k)^{2j_k}\in \C\big[(\delta_1z_1)^2,\ldots, (\delta_nz_n)^2\big]^S. \]
	From the computation \eqref{432983}, the map \eqref{298232} sends $T_{i,r}$ to $f_{i,r}$. Note that $\{f_{i,r}\mid 1\leq i\leq \ell, 1\leq r\leq n_i\}$ is exactly a set of  algebraically independent generators of the algebra $\C\big[(\delta_1z_1)^2,\ldots, (\delta_nz_n)^2\big]^S$. We conclude that \eqref{298232} is an embedding. It follows that the surjective map \eqref{324821} is an isomorphism.	
	
	Now, both sides of \eqref{234571} are graded free $A^\sigma(\bar\mu)\simeq \As(-\vec\mu)$-modules, see Theorem \ref{350320} and Lemma \ref{342893}. Moreover, their fibers at $0$ are the twisted local Weyl module $W^\sigma(\bar\mu)$. By comparing their graded characters (as graded $\mf{h}^\sigma$-modules), we conclude that \eqref{234571} is an isomorphism.	
\end{proof}
\begin{thm}\label{cor_dem_weyl}
	Given fundamental coweights $\lambda_1,\ldots, \lambda_n \in X_*(T)^+$ with $\bar{\lambda}$ being the image of $\sum \lambda_i$ in $X_*(T)^+_\sigma$, there is an isomorphism of $\gs$-modules
	\[\Ds(1,\vec{\lambda})\simeq \mb{W}^\sigma(\iota(\lb)),\] 
	and an isomorphism of $\C$-algebras $\As(1,\la)\simeq A^\sigma(\iota(\bar\lambda))$.
\end{thm}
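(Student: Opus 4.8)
The plan is to deduce the statement directly from Corollary~\ref{623012} and Proposition~\ref{234924}. First I would unwind the definitions with $c=1$: by Definition~\ref{234798}, $\Ds(1,\vec\lambda)=R^\sigma\big(D(1,\lambda_1),\ldots,D(1,\lambda_n)\big)$, where the $i$-th factor is equipped with its cyclic generator $v_i$ of $\mf{h}$-weight $-\iota(\lambda_i)$. Since $\mf{g}$ is simply-laced and each $\lambda_i$ is a fundamental coweight, $\mu_i:=\iota(\lambda_i)$ is a fundamental weight of $\mf{g}$, and by part~(1) of Corollary~\ref{623012} (coming from Theorem~\ref{401910}, where for $c=1$ the extra Demazure relations are already relations in the Weyl module) there is a $\mf{g}[t]$-module isomorphism $D(1,\lambda_i)\simeq W(\mu_i)$ that identifies the cyclic generator $v_i$ with the lowest-weight generator $w_i$ of $W(\mu_i)$ (both span the one-dimensional $(-\mu_i)$-weight space and are killed by $t\mf{h}[t]$). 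The construction $V\mapsto V[z]$ of \eqref{554908}, and hence $R^\sigma$, depends only on the pointed module $(V,v)$ with $v$ a cyclic generator killed by $t\mf{h}[t]$; so these isomorphisms assemble into an isomorphism $\Ds(1,\vec\lambda)\simeq R^\sigma\big(W(\mu_1),\ldots,W(\mu_n)\big)$ of $\gs$-modules carrying cyclic generator to cyclic generator.

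Next I would apply Proposition~\ref{234924} verbatim to the fundamental weights $\mu_1,\ldots,\mu_n$: it yields $R^\sigma\big(W(\mu_1),\ldots,W(\mu_n)\big)\simeq \mb{W}^\sigma(\bar\mu)$ as $\gs$-modules, together with $\As(-\vec\mu)\simeq A^\sigma(\bar\mu)$, where $\bar\mu$ is the restriction of $\sum_i\mu_i$ to $\mf{h}^\sigma$. It remains to identify $\bar\mu$ with $\iota(\lb)$: since $\iota\colon X_*(T)\to P$ is linear and descends to $X_*(T)_\sigma\to P_\sigma$, one has $\sum_i\mu_i=\sum_i\iota(\lambda_i)=\iota\big(\sum_i\lambda_i\big)$, whose restriction to $\mf{h}^\sigma$ is precisely $\iota(\lb)$, where $\lb$ is the image of $\sum_i\lambda_i$ in $X_*(T)^+_\sigma$ (and $\iota(\lb)=\sum_i\bar\omega_{\eta(j_i)}\in P^+_\sigma$ by \eqref{eq_match_wt}, so the left-hand side $\mb{W}^\sigma(\iota(\lb))$ is defined). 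Hence $\bar\mu=\iota(\lb)$, and combining with the previous paragraph gives $\Ds(1,\vec\lambda)\simeq \mb{W}^\sigma(\iota(\lb))$. For the algebra statement, $\As(1,\la)=\As(-\iota(\la))=\As(-\vec\mu)$ by \eqref{749134}, and Proposition~\ref{234924} gives $\As(-\vec\mu)\simeq A^\sigma(\bar\mu)=A^\sigma(\iota(\lb))$; compatibility of the two isomorphisms (semilinearity of the module isomorphism along the algebra isomorphism) is automatic, since both are realized by the common assignment $w_{\iota(\lb)}\mapsto w:=v_1[1]\otimes\cdots\otimes v_n[1]$.

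Since Corollary~\ref{623012} and Proposition~\ref{234924} already contain all the substance, I do not expect a genuine obstacle here; the only points needing care are the bookkeeping that $R^\sigma$ is functorial in the pointed module $(V_i,v_i)$ (so that replacing $D(1,\lambda_i)$ by the isomorphic $W(\mu_i)$ changes nothing) and that the weight identifications are routed correctly through $\iota$ and the passage to $\sigma$-coinvariants. If one wished to avoid citing Proposition~\ref{234924}, an alternative route would be to construct directly a surjection $\mb{W}^\sigma(\iota(\lb))\twoheadrightarrow\Ds(1,\vec\lambda)$ sending $w_{\iota(\lb)}$ to $w$ (checking the defining relations of $\mb{W}^\sigma$ for $w$ exactly as in the proof of Theorem~\ref{398063}), observe that both sides are graded free over $A^\sigma(\iota(\lb))\simeq\As(1,\la)$ by Theorem~\ref{350320} and Theorem~\ref{616072}, and conclude by comparing graded $\mf{h}^\sigma$-characters after specializing at the graded maximal ideal, where both reduce to the twisted local Weyl module $W^\sigma(\iota(\lb))$ via Lemma~\ref{342893}\,(2) and Theorem~\ref{350320}\,(2); but the argument through Proposition~\ref{234924} is considerably shorter.
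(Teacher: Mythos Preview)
Your proposal is correct and follows exactly the paper's approach: the paper's proof is the single sentence ``It directly follows from [Proposition]~\ref{234924} and Corollary~\ref{623012},'' and you have simply unpacked how those two results combine (identifying $D(1,\lambda_i)\simeq W(\iota(\lambda_i))$ via Corollary~\ref{623012}, then applying Proposition~\ref{234924} to the resulting $R^\sigma$, and checking $\bar\mu=\iota(\lb)$). Your bookkeeping about functoriality of $R^\sigma$ in the pointed modules and the routing of weights through $\iota$ is exactly the implicit content of that one-line proof.
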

\begin{proof}
	The theorem directly follows from Theorem \ref{234924} and Corollary \ref{623012}.
\end{proof}
\begin{remark}\label{rem_hw_alg}
From part (1) of Theorem \ref{350320} and Theorem \ref{cor_dem_weyl}, we are able to determine the algebra $\As(c,\vec{\lambda})$ when $c=1$ and $\vec{\lambda}$ is a collection of fundamental coweights. 
\end{remark}

\section{Beilinson-Drinfeld Grassmannians of parahoric Bruhat-Tits group schemes}\label{825344}
In this section, we define Beilinson-Drinfeld Schubert varieties of certain parahoric Bruhat-Tits group schemes, and we prove their flatness over the base.  
\subsection{Beilinson-Drinfeld Grassmannian} 
\label{BD_Gr}
Let $G$ be a reductive group over $\mathbb{C}$. Let $\Gamma$ be a finite group acting on $G$ which preserves a maximal torus $T$. Let $C$ be an irreducible smooth algebraic curve over $\C$ with a faithful $\Gamma$-action. Denote by $\bar C$ the quotient curve $C/\Gamma$. Let $\pi: C\to \bar{C}$ be the quotient map, and denote by $\bar{p}:=\pi(p)$ the image of a point $p\in C$. For each point $p\in C$, denote by $\Gamma_p$ the stabilizer of $p$ in $\Gamma$. We say a point $p\in C$ is unramified (resp.\,ramified) if $\Gamma_p$ is trivial (resp.\,nontrivial).

\begin{lem}\label{031385}
	Let $C_1$, $C_2$ be irreducible smooth curves over $\C$ with faithful $\Gamma$-actions. Let $f: C_1\to C_2$ be a $\Gamma$-equivariant \'etale morphism such that $\Gamma_p=\Gamma_{f(p)}$ for any $p\in C_1$. Then, the induced map $\bar{f}: \bar C_1\to \bar C_2$ is also \'etale, and there is an isomorphism 
	\begin{equation}\label{320259}
		C_1\simeq C_2\times_{\bar C_2} \bar C_1.
	\end{equation}
\end{lem}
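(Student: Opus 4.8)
The plan is to work locally over $\bar C_2$ and exploit the universal property of the fiber product together with the fact that $f$ is $\Gamma$-equivariant and \'etale. First I would observe that since $f:C_1\to C_2$ is \'etale and $\Gamma$-equivariant, it descends to a morphism $\bar f:\bar C_1\to \bar C_2$ on the quotient curves; one must check $\bar f$ is \'etale. This is a descent statement: \'etaleness can be checked after the faithfully flat base change $C_2\to\bar C_2$ (the quotient map is finite flat, and faithfully flat since it is surjective), so it suffices to show that $C_1\times_{\bar C_1}\bar C_2\to C_2$ — equivalently, after identifying things, the map $C_1\to C_2\times_{\bar C_2}\bar C_1$ followed by projection — is \'etale. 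More cleanly: the hypothesis $\Gamma_p=\Gamma_{f(p)}$ forces $f$ to be compatible with the $\Gamma$-orbit structure in a way that makes the square with the quotient maps Cartesian, and then \'etaleness of $\bar f$ follows from \'etaleness of $f$ by faithfully flat descent along $C_2\to\bar C_2$.

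The key step is therefore to prove the isomorphism \eqref{320259}, i.e. that $C_1\xrightarrow{(f,\pi_1)} C_2\times_{\bar C_2}\bar C_1$ is an isomorphism, where $\pi_1:C_1\to\bar C_1$ is the quotient map. I would check this \'etale-locally on $\bar C_2$, or even formally-locally at each point $\bar q\in\bar C_2$, reducing to a statement about complete local rings or about $\Gamma$-stable analytic neighborhoods. Choose $q\in C_2$ over $\bar q$ and $p\in C_1$ over $q$; the hypothesis gives $\Gamma_p=\Gamma_q=:\Gamma'$, a subgroup of $\Gamma$ (since the action on $C_2$ is faithful and ramification is total, $\Gamma_q$ is cyclic and acts by a faithful local character). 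The fiber of $\pi_2:C_2\to\bar C_2$ over $\bar q$ is the $\Gamma$-orbit of $q$, with $|\Gamma/\Gamma'|$ points, and similarly for $C_1$ over $\bar q$; and the fiber of $\bar f$ over $\bar q$ is a single point $\bar p$ because $f$ being \'etale with $\Gamma_p=\Gamma_q$ means $f$ induces a bijection on the local ramification data, so $\bar f$ is unramified of degree one near $\bar p$. Hence both sides of \eqref{320259} have the same fiber over $\bar q$ as $\Gamma$-sets, and the natural map $C_1\to C_2\times_{\bar C_2}\bar C_1$ is a $\Gamma$-equivariant, \'etale (it is \'etale over $C_2$ since $\pi_1$ becomes \'etale after base change to the \'etale locus, and the ramified points match up by the stabilizer hypothesis) bijection between smooth curves, hence an isomorphism.

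I expect the main obstacle to be the bookkeeping at the ramified points: one must verify that the condition $\Gamma_p=\Gamma_{f(p)}$ genuinely guarantees that $f$ does not "unfold" ramification, so that $\bar f$ stays \'etale there and the fiber product does not acquire extra nilpotents or extra branches. Concretely, near a totally ramified point the quotient map looks like $t\mapsto t^{|\Gamma'|}$ in a suitable coordinate, and one checks that an \'etale $\Gamma$-equivariant $f$ with matching stabilizer is, in these coordinates, an \'etale map commuting with the $\mu_{|\Gamma'|}$-action, which forces it to descend to an \'etale map on the base coordinates and makes the square Cartesian. Away from ramification everything is an honest \'etale cover of curves and the claim is formal. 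Once \eqref{320259} is established, \'etaleness of $\bar f$ is immediate: $\bar f$ becomes $f$ after the faithfully flat base change $\pi_2$, using \eqref{320259} to identify $C_2\times_{\bar C_2}\bar C_1\simeq C_1$, so $\bar f$ is \'etale by descent.
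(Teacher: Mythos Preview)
Your approach is workable and in one respect overlaps with the paper: both you and the paper establish that $\bar f$ is \'etale by a formal--local computation at ramified points (the paper phrases this as ``$f$ induces an isomorphism $\mb{D}_p\simeq\mb{D}_{f(p)}$ of formal discs''). Where you diverge is in the proof of the isomorphism \eqref{320259}. The paper argues algebraically via function fields: it shows $K_1=K_2K_1^\sigma$ by a degree count in the Galois diagram, deduces that $\ms{O}_2\otimes_{\ms{O}_2^\sigma}K_1^\sigma\simeq K_1$, and concludes that $C_2\times_{\bar C_2}\bar C_1$ is integral with the same function field as $C_1$; together with a point-set surjectivity check this gives the isomorphism. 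Your route is more geometric: once $\bar f$ is \'etale the fiber product is smooth, the comparison map $h=(f,\pi_1)$ is a finite morphism of $\bar C_1$-schemes which is \'etale (by cancellation, since its composite with the projection to $C_2$ is $f$ and that projection is the base change of $\bar f$), and both sides are flat of rank $|\Gamma|$ over $\bar C_1$, forcing $h$ to be finite \'etale of degree one, hence an isomorphism. This degree argument is arguably cleaner than the function-field computation and avoids integrality questions about the fiber product.

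Two points to tighten. First, your write-up is organized circularly: you propose to deduce \'etaleness of $\bar f$ from \eqref{320259} by descent, but your argument for \eqref{320259} invokes smoothness of the fiber product, which needs $\bar f$ \'etale. The resolution is already in your last paragraph: the formal-local computation at ramified points proves $\bar f$ \'etale \emph{directly}, independently of \eqref{320259}; reorder so that this comes first. Second, the sentence ``the fiber of $\bar f$ over $\bar q$ is a single point $\bar p$ \ldots\ so $\bar f$ is unramified of degree one near $\bar p$'' is wrong as stated: $\bar f$ has the same degree as $f$, which need not be one. What you actually need is that $h$ is bijective on fibers over $\bar C_1$, and this follows from the rank comparison above rather than from any claim about $\deg\bar f$.
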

\begin{proof}
	We can assume $C_1$, $C_2$ are affine curves. We denote by $\pi_i: C_i\to \bar{C}_i$ the projection maps for $i=1,2$. 
	Let $K_1$, $K_2$ be their function fields, and $\ms{O}_1$, $\ms{O}_2$ be the coordinate rings. Then we have a diagram of field extensions
	\[\xymatrix{
		K_1 & \ar[l] K_2 \\
		K_1^\sigma \ar[u] & \ar[l] K_2^\sigma \ar[u]
	}.\]
	Observing that $K_2\cap K_1^\sigma= K_2^\sigma$, we have 
	\[ [K_2K_1^\sigma: K_2^\sigma]=[K_2K_1^\sigma: K_2][K_2: K_2^\sigma]=[K_1^\sigma:K_2^\sigma][K_2: K_2^\sigma]=[K_1:K_2^\sigma]. \] Thus, $K_1=K_2K_1^\sigma$. Since $K_1/K_1^\sigma$ is finite, we have $K_1=K_1^\sigma(\theta)$ for some $\theta\in K_1$, and $f(\theta)=0$ for some polynomial $f$ over $K_1^\sigma$. By multiplying elements in $\ms{O}^\sigma_1$, all coefficients of $f$ can be made to be in $\ms{O}^\sigma_1$. Moreover, by modifying $\theta$, we can further assume $f$ is monic and $f(\theta)=0$. Thus, $\theta$ is integral over $\ms{O}_1^\sigma$. It follows that $\theta$ is integral over $\ms{O}_1$. Since $\ms{O}_1$ is integrally closed in $K_1$, we must have $\theta\in \ms{O}_1$. Hence, $K_1\simeq K^\sigma_1\otimes_{\ms{O}_1^\sigma}\ms{O}_1$, since $\ms{O}_1$ is a free $\ms{O}_1^\sigma$-module of rank $[K_1: K_1^\sigma]$. Similarly, we have $K_2=K^\sigma_2\otimes_{\ms{O}_2^\sigma}\ms{O}_2$. Therefore, we have
	\[ \ms{O}_2\otimes_{\ms{O}_2^\sigma} K_1^\sigma \simeq \ms{O}_2\otimes_{\ms{O}_2^\sigma} \ms{O}_1^\sigma \otimes_{\ms{O}_1^\sigma} K_1^\sigma\simeq K_1\simeq \ms{O}_1\otimes_{\ms{O}_1^\sigma} K_1^\sigma.\]

	By assumption, $\Gamma_p=\Gamma_{f(p)}$ for any $p\in C_1$. The map $f$ induces an isomorphism $\mb{D}_p\simeq \mb{D}_{f(p)}$ of formal discs around $p$ and $f(p)$, for any $p\in C_1$. Thus, $f$ induces an \'etale map $\bar f: \bar C_1\to \bar C_2$. It follows that $C_2\times_{\bar C_2}\bar C_1$ is smooth. For any point $(p, q)\in C_2\times_{\bar C_2}\bar C_1$, we have $\pi_2(p)=\bar f(q)$. We pick a point $x$ in $C_1$ such that $\pi_1(x)=q$. Then $\pi_2(f(x) )=\pi_2(p)$. Thus, $f(x)=\gamma ( p)$ for some $\gamma\in \Gamma$. Set $x'=\gamma^{-1}(x)$. Then, $f(x')=p$ and $\pi_1(x')=q$. This shows the surjectivity of the map $C_1\to C_2\times_{\bar C_2}\bar C_1$. Since $C_2\times_{\bar C_2} \bar C_1$ is reduced and $\ms{O}_2\otimes_{\ms{O}_2^\sigma} K_1^\sigma\simeq K_1$, the curve $C_2\times_{\bar C_2}\bar C_1$ is integral and has the same function field as $C_1$. Therefore, we have an isomorphism $C_1\simeq C_2\times_{\bar C_2}\bar C_1$.
\end{proof}
Let $\mc{G}_{\bar C}$ denote the $\Gamma$-fixed point subgroup scheme $\mr{Res}_{C/\bar{C}}\big( G\times C\big)^\Gamma $ of the Weil restriction $\mr{Res}_{C/\bar{C}}\big( G\times C\big)$. We will simply denote it by $\mc{G}$ if there is no confusion. Now, we prove an \'etale base change property of $\mc{G}$ which will be used in Section \ref{029728}.
\begin{prop}\label{218368}
	Under the same setup as in Lemma \ref{031385} and let $\Gamma$ act on $G$. We have an isomorphism of group schemes over $\bar C_1$ 
	\[\mc{G}_{\bar C_1}\simeq \mc{G}_{\bar C_2}\times_{\bar C_2} \bar C_1.\]
\end{prop}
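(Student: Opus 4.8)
The plan is to deduce the proposition from two general facts, both of which hold without extra hypotheses in the present situation: first, that Weil restriction along a finite flat morphism commutes with arbitrary base change, and second, that the scheme of $\Gamma$-fixed points of a separated scheme commutes with base change over a base on which $\Gamma$ acts trivially. Write $\pi_i\colon C_i\to\bar C_i$ for the quotient maps and set $\mc{H}_{\bar C_i}:=\mr{Res}_{C_i/\bar C_i}(G\times C_i)$, so that $\mc{G}_{\bar C_i}=\mc{H}_{\bar C_i}^\Gamma$. Each $\pi_i$ is finite and flat (a finite dominant morphism of smooth curves, so $\pi_{i,*}\mathcal O_{C_i}$ is a finitely generated torsion-free, hence locally free, $\mathcal O_{\bar C_i}$-module); combined with the fact that $G\times C_i$ is affine over $C_i$, this guarantees that $\mc{H}_{\bar C_i}$ is representable and affine, in particular separated, over $\bar C_i$, so that $\mc{H}_{\bar C_i}^\Gamma$ is a well-defined closed subscheme.

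First I would apply the base-change formula for Weil restriction to the morphism $\bar f\colon\bar C_1\to\bar C_2$, obtaining a canonical isomorphism
\[\mc{H}_{\bar C_2}\times_{\bar C_2}\bar C_1\;\simeq\;\mr{Res}_{(C_2\times_{\bar C_2}\bar C_1)/\bar C_1}\big((G\times C_2)\times_{C_2}(C_2\times_{\bar C_2}\bar C_1)\big).\]
Then I would substitute the isomorphism $C_1\simeq C_2\times_{\bar C_2}\bar C_1$ of Lemma \ref{031385}, which is realized by the morphism $(f,\pi_1)$ and is therefore $\Gamma$-equivariant ($f$ is $\Gamma$-equivariant, $\pi_1$ is $\Gamma$-invariant, and $\Gamma$ acts on $C_2\times_{\bar C_2}\bar C_1$ through the first factor). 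Under this substitution the inner scheme becomes $(G\times C_2)\times_{C_2}C_1\simeq G\times C_1$, again $\Gamma$-equivariantly, so the right-hand side is identified $\Gamma$-equivariantly with $\mc{H}_{\bar C_1}$. Passing to $\Gamma$-fixed points — which commutes with the base change $\bar C_1\to\bar C_2$ precisely because $\Gamma$ acts trivially on $\bar C_1$ and $\bar C_2$ — then yields
\[\mc{G}_{\bar C_2}\times_{\bar C_2}\bar C_1=(\mc{H}_{\bar C_2}\times_{\bar C_2}\bar C_1)^\Gamma\simeq\mc{H}_{\bar C_1}^\Gamma=\mc{G}_{\bar C_1},\]
and since every isomorphism used is compatible with the group-scheme structures, this is an isomorphism of group schemes over $\bar C_1$.

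The step requiring genuine care — and the only real obstacle — is checking that all these identifications respect the $\Gamma$-actions, in particular that the functorial $\Gamma$-action on $\mc{H}_{\bar C_2}=\mr{Res}_{C_2/\bar C_2}(G\times C_2)$, induced from the $\Gamma$-action on the pair $(C_2\to\bar C_2,\,G\times C_2)$ (where $\gamma$ acts on $C_2$ by translation over $\bar C_2$ and on $G\times C_2$ by $(g,c)\mapsto(\gamma\cdot g,\gamma\cdot c)$), is carried by the base-change isomorphism above to the analogous action on $\mc{H}_{\bar C_1}$. Concretely this amounts to chasing a $\gamma$ through the adjunction defining Weil restriction and through the isomorphism of Lemma \ref{031385}; it is purely formal but should be spelled out, since the content of the proposition is exactly this equivariant compatibility and not the bare isomorphism of Weil restrictions.
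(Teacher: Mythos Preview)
Your proof is correct and follows essentially the same route as the paper: both arguments reduce the statement to the isomorphism $C_1\simeq C_2\times_{\bar C_2}\bar C_1$ of Lemma \ref{031385}, and both use that $\Gamma$ acts trivially on the bases $\bar C_i$. The only cosmetic difference is that the paper computes the functor of points directly, writing $\mc{G}_{\bar C_1}(S)=\mr{Map}^\Gamma(C_1\times_{\bar C_1}S,G)$ and $(\mc{G}_{\bar C_2}\times_{\bar C_2}\bar C_1)(S)=\mr{Map}^\Gamma(C_2\times_{\bar C_2}S,G)$ and matching them via $C_1\times_{\bar C_1}S\simeq C_2\times_{\bar C_2}S$, whereas you package the same computation as ``Weil restriction commutes with base change'' followed by ``fixed points commute with base change over a trivial base.'' Your more structural phrasing has the advantage of making explicit which general facts are being used and where representability and separatedness enter; the paper's version is shorter because the functor-of-points description already has the $\Gamma$-equivariance built in, so no separate equivariance check is needed.
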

\begin{proof}
	Given any scheme $\varphi:S\to \bar C_1$ over $\bar C_1$, by definition $\mc{G}_{\bar C_1}(S)=\mr{Map}^\Gamma(C_1\times_{\bar C_1}S,G)$ consists of $\Gamma$-equivariant morphisms from $C_1\times_{\bar C_1} S$ to $G$. 
	On the other hand, $(\mc{G}_{\bar C_2}\times_{\bar C_2} \bar C_1)(S)$ consists of $\Gamma$-equivariant morphisms from $C_2\times_{\bar C_2}S$ to $G$, where $S$ is regarded as a scheme over $\bar C_2$ via $f\circ \varphi:S\to \bar C_2$. By Lemma \ref{031385}, we have an isomorphism 
	\[C_1\times_{\bar C_1} S \simeq (C_2\times_{\bar C_2}\bar C_1)\times_{\bar C_1} S\simeq C_2\times_{\bar C_2}S.\]
	Thus, $\mc{G}_{\bar C_1}(S)\simeq (\mc{G}_{\bar C_2}\times_{\bar C_2} \bar C_1)(S)$ for any scheme $S$ over $\bar C_1$.
\end{proof}
For any $\C$-algebra $R$, and any morphism $p:\mr{Spec}(R)\to C$, we denote by $\Delta_p\subseteq C\times \mr{Spec}(R)$ the graph of $p$. Let $C_R:=C\times \mr{Spec}(R)$. We denote by $\hat{\Delta}_p$ the formal completion of $C_R$ along $\Delta_p$, and by $\hat{\Delta}_p^*$ the punctured formal completion along $\Delta_p$, cf.\,\cite[Chapter II Section 9]{Hartshorne77}. By \cite[\href{https://stacks.math.columbia.edu/tag/0GBA}{Tag 0GBA},\href{https://stacks.math.columbia.edu/tag/0AMC}{Tag 0AMC}]{stacks-project}, we have an equivalent definition of the formal completion $\hat{D}$ of $C_R$ along a closed subscheme $D$ as the following functor, for any $R$-scheme $u:T\to \mr{Spec}(R)$, \begin{equation}\label{326925}
\hat{D}(T)= \{\phi:T\to C\,|\,(\phi, u) \circ \alpha\in D(T_{\mr{red}})\},
\end{equation}
where $T_{\mr{red}}$ is the reduced scheme of $T$ with the natural embedding $\alpha: T_{\mr{red}}\to T$. 

\medskip
We recall the definition of Beilinson-Drinfeld Grassmannians of general group schemes.
\begin{definition}\label{def_BD_jet}
Let $X$ be a smooth algebraic curve over $\C$. Let $\G$ be a smooth affine group scheme over $X$. The Beilinson-Drinfeld Grassmannian $\Gr_{\mc{G},C^n}$ and the jet group scheme $L^+\mc{G}_{X^n}$ over $X^n$ are defined as follows: for any $\C$-algebra $R$, define
\[\Gr_{\mc{G},X^n}(R):= \left\{ (p_1, \ldots, p_n, \mc{F}, \beta ) \,\middle|\, p_i \in X(R), \mc{F} \text{ a } \mc{G}\text{-torsor on } X_R, \beta: \mc{F}|_{X_{R} \setminus \cup\Delta_{p_i}}\simeq \mathring{\mc{F}}|_{X_{R} \setminus \cup\Delta_{p_i}} \right\}\]
and 
\[L^+\mc{G}_{X^n}(R):=\left\{ (p_1, \ldots, p_n,\eta ) \,\middle|\, p_i \in X(R), \eta: \mathring{\mc{F}}|_{ \widehat{ \cup{\Delta}_{p_i} } }\simeq \mathring{\mc{F}}|_{ \widehat{ \cup{\Delta}_{p_i} } }\right\},\]
where $\mathring{\mc{F}}$ is the trivial $\mc{G}$-torsor on $X_R$, and $ \widehat{ \cup{\Delta}_{p_i} } $ is the completion of $X_R$ along $ { \cup{\Delta}_{p_i} } $. 
\end{definition}

There is a left $L^+\mc{G}_{X^n}$-action on $\Gr_{\mc{G},X^n}$ given by the following map 
\begin{align}\label{522693}
	L^+\mc{G}_{X^n}\times \mr{Gr}_{\mc{G}, X^n} & \to \mr{Gr}_{\mc{G}, X^n} \\
	\big( (\vec{p},\eta), (\vec{p}, \mc{F}, \beta) \big) &\mapsto \big(\vec{p}, \mc{F}', \beta' \big)\nonumber,
\end{align}
where by Beauville-Laszlo's Lemma, there exists a unique $\mc{G}$-torsor $\mc{F}'$ (up to a unique isomorphism) with isomorphisms $\delta:{\mc{F}}'|_{ \widehat{\cup\Delta_{p_i}} }\simeq \mc{F}|_{ \widehat{\cup\Delta_{p_i}} }$ and $\beta': \mc{F}'|_{C_R\setminus \cup\Delta_{p_i} } \simeq \mathring{\mc{F}}|_{C_R\setminus \cup \Delta_{ p_i}}$ such that $\beta'\circ\delta^{-1}|_{{\widehat{\cup\Delta_{p_i}} }^*}=\eta\circ \beta|_{{\widehat{\cup\Delta_{p_i}} }^*}$. Here ${\widehat{\cup\Delta_{p_i}} }^*:=\big({{\widehat{\cup\Delta_{p_i}} }}\big)\cap \big(X_R\setminus \cup\Delta_{ p_i}\big)$.

\begin{prop}\label{329321}
	Let $f:X\to Y$ be an \'etale morphism of  irreducible smooth curves. Let $\G_Y$ be a smooth affine group scheme over $Y$, and let $\G_X= \G_Y\times_Y X$. Let $Z=\{(x_1,\ldots,x_n)\in X^n \,|\, x_i\neq x_j, f(x_i)= f(x_j) \text{ for some } i\neq j\}$. We have an isomorphism
	$$ \Gr_{\G_{X}, X^n\setminus Z}\simeq \Gr_{\G_{ Y}, Y^n}\times_{Y^n}(X^n\setminus Z).$$
\end{prop}
\begin{proof}	
 Let $S$ be a scheme over $\mathbb{C}$. When $n=2$, an $S$-point $(x_1,x_2)\in (X^2\setminus Z)(S)$ is a pair of points $x_1,x_2\in X(S)$ such that the equalizer $\mr{Eq}( x_1, x_2)$ of $ x_1, x_2:S\to  X$ is naturally isomorphic to the equalizer $\mr{Eq}( y_1, y_2)$ of $ y_1, y_2:S\to  Y$, where $y_i=f\circ x_i\in Y(S)$. We shall show ${f}_S= f\times \mr{Id}_S: X_S=X\times S\to  Y\times S=Y_S$ induces an isomorphism $\widehat{\Delta_{ x_1}\cup\Delta_{ x_2}}\simeq \widehat{\Delta_{ y_1}\cup\Delta_{ y_2}}$. Firstly, $f_S$ is \'etale and  restricts to isomorphisms $\Gamma_{ x_1}\simeq \Delta_{ y_1}$ and $\Delta_{ x_2}\simeq\Delta_{ y_2}$. Since $\mr{Eq}( x_1, x_2)\simeq \mr{Eq}( y_1, y_2)$, we have an isomorphism
	$$\Delta_{ x_1}\times_{ X_S}\Delta_{ x_2}\simeq \mr{Eq}\big(({x}_1\times \mr{Id}_S), ({x}_2\times \mr{Id}_S)\big)\simeq \mr{Eq}\big(({y}_1\times \mr{Id}_S), ({y}_2\times \mr{Id}_S)\big)\simeq \Delta_{ y_1}\times_{ Y_S}\Delta_{ y_2}.$$ 
	Thus, ${f}_S$ induces an isomorphism 
\begin{equation}\label{eq_Delta}
{\Delta_{ x_1}\cup\Delta_{ x_2}}\simeq {\Delta_{ y_1}\cup\Delta_{ y_2}}.\end{equation} 

For any $S$-scheme $u: T\to S$, by \eqref{326925}, we have 
\begin{equation}\label{eq_comp_functor}
\widehat{\Delta_{ x_1}\cup\Delta_{ x_2}}(T)= \{  \phi:T\to  X \mid (\phi, u)\circ\alpha\in (\Delta_{ x_1}\cup\Delta_{ x_2})(T_{\mr{red}}) \},\end{equation}  where $T_{\mr{red}}$ is the reduced scheme of $T$ with the natural embedding $\alpha: T_{\mr{red}}\to T$. Then,  
we have a map $\iota: \widehat{\Delta_{ x_1}\cup\Delta_{ x_2}}(T)\to \widehat{\Delta_{ y_1}\cup\Delta_{ y_2}}(T)$ given by $\phi\mapsto f\circ \phi$.  By Lemma \ref{lem_lift},  $\iota$ is bijective.  Thus, we have the following isomorphisms 
	\begin{equation} \label{eq_iso_graphs}
	\widehat{\Delta_{ x_1}\cup\Delta_{ x_2}}\simeq \widehat{\Delta_{ y_1}\cup\Delta_{ y_2}},  \quad \widehat{\Delta_{ x_1}\cup\Delta_{ x_2}}^*\simeq \widehat{\Delta_{ y_1}\cup\Delta_{ y_2}}^*\end{equation}
 where $^*$ represents the punctured formal completion. Let $\mc{F}$ be a $\G_{ X}$-torsor over $ X_S$ with a trivialization $\beta:\mc{F}|_{ X_S\setminus(\Delta_{ x_1}\cup\Delta_{ x_2})}\simeq \mc{F}^0|_{ X_S\setminus(\Delta_{ x_1}\cup\Delta_{ x_2})}$. By Beaville-Laszlo lemma, this is equivalent to the data consisting of a torsor $\mc{F}'$ over $\widehat{\Delta_{ x_1}\cup\Delta_{ x_2}}$ and a trivialization $\beta'$ over $\widehat{\Delta_{ x_1}\cup\Delta_{ x_2}}^*$. By isomorphisms in (\ref{eq_iso_graphs}), we get a torsor $\mc{P}$ over $\widehat{\Delta_{ y_1}\cup\Delta_{ y_2}}$ and a trivialization $\eta$ over $\widehat{\Delta_{ y_1}\cup\Delta_{ y_2}}^*$. Again, this is equivalent to the data of a $\G_{ Y}$-torsor over $ Y_S$ with a trivialization over $ Y_S\setminus (\Delta_{ y_1}\cup\Delta_{ y_2})$,  which will still be denoted  by $\mc{P}$ and $\eta$. Thus, we get an isomorphism 
	$$\Gr_{\G_{ X}, X^2\setminus Z}\to  \Gr_{\G_{ Y}, Y ^2}\times_{X^2}(X^2\setminus Z),$$
	given by $(x_1,x_2,\mc{F},\beta)\mapsto \big((y_1,y_2, \mc{P},\eta), (x_1,x_2)\big)$.
	
	When $n\geq 3$, by induction on $n$, we assume that $ f_S$ induces an isomorphism $$\Delta_{ x_1}\cup\cdots\cup\Delta_{ x_{n-1}}\simeq \Delta_{ y_1}\cup\cdots\cup\Delta_{ y_{n-1}}.$$
	Then, $(\bigcup_{i=1}^{n-1}\Delta_{ x_{i}})\times_{ X_S}\Delta_{ x_n}\simeq \bigcup_{i=1}^{n-1}(\Delta_{ x_i}\times_{ X_S} \Delta_{ x_n})\simeq \bigcup_{i=1}^{n-1}(\Delta_{ y_i}\times_{ Y_S} \Delta_{ y_n})\simeq (\bigcup_{i=1}^{n-1}\Delta_{ y_{i}})\times_{ Y_S}\Delta_{ y_n}$. Thus, $ f_S$ induces an isomorphism $\Delta_{ x_1}\cup\cdots\cup\Delta_{ x_{n}}\simeq \Delta_{ y_1}\cup\cdots\cup\Delta_{ y_{n}}$. By the same argument as in the case when $n=2$, we have isomorphisms $\widehat{\bigcup_i \Delta_{ x_i}}\simeq \widehat{\bigcup_i \Delta_{ y_i}}$, $\widehat{\bigcup_i \Delta_{ x_i}}^*\simeq \widehat{\bigcup_i \Delta_{ y_i}}^*$, and 
	\begin{equation*} 
		\Gr_{\G_{ X}, X^n\setminus Z}\simeq \Gr_{\G_{ Y}, Y^n}\times_{Y^n}(X^n\setminus Z).\qedhere
	\end{equation*}
\end{proof}

\begin{remark}
When $n=1$, $Z$ is the empty set.  In this case, Proposition \ref{329321} is exactly \cite[Lemma 3.2]{Zhu14}. When $n\geq 2$, a statement was formulated incorrectly in \cite[Prop.3.1.20]{Zhu17}.  \end{remark}
\begin{lem}\label{lem_lift}
Suppose we are in the same setup and assumptions as in Proposition \ref{329321} and its proof.
For any $\psi\in  Y(T)$ and $u:T\to S$ such that $(\psi\circ \alpha, u \circ\alpha) \in  (\Delta_{ y_1}\cup\Delta_{ y_2})(T_{\rm red})$, 
we have a unique lifting $\phi\in X(T)$ such that  the following diagram commutes
\begin{equation}
 \xymatrixcolsep{2pc}
 \xymatrix{
T_{\mr{red}} \ar[r]^-{} \ar[d]_{\alpha} &  X \ar[d]^-{f} \\
T \ar[r]^-{\psi}\ar[ru]^{\exists \, \phi}&  Y}.
\end{equation}
 and $(\phi\circ \alpha, u \circ \alpha) \in 
(\Delta_{ x_1}\cup\Delta_{ x_2})(T_{\rm red})$.
\end{lem}

\begin{proof}
We may assume that $S=\mr{Spec}(A)$ and $T=\mr{Spec}(B)$ for some commutative rings $A, B$ over $\mathbb{C}$.   By assumption $(\psi\circ \alpha, u\circ \alpha )\in (\Delta_{ y_1}\cup\Delta_{ y_2})(T_{\rm red})$. Then the isomorphism (\ref{eq_Delta}) induces a morphism $(\phi_{\rm red}, u\circ \alpha):  T_{\rm red}\to X\times S$ such that $f\circ \phi_{\rm red}=\psi\circ \alpha$ and $(\phi_{\rm red}, u\circ \alpha)\in(\Delta_{ x_1}\cup\Delta_{ x_2})(T_{\rm red}) $.  By \cite[\href{https://stacks.math.columbia.edu/tag/02HF}{Tag 02HF}]{stacks-project}, $f$ is \'etale implies that $f$ is formally \'etale. When $B$ is Noetherian, its Jacobson ideal $J(B)$ is nilpotent, i.e. there exists an integer $k$ such that $J(B)^k=0$. Then, by the definition of formally \'etale morphism, there exists a unique morphism $\phi\in X(T)$ such that $\phi\circ \alpha= \phi_{\rm red}$ and 
 $f\circ \phi= \psi$.

We now consider the general case.  Let $h: A\to B$ be the  ring homomorphism induced from $u: T\to S$. Since $X$ is of finite type, there exists a finitely generated $\mathbb{C}$-subalgebra $A'$ of $A$ and $x'_1,x'_2\in X(A')$ such that $p(x'_i)=x_i$, i=1,2, where $p:S\to S':=\mr{Spec}(A')$ is induced from $A'\to A$, see the left diagram in \eqref{5298385}. 
\begin{equation}\label{5298385}
\xymatrix{
S \ar[r]^-{x_i} \ar[d]_{p} &  X \\
\exists\, S' \ar[ru]_{ x_i'}&}\hspace{6em} 
\xymatrixcolsep{3pc}
\xymatrix{
T \ar[d]_{q} \ar[r]^-{(\psi,u)}   &  Y\times S \ar[d]^{\mr{Id}_Y\times p}\\
\exists\, T' \ar[r]^-{(\psi',u')} & Y\times S'}
\end{equation}

For any $\psi\in Y(T)=Y(B)$ such that $(\psi, u) \circ\alpha \in  (\Delta_{ y_1}\cup\Delta_{ y_2})(T_{\rm red})$, there exists a finitely generated $\mathbb{C}$-subalgebra $B'$ of $B$ and $\psi' \in Y(B')$ such that $h(A')\subset B'$, 
and $q(\psi')=\psi$, where $q: T\to T'$ is induced from $B'\subset B$.  Let $u': T'\to S'$ be the morphism associated to $h: A'\to B'$. Then, we have the commutative diagram on the right side of \eqref{5298385}. Since $S'$ and $T'$ are Noetherian, there exists a unique $\phi'\in X(S')$ such that $f\circ \phi'=\psi'$ and $(\phi', u') \circ \alpha' \in 
(\Delta_{ x'_1}\cup\Delta_{ x'_2})(T'_{\rm red})$, where $\alpha': T'_{\rm red}\to T'$ is associated to the quotient morphism $B'\to B'/J(B')$ with $J(B')$ being the Jacobson ideal of $B'$. Thus, we have the following comutative diagram.
\begin{equation}
\xymatrixcolsep{2pc}
\xymatrix{
T_{\mr{red}}\ar[r]^-{q_{\rm red}}\ar[d]_\alpha & T'_{\mr{red}} \ar[r]^-{} \ar[d]_{\alpha'} &  X \ar[d]^-{f} \\
T\ar[r]^q & T' \ar[r]^-{\psi'}\ar[ru]^-{\phi'}&  Y}.
\end{equation}
where $q_{\rm red}: T'_{\rm red}\to T_{\rm red}$ is the induced from $q: T'\to T$.
Set $\phi=\phi' \circ q$. We shall show that $\phi$ is the desired lifting. 

It is clear that $f\circ \phi=\psi$.  Note that the following diagram commutes,
\begin{equation}
\label{eq_complicated}
\xymatrixcolsep{6pc}
\xymatrix{
T_{\mr{red}}\ar[r]^-{(\phi \circ \alpha,  u\circ \alpha)}\ar[d]_{(q_{\rm red}, u\circ \alpha)} &  X\times S\ar@{=}[d] \\
U:=T'_{\mr{red}}\times_{S'} S \ar[r]^-{(\phi'\circ \alpha',  u'\circ \alpha'  )\times  {\mr{Id}_S}}&  (X\times S')\times_{S'} S}.
\end{equation}
Since $(\phi'\circ \alpha',  u'\circ \alpha'  )\in 
(\Delta_{ x'_1}\cup\Delta_{ x'_2})(T'_{\rm red})$ and $(\Delta_{ x'_1}\cup\Delta_{ x'_2})\times_{S'} S  \simeq \Delta_{ x_1}\cup\Delta_{ x_2}$ , we have $ (\phi'\circ \alpha',  u'\circ \alpha'  )\times  {\mr{Id}_S}  \in ((\Delta_{ x'_1}\cup\Delta_{ x'_2})\times_{S'} S  )(U) \simeq (\Delta_{ x_1}\cup\Delta_{ x_2})(U)$. Composing with $(q_{\rm red}, u\circ \alpha)$, by the commutative diagram (\ref{eq_complicated}) we get $(\phi \circ \alpha,  u\circ \alpha)\in (\Delta_{ x_1}\cup\Delta_{ x_2})(T_{\rm red})$. This concludes the proof of the lemma.
\end{proof}
From now on, we will focus on the parahoric Bruhat-Tits group scheme $\G_{\bar C}=\mr{Res}_{C/\bar{C}}\big( G\times C\big)^\Gamma $ over $\bar C$. 
\begin{definition}
	We denote by $\Gr_{\G_{\bar C}, C^n}:=\Gr_{\G_{\bar C}, \bar C^n}\times_{\bar C^n} C^n$ the base change of BD grassmannian of $\G_{\bar C}$ to $C^n$. More precisely, for every $\C$-algebra $R$, we define
	\[\Gr_{\mc{G}_{\bar C},C^n}(R):= \left\{ (p_1, \ldots, p_n, \mc{F}, \beta ) \,\middle|\, p_i \in C(R), \mc{F} \text{ a } \mc{G}\text{-torsor on } \bar{C}_R, \beta: \mc{F}|_{\bar{C}_{R} \setminus \cup\Delta_{\bar{p}_i}}\simeq \mathring{\mc{F}}|_{\bar{C}_{R} \setminus \cup\Delta_{\bar{p}_i}} \right\}\]
	If there is no confusion, we will simply use the notation $\Gr_{\G,C^n}$. 
\end{definition}
	The BD Grassmannian $\Gr_{\mc{G},C^n}$ is an ind-scheme, ind-of-finite type and ind-projective over $C^n$, see \cite[Remark 3.1.4]{Zhu17}. The following result is a consequence of Proposition \ref{329321}.
\begin{cor}\label{prop_etale_base}
	\begin{enumerate}
		\item Let $X,Y$ be  irreducible smooth curves with faithful $\Gamma$-actions. Suppose $f:X\to Y$ is an $\Gamma$-equivariant \'etale morphism such that $\Gamma_x=\Gamma_{f(x)}$ for any $x\in X$. Let $Z=\{(x_1,\ldots,x_n)\in X^n \,|\, \bar x_i\neq\bar x_j, \overline{f(x_i)}=\overline{ f(x_j)} \text{ for some } i\neq j\} $. We have an isomorphism
		$$ \Gr_{\G_{\bar X}, X^n\setminus Z}\simeq \Gr_{\G_{\bar Y}, Y^n}\times_{Y^n}(X^n\setminus Z).$$
		\item Let $C$ be an irreducible smooth curve with a faithful $\Gamma$-action such that all  points are unramified.  Let $Z=\{(x_1,\ldots,x_n)\in C^n \,|\, x_i\neq  x_j, \bar x_i=\bar x_j \text{ for some } i\neq j\}$. Then, 
		\[ \Gr_{G_C,C^n\setminus Z}\simeq \Gr_{\G_{\bar C}, \bar C^n}\times _{\bar C^n} (C^n\setminus Z)=\Gr_{\G_{\bar C},C^n\setminus Z},\]
		where $G_C$ is the constant group scheme over $C$.
	\end{enumerate}
\end{cor}
\begin{proof}
	For part (1), by Lemma \ref{031385} and Proposition \ref{218368}, $f$ induces an \'etale morphism $\bar{f}:\bar X\to \bar Y$ and there is an isomorphism $\G_{\bar X}\simeq \G_{\bar Y}\times_{\bar Y} \bar X$. Let $\bar Z=\{(p_1,\ldots,p_n)\in \bar X^n \,|\,  p_i\neq p_j, \bar{f}(p_i)=\bar{ f}(p_j) \text{ for some } i\neq j\} $. Then $(x_1,\ldots,x_n)\in Z$ if and only if $(\bar x_1,\ldots,\bar x_n)\in \bar Z$.  By Proposition \ref{329321}, we have an isomorphism
	\begin{equation}\label{424910}
		\Gr_{\G_{\bar X},\bar X^n\setminus \bar Z}\simeq \Gr_{\G_{\bar Y},\bar Y^n}\times_{\bar Y^n} (\bar X^n\setminus \bar Z).
	\end{equation}
	Note that there are isomorphisms
	$$\Gr_{\G_{\bar X}, X^n\setminus Z}=\Gr_{\G_{\bar X}, \bar X^n}\times_{\bar X^n} (X^n\setminus Z)\simeq \Gr_{\G_{\bar X}, \bar X^n}\times_{\bar X^n}(\bar X^n\setminus \bar Z)\times_{\bar X^n\setminus \bar Z} (X^n\setminus Z),$$
	$$\Gr_{\G_{\bar Y}, Y^n}\times_{Y^n}(X^n\setminus Z)=\Gr_{\G_{\bar Y},\bar Y^n}\times_{\bar Y^n}  (X^n\setminus Z)\simeq\Gr_{\G_{\bar Y},\bar Y^n}\times_{\bar Y^n} (\bar X^n\setminus \bar Z)\times_{\bar X^n\setminus \bar Z} (X^n\setminus Z).$$ 
	By base change via $X^n\setminus Z\to \bar X^n\setminus \bar Z$, the isomophism \eqref{424910} becomes
	$$ \Gr_{\G_{\bar X}, X^n\setminus Z}\simeq \Gr_{\G_{\bar Y}, Y^n}\times_{Y^n}(X^n\setminus Z).$$
	
	For part (2), we consider the \'etale morphism $\pi:C\to \bar C$. There is a natural isomorphism $G_C\simeq \G_{\bar C}\times_{\bar C} C$. By Proposition \ref{329321}, we get an isomorphism
	\begin{equation*}
		\Gr_{G_C,C^n\setminus Z}\simeq \Gr_{\G_{\bar C}, \bar C^n}\times _{\bar C^n} (C^n\setminus Z). \qedhere
	\end{equation*}
\end{proof}

Let $\xi=(I_1,\ldots, I_k)$ be a partition of $[n]$. Similar to Definition \ref{524794}, we define $C_\xi^n$ to be the following open subset in $C^n$,
\[C_\xi^n:=\{(p_1,\ldots,p_n)\in C^n\mid \bar{p}_i\neq \bar{p}_j, \ \forall i\in I_\alpha, j\in J_\beta, \alpha\neq \beta\}.\]
By \cite[Theorem 3.2.1]{Zhu17}, $\Gr_{\mc{G},C^{n}}$ and $L^+\mc{G}_{C^n}$ have a factorization property over $C_\xi^n$ as follows
\begin{align}
	\label{447893} \Gr_{\mc{G},C^{n}_\xi}&:=\Gr_{\mc{G},C^{n}}\times_{C^n} C^{n}_\xi \simeq\bigg(\prod_{j=1}^k\Gr_{\mc{G},C^{I_j}}\bigg)\times_{C^n} C^{n}_\xi, \\
	\label{346288} L^+\mc{G}_{C^n_\xi}&:=L^+\mc{G}_{C^n}\times_{C^n} C^{n}_\xi \simeq\bigg(\prod_{j=1}^kL^+\mc{G}_{C^{I_j}}\bigg)\times_{C^n} C^{n}_\xi.
\end{align}

\subsection{Beilinson-Drinfeld Schubert varieties}\label{993431} 
With the same setup as in Section \ref{BD_Gr}, we define a $\Gamma$-action on $C^{n+1}=C^n\times C$ by letting $\Gamma$ act on the last copy of $C$. More precisely, this action is given by 
\begin{equation}\label{038287}
	\gamma(p_1,\ldots, p_n, p_{n+1}):=(p_1,\ldots, p_n, \gamma p_{n+1}),
\end{equation}
for any $\gamma\in \Gamma$, $(p_1,\ldots, p_n, p_{n+1})\in C^{n+1}$.

Let $T$ be a $\Gamma$-stable maximal torus in $G$. Given any $\la=(\lambda_1,\lambda_2,\ldots, \lambda_n)$ with $\lambda_i\in X_*(T)$, we shall construct a $\Gamma$-equivariant tensor functor $F$ from the category $\mr{Rep}(T)$ of finite dimensional representations of $T$ to the category $\mr{Coh}^{\mr{lf}}(C^{n+1})$ of locally free sheaves of finite rank over $C^{n+1}$. For each character $\nu$ of $T$, we define
\begin{equation}\label{214694}
	F( \C_{\nu}):= \ms{O}_{C^{n+1}}\bigg(\sum_{i=1}^n\sum_{\gamma\in \Gamma} \langle \gamma(\lambda_i), \nu \rangle \Delta_{\gamma, i}\bigg),
\end{equation} 
where $\Delta_{\gamma,i}:=\{(a_1, a_2, \ldots, a_n, a_{n+1})\mid a_{n+1}=\gamma a_{i}\}$ 
is a divisor in $C^{n+1}$. 

\begin{lem}\label{lem_torosr}
	This assignment \eqref{214694} extends to a tensor functor $F:\mr{Rep}(G)\to \mr{Coh}^{\mr{lf}}(C^{n+1})$ which is $\Gamma$-equivariant in the sense of Definition \ref{964424}. As a consequence, by Theorem \ref{926976}, this functor $F$ defines a $(\Gamma, T)$-torsor $\mc{F}$ over $C^{n+1}$. 
\end{lem}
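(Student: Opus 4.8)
The plan is to reduce everything to the Tannakian dictionary for $(\Gamma,T)$-torsors (Theorem \ref{926976}): once $F$ is shown to be a $\C$-linear, exact, faithful, symmetric monoidal functor $\mr{Rep}(T)\to\mr{Coh}^{\mr{lf}}(C^{n+1})$ equipped with a $\Gamma$-equivariant structure in the sense of Definition \ref{964424}, that theorem produces the torsor $\mc{F}$. Since $T$ is a torus, $\mr{Rep}(T)$ is the semisimple category of $X^*(T)$-graded finite-dimensional $\C$-vector spaces, with simple objects the characters $\C_\nu$. So I would first set $F(V):=\bigoplus_{\nu}V_\nu\otimes_\C\ms{O}_{C^{n+1}}(D_\nu)$, where $D_\nu:=\sum_{i,\gamma}\langle\gamma(\lambda_i),\nu\rangle\,\Delta_{\gamma,i}$ and $V_\nu$ is the $\nu$-weight space, with the obvious action on morphisms (a map of $T$-modules preserves weight spaces). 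Because $C^{n+1}$ is smooth and each $\Delta_{\gamma,i}$ is a prime divisor, $\ms{O}_{C^{n+1}}(D_\nu)$ is invertible, so $F(V)$ is locally free of rank $\dim V$ and $F$ indeed lands in $\mr{Coh}^{\mr{lf}}(C^{n+1})$. The monoidal structure is forced by the additivity $D_{\nu+\nu'}=D_\nu+D_{\nu'}$ (valid since $\langle\gamma(\lambda_i),-\rangle$ is additive), giving canonical isomorphisms $\ms{O}(D_\nu)\otimes\ms{O}(D_{\nu'})\cong\ms{O}(D_{\nu+\nu'})$; combined with $(V\otimes W)_\mu=\bigoplus_{\nu+\nu'=\mu}V_\nu\otimes W_{\nu'}$ this yields the associativity and symmetry constraints, the latter being the standard commutativity constraint for line bundles. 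Exactness and faithfulness are then automatic from semisimplicity of $\mr{Rep}(T)$ together with $\ms{O}(D_\nu)\neq 0$.

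The one genuinely computational step is $\Gamma$-equivariance, which I would obtain by a direct manipulation of the divisors $\Delta_{\gamma,i}$ under the $\Gamma$-action \eqref{038287}. Writing $m_\gamma\colon C^{n+1}\to C^{n+1}$ for that automorphism and using that it only moves the last coordinate, one gets $m_\gamma^{-1}(\Delta_{\gamma',i})=\Delta_{\gamma^{-1}\gamma',\,i}$, hence $m_\gamma^*\ms{O}(\Delta_{\gamma',i})\cong\ms{O}(\Delta_{\gamma^{-1}\gamma',\,i})$. Reindexing the sum defining $D_\nu$ by $\delta=\gamma^{-1}\gamma'$ and using $\Gamma$-invariance of the pairing $\langle-,-\rangle$,
\[ m_\gamma^*F(\C_\nu)\;\cong\;\ms{O}_{C^{n+1}}\!\Big(\textstyle\sum_{i,\delta}\langle\delta(\lambda_i),\,\gamma^{-1}\nu\rangle\,\Delta_{\delta,i}\Big)\;=\;F(\C_{\gamma^{-1}\nu}), \]
so $m_\gamma^*F(\C_\nu)$ is canonically identified with $F$ applied to the $\Gamma$-translate of $\C_\nu$ (with the precise left/right convention as fixed in Definition \ref{964424}). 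Since these identifications are canonical, they propagate through the weight decomposition to all of $\mr{Rep}(T)$, they are compatible with the monoidal constraints because $\nu\mapsto D_\nu$ is additive, and they satisfy the cocycle condition because $m_{\gamma_1}\circ m_{\gamma_2}=m_{\gamma_1\gamma_2}$, so the induced identifications of index sets compose correctly. This is exactly a $\Gamma$-equivariant structure on $F$ in the sense of Definition \ref{964424}.

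With all of this in place, Theorem \ref{926976} applied to $F$ yields the locally trivial $(\Gamma,T)$-torsor $\mc{F}$ over $C^{n+1}$, as claimed. The main obstacle is the equivariance verification: one must keep straight the three simultaneous $\Gamma$-actions — on $X^*(T)$, on the index set $\{(\gamma,i)\}$ of the divisors, and on $C^{n+1}$ — and check that the resulting natural isomorphisms meet the tensor-compatibility and cocycle conditions of Definition \ref{964424}. Everything else is formal once one uses that $\mr{Rep}(T)$ is the category of $X^*(T)$-graded vector spaces and that $\ms{O}(\,\cdot\,)$ turns addition of divisors into tensor product of line bundles.
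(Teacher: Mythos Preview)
Your proposal is correct and follows essentially the same approach as the paper: both arguments compute the effect of the $\Gamma$-action \eqref{038287} on the divisors $\Delta_{\gamma',i}$, reindex the sum, and match the result with $F$ applied to the $\Gamma$-translated character, thereby producing the isomorphisms $\theta_\gamma$ required by Definition \ref{964424}. You are somewhat more explicit than the paper about extending $F$ to all of $\mr{Rep}(T)$ via weight decompositions and checking the tensor, exactness, and faithfulness axioms, but this is routine and the paper simply leaves it implicit. One small overclaim: Theorem \ref{926976} yields a $(\Gamma,T)$-torsor, not a \emph{locally trivial} one; local triviality is established separately in Lemma \ref{370984}.
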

\begin{proof}
For each $\eta\in \Gamma$, denote by $\eta_T: T\to T$ the map sending $h$ to $\eta (h)$. Let $\eta_T^*:\mr{Rep}(T)\to \mr{Rep}(T)$ be the pullback functor induced from $\eta_T$. Define the $\Gamma$-action on $X^*(T)$ by 
\[\eta(\nu)(t):=\nu(\eta^{-1}(t)),\] 
where $\eta\in \Gamma$, $\nu\in X^*(T)$, and $t\in T$. Then, we have $\eta_T^*(\C_\nu)=\C_{\eta^{-1}(\nu)}$, and 
\begin{equation}\label{826785}
	F\eta_T^*( \C_{\nu})= \ms{O}_{C^{n+1}}\bigg(\sum_{i=1}^n\sum_{\gamma\in \Gamma} \langle \gamma(\lambda_i), \eta^{-1}(\nu) \rangle \Delta_{\gamma, i}\bigg)= \ms{O}_{C^{n+1}}\bigg(\sum_{i=1}^n\sum_{\gamma\in \Gamma} \langle \eta\gamma(\lambda_i), \nu \rangle \Delta_{\gamma, i}\bigg),
\end{equation}
for any $\eta\in \Gamma$ and $\nu\in X^*(T)$.

Let $Y:=C^{n+1}$ be the variety with the $\Gamma$-action given in \eqref{038287}. For each $\eta\in \Gamma$, denote by $\eta_Y: Y\to Y$ the map sending $y$ to $\eta(y)$. Let $\eta_Y^*:\mr{Coh}^{\mr{lf}}(Y)\to \mr{Coh}^{\mr{lf}}(Y)$ be the pullback functor induced from $\eta_Y$. Then, there is an isomorphism 
\begin{equation}\label{060044}
	\eta_Y^*F(\C_\nu) \simto \ms{O}_{C^{n+1}}\bigg(\sum_{i=1}^n\sum_{\gamma\in \Gamma} \langle \gamma(\lambda_i), \nu \rangle \Delta_{\eta^{-1}\gamma, i}\bigg) = \ms{O}_{C^{n+1}}\bigg(\sum_{i=1}^n\sum_{\gamma\in \Gamma} \langle \eta\gamma(\lambda_i), \nu \rangle \Delta_{\gamma, i}\bigg).
\end{equation}

Combining \eqref{826785} and \eqref{060044}, we have an isomorphism between tensor functors $\theta_\eta: F\eta_T^*\to \eta_Y^*F$ for each $\eta\in \Gamma$. One can check that $F$ together with the collection of isomorphisms $\{\theta_\eta\}_{\eta\in \Gamma}$ satisfy the axioms of Definition \ref{964424}. This extends to a $\Gamma$-equivariant functor from $\mr{Rep}(T)$ to $\mr{Coh}^{\mr{lf}}(C^{n+1}) $. 
\end{proof}
For any $\C$-scheme $X$ with a $\Gamma$-action, we denote by $\mathring{\mc{F}}_X:= X\times G$ the trivial $G$-torsor over $X$ with a $\Gamma$-action acting diagonally on $X\times G$, and call it the trivial $(\Gamma, G)$-torsor over $X$. For the general definition of $(\Gamma, G)$-torsors, see Definition \ref{653962}. 
\begin{definition}\label{117973}
	A $(\Gamma, G)$-torsor $\mc{P}$ over a scheme $X$ is called locally trivial if for any $x\in X$, there exists an open neighborhood $U_x$ of $x$ such that 
	\begin{enumerate}
		\item $U_x$ is stable under the stabilizer $\Gamma_x$ of $x$,
		\item $\mc{P}|_{U_x}$ is isomorphic to the trivial $(\Gamma_x, G)$-torsor $\mathring{\mc{F}}_{U_x}$ over $U_x$.
	\end{enumerate} 
\end{definition}
Note that not every $(\Gamma, G)$-torsor is locally trivial, and in fact this is determined by local types of $(\Gamma, G)$-torsors at ramified points, see \cite{DH}. 
\begin{lem}\label{370984}
	The $(\Gamma, T)$-torsor $\mc{F}$ constructed in Lemma \ref{lem_torosr} is locally trivial over $C^{n+1}$.
\end{lem}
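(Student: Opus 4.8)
The plan is to verify local triviality by testing the condition of Definition \ref{117973} separately at unramified and at ramified points of $C^{n+1}$, using the explicit divisor description of $\mc F$ from \eqref{214694}. At an unramified point $y=(a_1,\dots,a_{n+1})$ the stabilizer $\Gamma_y$ is trivial (recall the $\Gamma$-action on $C^{n+1}$ is only on the last factor, and $a_{n+1}$ is then unramified in $C$), so condition (1) is vacuous and condition (2) only asks that $\mc F$ be trivial as a $T$-torsor near $y$; but any line bundle, hence any $T$-torsor, is locally trivial in the Zariski topology on a smooth variety, so this case is immediate.

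The substance is at a point $y=(a_1,\dots,a_n,a_{n+1})$ where $a_{n+1}=p$ is a ramified point of $C$, so $\Gamma_y=\Gamma_p$ is the (cyclic, by hypothesis that ramification is total) stabilizer of $p$. First I would choose a $\Gamma_p$-stable affine open neighborhood $V$ of $p$ in $C$ on which we have a $\Gamma_p$-equivariant trivialization, i.e. a coordinate $z$ with $\gamma\cdot z=\chi(\gamma) z$ for a faithful character $\chi$ of $\Gamma_p$; shrinking the other coordinates to arbitrary affine opens $V_i\ni a_i$ in $C$ (and shrinking $V$ further so that $V$ meets $\gamma(V_i)$ only when forced), set $U_y:=V_1\times\cdots\times V_n\times V$, which is $\Gamma_y$-stable for the action \eqref{038287}. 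On $U_y$ each divisor $\Delta_{\gamma,i}$ restricts either to the empty divisor (if $a_{n+1}\neq\gamma a_i$, which after shrinking can only fail when $a_i=p$ too and $\gamma\in\Gamma_p$) or to the graph of the map $(q_1,\dots,q_n)\mapsto \gamma q_i$, which is cut out by a single equation and hence is a principal divisor on the affine scheme $U_y$. Therefore $F(\C_\nu)|_{U_y}\cong \ms O_{U_y}$ for every $\nu$, compatibly with tensor products, so by the Tannakian equivalence of Theorem \ref{926976} the $T$-torsor $\mc F|_{U_y}$ is trivial.

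It remains to upgrade this to a $(\Gamma_p,T)$-equivariant trivialization, i.e. to check that the $\Gamma_p$-equivariant structure on $\mc F|_{U_y}$ (the data $\{\theta_\eta\}$ of Lemma \ref{lem_torosr}) matches that of the trivial $(\Gamma_p,T)$-torsor under the trivialization just produced. Concretely, after trivializing each $F(\C_\nu)|_{U_y}$ by a global generator, the isomorphism $\theta_\eta\colon F\eta_T^*(\C_\nu)\to\eta_Y^*F(\C_\nu)$ becomes multiplication by a unit $c_\eta(\nu)\in\ms O^\times(U_y)=\C^\times$ (as $U_y$ is a product of affine curves, shrunk so that the only units are constants), and one checks $c_\eta$ defines a character of $\Gamma_p$ valued in $T(\C)=X^*(T)^\vee\otimes\C^\times$; changing the chosen generators by the corresponding cocycle adjusts $\theta_\eta$ to the identity, which is exactly the statement that $\mc F|_{U_y}\cong\mathring{\mc F}_{U_y}$ as $(\Gamma_p,T)$-torsors. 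I expect this last bookkeeping — tracking the sign/root-of-unity factors coming from $\langle\gamma(\lambda_i),\nu\rangle$ through the identifications \eqref{826785} and \eqref{060044} and showing the resulting $\Gamma_p$-cocycle is a coboundary — to be the only real obstacle; it is essentially the observation that $H^1(\Gamma_p,T(\C))$ vanishes because $\Gamma_p$ is finite cyclic and $T(\C)$ is a divisible abelian group, so every such cocycle trivializes.
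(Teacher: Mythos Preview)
Your reduction to the ramified case and the trivialization of the underlying $T$-torsor via local equations for the divisors $\Delta_{\gamma,i}$ is fine and matches the paper. The gap is in the last step, where you try to upgrade the trivialization to a $(\Gamma_p,T)$-equivariant one by a cohomological argument.

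Two problems. First, you cannot arrange $\ms O^\times(U_y)=\C^\times$ by shrinking: a Zariski open in a smooth curve typically has nonconstant units (e.g.\ $\A^1\setminus\{0\}$), and removing more points only makes it worse. So the cocycle $c_\eta$ lives in $T(\ms O(U_y))$ with its $\Gamma_p$-action through both $T$ and $U_y$, not in $T(\C)$. Second, and more seriously, even if you could reduce to constant coefficients, the assertion that $H^1(\Gamma_p,T(\C))=0$ because $T(\C)$ is divisible is \emph{false}. Divisibility alone does not kill group cohomology of finite groups; you need unique divisibility (i.e.\ a $\mathbb Q$-vector space). For instance $H^1(\mathbb Z/2,\C^\times)=\mu_2$ with the trivial action, and in the situations of the paper the $\sigma$-action on $T$ does fix a subtorus (e.g.\ the middle coordinate for $A_{2\ell-1}$), so the relevant $H^1$ is genuinely nonzero. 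Hence your cocycle need not be a coboundary, and the argument does not close.

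The paper avoids this obstruction entirely by writing down an \emph{explicit} trivialization that is already $\Gamma_x$-equivariant. For each $i$ one picks a single local equation $f_i$ for $\Delta_{\eta_i,i}$, and then uses its $\Gamma_x$-translates $\gamma\eta_i^{-1}f_i$ as local equations for the other divisors $\Delta_{\gamma,i}$ with $\gamma\in\eta_i\Gamma_x$; the product
\[
f_\nu=\prod_i\prod_{\gamma\in\eta_i\Gamma_x}(\gamma\eta_i^{-1}f_i)^{\langle\gamma(\lambda_i),\nu\rangle}
\]
then trivializes $F(\C_\nu)$ on a $\Gamma_x$-stable neighborhood $U_x$ in a way that is visibly compatible with both the tensor structure ($f_\nu f_\omega=f_{\nu+\omega}$) and the $\Gamma_x$-equivariant data \eqref{826785}--\eqref{060044}. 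The point is that by building the trivializing functions out of a single $f_i$ and its $\Gamma_x$-translates, the comparison maps $\theta_\eta$ become identities on the nose rather than an abstract cocycle to be killed. Your approach could be repaired by making this same explicit choice; the cohomological shortcut does not work.
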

\begin{proof}
	Let $x$ be any point in $C^{n+1}$. We will find a $\Gamma$-stable neighborhood $U_x$ of $x$ in $C^n$ such that $\mc{F}$ is trivializable over $U_x$ as $(\Gamma,T)$-torsors. 
	
	Let $\Gamma_x$ be the stabilizer of $x$ in $\Gamma$. Given $1\leq i\leq n$, suppose that $x\in \Delta_{\eta_i, i}$ for some $\eta_i\in \Gamma$. In this case, we observe that $x \in \Delta_{ \gamma, i}$ if and only if $\gamma\in \eta_i\Gamma_x$. Hence there exists an open neighborhood $U_i$ of $x$ with a regular function $f_i$ over $U_i$ such that $f_i$ defines $\Delta_{\eta_i, i}\cap U_i$. Moreover, we can shrink $U_i$ so that $U_i\cap \Delta_{\gamma, i}=\emptyset$ for any $\gamma\not \in \eta_i\Gamma_x$. For the given $i$, if $x\not\in \Delta_{\eta, i}$ for any $\eta\in \Gamma$, by convention we take $\eta_i=e$, $U_i=C^{n+1}\setminus \bigcup_{\gamma\in \Gamma}\Delta_{\gamma, i}$, and $f_i=1$. Combining the above constructions, for any $x\in C^{n+1}$, we have the following open neighborhood $U_x$ of $x$ in $C^{n+1}$, 
	\[U_x:=\bigcap_{i=1}^n\bigcap_{\gamma\in \Gamma_x} \gamma U_i,\]
	and a collection of functions $\{ f_i \}_{i=1,\cdots,n}$ defined over $U_x$. 
	Clearly $U_x$ is $\Gamma_x$-stable. Moreover, for any $1\leq i\leq n$ and $\gamma\in \eta_i\Gamma_x$, the regular function $\gamma\eta_i^{-1} f_i$ defines the divisor $\Delta_{\gamma, i}=\gamma\eta_i^{-1}\Delta_{\eta_i, i}$ over $U_x$.

	Now, for each character $\nu$ of $T$, denote by $D_\nu$ the divisor $\sum_{i=1}^n\sum_{\gamma\in \Gamma} \langle \gamma(\lambda_i), \nu \rangle \Delta_{\gamma, i}$. Set 
	\[f_\nu=\prod_{i=1}^n\prod_{\gamma\in\eta_i\Gamma_x} (\gamma\eta_i^{-1} f_i)^{\langle \gamma (\lambda_i), \nu \rangle}.\] 
	Then $f_\nu$ defines the divisor $D_\nu$ locally in $U_x$, since $U_x\cap \Delta_{\gamma,i}=\emptyset$ for any $\gamma\not\in \eta_i\Gamma_x$. For any characters $\nu, \omega\in X^*(T)$, we have $f_\nu\cdot f_\omega=f_{\nu+\omega}$. Consider the tensor functor $F_x^0:\mr{Rep}(T)\to \mr{Coh}^{\mr{lf}}(U_x)$ sending $\C_{\nu}$ to $\ms{O}_{U_x}\otimes_{\C} \C_{\nu}$. It is easy to see $F_x^0$ is $\Gamma_x$-equivariant and this gives a trivial $(\Gamma_x, T)$-torsor $\mathring{\mc{F}}_{U_x}$ over $U_x$. Moreover, there is an isomorphism
	\begin{equation*}
		\phi_{\nu}: F|_{U_x}(\C_{\nu})=\ms{O}_{U_x}\bigg(\sum_{i=1}^n\sum_{\gamma\in \Gamma} \langle \gamma(\lambda_i), \nu \rangle \Delta_{\gamma, i}\bigg) \simto \ms{O}_{U_x}\simeq F_x^0(\C_{\nu}),
	\end{equation*}
	where the first isomorphism is given	by multiplying by $f_\nu$. One can check that these isomorphisms $\{\phi_\nu\}_{\nu\in X^*(T)}$ respect the tensor structure since $\ms{O}(D_{\nu})\otimes\ms{O} (D_{\omega})\simeq \ms{O}(D_{\nu+\omega})$. Thus, there is an isomorphism $\phi:F|_{U_x}\to F_x^0$ between tensor functors. By Theorem \ref{926976}, $\phi$ induces an isomorphism $\tilde\phi:\mc{F}|_{U_x}\to \mathring{\mc{F}}_{U_x}$ between $(\Gamma_x, T)$-torsors over $U_x$. Therefore, $\mc{F}$ is locally trivial following Definition \ref{117973}.
\end{proof}
Set $U=C^{n+1}\setminus \bigcup_{\gamma,i}\Delta_{ \gamma, i}$. Let $F^0:\mr{Rep}(T)\to \mr{Coh}^{\mr{lf}}(U)$ be the tensor functor sending $\C_{\nu}$ to $\ms{O}_{U}\otimes_{\C} \C_{\nu}$. Then, $F^0$ is $\Gamma$-equivariant, and it gives a trivial $(\Gamma, T)$-torsor $\mathring{\mc{F}}_{U}$ over $U$. For each $\nu\in X_*(T)$, there is a natural isomorphism \begin{equation}\label{342573}
	\eta_\nu:F|_U(\C_\nu)\simto F^0(\C_\nu).
\end{equation}
Then, $\{\eta_\nu\}_{\nu\in T^*}$ defines an isomorphism $\eta: F|_U\simto F^0$. By Theorem \ref{926976}, this is equivalent to an isomorphism $\mc{F}|_U\simto \mathring{\mc{F}}_U$, which will still be denoted by $\eta$.

Let $\mc{F}_G:=\mc{F}\times^T G$ be the $(\Gamma,G)$-torsor over $C^{n+1}$ induced from $\mc{F}$. Then, $\eta$ induces a trivialization $\eta_G$ of $\mc{F}_G$ outside $\bigcup_{\gamma,i}\Delta_{ \gamma, i}$. By Lemma \ref{370984}, $\mc{F}_G$ is a locally trivial $(\Gamma,G)$-torsor. We define a morphism $\pi:C^{n+1}\to C^n\times \bar C$ by $(p_1,\ldots,p_n,p_{n+1})\mapsto (p_1,\ldots p_n, \bar p_{n+1})$. From the local triviality of $\mc{F}_G$, $\pi_*(\mc{F}_G)^\Gamma$ is a $\mc{G}$-torsor over $C^n\times\bar C$, denoted by $\bar{\mc{F}}$. Moreover, $\eta_G$ induces a trivialization $\bar\eta$ of $\bar{\mc{F}}$ outside $\pi\big(\bigcup_{\gamma,i}\Delta_{ \gamma, i}\big)=\bigcup_i\Delta_{\bar P_i}$,
where $P_i:C^n \to C$ is the projection map sending $\vec{p}$ to $p_i$.

We construct a section $s_{\la}:C^n\to \Gr_{\mc{G},C^n}$ as follows:
\begin{equation}\label{926470}
	s_{\vec{\lambda}}=\big(P_1,\ldots, P_n, \bar{\mc{F}}, \bar\eta \big),
\end{equation}
\begin{definition}\label{046143}
	The Beilinson-Drinfeld Schubert variety $\Grb_{\mc{G}, C^n}^{\la}$ of $\mc{G}$ over $C^n$ is defined to be the schematic image of $o_{\la}:L^+\mc{G}_{C^n} \to \Gr_{\mc{G}, {C^n}}$, where $o_{\la}$ is defined as the following composition,
 \[ L^+\mc{G}_{C^n}\simeq L^+\mc{G}_{C^n}\times_{C^n} C^n \xrightarrow{{\rm Id}\times s_{\la} } L^+\mc{G}_{C^n}\times_{C^n}   \Gr_{\mc{G}, {C^n}} \xrightarrow{\eqref{522693}} \Gr_{\mc{G}, {C^n}} . \]
 When $n=1$, $\Grb_{\mc{G}, C}^{\lambda}$ is also called a global Schubert variety of $\G$. 
\end{definition}
\begin{prop}\label{684242} Given any $\la\in (X_*(T)^+)^n$ and any partition $\xi=(I_1,\cdots, I_k)$ of $[n]$, the BD Schubert variety $\Grb_{\mc{G}, C^n}^{\la}$ has a factorization property over $C^n_\xi$, i.e.
		\begin{equation}\label{468789}
			\Grb_{\mc{G},C^n_{\xi}}^{\la}\simeq\bigg(\prod_{j=1 }^k\Grb_{\mc{G},C^{I_j}}^{\la_{I_j}}\bigg)\times_{C^n} C^n_\xi,
		\end{equation}
		where $\la_{I_j}$ is defined in \eqref{249296}. 
\end{prop}
\begin{proof}
	It suffices to prove the case when $\xi=(I,J)$. The map \eqref{447893} can be given explicitly as follows:
	\begin{align}\label{134631}
		\Phi: \Gr_{\mc{G},C^{n}_\xi}& \simeq \big(\Gr_{\mc{G},C^{I}}\times \Gr_{\mc{G},C^{J}} \big)|_{C^{n}_\xi}, \\
		\nonumber (\vec{p},\mc{F},\beta)&\mapsto \big((\vec{p}_I,\mc{F}_I,\beta_I),(\vec{p}_J,\mc{F}_J,\beta_J)\big)
	\end{align}
	where by Beauville-Laszlo's Lemma, there exists a $\mc{G}$-torsor $\mc{F}_I$ with isomorphisms $\delta_I:\mc{F}_I|_{ \widehat{\cup_{i\in I} \Delta_{\bar p_i} }} \simeq {\mc{F}}_{ \widehat{\cup_{i\in I} \Delta_{\bar p_i} }}$ and $\beta_I: \mc{F}_I|_{ \bar C_R\setminus{\cup_{i\in I} \Delta_{\bar p_i} }}\to \mathring{\mc{F}}|_{\bar C_R \setminus{\cup_{ i\in I} \Delta_{\bar p_i} }}$ such that $\beta_I\circ\delta_I^{-1}|_{\widehat{\cup_{i\in I} \Delta_{\bar p_i} }^*}=\beta|_{\widehat{\cup_{i\in I} \Delta_{\bar p_i} }^*}$; $\mc{F}_J,\beta_J$ are defined similarly.
	
	Let $s_{\la_I}:C^I\to \Gr_{\mc{G},C^I}$ (resp.\,$s_{\la_J}$) be the section defined by \eqref{926470}. We shall show there is an equality of sections over $C^n_\xi$
	\begin{equation}
		\Phi\circ s_{\la}|_{C^n_\xi}=s_{\la_I}\times s_{\la_J}|_{C^n_\xi}.
	\end{equation}
	Recall the definition \eqref{926470} of $\bar{\mc{F}}:=\pi_*(\mc{F}\times^T G)^\Gamma$ and $\bar \eta$, where $\mc{F}$ is a $(\Gamma, T)$-torsor defined via the functor $F$ in \eqref{214694}, and $\eta$ is the map induced from the natural maps $\eta_\nu$ in \eqref{342573}. 
	
	Denote by $D_{I,\nu}$ the divisor $\big(\sum_{i\in I}\sum_{\gamma\in \Gamma} \langle \gamma(\lambda_i), \nu \rangle \Delta_{\gamma, i}\big)\cap \big(C^n_\xi\times C\big)$ in $C^n_\xi\times C$, and define $D_{J,\nu}$ similarly. One may check that $D_{I,\nu}\cap D_{J,\nu}=\emptyset$ for any $\nu$. Let $U_I\subseteq C^n_\xi\times C$ be a neighborhood of $D_I:=\big(\sum_{i\in I}\sum_{\gamma\in \Gamma} \Delta_{\gamma, i}\big)\cap \big(C^n_\xi\times C\big)$ such that $U_{I}\cap D_{J,\nu}=\emptyset$ for all $\nu$, and let $\mr{pr}_I:C^n_\xi\times C\to C^I\times C$ be the projection map, then 
	\[\ms{O}_{C^{n+1}}\bigg(\sum_{i=1}^n\sum_{\gamma\in \Gamma} \langle \gamma(\lambda_i), \nu \rangle \Delta_{\gamma, i}\bigg)\Big|_{U_I}\simeq \ms{O}_{U_{I}}(D_{I,\nu})\simeq \mr{pr}_I^*\Big(\ms{O}_{C^I\times C}\big( \mr{pr}_I(D_{I,\nu}) \big) \Big)\big|_{U_{I}}.\]
	By the definition of map \eqref{134631}, we have an equality of sections over $C^n_\xi$
	\[\Phi\circ s_{\la}|_{C^n_\xi} = \big((\{P_i\}_{i\in I},\bar{\mc{F}}_{I},\bar{\eta}_I),(\{P_j\}_{j\in J},\bar{\mc{F}}_{J},\bar{\eta}_J)\big)|_{C^n_\xi},\]
	where $\bar{\mc{F}}_{I}=\pi_*(\mc{F}_I\times^T G)^\Gamma$ is a $\mc{G}$-torsor, $\mc{F}_I$ is the $(\Gamma,T)$-torsor constructed via the functor $F_I:\mr{Rep}(T)\to\mr{Coh}^{\mr{lf}}(C^I\times C)$ given by
	$F_I( \C_{\nu}) = \ms{O}_{C^I\times C}\big(\mr{pr}_I(D_{I,\nu})\big)$,
	and $\eta_I$ is the trivialization induced from the natural maps $\eta_{I,\nu}:\ms{O}_{C^I\times C\setminus \mr{pr}_I(D_{I})}\big(\mr{pr}_I(D_{I,\nu})\big) \simto \ms{O}_{C^I\times C\setminus \mr{pr}_I(D_{I})}$, $\nu\in X^*(T)$.
	By definition \eqref{926470}, $(\{P_i\}_{j\in I},\bar{\mc{F}}_{I},\bar{\eta}_I)$ is exactly the section $s_{\la_I}:C^I\to \Gr_{\mc{G},C^I}$. Similarly, $(\{P_j\}_{i\in J},\bar{\mc{F}}_{J},\bar{\eta}_J)=s_{\la_J}$. This shows that $\Phi\circ s_{\la}|_{C^n_\xi}=s_{\la_I}\times s_{\la_J}|_{C^n_\xi}$. We also notice that the map $\Phi$ is compatible with the factorization of $L^+\G_{C^n}$.	It follows that the map \eqref{134631} gives an isomorphism
	$\Phi: \Grb^{\la}_{\mc{G},C^{n}_\xi} \simeq \Big(\Grb^{\la_I}_{\mc{G},C^{I}}\times \Grb^{\la_J}_{\mc{G},C^{J}} \Big)\big|_{C^{n}_\xi}$.
\end{proof}

\subsection{Convolution Grassmannian}\label{sect_conv}
With the same setup as in Section \ref{993431}, we define the convolution Grassmannian $\mr{Conv}_{\mc{G},C^n}$ as follows: for each $\C$-algebra $R$,
\[ \mr{Conv}_{\mc{G},C^n}(R):=\left\{ \big( \vec{p}, (\mc{F}_i, \beta_i)_{i=1}^n \big) \,\middle|\, 
\begin{aligned}
	&\vec{p}=(p_1, \ldots, p_n) \in C^n(R),\\
	&\mc{F}_1,\ldots, \mc{F}_n \text{ are } \mc{G}\text{-torsors on } \bar{C}, \\
	&\beta_i: \mc{F}_i|_{\bar C_{R} \setminus \Delta_{\bar{p}_i}}\simeq \mc{F}_{i-1}|_{\bar C_{R} \setminus \Delta_{\bar{p}_{i}}}, \mc{F}_0:=\mathring{\mc{F}}.
\end{aligned}\right\}.\]

There is a left $L^+\mc{G}_{C^n}$-action on $\mr{Conv}_{\mc{G},C^n}$ as follows. Given $(\vec{p},\zeta)\in L^+\mc{G}_{C^n}(R)$ and $ \big(\vec{p}, (\mc{F}_i, \beta_i)_{i=1}^n \big)\in \mr{Conv}_{\mc{G},C^n}(R)$, by Beauville-Laszlo's Lemma, there exists a $\mc{G}$-torsor $\mc{F}_i'$ with isomorphisms $\delta_i:{\mc{F}}'_i|_{ \hat{D}_i }\simeq \mc{F}_i|_{ \hat{D}_i }$ and $\alpha_i: \mc{F}_i'|_{\bar C_R\setminus D_i } \simeq \mathring{\mc{F}}|_{\bar C_R\setminus D_i}$ such that $\alpha_i\circ\delta^{-1}_i|_{\hat{D}_i^*}=\zeta\circ\beta_1\circ\cdots\circ \beta_i|_{\hat{D}_i^*}$, where $\hat{D}_i$ is the formal completion of $\bar C_R$ along the divisor $D_i:=\underset{\scriptscriptstyle1\leq j\leq i }{\bigcup}\Delta_{\bar p_j}$ and $\hat{D}_i^* :=\hat{D}_i \setminus D_i$. Thus, we get the following map 
\begin{align*} 
	L^+\mc{G}_{C^n}\times \mr{Conv}_{\mc{G}, C^n} & \to \mr{Conv}_{\mc{G}, C^n} \\
	\Big( (\vec{p},\zeta), \big(\vec{p}, (\mc{F}_i, \beta_i)_{i=1}^n \big) \Big) &\mapsto \big(\vec{p}, (\mc{F}'_i, \beta'_i)_{i=1}^n \big),
\end{align*}
where $\beta_i':\mc{F}_i'|_{\bar C_R\setminus\Delta_{\bar p_i}} \to \mc{F}_{i-1}'|_{\bar C_R\setminus\Delta_{\bar p_i}}$ is given by
$ \beta_i' =
\begin{cases}
	\delta_{i-1}^{-1}\circ \beta_i\circ\delta_i|_{\hat{D}_{i-1}}\\
	\alpha_{i-1}^{-1} \circ\alpha_i|_{\bar C_R\setminus D_i}
\end{cases}$.

Moreover, there is a convolution map
\begin{equation}\label{818710}
	\mf{m}: \mr{Conv}_{\mc{G},C^n} \to \Gr_{\mc{G}, C^n},
\end{equation}
sending $\big(\vec{p}, (\mc{F}_i, \beta_i)_{i=1}^n \big) $ to $\big(\vec{p}, \mc{F}_n, \beta_1\circ\cdots\circ\beta_n\big) $. One can check that this map is $L^+\mc{G}_{C^n}$-equivariant. Let $\xi_0=( \{1\},\{2\},\ldots, \{n\})$ be the finest partition of $[n]$. We have the following isomorphism over $C^n_{\xi_0}=\{ \vec{p}\in C^n \mid \bar{p}_i\neq \bar{p}_j, \ \forall i\neq j \}$,
\begin{equation}\label{626689}
	\varphi_n:\mr{Conv}_{\mc{G},C^n}|_{C_{\xi_0}^n} \xrightarrow{\mf{m}} \Gr_{\mc{G},{C}^n_{\xi_0}} \simto \big(\Gr_{\mc{G},C}\times\cdots\times \Gr_{\mc{G},C}\big)|_{C_{\xi_0}^n},
\end{equation}
sending $\big(\vec{p}, (\mc{F}_i, \beta_i)_{i=1}^n \big) $ to $\big( ( p_i, \mc{F}_i|_{\hat{\Delta}_{\bar p_i}}, \beta_1\circ\cdots\circ\beta_i|_{\hat{\Delta}^*_{\bar p_i}} )_{i=1}^n \big)$.

Following \cite[(3.1.22)]{Zhu17}, we define an $L^+\mc{G}_C$-torsor $\mb{E}$ over $\mr{Conv}_{\mc{G},C^{n-1}}\times C$ by
\[ \mb{E}(R):=\left\{ \big( \vec{p}, (\mc{F}_i, \beta_i)_{i=1}^{n-1}, \alpha \big) \,\middle|\, 
\begin{aligned}
	&\vec{p}=(p_1, \ldots, p_n) \in C^n(R),\\
	&\mc{F}_1,\ldots, \mc{F}_{n-1} \text{ are } \mc{G}\text{-torsors on } \bar{C}, \\
	&\beta_i: \mc{F}_i|_{\bar C_{R} \setminus \Delta_{\bar{p}_i}}\simeq \mc{F}_{i-1}|_{\bar C_{R} \setminus \Delta_{\bar{p}_{i}}}, \mc{F}_0:=\mathring{\mc{F}},\\
	&\alpha: \mc{F}_{n-1}|_{\hat \Delta_{\bar p_n}} \to \mathring{\mc{F}}|_{\hat \Delta_{\bar p_n}}.
\end{aligned}\right\}.\]
Then, there is an isomorphism 
\begin{align}
	\label{643298} f: \mb{E}\times^{L^+\mc{G}_C} \Gr_{\mc{G}, C} &\simto \mr{Conv}_{\mc{G},C^n},\\
	\nonumber \Big( \big( \vec{p}, (\mc{F}_i, \beta_i)_{i=1}^{n-1}, \alpha \big), (\mc{F},\beta) \Big) & \mapsto \big( \vec{p}, (\mc{F}_i, \beta_i)_{i=1}^{n}\big)
\end{align}
where by Beauville-Laszlo's Lemma, there exists a $\mc{G}$-torsor $\mc{F}_n$ with isomorphisms $\beta_n: \mc{F}_n|_{\bar C_R\setminus \Delta_{\bar p_n}}\simeq \mc{F}_{n-1}|_{\bar C_R\setminus \Delta_{\bar p_n}}$ and $\delta:\mc{F}_n|_{\Delta_{\bar p_n}}\simeq \mc{F}|_{\Delta_{\bar p_n}}$ such that $\beta_n\circ\delta^{-1}|_{\hat{\Delta}^*_{\bar p_n}}=\alpha^{-1}\circ\beta|_{\hat{\Delta}^*_{\bar p_n}}$. Composing $f$ with the map \eqref{626689}, we get an isomorphism over $C^n_{\xi_0}$
\begin{align}
	\label{086999} (\varphi\circ f)|_{C^n_{\xi_0}}: \big(\mb{E}\times^{L^+\mc{G}_C} \Gr_{\mc{G}, C}\big)|_{C_{\xi_0}^n} &\simto \big(\Gr_{\mc{G},C}\times\cdots\times \Gr_{\mc{G},C}\big)|_{C_{\xi_0}^n} \\
	\nonumber \Big( \big( \vec{p}, (\mc{F}_i, \beta_i)_{i=1}^{n-1}, \alpha \big), (\mc{F},\beta) \Big) & \mapsto \Big( \big( p_i, \mc{F}_i|_{\hat{\Delta}_{\bar p_i}}, \beta_1\circ\cdots\circ\beta_i|_{\hat{\Delta}^*_{\bar p_i}} \big)_{i=1}^{n} \Big).
\end{align}
By the constructions of $\mc{F}_n$ and $\beta_n$ above, we have 
\[(p_n, \mc{F}_n|_{\hat{\Delta}_{\bar p_n}}, \beta_1\circ\cdots\circ\beta_n|_{\hat{\Delta}^*_{\bar p_n}}) =(p_n, \mc{F}|_{\hat{\Delta}_{\bar p_n}}, \beta_1\circ\cdots\circ\beta_{n-1}\circ\alpha^{-1}\circ\beta|_{\hat{\Delta}^*_{\bar p_n}}).\]
Combining \eqref{086999} and \eqref{626689}, we get an isomorphism over $C^n_{\xi_0}$
\begin{align}
	\label{346788} \psi:\big(\mb{E}\times^{L^+\mc{G}_C} \Gr_{\mc{G}, C}\big)|_{C_{\xi_0}^n} &\simto \big(\mr{Conv}_{\mc{G},C^{n-1}}\times \Gr_{\mc{G},C}\big)|_{C_{\xi_0}^n} \\
	\nonumber \Big( \big( \vec{p}, (\mc{F}_i, \beta_i)_{i=1}^{n-1}, \alpha \big), (\mc{F},\beta) \Big) & \mapsto \Big( \big( (p_i, \mc{F}_i, \beta_i)_{i=1}^{n-1}\big), (p_n,\mc{F}|_{\hat{\Delta}_{\bar p_n}}, \zeta\circ\beta|_{\hat{\Delta}^*_{\bar p_n}}) \Big),
\end{align}
where $\zeta=(\beta_1\circ\cdots\circ\beta_{n-1})|_{\hat{\Delta}_{\bar p_n}}\circ\alpha^{-1}\in L^+\mc{G}_{p_n}$.

In summary, we have the following commutative diagram
\begin{equation}\label{129303}
	\xymatrixcolsep{3pc}
	\xymatrix{
		\big(\mb{E}\times^{L^+\mc{G}_C} \Gr_{\mc{G}, C}\big)|_{C_{\xi_0}^n} \ar[d]_{\psi}^[@]{\sim} \ar[r]^-{\sim}_-{f|_{C^n_{\xi_0}}} & \mr{Conv}_{\mc{G},C_{\xi_0}^n} \ar[d]_[@]{\sim}^{\varphi_{n}} \\
		 \big(\mr{Conv}_{\mc{G},C^{n-1}}\times \Gr_{\mc{G},C}\big)|_{C_{\xi_0}^n} \ar[r]^-{\sim}_-{\varphi_{n-1}\times \mr{id}} & \big(\Gr_{\mc{G},C}\times\cdots\times\Gr_{\mc{G},C}\times \Gr_{\mc{G},C}\big)|_{C_{\xi_0}^n}
	 }.
\end{equation}
\begin{definition}\label{324923}
	The convolution Schubert variety $\overline{\mr{Conv}}^{\la}_{\mc{G},C^n}$ is defined inductively as follows: when $n=1$, set $\overline{\Conv}_{\mc{G},C}^\lambda:=\Grb_{\mc{G},C}^\lambda$; when $n\geq 2$, define 
	\[\overline{\Conv}_{\mc{G},C^n}^{\la}:= \mb{E}|_{\overline{\Conv}_{\mc{G},C^{n-1}}^{ (\lambda_1,\ldots,\lambda_{n-1}) }\times C } \times^{L^+\mc{G}_C} \Grb_{\mc{G},C}^{\lambda_n}.\] 
\end{definition}
Now we would like to give another construction of the convolution Schubert variety. For each character $\nu$ of $T$, we define
\[F_i( \C_{\nu}):= \ms{O}_{C^{n+1}}\bigg(\sum_{k=1}^i\sum_{\gamma\in\Gamma}\langle \gamma(\lambda_k), \nu \rangle \Delta_{\gamma, k}\bigg).\]
This assignment extends to a $\mathbb{C}$-linear tensor functor $F_i:\mr{Rep}(G)\to \mr{Coh}^{\mr{lf}}(C^{n+1})$. By the same argument as in Section \ref{993431}, $F_i$ gives a locally trivial $(\Gamma,T)$-torsor $\mc{F}_i$ over $C^{n+1}$. Moreover, the natural maps $\beta_{\nu,i}: F_i(\C_\nu)|_{C^n\times\bar C-\Delta_{\bar P_i}} \simto F_{i-1} (\C_\nu)|_{C^n\times\bar C-\Delta_{\bar P_i}}$ for all $\nu\in X_*(T)$ induce an isomorhism $\beta_i:\mc{F}_i|_{C^n\times\bar C-\Delta_{\bar P_i}} \to \mc{F}_{i-1}|_{C^n\times\bar C-\Delta_{\bar P_i}} $. 

Set $ \mc{F}_{i,G}=\mc{F}_i\times^T G$. Then $\bar{\mc{F}}_{i}:=\pi_*(F_{i,G})^\Gamma$ is a $\mc{G}$-torsor over $C^n\times \bar C$, and $\beta_i$ induces an isomorohism $\bar\beta_i: \bar{\mc{F}}_{i}|_{C^n\times\bar C-\Delta_{\bar P_i}} \simeq\bar{\mc{F}}_{i-1}|_{C^n\times\bar C-\Delta_{\bar P_i}}$, where by convention $\bar{\mc{F}}_{0}$ is the trivial torsor. We construct a section $\tilde s_{\la}:C^n\to \mr{Conv}_{\mc{G},C^n}$ as follows:
\begin{equation}\label{282313}
	\tilde s_{\la}=\big(P_1,\ldots, P_n, (\bar{\mc{F}}_{i}, \bar\beta_i)_{i=1}^n\big).
\end{equation}
We similary define a morphism $\tilde{o}_{\la}:L^+\mc{G}_{C^n}\to \mr{Conv}_{\mc{G},C^n}$ as in Definition \ref{926470}. Denote by $\widetilde{\Conv}^{\la}_{C^n} $ the schematic image of $\tilde{o}_{\la}$. 
\begin{lem}\label{901319}
	For any partition $\xi=(I_1,\ldots,I_k)$ of $[n]$, there is an isomorphism $\Phi: \Conv_{\mc{G},C^n_\xi}\to \big(\prod_{i=1}^k\Conv_{\mc{G},C^{I_i}}\big)|_{C^n_\xi}$ satisfying the following commutative diagram
	\begin{equation}\label{998995}
		\xymatrix{
		\Conv_{\mc{G},C^n_\xi} \ar[d]_{\Phi}^[@]{\sim} \ar[r]^{\mf{m}} & \Gr_{\mc{G},C^n_\xi} \ar[d]_[@]{\sim}^{\eqref{447893}} \\
		\big(\prod_{i=1}^k\Conv_{\mc{G},C^{I_i}}\big)|_{C^n_\xi} \ar[r]^-{\prod \mf{m}} & \big(\prod_{i=1}^k \Gr_{\mc{G},C^{I_i}}\big)|_{C^n_\xi} }.
	\end{equation}
	Moreover, we have $\Phi\circ \tilde{s}_{\la}|_{C^n_\xi}=\prod \tilde{s}_{\la_{I_i}}|_{C^n_\xi}$. It follows that $\Phi$ induces an isomorphism
	\[\widetilde{\Conv}^{\la}_{\mc{G},C^n_\xi}\simeq \Big(\prod_{i=1}^k\widetilde{\Conv}^{\la_{I_i}}_{\mc{G},C^{I_i}}\Big)\big|_{C^n_\xi}.\]
\end{lem}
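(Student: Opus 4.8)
The plan is to construct the factorization isomorphism $\Phi$ directly from the moduli description of the convolution Grassmannian, check that it fits into the square \eqref{998995}, and then verify that it transports the section $\tilde s_{\la}$ of \eqref{282313} to the external product of the sections $\tilde s_{\la_{I_i}}$. By associativity of the construction it suffices to treat a two-block partition $\xi=(I,J)$; the general statement follows by iterating.

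First I would build $\Phi$. Given $(\vec p,(\mc F_l,\beta_l)_{l=1}^n)\in\Conv_{\mc{G},C^n}(R)$ with $\vec p\in C^n_\xi(R)$, write $\mc F_l^{\mr{tot}}:=\beta_1\circ\cdots\circ\beta_l$ for the composite modification of $\mc F_l$ away from $\bigcup_{s\le l}\Delta_{\bar p_s}$. Over $C^n_\xi$ the subschemes $\bigcup_{s\in I,\,s\le l}\Delta_{\bar p_s}$ and $\bigcup_{s\in J,\,s\le l}\Delta_{\bar p_s}$ are disjoint, so by Beauville--Laszlo's lemma the modified torsor $(\mc F_l,\mc F_l^{\mr{tot}})$ decomposes uniquely, up to unique isomorphism, as an external product of a $\mc G$-torsor modified only along the $I$-points $\le l$ and one modified only along the $J$-points $\le l$. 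Writing $I=\{i_1<\cdots<i_a\}$ and taking $l=i_r$, the first factor gives a $\mc G$-torsor $\mc F^I_r$ with a modification along $\bigcup_{s\le r}\Delta_{\bar p_{i_s}}$; since the $\beta_l$ with $i_{r-1}<l<i_r$ are modifications at $J$-points, which are disjoint from the $I$-points, $\beta_{i_r}$ induces $\beta^I_r\colon\mc F^I_r|_{\setminus\Delta_{\bar p_{i_r}}}\simeq\mc F^I_{r-1}|_{\setminus\Delta_{\bar p_{i_r}}}$, producing the $I$-chain $(\vec p_I,(\mc F^I_r,\beta^I_r)_{r=1}^a)$; the $J$-chain is built symmetrically. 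Set $\Phi:=\big((\mc F^I_\bullet,\beta^I_\bullet),(\mc F^J_\bullet,\beta^J_\bullet)\big)$. Its inverse sends a pair of chains to the chain whose $l$-th term is the Beauville--Laszlo gluing of $\mc F^I_r$, $\mc F^J_s$ and $\mathring{\mc F}$, where $r$ (resp.\ $s$) is the number of elements of $I$ (resp.\ $J$) that are $\le l$, with modifications recovered from $\beta^I_\bullet,\beta^J_\bullet$; checking functoriality in $R$ and that these are mutually inverse is a routine, if slightly tedious, manipulation, entirely parallel to the proof of the factorization \eqref{447893} of $\Gr_{\mc G,C^n}$. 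Because the total modification of the combined chain is, away from all $\Delta_{\bar p_l}$, the composite of the two total modifications along disjoint loci, $\Phi$ makes \eqref{998995} commute; one also checks it is equivariant for the factorization \eqref{346288} of $L^+\mc G_{C^n}$.

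For the compatibility with sections, recall from \eqref{282313} that $\tilde s_{\la}$ is assembled from the $(\Gamma,T)$-torsors $\mc F_i$ attached to the tensor functors $F_i(\C_\nu)=\ms O_{C^{n+1}}\big(\sum_{k=1}^i\sum_\gamma\langle\gamma(\lambda_k),\nu\rangle\Delta_{\gamma,k}\big)$ together with the natural modifications $\bar\beta_i$. Restricting to $C^n_\xi\times C$, the defining divisor of $F_i(\C_\nu)$ splits as the sum of its $I$-part and its $J$-part, supported on disjoint loci; hence the pushforward $\bar{\mc F}_i$ decomposes accordingly, and the resulting factors are precisely the torsors occurring in $\tilde s_{\la_I}$ and $\tilde s_{\la_J}$ at the relevant stages, compatibly with the $\bar\beta_i$. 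This is the same calculation as in the proof of Proposition~\ref{684242}, so $\Phi\circ\tilde s_{\la}|_{C^n_\xi}=(\tilde s_{\la_I}\times\tilde s_{\la_J})|_{C^n_\xi}$. Since $\Phi$ is an $L^+\mc G_{C^n}$-equivariant isomorphism intertwining the factorizations of the jet group scheme and of the convolution Grassmannian, it carries the $L^+\mc G_{C^n_\xi}$-orbit of $\tilde s_{\la}$ onto the external product of the orbits of the $\tilde s_{\la_{I_i}}$, and an isomorphism identifies schematic images; therefore $\Phi$ restricts to $\tilde{\Conv}^{\la}_{\mc G,C^n_\xi}\simeq\big(\prod_{i=1}^k\tilde{\Conv}^{\la_{I_i}}_{\mc G,C^{I_i}}\big)\big|_{C^n_\xi}$, as claimed.

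The main obstacle is the first step: setting up the Beauville--Laszlo ``regrouping'' of an $n$-step chain with full care --- producing the torsors $\mc F^I_\bullet$, the modifications $\beta^I_\bullet$, and the inverse construction, verifying functoriality and mutual inverseness, and checking compatibility with $\mf m$. Nothing here is deep, but it is the place where one must be meticulous, exactly as in the corresponding step for $\Gr_{\mc G,C^n}$ in the proof of Proposition~\ref{684242}.
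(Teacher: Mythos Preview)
Your proposal is correct and follows essentially the same approach as the paper: both construct $\Phi$ by a Beauville--Laszlo regrouping of the $n$-step chain into an $I$-chain and a $J$-chain, verify the section compatibility by observing that the defining divisors of the functors $F_i$ split into disjoint $I$- and $J$-parts over $C^n_\xi\times C$ (the paper illustrates this with the example $n=4$, $\xi=(\{1,3\},\{2,4\})$ while you argue in general), and conclude via $L^+\mc G_{C^n}$-equivariance.
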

\begin{proof}
	We first define the map $\Phi$ as follows,
	\begin{align}\label{466981}
		\Phi:\Conv_{\mc{G},C^n_\xi} & \to \Big(\prod_{i=1}^k\Conv_{\mc{G},C^{I_i}}\Big)\big|_{C^n_\xi} \\
		\nonumber \big( \vec{p}, (\mc{F}_i, \beta_i)_{i=1}^{n}\big) & \mapsto {\textstyle\prod} \big( \vec{p}_{I_i},(\mc{F}'_j,\beta'_j)_{j\in I_i}\big)
	\end{align}
	where by Beauville-Laszlo's Lemma, for any $j\in I_i$ and the index $j'$ right before $j$ in the ordered set $I_i$, there exists a $\mc{G}$-torsor $\mc{F}'_j$ with isomorphisms $\alpha_j:\mc{F}'_j|_{\hat\Delta_{\bar p_j}}\to \mc{F}_j|_{\hat\Delta_{\bar p_j}}$ and $\beta_j': {\mc{F}'_j}|_{\bar C_R\setminus \Delta_{\bar p_j}}\to {\mc{F}'_{j'}}|_{\bar C_R\setminus \Delta_{\bar p_j}}$ such that $\beta'_j\circ\alpha^{-1}_j|_{\hat\Delta_{\bar p_j}^*}={\beta'}_{j'}^{-1}\circ\beta_1\circ\cdots\circ\beta_j|_{\hat\Delta_{\bar p_j}^*}$. One may check this is an isomorphism and satisfies the commutative diagram \eqref{998995}. 
	
	Let $\tilde s_{\la_I}:C^I\to \Conv_{\mc{G},C^I}$ be the section defined by \eqref{282313}, we shall show $ \textstyle \Phi\circ \tilde{s}_{\la}|_{C^n_\xi}= \prod \tilde{s}_{\la_{I_i}}|_{C^n_\xi}$ over $C^n_\xi$ for any partition $\xi$.
For the simplicity of illustration,	we will only prove a special case for $n=4$ and $\xi=(I,J)=(\{1,3\},\{2,4\})$, which indicates the argument in general. Recall the definition \eqref{282313} of $\bar{\mc{F}}_{i}:=\pi_*(\mc{F}_i\times^T G)^\Gamma$ and $\bar \eta_i$, where $\mc{F}_i$ is a $(\Gamma, T)$-torsor defined via the functor $F_i: \mr{Rep}(T)\to \mr{Coh}^{\mr{lf}}(C^{n+1})$ given by 
	$$F_i( \C_{\nu}) = \ms{O}_{C^{n+1}}\bigg(\sum_{j=1}^i\sum_{\gamma\in \Gamma} \langle \gamma(\lambda_j), \nu \rangle \Delta_{\gamma, j}\bigg),$$
	 and $\eta_i$ is the map induced from the natural isomorphisms $\eta_{\nu,i}:F(\C_\nu)|_{C^n\times C\setminus \bigcup_{j\leq i}\Delta_{ \gamma,j}} \simto F^0 (\C_\nu)|_{C^n\times C\setminus \bigcup_{j\leq i}\Delta_{ \gamma, j}} $, $\nu\in X^*(T)$. Denote by $D_{i,\nu}$ the divisor $\big(\sum_{\gamma\in \Gamma} \langle \gamma(\lambda_i), \nu \rangle \Delta_{\gamma, i}\big)\cap \big(C^n_\xi\times C\big)$ in $C^n_\xi\times C$. One may check that $D_{i,\nu}\cap D_{j,\nu}=\emptyset$ for any $i=1,3$, $j=2,4$. Let $U_{I}\subseteq C^n_\xi\times C$ be a neighborhood of $\big(\bigcup_{i\in I,\gamma\in\Gamma} \Delta_{\gamma, i}\big)\cap \big(C^n_\xi\times C\big)$ such that $U_I\cap (D_{2,\nu}\cup D_{4,\nu})=\emptyset$ for any $\nu$, and let $\mr{pr}_I:C^n_\xi\times C\to C^I\times C$ be the projection map. Define $U_J$ and $\mr{pr}_J$ similarly. Then 
	\begin{gather*}
		\ms{O}_{C^{n+1}}\bigg(\sum_{\gamma\in \Gamma} \langle \gamma(\lambda_1), \nu \rangle \Delta_{\gamma, 1}\bigg)\Big|_{U_I}\simeq \ms{O}_{U_I}(D_{1,\nu})\simeq \mr{pr}_I^*\Big(\ms{O}_{C^I\times C}\big( \mr{pr}_I(D_{1,\nu}) \big) \Big)\big|_{U_I},\\
		\ms{O}_{C^{n+1}}\bigg(\sum_{i=1}^3\sum_{\gamma\in \Gamma} \langle \gamma(\lambda_i), \nu \rangle \Delta_{\gamma, i}\bigg)\Big|_{U_I}\simeq \ms{O}_{U_I}(D_{1,\nu}\cup D_{3,\nu})\simeq \mr{pr}_I^*\Big(\ms{O}_{C^I\times C}\big( \mr{pr}_I(D_{1,\nu}\cup D_{3,\nu}) \big) \Big)\big|_{U_I}.
	\end{gather*}
	By the definition of the map \eqref{466981}, we have an equality of sections over $C^n_\xi$
	\[\Phi\circ \tilde s_{\la}|_{C^n_\xi} = \big((P_1,P_3,\bar{\mc{F}}_{1},\bar{\mc{F}}_{3}',\bar{\eta}_1,\bar{\eta}'_3),(P_2,P_4,\bar{\mc{F}}'_{2},\bar{\mc{F}}'_{4},\bar{\eta}'_2,\bar{\eta}'_4)\big)|_{C^n_\xi},\]
	where $\bar{\mc{F}}'_{3}=\pi_*(\mc{F}'_3\times^T G)^\Gamma$, and $\mc{F}'_3$ is the $(\Gamma,T)$-torsor constructed via the following functor $F'_3:\mr{Rep}(T)\to\mr{Coh}^{\mr{lf}}(C^I\times C)$
	\[F'_3: \C_{\nu}\mapsto \ms{O}_{C^I\times C}\big(\mr{pr}_I(D_{1,\nu}\cup D_{3,\nu})\big),\]
	$\eta'_3$ is the map induced form the natural maps 
	\[\eta'_{3,\nu}:\ms{O}_{C^I\times C\setminus \mr{pr}_I(D_{3,\nu})}\big(\mr{pr}_I(D_{1,\nu}\cup D_{3,\nu})\big) \to \ms{O}_{C^I\times C\setminus \mr{pr}_I(D_3)}(\mr{pr}_I(D_{1,\nu})),\]
	for all $\nu\in X^*(T)$.
	By definition \eqref{282313}, $(P_1,P_3,\bar{\mc{F}}_{1},\bar{\mc{F}}_{3}',\bar{\eta}_1,\bar{\eta}'_3)$ is exactly the section $\tilde s_{\la_I}:C^I\to \Conv_{\mc{G},C^I}$. Similarly, $(P_2,P_4,\bar{\mc{F}}'_{2},\bar{\mc{F}}'_{4},\bar{\eta}'_2,\bar{\eta}'_4)=\tilde s_{\la_J}$. This shows that 
	$\Phi\circ \tilde s_{\la}|_{C^n_\xi}=\tilde s_{\la_I}\times \tilde s_{\la_J}|_{C^n_\xi}$.
	Since $\Phi$ is $L^+\mc{G}_{C^n_\xi}$-equivariant, it follows that the map \eqref{466981} induces an isomorphism $\Phi: \widetilde{\Conv}^{\la}_{\mc{G},C^{n}_\xi} \simeq \big(\widetilde{\Conv}^{\la_I}_{\mc{G},C^{I}}\times \widetilde{\Conv}^{\la_J}_{\mc{G},C^{J}} \big)|_{C^{n}_\xi}$.
\end{proof}
Recall that there is an isomorphism \eqref{626689} over $C_{\xi_0}^n$:
\[ \varphi_n:\mr{Conv}_{\mc{G},C^n}|_{C_{\xi_0}^n} \to \big(\Gr_{\mc{G},C}\times\cdots\times \Gr_{\mc{G},C}\big)|_{C_{\xi_0}^n}. \]
By Lemma \ref{901319}, this map $\varphi$ induces an isomorphism
\begin{equation}\label{234926}
	\widetilde{\Conv}^{\la}_{\mc{G},C^n_{\xi_0}} \simeq \big(\Grb^{\lambda_{1}}_{\mc{G},C}\times\cdots\times \Grb^{\lambda_n}_{\mc{G},C}\big)|_{C^n_{\xi_0}}.
\end{equation}
\begin{prop}\label{234953}
	The morphism $f$ defined in \eqref{643298} induces an isomorphism 
	\begin{equation}\label{138990}
		\overline{ \Conv}_{\mc{G},C^n}^{\la}\simeq \widetilde{\Conv}^{\la}_{C^n}.
	\end{equation} 
\end{prop}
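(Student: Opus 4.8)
The plan is to induct on $n$. For $n=1$ there is nothing to do: $\mr{Conv}_{\mc{G},C}=\Gr_{\mc{G},C}$, the section $\tilde s_{\la}$ of \eqref{282313} is literally the section $s_{\la}$ of \eqref{926470} (the functor $F_1$ agrees with $F$), the morphism $f$ is the identity, and both $\overline{\Conv}^{\lambda}_{\mc{G},C}$ and $\tilde{\Conv}^{\lambda}_{C}$ are $\Grb^{\lambda}_{\mc{G},C}$ by Definition \ref{324923}. So fix $n\ge 2$, put $\la'=(\lambda_1,\ldots,\lambda_{n-1})$, and apply the inductive hypothesis (to the morphism $f$ in $n-1$ variables) to identify $\overline{\Conv}^{\la'}_{\mc{G},C^{n-1}}$ with the closed subscheme $\tilde{\Conv}^{\la'}_{C^{n-1}}\subseteq\mr{Conv}_{\mc{G},C^{n-1}}$. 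Since $\mb{E}$ is an $L^+\mc{G}_{C}$-torsor over $\mr{Conv}_{\mc{G},C^{n-1}}\times C$, Definition \ref{324923} then exhibits $\overline{\Conv}^{\la}_{\mc{G},C^n}$ as the closed subscheme $\mb{E}|_{\tilde{\Conv}^{\la'}_{C^{n-1}}\times C}\times^{L^+\mc{G}_{C}}\Grb^{\lambda_n}_{\mc{G},C}$ of $\mb{E}\times^{L^+\mc{G}_{C}}\Gr_{\mc{G},C}$, which $f$ carries isomorphically onto a closed subscheme $Z\subseteq\mr{Conv}_{\mc{G},C^n}$. It then remains only to show $Z=\tilde{\Conv}^{\la}_{C^n}$.

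I would first record that $Z$ is reduced and irreducible: $\Grb^{\lambda_n}_{\mc{G},C}$ is a reduced irreducible global Schubert variety (cf.\ \cite{Zhu14}), the base $\tilde{\Conv}^{\la'}_{C^{n-1}}\times C$ is reduced and irreducible by induction, hence so is the $L^+\mc{G}_C$-torsor $\mb{E}$ over it, and the associated fibration $\mb{E}|_{\tilde{\Conv}^{\la'}_{C^{n-1}}\times C}\times^{L^+\mc{G}_C}\Grb^{\lambda_n}_{\mc{G},C}$ is smooth-locally a product of two such schemes. Next I would compute $Z$ over the dense open $C^n_{\xi_0}$ using the commutative square \eqref{129303}: under $\psi$ of \eqref{346788} the $\mr{Conv}_{\mc{G},C^{n-1}}$-coordinate is preserved while the $\Gr_{\mc{G},C}$-coordinate changes only by an element of $L^+\mc{G}_{p_n}$, so $\psi$ carries $\big(\mb{E}|_{\tilde{\Conv}^{\la'}_{C^{n-1}}\times C}\times^{L^+\mc{G}_C}\Grb^{\lambda_n}_{\mc{G},C}\big)|_{C^n_{\xi_0}}$ onto $\big(\tilde{\Conv}^{\la'}_{C^{n-1}}\times\Grb^{\lambda_n}_{\mc{G},C}\big)|_{C^n_{\xi_0}}$; composing with $\varphi_{n-1}\times\mr{id}$, the isomorphism $\varphi_n$ of \eqref{626689}, and the factorization \eqref{234926} for $\la'$ gives $\varphi_n\big(Z|_{C^n_{\xi_0}}\big)=\big(\Grb^{\lambda_1}_{\mc{G},C}\times\cdots\times\Grb^{\lambda_n}_{\mc{G},C}\big)|_{C^n_{\xi_0}}$. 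By \eqref{234926} for $\la$ this equals $\varphi_n\big(\tilde{\Conv}^{\la}_{C^n}|_{C^n_{\xi_0}}\big)$, so $Z$ and $\tilde{\Conv}^{\la}_{C^n}$ agree as closed subschemes over $C^n_{\xi_0}$.

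From here both inclusions are soft. Since $f$ is $L^+\mc{G}_{C^n}$-equivariant and $f^{-1}(Z)$ is $L^+\mc{G}_{C^n}$-stable ($\tilde{\Conv}^{\la'}_{C^{n-1}}$ is $L^+\mc{G}_{C^{n-1}}$-stable and $\Grb^{\lambda_n}_{\mc{G},C}$ is $L^+\mc{G}_C$-stable), $Z$ is a closed, $L^+\mc{G}_{C^n}$-stable subscheme which, containing $\tilde{\Conv}^{\la}_{C^n}|_{C^n_{\xi_0}}$, contains $\tilde s_{\la}(C^n)$ (use that $C^n$ is reduced and $C^n_{\xi_0}$ dense) and hence the whole orbit $L^+\mc{G}_{C^n}\cdot\tilde s_{\la}$ set-theoretically; as $Z$ is reduced, the orbit map has reduced source, so it factors through $Z$, whence $\tilde{\Conv}^{\la}_{C^n}\subseteq Z$ by the universal property of the schematic image. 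Conversely, $Z$ is irreducible and $Z\to C^n$ is surjective (it contains $\tilde s_{\la}(C^n)$), so $Z|_{C^n_{\xi_0}}$ is dense in $Z$; since $Z$ is reduced, it is the closure of $Z|_{C^n_{\xi_0}}=\tilde{\Conv}^{\la}_{C^n}|_{C^n_{\xi_0}}$, which lies in the closed subscheme $\tilde{\Conv}^{\la}_{C^n}$, giving $Z\subseteq\tilde{\Conv}^{\la}_{C^n}$. Therefore $Z=\tilde{\Conv}^{\la}_{C^n}$ and $f$ restricts to the asserted isomorphism.

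The main obstacle is the computation of $Z$ over $C^n_{\xi_0}$ in the second paragraph: one must chase the square \eqref{129303} carefully and verify that the two identifications of $\mr{Conv}_{\mc{G},C^n}|_{C^n_{\xi_0}}$ with $(\Gr_{\mc{G},C})^{n}|_{C^n_{\xi_0}}$ — one through $\psi$ and $\varphi_{n-1}\times\mr{id}$, the other directly through $\mf m$ and $\varphi_n$ — restrict to the same closed subscheme on the piece coming from $\overline{\Conv}^{\la}_{\mc{G},C^n}$, so that the inductive factorization \eqref{234926} together with the explicit formulas for $\psi$ and $\varphi_n$ genuinely pins down $Z|_{C^n_{\xi_0}}$. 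Everything after that (reducedness, density, the schematic-image universal property) is formal.
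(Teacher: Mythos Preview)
Your proof is correct and follows essentially the same approach as the paper: induction on $n$, the base case being trivial, and for $n\ge 2$ using the commutative square \eqref{129303} together with the factorization \eqref{234926} to show that $f$ carries $\overline{\Conv}^{\la}_{\mc{G},C^n}$ onto $\tilde{\Conv}^{\la}_{C^n}$ over the dense open $C^n_{\xi_0}$, then concluding by reducedness and irreducibility. The paper compresses your final two paragraphs into the single sentence ``since both of them are reduced and irreducible, we must have an isomorphism'' (i.e.\ two reduced irreducible closed subschemes of $\mr{Conv}_{\mc{G},C^n}$ agreeing on a dense open coincide), whereas you spell out the two inclusions separately; your extra care is harmless but not needed.
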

\begin{proof}
	We prove it by induction on $n$. When $n=1$, by definition $\tilde{s}_{\lambda}=s_{\lambda}$. Hence, $\widetilde{\Conv}^{\lambda}_{\G,C}=\Grb_{\G,C}^\lambda=\overline{\Conv}_{\G,C}^\lambda$. When $n\geq 2$, by induction we have $\overline{\Conv}_{\mc{G},C^{n-1}}^{\lambda_1,\ldots,\lambda_{n-1}}\simeq \widetilde{\Conv}^{\lambda_1,\ldots,\lambda_{n-1}}_{\G,C^n}$. Then, the map $\psi$ in \eqref{346788} induces an isomorphism 
	\begin{equation} \label{342960}
		\overline{\Conv}_{\mc{G},C^n_{\xi_0}}^{\la}\simeq \big(\widetilde{\Conv}^{\lambda_1,\ldots,\lambda_{n-1}}_{\G,C^n}\times \Grb_{\G,C}^{\lambda_n}\big)|_{C^n_{\xi_0}}.
	\end{equation}
	Combining with the isomorphism \eqref{234926}, we get the following diagram
	\begin{equation*}
		\xymatrixcolsep{3pc}
		\xymatrix{
			\overline{\Conv}_{\mc{G},C^n_{\xi_0}}^{\la} \ar[d]_{\eqref{342960}}^[@]{\sim} & \widetilde{\Conv}^{\la}_{\mc{G},C^n_{\xi_0}} \ar[d]_[@]{\sim}^{\eqref{234926}} \\
			\big(\widetilde{\Conv}^{\lambda_1,\ldots,\lambda_{n-1}}_{\G,C^n}\times \Grb_{\G,C}^{\lambda_n}\big)|_{C^n_{\xi_0}} \ar[r]^-{\sim}_-{\eqref{234926}\times \mr{id}} & \big(\Grb^{\lambda_{1}}_{\mc{G},C}\times\cdots\times \Grb^{\lambda_n}_{\mc{G},C}\big)|_{C^n_{\xi_0}}
		}.
	\end{equation*}
	This gives an isomorphism $\overline{\Conv}_{\mc{G},C^n_{\xi_0}}^{\la}\simeq \widetilde{\Conv}^{\la}_{\mc{G},C^n_{\xi_0}}$. By the commutativity of the diagram \eqref{129303}, this isomorphism is exactly the restriction of the map $f$ defined in \eqref{643298}. Since both of them are reduced and irreducible, we must have an isomorphism 	$\overline{\Conv}_{\mc{G},C^n}^{\la}\simeq \widetilde{\Conv}^{\la}_{\mc{G},C^n}$.
\end{proof}

\subsection{Flatness of BD Schubert varieties}\label{029728}
Let $G$ be a simple algebraic group of adjoint type over $\C$, and let $\sigma$ be the standard automorphism defined in Section \ref{133740}. Let $\mc{K}=\C((t))$ and $\mc{O}=\C[[t]]$. We define the action of $\sigma$ on $\mc{K}$ and $\mc{O}$ by $\sigma(t)=\epsilon^{-1} t$, where $\epsilon$ is a fixed primitive $m$-th root of unity. Denote by $\bar{\mc{K}}=\mc{K^\sigma}$ and $\bar{\mc{O}}=\mc{O}^\sigma$ the $\sigma$-fixed points in $\mc{K}$ and $\mc{O}$ respectively.

Let $\ms{G}$ be the $\sigma$-fixed point subgroup scheme $\mr{Res}_{\mc{O}/\bar{\mc{O}}} ( G_\mc{O})^\sigma$ of the Weil restriction $\mr{Res}_{\mc{O}/\bar{\mc{O}}} ( G_\mc{O}) $. Then $\ms{G}$ is a special parahoric group scheme over $\bar{\mc{O}}$ in the sense of Bruhat-Tits, see \cite[Section 2.2]{BH}. Let $L\ms{G}$ be the loop group scheme and $L^+\ms{G}$ be the jet group scheme with $L\ms{G}(R)=\ms{G}(R((t)))$ and $L^+\ms{G}(R)=\ms{G}(R[[t]])$, for any $\C$-algebra $R$. We define the affine grassmannian $\Gr_\ms{G}$ to be the fppf quotient $L\ms{G}/L^+\ms{G}$, and call it a twisted affine Grassmannian of $\ms{G}$.  Similarly, replacing $\ms{G}$ by $G$, we define the affine Grassmannian $\Gr_G$ of $G$ to be the fppf quotient $LG/L^+G$. In particular, $\Gr_{\ms{G}}(\C)=G(\mc{K})^\sigma/G(\mc{O})^\sigma$ and $\Gr_{G}(\C)=G(\mc{K})/G(\mc{O})$. We denote by $e_0$ the base point in $\Gr_{\ms{G}}(\C)$ and $\Gr_{G}(\C)$.

For any dominant coweight $\lambda\in X_*(T)$, we are associated to an element $t^\lambda\in T(\mc{K})$. Then, the closure of the orbit $G(\mc{O})\cdot t^\lambda e_0$ in $\Gr_G(\C)$ is called an affine Schubert variety, denoted by $\Grb_G^\lambda$. Let $n^\lambda$ be the norm of $t^\lambda$, i.e. $n^\lambda:=\prod_{i=0}^{m-1} \sigma^i(t^\lambda)\in T(\mc{K})^\sigma$. Let $\bar\lambda\in X_*(T)_{\sigma}$ be the image of $\lambda$ under the projection map $X_*(T)\to X_*(T)_\sigma$, and denote by $e_{\lb}$ the point $n^\lambda e_0\in \Gr_{\ms{G}}(\C)$. One can define a Schubert cell $\Gr_{\ms{G}}^{\lb}:=G(\mc{O})^\sigma e_{\lb}$. We define the Schubert variety $\Grb_\ms{G}^{\lb}$ to be the reduced closure of $\Gr_{\ms{G}}^{\lb}$ in $\Gr_\ms{G}$. The dimensions of $\Grb_G^\lambda$ and $\Grb_\ms{G}^{\lb}$ are given by 
\begin{equation}\label{eq_dim}
\dim \Grb_G^\lambda=\dim \Grb_\ms{G}^{\lb}=2\langle \lambda, \rho \rangle,
\end{equation}
where $\rho$ is the summation of fundamental weights of $G$, cf.\cite[Proposition 2.1.5]{Zhu17}\cite[Sectioin 2.3]{BH}.  Moreover, there is a level one line bundle $\ms{L}$ on $\Gr_{\ms{G}}$ such that 
\begin{equation}\label{eq_sch_dem}
	H^0\big(\Grb_{\ms{G}}^{\bar{\lambda}}, \ms{L}^c \big) ^\vee\simeq D^\sigma(c,\bar{\lambda}),
\end{equation}
as $\gs$-modules, where $D^\sigma(c,\bar{\lambda})$ is the twisted affine Demazure module defined in Section \ref{925386}, cf.\,\cite[Theorem 3.11]{BH}).

Let $\Gamma$ be the cyclic group generated by $\sigma$, and let $C$ be an irreducible smooth curve with a faithful $\Gamma$-action. From now on, we assume that the ramified points of $C$ are totally ramified, i.e.\,the stabilizer $\Gamma_p$ is equal to $\Gamma$ for any ramified point $p$. Moreover, when $C$ is the affine line $\A^1$, we require the $\Gamma$-action to be the `standard' one given by 
\begin{equation}\label{eq_standard}
	\sigma(p)=\epsilon p, \quad \text{ for any } p\in \A^1.  
\end{equation}
\begin{thm}\label{231044}
	With the assumptions above, the global Schubert variety $\Grb^\lambda_{\mc{G},C}$ is flat over $C$, and for any $p\in C$ the fiber $\Grb^\lambda_{\mc{G},p}$ is reduced. Moreover, 
	\begin{equation}\label{200864}
		\Grb^\lambda_{\mc{G},p}\simeq 
		\begin{cases}
		\Grb_{\ms{G}}^{\lb} & \text{for } p\in R;\\
		\Grb_{G}^{\lambda} & \text{for } p\not\in R,
		\end{cases}
	\end{equation}
	where $R$ denotes the ramification locus of the action of $\Gamma$ on $C$.
\end{thm}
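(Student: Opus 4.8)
The plan is to get flatness cheaply, to treat the unramified fibers by reducing to the untwisted case, and to treat a ramified fiber by reducing to $C=\A^1$ with its standard $\Gamma$-action together with a Hilbert polynomial comparison. For flatness, observe that by Definition \ref{046143} the scheme $\Grb^{\la}_{\mc{G},C}$ is the scheme-theoretic image of a morphism out of the reduced, irreducible scheme $L^+\mc{G}_C$ (a pro-smooth group scheme over $C$ with connected fibers), hence is integral; and it dominates $C$ because it contains the image of the section $s_{\la}$ of \eqref{926470}. A finite-type morphism from an integral scheme to a smooth curve is flat as soon as it is dominant: over a Dedekind base flat is the same as torsion-free, and the local rings of $\Grb^{\la}_{\mc{G},C}$ are domains containing the base ring. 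Hence $\Grb^{\la}_{\mc{G},C}$ is $C$-flat and its fibers are equidimensional of dimension $\dim\Grb^{\la}_{\mc{G},C}-1=\dim\Grb_G^{\lambda}$.

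For $p\notin R$, Proposition \ref{prop_etale_base}(2) identifies $\Gr_{\mc{G},\mathring C}\simeq\Gr_{G,\mathring C}$ and $L^+\mc{G}_{\mathring C}\simeq L^+G_{\mathring C}$ compatibly with $s_{\la}$, so $\Grb^{\la}_{\mc{G},C}\times_C\mathring C\simeq\Grb^{\la}_{G,\mathring C}$, the untwisted global Schubert variety, whose fiber at each point is the reduced affine Schubert variety $\Grb_G^{\lambda}$ by the untwisted case \cite{Zhu14}; this gives \eqref{200864} away from $R$. For $p\in R$, choose a $\Gamma$-stable affine neighborhood $U\ni p$ with a $\Gamma$-equivariant \'etale map $f\colon U\to\A^1$ such that $f(p)=0$; since ramified points are totally ramified, $\Gamma_q=\Gamma_{f(q)}$ for all $q\in U$, so Proposition \ref{prop_etale_base}(1) applies, and -- the construction of $s_{\la}$ from the divisorial $(\Gamma,T)$-torsor \eqref{214694} being compatible with \'etale base change -- it gives $\Grb^{\la}_{\mc{G},U}\simeq\Grb^{\la}_{\mc{G},\A^1}\times_{\A^1}U$. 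We are reduced to $C=\A^1$ and $p=0$. On the formal disc at $\bar 0$ the group $\mc{G}$ restricts to $\ms{G}=\mr{Res}_{\mc{O}/\bar{\mc{O}}}(G_{\mc{O}})^\sigma$, so Beauville-Laszlo gives $\Gr_{\mc{G},0}\simeq\Gr_{\ms{G}}$ and $L^+\mc{G}_0\simeq L^+\ms{G}$; moreover at $p_1=0$ all the divisors $\Delta_{\gamma,1}$ in \eqref{214694} collapse to $\{p_2=0\}$ with total multiplicity $\langle\sum_{k}\sigma^k(\lambda),\nu\rangle$, so $s_{\la}(0)$ is the $T$-torsor attached to the norm $n^{\lambda}$, that is, $s_{\la}(0)=e_{\lb}$. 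Hence the scheme-theoretic fiber $\Grb^{\la}_{\mc{G},0}$ contains the $L^+\ms{G}$-orbit of $e_{\lb}$, and therefore contains the reduced twisted Schubert variety $\Grb_{\ms{G}}^{\lb}$ as a closed subscheme.

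To promote this inclusion to an equality I compare Hilbert polynomials relative to the level-one line bundle $\mc{L}_{\A^1}$ of \cite{BH} on $\Gr_{\mc{G},\A^1}$, whose restriction $\ms{L}$ to the fiber over $0$ is the bundle appearing in \eqref{eq_sch_dem} and whose restriction to an unramified fiber is the level-one bundle on $\Gr_G$. By flatness the Hilbert polynomial of $\Grb^{\la}_{\mc{G},0}$ equals that of a generic fiber, namely $c\mapsto\dim H^0(\Grb_G^{\lambda},\ms{L}^c)=\dim D(c,\lambda)$ (cf.\,\cite{Ku}). On the other hand $\dim H^0(\Grb_{\ms{G}}^{\lb},\ms{L}^c)=\dim D^\sigma(c,\lb)$ by \eqref{eq_sch_dem}, while $\dim D^\sigma(c,\lb)=\dim D(c,\lambda)$ by Theorem \ref{257230} (for $n=1$ it reads $D^\sigma(c,\lb)\simeq\mr{gr}\,D(c,\lambda)_p$, which has the same dimension as $D(c,\lambda)$). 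Thus $\Grb_{\ms{G}}^{\lb}\hookrightarrow\Grb^{\la}_{\mc{G},0}$ is a closed immersion of projective schemes with equal Hilbert polynomials for the ample bundle $\ms{L}$, so its ideal sheaf has vanishing Euler characteristic after every twist, hence no nonzero global section after any sufficiently positive twist, hence vanishes. Therefore $\Grb^{\la}_{\mc{G},0}=\Grb_{\ms{G}}^{\lb}$, in particular reduced, which completes \eqref{200864} and the proof.

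The main obstacle is the identification of the special fiber rather than flatness. Two points require care: the normalization $s_{\la}(0)=e_{\lb}$, where the conventions fixing the $(\Gamma,T)$-torsor \eqref{214694} and the base point $e_0$ of $\Gr_{\ms{G}}$ must be matched precisely; and the dimension identity $\dim D(c,\lambda)=\dim D^\sigma(c,\lb)$ together with the Borel-Weil realizations of both sides -- the classical one and \eqref{eq_sch_dem} from \cite{BH} -- which is exactly where, following Zhu \cite{Zhu14,Zhu17}, one relies on the normality and the known dimension of twisted affine Schubert varieties.
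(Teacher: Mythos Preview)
Your argument is correct in outline, and the reduction from a general $\Gamma$-curve $C$ to $C=\A^1$ via the \'etale base change of Proposition~\ref{218368} and Proposition~\ref{prop_etale_base} is exactly what the paper does. The difference is in the treatment of the $\A^1$ case itself: the paper simply invokes \cite[Theorem~3]{Zhu14} as a black box, whereas you reprove this special instance by a Hilbert-polynomial comparison, feeding in the dimension identity $\dim D(c,\lambda)=\dim D^\sigma(c,\bar\lambda)$ from Theorem~\ref{257230} (which is Kus--Venkatesh \cite{KV16}) together with the twisted Borel--Weil isomorphism \eqref{eq_sch_dem} from \cite{BH}. This is a genuinely different route: Zhu's argument in \cite{Zhu14} establishes flatness and reducedness of the special fiber for arbitrary parahorics without any representation-theoretic input, while your argument is shorter but specific to the (hyper)special parahoric $\ms{G}$, where the needed dimension equality is already known by independent means.

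Two points of care. First, the flatness-over-a-curve step and the identification $s_\lambda(0)=e_{\bar\lambda}$ are fine, but the Hilbert-polynomial step requires an ample line bundle on $\Grb^{\lambda}_{\mc{G},\A^1}$ whose restrictions to the generic and special fibers are the level-one bundles appearing in the Demazure realizations. Since in Section~\ref{029728} the group $G$ is of adjoint type, the level-one line bundle on the relevant component of $\Gr_{\mc{G},\A^1}$ is only constructed later in this paper (Section~\ref{528475}, via translations $T_{\vec\mu}$ from the simply-connected cover); citing \cite{BH} directly for this is acceptable if that construction is there, but otherwise you incur a forward reference. A clean workaround is to run the Hilbert-polynomial comparison with the determinant line bundle $\mc{L}_{\mathrm{det}}$, which is manifestly $\mc{G}$-equivariant and available in any case; on each fiber it is a positive power $\ms{L}^d$, and Theorem~\ref{257230} still gives $\dim D(cd,\lambda)=\dim D^\sigma(cd,\bar\lambda)$. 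Second, make sure \eqref{eq_sch_dem} is logically prior to what you are proving: that statement concerns only the local twisted Schubert variety $\Grb_{\ms{G}}^{\bar\lambda}$ and its level-one line bundle, and its proof in \cite{BH} does not require flatness of the global Schubert variety, so there is no circularity.
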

\begin{proof}
	When $C=\A^1$, the theorem was proven in \cite[Theorem 3]{Zhu14}. Now, let $C$ be a general $\Gamma$-curve whose points are either unramified or totally ramified. Note that the theorem is local, we shall reduce it to the affine case. For any ramified point $x\in C$, there exists an open neighborhood $U$ of $x$ with a $\Gamma$-equivariant \'etale map $U\to \A^1$ sending $x$ to $0$. 
	By Corollary \ref{prop_etale_base}, we have a base change isomorphism
	\[\Gr_{\mc{G}_{\bar U}, U} \simeq \Gr_{\mc{G}_{\bar\A^1},\A^1}\times_{\A^1} U,\]
	and the left $L^+\mc{G}_U$-action on $\Gr_{\mc{G}_{\bar U}, U}$ commutes with the base change. Thus, 
	\[\Grb_{\mc{G}_{\bar U}, U}^\lambda \simeq \Grb^\lambda_{\mc{G}_{\bar\A^1},\A^1}\times_{\A^1} U.\] 
	Hence, the theorem holds over $U$. Similarly, one can show that the theorem also holds over $C\backslash R$. This completes the proof of the theorem.
\end{proof}
\begin{lem}\label{432849}
The convolution variety $ \overline{\mr{Conv}}_{\mc{G},C^n}^{\la} $ is flat over $C^n$. Moreover, the convolution morphism $\mf{m}$ induces a surjective morphism $\mf{m}: \overline{\mr{Conv}}_{\mc{G},C^n}^{\la}\to \Grb_{\mc{G},C^n}^{\la}$. 
\end{lem}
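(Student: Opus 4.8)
The plan is to establish the two assertions separately, in both cases bootstrapping from the $n=1$ case of Theorem~\ref{231044} via the inductive Definition~\ref{324923} and the identification $\overline{\Conv}^{\la}_{\mc{G},C^n}\simeq\tilde{\Conv}^{\la}_{C^n}$ of Proposition~\ref{234953}.

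For flatness I would induct on $n$. When $n=1$, $\overline{\Conv}^{\lambda}_{\mc{G},C}=\Grb^{\lambda}_{\mc{G},C}$ is flat over $C$ by Theorem~\ref{231044}. For $n\geq 2$, set $B:=\overline{\Conv}^{(\lambda_1,\ldots,\lambda_{n-1})}_{\mc{G},C^{n-1}}\times C$, so that $\overline{\Conv}^{\la}_{\mc{G},C^n}=\mb{E}|_{B}\times^{L^+\mc{G}_C}\Grb^{\lambda_n}_{\mc{G},C}$ lives over $B$, and $B\to C^{n-1}\times C=C^n$ is the product of the structure map $\overline{\Conv}^{(\lambda_1,\ldots,\lambda_{n-1})}_{\mc{G},C^{n-1}}\to C^{n-1}$ (flat by induction) with $\mr{id}_C$. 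It then remains to check that $p\colon\overline{\Conv}^{\la}_{\mc{G},C^n}\to B$ is flat. Since $\Grb^{\lambda_n}_{\mc{G},C}$ is of finite type over $C$, the $L^+\mc{G}_C$-action on it factors through a smooth affine jet quotient $L^{(N)}\mc{G}_C$ over $C$ for $N\gg 0$; pushing $\mb{E}|_B$ out to this quotient one may replace the $L^+\mc{G}_C$-torsor by an $L^{(N)}\mc{G}_C$-torsor, which becomes trivial over some étale cover $V\to B$ (regarding $B$ over $C$ through its second projection). Over $V$ the fibration $p$ becomes the base change $\Grb^{\lambda_n}_{\mc{G},C}\times_C V\to V$, which is flat because $\Grb^{\lambda_n}_{\mc{G},C}\to C$ is flat by Theorem~\ref{231044}. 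As flatness is étale-local on the target, $p$ is flat, hence so is $\overline{\Conv}^{\la}_{\mc{G},C^n}\to C^n$.

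The same étale-local description shows more: $\Grb^{\lambda_n}_{\mc{G},C}\to C$ is projective (a finite-type closed subscheme of the ind-projective $\Gr_{\mc{G},C}$), so $p$ is projective, and composing with the inductively projective map $B\to C^n$ shows $\overline{\Conv}^{\la}_{\mc{G},C^n}\to C^n$ is proper. Since $\Gr_{\mc{G},C^n}$ is ind-separated over $C^n$ and $\mf{m}$ is a $C^n$-morphism out of a proper finite-type scheme, $\mf{m}\colon\overline{\Conv}^{\la}_{\mc{G},C^n}\to\Gr_{\mc{G},C^n}$ is proper, in particular closed, and lands in a finite-type closed subscheme. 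For surjectivity onto $\Grb^{\la}_{\mc{G},C^n}$ I would use Proposition~\ref{234953} to identify $\overline{\Conv}^{\la}_{\mc{G},C^n}$ with $\tilde{\Conv}^{\la}_{C^n}$, the schematic image of the orbit map $\tilde s_{\la}\colon L^+\mc{G}_{C^n}\to\mr{Conv}_{\mc{G},C^n}$. The key point is that $\mf{m}\circ\tilde s_{\la}=s_{\la}$ as maps to $\Gr_{\mc{G},C^n}$: in the notation of Section~\ref{993431} one has $F_n=F$, hence $\bar{\mc{F}}_n=\bar{\mc{F}}$, and the composite of the natural connecting isomorphisms $\bar\beta_1\circ\cdots\circ\bar\beta_n$ equals $\bar\eta$ away from $\bigcup_i\Delta_{\bar P_i}$; the equality of the orbit maps then follows from the $L^+\mc{G}_{C^n}$-equivariance of $\mf{m}$. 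Therefore $\mf{m}\big(\tilde s_{\la}(L^+\mc{G}_{C^n})\big)=s_{\la}(L^+\mc{G}_{C^n})$ is dense in $\Grb^{\la}_{\mc{G},C^n}$, and since $\mf{m}$ is closed and $\overline{\Conv}^{\la}_{\mc{G},C^n}$ is the closure of $\tilde s_{\la}(L^+\mc{G}_{C^n})$, we get $\mf{m}\big(\overline{\Conv}^{\la}_{\mc{G},C^n}\big)=\overline{\mf{m}\big(\tilde s_{\la}(L^+\mc{G}_{C^n})\big)}=\Grb^{\la}_{\mc{G},C^n}$. (Alternatively, one can invoke that $\mf{m}$ is an isomorphism over $C^n_{\xi_0}$ by \eqref{626689}, that $\overline{\Conv}^{\la}_{\mc{G},C^n_{\xi_0}}$ is dense in the irreducible $\overline{\Conv}^{\la}_{\mc{G},C^n}$ by flatness, and conclude by properness.)

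The one step that deserves care is the flatness argument: one must make precise that the associated-bundle fibration $p$ is, étale-locally on $B$, the base change along $B\to C$ of $\Grb^{\lambda_n}_{\mc{G},C}\to C$. This rests on reducing the $L^+\mc{G}_C$-action to a finite-type smooth jet group scheme, the étale-local triviality of torsors under such group schemes, and the étale-local nature of flatness and properness — all standard given the constructions of Section~\ref{BD_Gr}. Once these are in place, both assertions are essentially formal consequences of Theorem~\ref{231044} and Proposition~\ref{234953}.
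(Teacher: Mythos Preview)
Your proof is correct and follows essentially the same approach as the paper: flatness by the inductive fibration structure of Definition~\ref{324923} together with Theorem~\ref{231044}, and surjectivity via Proposition~\ref{234953}, the identity $\mf{m}\circ\tilde s_{\la}=s_{\la}$ (the paper records this as diagram~\eqref{015520}), and properness of $\mf{m}$. You supply more detail than the paper does---the \'etale-local trivialization of the $L^+\mc{G}_C$-torsor to justify flatness of the fibration, and an explicit argument for why $\mf{m}$ is proper---but the strategy is the same.
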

\begin{proof}
	Recall the Definition \ref{324923} that $ \overline{\mr{Conv}}_{\mc{G},C^n}^{\la} $ is a global Schubert variety when $n=1$ and is a sequence of fibrations of global Schubert varieties when $n\geq 2$. By Theorem \ref{231044}, $\overline{\mr{Conv}}_{\mc{G},C^n}^{\la} $ must be flat over $C^n$. 
	
	By Proposition \ref{234953}, $\overline{\mr{Conv}}_{\mc{G},C^n}^{\la} $ can be identified with the schematic image of the map $\tilde{o}_{\la}:L^+\mc{G}_{C^n}\to \mr{Conv}_{\mc{G},C^n}$. By the definition \eqref{818710} of convolution map $\mf{m}$ and the $L^+\mc{G}_{C^n}$-equivariance of $\mf{m}$, we have a commutative diagram
	\begin{equation}\label{015520}
		\xymatrix{
			L^+\mc{G}_{C^n} \ar[rd]_{o_{\la}} \ar[r]^-{\tilde{o}_{\la}} &\mr{Conv}_{\mc{G},C^n} \ar[d]^-{\mf{m}} \\
			& \Gr_{\mc{G},C^n}
		}.
	\end{equation}
	Since $\mf{m}$ is proper, it gives a surjective map $\mf{m}:\overline{\Conv}^{\la}_{\mc{G},C^n} \twoheadrightarrow \Grb^{\la}_{\mc{G},C^n}$. 
\end{proof}
We define the following subset in $C^n$:
\begin{equation}\label{348782}
	\Delta_{ [n]}=\{ \vec{p}\in C^n \mid\bar{p}_i= \bar{p}_j, \ \forall 1\leq i, j \leq n \}.
\end{equation}
\begin{prop}\label{930915}
	With the same assumptions in Theorem \ref{231044}, we have 
	\begin{enumerate}
		\item 
		For any point $\vec{p}=(p,\gamma_2(p),\ldots, \gamma_n (p) ) \in \Delta_{[n]}$ with $\gamma_i\in \Gamma$, we have 
		\[\Big(\Grb_{\mc{G},\vec{p}}^{\la} \Big)_{\mr{red}} \simeq \Grb_{\mc{G},p}^{\lambda_{\vec{p}}},\]
		where $\lambda_{\vec{p}}:=\lambda_1+\gamma_2^{-1}(\lambda_2)+\cdots+\gamma_n^{-1}(\lambda_n)$.
		\item 
		For any point $\vec{p}\in C^n$, we have
		\[R^i(\mf{m}_{\vec{p}})_*\Big(\ms{O}_{\mr{Conv}_{\mc{G},\vec{p}}^{\la} } \Big)=
		\begin{cases}
		\ms{O}_{\big(\Grb_{\mc{G},\vec{p}}^{\la}\big)_{\mr{red}}}, & i=0;\\
		0, & \forall i\geq 1.
		\end{cases}\]
	\end{enumerate}
\end{prop}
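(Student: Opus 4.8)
The plan is to prove both parts by first using the factorization isomorphisms of Proposition~\ref{684242}, Lemma~\ref{901319} and Proposition~\ref{234953} — together with the K\"unneth formula for higher direct images — to reduce to a point $\vec{p}\in\Delta_{[n]}$, i.e.\ one at which all $\bar{p}_i$ coincide; and then to identify $\mf{m}_{\vec p}$ with the convolution morphism of (possibly twisted) affine Schubert varieties, whose behaviour is governed by normality and rational singularities. Part~(1) amounts to computing the image of this morphism, and part~(2) to computing its derived pushforward of the structure sheaf; part~(2) uses part~(1) to identify the target.

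For part~(1), let $\vec p=(p,\gamma_2 p,\dots,\gamma_n p)\in\Delta_{[n]}$, so all modifications happen at the single point $\bar p\in\bar C$ and the fiber $\Gr_{\mc{G},\vec p}$ is the affine Grassmannian at $\bar p$ — namely $\Gr_{\ms{G}}$ with jet group $G(\mc{O})^\sigma$ if $p$ is ramified, and $\Gr_G$ with jet group $G(\mc{O})$ if $p$ is unramified (using the \'etale-local description of $\mc{G}$ and Proposition~\ref{prop_etale_base}). By the tower structure of Definition~\ref{324923} and the reducedness of each $\Grb^{\lambda_i}_{\mc{G},p_i}$ from Theorem~\ref{231044}, $\overline{\mr{Conv}}^{\la}_{\mc{G},\vec p}$ is reduced and irreducible, and is identified with the (twisted, when $p$ is ramified) convolution variety $\Grb^{\lambda_1}_{\mc{G},p}\,\widetilde{\times}\cdots\widetilde{\times}\,\Grb^{\lambda_n}_{\mc{G},p}$ in this affine Grassmannian; a direct local computation with the defining functors $F_i$ of the section $\tilde s_{\la}$ — tracking the fiber divisors $\Delta_{\gamma,i}$ over $\vec p$ and taking $\Gamma$-invariant pushforward to $\bar C$ — shows that the $i$-th factor contributes the cocharacter $\gamma_i^{-1}(\lambda_i)$ at $\bar p$. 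By Lemma~\ref{432849} the scheme $\Grb^{\la}_{\mc{G},\vec p}$ is, up to reduced structure, the image of $\mf{m}_{\vec p}$; since the image of a convolution morphism $\Grb^{\mu_1}_{\mc{G},p}\,\widetilde{\times}\cdots\widetilde{\times}\,\Grb^{\mu_k}_{\mc{G},p}\to\Gr_{\mc{G},\vec p}$ is the affine Schubert variety attached to $\mu_1+\dots+\mu_k$, we obtain $(\Grb^{\la}_{\mc{G},\vec p})_{\mr{red}}\simeq\Grb^{\lambda_{\vec p}}_{\mc{G},p}$ with $\lambda_{\vec p}=\lambda_1+\gamma_2^{-1}(\lambda_2)+\dots+\gamma_n^{-1}(\lambda_n)$. (When $p$ is ramified this variety is $\Grb_{\ms{G}}^{\overline{\lambda_{\vec p}}}$, which depends only on the image of $\sum_i\lambda_i$ in $X_*(T)_\sigma$, on which $\Gamma$ acts trivially, so the answer is independent of the chosen $\gamma_i$; when $p$ is unramified the $\gamma_i$ are uniquely determined.)

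For part~(2), by K\"unneth and the factorization isomorphisms it suffices to treat $\vec p\in\Delta_{[n]}$, where by part~(1) the target $(\Grb^{\la}_{\mc{G},\vec p})_{\mr{red}}=\Grb^{\lambda_{\vec p}}_{\mc{G},p}$ is a normal variety with rational singularities — classically for untwisted affine Schubert varieties, and by Zhu \cite{Zhu14} for twisted ones. The morphism $\mf{m}_{\vec p}\colon\overline{\mr{Conv}}^{\la}_{\mc{G},\vec p}\to\Grb^{\lambda_{\vec p}}_{\mc{G},p}$ is proper and birational (the open cell maps isomorphically), so it has connected fibers and $\mf{m}_{\vec p*}\ms{O}=\ms{O}_{\Grb^{\lambda_{\vec p}}_{\mc{G},p}}$; for the vanishing of $R^i\mf{m}_{\vec p*}\ms{O}$ with $i\geq1$ I pick the resolution $\rho\colon Z\to\overline{\mr{Conv}}^{\la}_{\mc{G},\vec p}$ obtained by replacing each factor with its twisted Demazure (Bott--Samelson type) resolution, so that $Z$ is smooth and $\mf{m}_{\vec p}\circ\rho$ is a resolution of $\Grb^{\lambda_{\vec p}}_{\mc{G},p}$; since both $\overline{\mr{Conv}}^{\la}_{\mc{G},\vec p}$ and $\Grb^{\lambda_{\vec p}}_{\mc{G},p}$ have rational singularities (Zhu \cite{Zhu14} in the twisted case), $R\rho_*\ms{O}_Z=\ms{O}_{\overline{\mr{Conv}}^{\la}_{\mc{G},\vec p}}$ and $R(\mf{m}_{\vec p}\circ\rho)_*\ms{O}_Z=\ms{O}_{\Grb^{\lambda_{\vec p}}_{\mc{G},p}}$, whence $R\mf{m}_{\vec p*}\ms{O}=\ms{O}_{\Grb^{\lambda_{\vec p}}_{\mc{G},p}}$, which is the assertion; K\"unneth then lifts this to arbitrary $\vec p$.

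The main obstacle is twofold and entirely geometric. First, the local computation in part~(1) identifying the $\gamma_i^{-1}$-twisted coweights when passing from $C^{n+1}$ to $C^n\times\bar C$ by $\Gamma$-invariant pushforward must be carried out carefully, tracking the fiber divisors $\Delta_{\gamma,i}$ and the norm map $T(\mc{K})\to T(\mc{K})^\sigma$. Second, the birationality of the twisted convolution morphism and the rational-singularities statements for twisted affine Schubert and convolution varieties are the substantive inputs, quoted from \cite{Zhu14}; by contrast the reduction via factorization and K\"unneth and the passage between $\mf{m}_{\vec p}$ and the convolution morphism are formal.
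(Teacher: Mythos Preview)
Your approach to part~(1) matches the paper's: both identify the fiber of the convolution variety over $\vec{p}\in\Delta_{[n]}$ with an iterated (possibly twisted) convolution of affine Schubert varieties and compute its image under $\mf{m}_{\vec{p}}$. The paper makes the $\gamma_i^{-1}$ twist explicit via induction on $n$, using at each step the identification $\Grb^{\mu}_{G,p}\simeq\Grb^{\gamma(\mu)}_{G,\gamma(p)}$, rather than your direct computation with the divisors $\Delta_{\gamma,i}$; the content is the same. One imprecision: you write the fiber convolution as $\Grb^{\lambda_1}_{\mc{G},p}\,\widetilde{\times}\cdots\widetilde{\times}\,\Grb^{\lambda_n}_{\mc{G},p}$ with all factors at $p$, but the $i$-th factor is naturally at $p_i=\gamma_i(p)$; the $\gamma_i^{-1}$ twist appears precisely when you transport $\Grb^{\lambda_i}_{\mc{G},\gamma_i(p)}$ back to $p$.

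For part~(2) you take a genuinely different route. The paper cites \cite[Proposition~9.7]{PR08} for the $i=0$ statement and then proves the higher vanishing by induction on $n$: it uses the fibration $\overline{\Conv}^{\la}_{\mc{G},\vec{p}}\to\overline{\Conv}^{\lambda_1,\ldots,\lambda_{n-1}}_{\mc{G},\vec{q}}$ with fiber $\Grb^{\lambda_n}_{\mc{G},p_n}$, flat base change to transport the inductive hypothesis, and a factorization of $\mf{m}_{\vec{p}}$ through the two-step convolution $\widetilde{\Gr}^{\gamma_n(\lambda_{\vec{q}})}_{\mc{G},p_n}\times^{L^+\mc{G}_{p_n}}\Grb^{\lambda_n}_{\mc{G},p_n}\to\Grb^{\gamma_n(\lambda_{\vec{q}})+\lambda_n}_{\mc{G},p_n}$. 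Your argument instead invokes rational singularities of both the convolution variety and its image, together with a common Demazure-type resolution, to get $R\mf{m}_{\vec{p}*}\ms{O}=\ms{O}$ in one stroke. This is cleaner and dispatches all $i$ at once, but it requires as input that \emph{convolution} varieties of twisted affine Schubert varieties have rational singularities --- true, since they are \'etale-locally products of Schubert varieties, but not literally the statement in \cite{Zhu14}, so you should supply that step. The paper's inductive argument is more self-contained: it only needs the two-fold case from \cite{PR08}, which is where the real geometric input is hidden in either approach.
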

\begin{proof}
	(1) Taking the reduced fibers of both sides of the morphism $\mf{m}:\overline{\Conv}^{\la}_{\mc{G},C^n} \twoheadrightarrow \Grb^{\la}_{\mc{G},C^n}$ in Lemma \ref{432849}, we get a surjective map for any $\vec{p}\in C^n$,
	\[\mf{m}_{\vec{p}}: \overline{\Conv}^{\la}_{\mc{G},\vec{p}}=\Big(\overline{\Conv}^{\la}_{\mc{G},\vec{p}}\Big)_{\mr{red}} \twoheadrightarrow \Big(\Grb^{\la}_{\mc{G},\vec{p}}\Big)_{\mr{red}}.\]
	
	Now, let $\vec{p}=(p, \gamma_2(p), \ldots, \gamma_n(p))$ be any point in $\Delta_{[n]}$. To prove part (1), it suffices to show the image of $\overline{\Conv}^{\la}_{\mc{G},\vec{p}}$ under the convolution map $\mf{m}: \mr{Conv}_{\mc{G},C^n} \to \Gr_{\mc{G}, C^n}$ is $\Grb^{\lambda_{\vec{p}}}_{\mc{G},p}$. When $p\in C$ is unramified, the composition of morphisms \eqref{818710} \eqref{643298} restricts to the following map:
	\begin{align}
		\label{518909} \mb{E}|_{\vec{p}}\times^{L^+\mc{G}_{\gamma_n(p)}} \Gr_{\mc{G}, \gamma_n(p)} & \xrightarrow{\mathmakebox[1.5em]{\sim}} \mr{Conv}_{\mc{G},\vec{p}} \xrightarrow{\mathmakebox[1.5em]{\mf{m}}} \Gr_{\mc{G},\vec{p}} \\
		\nonumber \Big( \big( \vec{p}, (\mc{F}_i, \beta_i)_{i=1}^{n-1}, \alpha \big), (\mc{F},\beta) \Big) & \mapsto \big( \vec{p}, (\mc{F}_i, \beta_i)_{i=1}^{n}\big) \mapsto (\vec{p}, \mc{F}|_{\hat{\Delta}_{\bar p_n}}, \zeta\circ\beta|_{\hat{\Delta}^*_{\bar p_n}})
	\end{align}
	where $p_n:=\gamma_n(p)$, $\zeta=(\beta_1\circ\cdots\circ\beta_{n-1}\circ\alpha^{-1})|_{\hat{\Delta}^*_{\bar p_n}}\in G(\mc{K}_{p_n})$. It factors through the usual convolution map 
	\[G(\mc{K}_{p_n})\times^{G(\mc{O}_{p_n})}\Gr_{G,p_n}\to \Gr_{G,p_n}\simeq \Gr_{\mc{G},\vec{p}}.\] 
	By induction on $n$, for $\vec{q}:=(p,\gamma_2(p),\cdots,\gamma_{n-1}(p))\in C^{n-1}$, the image of $\overline{\Conv}_{\mc{G},\vec{q}}^{\lambda_1,\ldots,\lambda_{n-1}}$ under the convolution map is $ \Grb^{\lambda_{\vec{q}}}_{\mc{G},p}$. Thus for any $(\vec{q}, (\mc{F}_i,\beta_i)_i)\in \overline{\Conv}_{\mc{G},\vec{q}}^{\lambda_1,\ldots,\lambda_{n-1}}$, we have $(p,\mc{F}_{n-1},\beta_1\circ\cdots\circ\beta_{n-1})\in \Grb_{G,p}^{\lambda_{\vec{q}}}\simeq \Grb_{G,p_n}^{\gamma_n(\lambda_{\vec{q}})}$. Therefore, the following restriction of \eqref{518909} 
		\[\mb{E}|_{\overline{\Conv}_{\mc{G},\vec{q}}^{\lambda_1,\ldots,\lambda_{n-1}},p_n}\times^{L^+\mc{G}_{p_n}} \Grb^{\lambda_n}_{G, p_n}\to \Grb^{\gamma_n(\lambda_{\vec{q}})+\lambda_n}_{G, p_n},\]
	factors through $\mf{m}_{p_n}: \widetilde{\mr{Gr}}_{\mc{G},p_n}^{\gamma_n(\lambda_{\vec{q}})}\times^{L^+\mc{G}_{p_n}}\Grb^{\lambda_n}_{G, p_n}\to \Grb^{\gamma_n(\lambda_{\vec{q}})+\lambda_n}_{G, p_n}$, where $\widetilde{\mr{Gr}}_{\mc{G},p_n}^{\gamma_n(\lambda_{\vec{q}})}$ is the $L^+\mc{G}_p$-torsor over $\Grb^{\gamma_n(\lambda_{\vec{q}})}_{G, p_n}$. This implies that the image of $\overline{\Conv}^{\la}_{\mc{G},\vec{p}}$ under the convolution map is $\Grb^{\gamma_n(\lambda_{\vec{q}})+\lambda_n}_{G, p_n}\simeq \Grb^{\lambda_{\vec{p}}}_{G, p}\simeq \Grb^{\lambda_{\vec{p}}}_{\mc{G},p}$. The case when $p\in C$ is totally ramified can be proved similarly.
	
	For part (2), by Lemma \ref{901319} it suffices to consider the case when $\vec{p}\in \Delta_{[n]}$. We now the consider the morphism  
	$\mf{m}_{\vec{p}}: \overline{\mr{Conv}}_{\mc{G},\vec{p}}^{\la}\to \Grb_{\mc{G},\vec{p}}^{\la}$. 	From part (1) and \cite[Proposition 9.7]{PR08}, we know $(\mf{m}_{\vec{p}})_*\Big(\ms{O}_{\overline{\mr{Conv}}_{\mc{G},\vec{p}}^{\la} } \Big)=\ms{O}_{ \Grb^{\lambda_{\vec{p}}}_{\mc{G},p}}=\ms{O}_{\big(\Grb_{\mc{G},\vec{p}}^{\la}\big)_{\mr{red}}}$.

	For $i \geq 1$, we shall show $R^i(\mf{m}_{\vec{p}})_*\Big(\ms{O}_{\overline{\mr{Conv}}_{\mc{G},\vec{p}}^{\la} } \Big)=0$ by induction on $n$.	Clearly, this holds when $n=1$. Now, assume $n\geq 2$, and let $\vec{q}=(p,\gamma_2(p),\ldots,\gamma_{n-1}(p))\in C^{n-1}$. By induction, $R^i(\mf{m}_{\vec{q}})_*\Big(\ms{O}_{\overline{\mr{Conv}}_{\mc{G},\vec{q}}^{\la} } \Big)=0$. 
	Consider the following diagram
	\[\xymatrix{
		\mb{E}':=\mb{E}|_{\overline{\Conv}^{\lambda_1,\ldots, \lambda_{n-1}}_{\mc{G},\vec{q}}\times \{ p_n\}} \ar[d]_{\tilde{\mf{m}}_{\vec{q}}} \ar[r]^-{\mr{pr}} & \overline{\Conv}^{\lambda_1,\ldots, \lambda_{n-1}}_{\mc{G},\vec{q}} \ar[d]^{\mf{m}_{\vec{q}}} \\
		\widetilde{\mr{Gr}}_{\mc{G},p}^{\lambda_{\vec{q}}} \ar[r]^{\mr{pr}} & \Grb^{\lambda_{\vec{q}}}_{\mc{G},p} 
	} .\]
	By flat base change \cite[Proposition 9.3]{Hartshorne77}, we have $R^i(\tilde{\mf{m}}_{\vec{q}})_*\big(\ms{O}_{\mb{E}' } \big)=0$ for $i\geq 1$. 
	From part (1), there is a commutative diagram
	\[\xymatrix{
		\overline{\Conv}^{\la}_{\mc{G},\vec{p}}\simeq \mb{E}|_{\overline{\Conv}_{\mc{G},\vec{q}}^{\lambda_1,\ldots,\lambda_{n-1}}\times \{p_n\}}\times^{L^+\mc{G}_{p_n}} \Grb^{\lambda_n}_{\mc{G}, p_n}\ar[d]_f \ar[dr]^-{\mf{m}_{\vec{p}}} & \\
		W:=\widetilde{\mr{Gr}}_{\mc{G},p_n}^{\gamma_n(\lambda_{\vec{q}})}\times^{L^+\mc{G}_{p_n}} \Grb^{\lambda_n}_{\mc{G}, p_n} \ar[r]_-{\mf{m}_{p_n}} & \Grb^{\gamma_n(\lambda_{\vec{q}})+\lambda_n}_{\mc{G}, p_n}.
	} \]	
	Thus, $R^if_*\Big(\ms{O}_{\overline{\Conv}^{\la}_{\mc{G},\vec{p}}}\Big)=0$. Recall that $m_{p_n}$ is a partial Bott-Samelson resolution and we have $R^i({\mf{m}_{p_n}})_*(\ms{O}_W)=0$, cf.\cite[Proposition 9.7]{PR08}. Thus, $R^i(\mf{m}_{\vec{p}})_*\Big(\ms{O}_{\overline{\Conv}^{\la}_{\mc{G},\vec{p}}}\Big)=0$.
\end{proof}
\begin{thm}\label{368017}
	With the same assumptions as in Theorem \ref{231044}, the BD Schubert variety $\Grb_{\mc{G},C^n}^{\la}$ is flat over $C^n$. Moreover, $\Grb_{\mc{G},\vec{p}}^{\la}$ is reduced.
\end{thm}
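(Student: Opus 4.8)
The plan is to deduce the theorem from the convolution variety $\overline{\Conv}_{\mc{G},C^n}^{\la}$ together with the generalized cohomology and base change of Appendix~\ref{419108}. By Lemma~\ref{432849}, $\overline{\Conv}_{\mc{G},C^n}^{\la}$ is flat over $C^n$ and the convolution morphism restricts to a proper surjection $\mf{m}\colon \overline{\Conv}_{\mc{G},C^n}^{\la}\twoheadrightarrow \Grb_{\mc{G},C^n}^{\la}$; by Proposition~\ref{930915}(2), for every $\vec{p}\in C^n$ we have $R^{i}(\mf{m}_{\vec{p}})_{*}\ms{O}_{\Conv_{\mc{G},\vec{p}}^{\la}}=0$ for $i\geq 1$ and $(\mf{m}_{\vec{p}})_{*}\ms{O}_{\Conv_{\mc{G},\vec{p}}^{\la}}=\ms{O}_{(\Grb_{\mc{G},\vec{p}}^{\la})_{\mr{red}}}$. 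Feeding the $C^n$-flatness of $\overline{\Conv}_{\mc{G},C^n}^{\la}$ and this fibrewise vanishing into Theorem~\ref{259758}, I would obtain that $\mc{A}:=\mf{m}_{*}\ms{O}_{\overline{\Conv}_{\mc{G},C^n}^{\la}}$ is flat over $C^n$, that $R^{i}\mf{m}_{*}\ms{O}_{\overline{\Conv}_{\mc{G},C^n}^{\la}}=0$ for $i\geq 1$, and that for each $\vec{p}$ the base change map $\mc{A}\otimes_{\ms{O}_{C^n}}k(\vec{p})\to(\mf{m}_{\vec{p}})_{*}\ms{O}_{\Conv_{\mc{G},\vec{p}}^{\la}}\cong\ms{O}_{(\Grb_{\mc{G},\vec{p}}^{\la})_{\mr{red}}}$ is an isomorphism.

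The heart of the argument is then to show the adjunction map $\varphi\colon \ms{O}_{\Grb_{\mc{G},C^n}^{\la}}\to \mc{A}$ is an isomorphism. Injectivity is immediate: $\mf{m}$ is surjective and $\Grb_{\mc{G},C^n}^{\la}$ is reduced, so a local section of the structure sheaf pulling back to $0$ is nilpotent, hence $0$. For surjectivity, set $\mc{Q}:=\operatorname{coker}\varphi$, a coherent sheaf on $\Grb_{\mc{G},C^n}^{\la}$. Right exactness of $\otimes_{\ms{O}_{C^n}}k(\vec{p})$ gives $\mc{Q}\otimes_{\ms{O}_{C^n}}k(\vec{p})=\operatorname{coker}\!\big(\ms{O}_{\Grb_{\mc{G},\vec{p}}^{\la}}\to \mc{A}\otimes_{\ms{O}_{C^n}}k(\vec{p})\big)$. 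By compatibility of adjunction with base change, the map $\ms{O}_{\Grb_{\mc{G},\vec{p}}^{\la}}\to \mc{A}\otimes_{\ms{O}_{C^n}}k(\vec{p})\cong(\mf{m}_{\vec{p}})_{*}\ms{O}_{\Conv_{\mc{G},\vec{p}}^{\la}}$ is the adjunction map of $\mf{m}_{\vec{p}}$, and by Proposition~\ref{930915}(2) (using that $\Conv_{\mc{G},\vec{p}}^{\la}$ is reduced and maps onto $(\Grb_{\mc{G},\vec{p}}^{\la})_{\mr{red}}$) it coincides with the canonical surjection $\ms{O}_{\Grb_{\mc{G},\vec{p}}^{\la}}\twoheadrightarrow\ms{O}_{(\Grb_{\mc{G},\vec{p}}^{\la})_{\mr{red}}}$; in particular it is surjective, so $\mc{Q}\otimes_{\ms{O}_{C^n}}k(\vec{p})=0$ for all $\vec{p}$. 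Since the scheme-theoretic fibres $\Grb_{\mc{G},\vec{p}}^{\la}$ cover $\Grb_{\mc{G},C^n}^{\la}$ and the restriction of a coherent sheaf to a closed fibre has support equal to the intersection of its support with that fibre, it follows that $\operatorname{Supp}\mc{Q}=\varnothing$, i.e. $\mc{Q}=0$. Hence $\varphi$ is an isomorphism.

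Granting this, $\ms{O}_{\Grb_{\mc{G},C^n}^{\la}}\cong\mc{A}$ is flat over $C^n$, so $\Grb_{\mc{G},C^n}^{\la}$ is flat over $C^n$; and $\ms{O}_{\Grb_{\mc{G},\vec{p}}^{\la}}=\ms{O}_{\Grb_{\mc{G},C^n}^{\la}}\otimes_{\ms{O}_{C^n}}k(\vec{p})\cong\mc{A}\otimes_{\ms{O}_{C^n}}k(\vec{p})\cong\ms{O}_{(\Grb_{\mc{G},\vec{p}}^{\la})_{\mr{red}}}$, so every fibre $\Grb_{\mc{G},\vec{p}}^{\la}$ is reduced. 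The finer description in Theorem~\ref{445316} of $\Grb_{\mc{G},\vec{p}}^{\la}$ as a (possibly twisted) product of affine Schubert varieties then follows by combining reducedness of the fibres with the factorization isomorphism of Proposition~\ref{684242}, the computation of reduced diagonal fibres in Proposition~\ref{930915}(1), and the $n=1$ identification in Theorem~\ref{231044}. The genuinely hard inputs sit outside this last step: the generalized cohomology and base change of Appendix~\ref{419108} (Theorem~\ref{259758}) and the fibrewise cohomology vanishing of Proposition~\ref{930915}, the latter resting on \cite[Prop.~9.7]{PR08} and an induction along the fibration structure of $\overline{\Conv}_{\mc{G},C^n}^{\la}$; once these are available, the passage to flatness and reducedness above is essentially formal bookkeeping.
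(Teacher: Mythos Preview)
Your proposal is correct and follows essentially the same route as the paper: both apply Corollary~\ref{173291} to the convolution map $\mf{m}\colon \overline{\Conv}_{\mc{G},C^n}^{\la}\to \Grb_{\mc{G},C^n}^{\la}$ (using Lemma~\ref{432849} and Proposition~\ref{930915}) to obtain flatness of $\mf{m}_*\ms{O}_X$ over $C^n$ and the fibrewise base change identification, and then show the adjunction $\ms{O}_Y\to \mf{m}_*\ms{O}_X$ is an isomorphism. The only difference is in the injectivity step: you argue directly from surjectivity of $\mf{m}$ and reducedness of $Y$, whereas the paper factors through the schematic image characterization $\ms{O}_Y\hookrightarrow (s_{\la})_*\ms{O}_{L^+\mc{G}_{C^n}}$ and the compatible inclusion $\mf{m}_*\ms{O}_X\hookrightarrow (s_{\la})_*\ms{O}_{L^+\mc{G}_{C^n}}$; your version is slightly more direct and avoids the (in fact unnecessary for this step) appeal to $R^1\mf{m}_*\ms{O}_X=0$.
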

\begin{proof}
Let $X=\overline{\mr{Conv}}_{\mc{G},C^n}^{\la}$ and $Y=\Grb_{\mc{G},C^n}^{\la}$.	By Lemma \ref{432849} and Proposition \ref{930915}, we can apply Corollary \ref{173291} to the morphism $\mf{m}:X\to Y$ and we get $\mf{m}_*(\ms{O}_X)$ is flat over $C^n$. To prove the first part of the theorem, it suffices to show $\mf{m}_*(\ms{O}_X)\simeq \ms{O}_Y$. Consider the natural map $\ms{O}_Y\to \mf{m}_*(\ms{O}_X)$. At every point $s=\vec{p}\in C^n$, the composition of the following maps 
	\begin{equation}\label{960342}
		\ms{O}_{Y_s}\to \mf{m}_*(\ms{O}_X)|_{Y_s}\xrightarrow[\text{part (1) of Corollary \ref{173291}}]{\sim} (\mf{m}_s)_*(\ms{O}_{X_s}) \xrightarrow[\text{Proposition \ref{930915}}]{\sim} \ms{O}_{(Y_s)_{\mr{red}}}
	\end{equation}
	is surjective. Thus, the morphism $\ms{O}_Y\to \mf{m}_*(\ms{O}_X)$ is surjective.

	Recall that $Y$ is the schematic image of $o_{\la}:L^+\mc{G}_{C^n}\to \Gr_{\mc{G},C^n}$. By Proposition \ref{234953}, $X$ is the schematic image of $\tilde{o}_{\la}:L^+\mc{G}_{C^n}\to \mr{Conv}_{\mc{G},C^n}$. 
	Thus, we have an inclusion $\ms{O}_X\hookrightarrow (\tilde{o}_{\la})_*\big(\ms{O}_{L^+\mc{G}_{C^n}}\big)$. By Proposition \ref{930915}, we have $R^1\mf{m}_*( \ms{O}_X)=0$. It follows that we have the following inclusion
	\[\mf{m}_*\ms{O}_X\hookrightarrow \big(\mf{m}_*\circ (\tilde{o}_{\la})_*\big) \big(\ms{O}_{L^+\mc{G}_{C^n}}\big)=(o_{\la})_*\big(\ms{O}_{L^+\mc{G}_{C^n}}\big). \]
	From the commutativity of the following diagram
	\[\xymatrix{
		\mf{m}_*\ms{O}_X \ar@{^{(}->}[r] & (o_{\la})_*\big(\ms{O}_{L^+\mc{G}_{C^n}}\big) \\
		\ms{O}_{Y} \ar@{->>}[u] \ar@{^{(}->}[ru]& 
	} ,\]
	we must have $\mf{m}_*\ms{O}_X\simeq \ms{O}_Y$. Thus, $\ms{O}_Y$ is flat over $C^n$. On the other hand, this also implies that the composition map \eqref{960342} is an isomorphism. In particular, it follows that $Y_s=\Grb_{\mc{G},\vec{p}}^{\la}$ is reduced.
\end{proof}

\begin{cor}\label{711570}
	With the same assumptions in Theorem \ref{231044}, we have
	\begin{enumerate}
		\item Let $\xi=(I_1,\ldots,I_k)$ be a partition of $[n]$. For any $\vec{p}\in C^n_\xi$, we have 
		\[\Grb_{\mc{G},\vec{p}}^{\la} \simeq \Grb_{\mc{G},\vec{p}_{I_1}}^{\la_{I_1}}\times \cdots\times \Grb_{\mc{G},\vec{p}_{I_k}}^{\la_{I_k}},\]
		where $\vec{p}_{I_j}\in C^{I_j}$ and $\la_{I_j}$ are defined in \eqref{053076} and \eqref{249296}.
		\item For any $\vec{p}=(p,\gamma_2(p),\ldots,\gamma_{n}(p))\in \Delta_{[n]}$, we have $\Grb_{\mc{G},\vec{p}}^{\la} \simeq \Grb_{\mc{G},p}^{\lambda_{\vec{p}}}$, 
		where $\lambda_{\vec{p}}:=\lambda_1+\gamma_2^{-1}(\lambda_2)+\cdots+\gamma_n^{-1}(\lambda_n)$.
	\end{enumerate}
\end{cor}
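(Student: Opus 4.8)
The plan is to derive both statements directly from the factorization property of BD Schubert varieties together with the flatness and reducedness results already obtained, so that no genuinely new argument is required.

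For part (1), I would invoke Proposition \ref{684242}, which provides an isomorphism of schemes over $C^n_\xi$, namely $\Grb_{\mc{G},C^n_{\xi}}^{\la}\simeq\big(\prod_{j=1}^k\Grb_{\mc{G},C^{I_j}}^{\la_{I_j}}\big)\times_{C^n}C^n_\xi$. Since this is an isomorphism over the base $C^n_\xi$, base change to a closed point $\vec{p}\in C^n_\xi$ identifies $\Grb_{\mc{G},\vec{p}}^{\la}$ with the fiber at $\vec{p}=(\vec{p}_{I_1},\ldots,\vec{p}_{I_k})$ of the external product $\prod_j\Grb_{\mc{G},C^{I_j}}^{\la_{I_j}}$ over $C^n=\prod_j C^{I_j}$. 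By associativity and commutativity of fiber products (the residue fields being $\C$), this fiber equals $\prod_j\big(\Grb_{\mc{G},C^{I_j}}^{\la_{I_j}}\big)_{\vec{p}_{I_j}}=\prod_j\Grb_{\mc{G},\vec{p}_{I_j}}^{\la_{I_j}}$, which is the assertion. If desired, one can then unwind each factor using Theorem \ref{231044} and Theorem \ref{368017} to see that it is a product of (possibly twisted) affine Schubert varieties.

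For part (2), I would combine two results. On one hand, Proposition \ref{930915}(1) gives $\big(\Grb_{\mc{G},\vec{p}}^{\la}\big)_{\mr{red}}\simeq\Grb_{\mc{G},p}^{\lambda_{\vec{p}}}$ for $\vec{p}=(p,\gamma_2(p),\ldots,\gamma_n(p))\in\Delta_{[n]}$, where $\lambda_{\vec{p}}=\lambda_1+\gamma_2^{-1}(\lambda_2)+\cdots+\gamma_n^{-1}(\lambda_n)$. On the other hand, Theorem \ref{368017} asserts that the schematic fiber $\Grb_{\mc{G},\vec{p}}^{\la}$ is reduced, hence equal to its own reduction. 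Putting these together yields $\Grb_{\mc{G},\vec{p}}^{\la}\simeq\Grb_{\mc{G},p}^{\lambda_{\vec{p}}}$.

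The hard part here is essentially nil: all the substance --- the scheme-theoretic factorization of Proposition \ref{684242} and the flatness-plus-reducedness of Theorem \ref{368017} --- has already been established. The only point to watch is that the factorization isomorphism is over the base, so that base change to a fiber is legitimate and the fiber of an external product is the external product of fibers; this is precisely what Proposition \ref{684242} supplies. Everything else is a formal manipulation of fiber products.
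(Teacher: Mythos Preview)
Your proposal is correct and follows essentially the same approach as the paper: part (1) is deduced from Proposition \ref{684242} by taking fibers, and part (2) combines Proposition \ref{930915}(1) with the reducedness assertion of Theorem \ref{368017}.
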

\begin{proof}
	Part (1) directly follows from Proposition \ref{684242}. Part (2) follows from Proposition \ref{930915} and Theorem \ref{368017}.
\end{proof}

\begin{lem}\label{215536}
The orbit $\Gr_{\mc{G}, {C^n}}^{\la}:=L^+\mc{G}_{C^n}\cdot o^{\la}$ is smooth over $C^n$.
\end{lem}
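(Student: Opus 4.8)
The plan is to identify $\Gr_{\mc{G},C^n}^{\la}$ with an open subscheme of the flat $C^n$-scheme $\Grb_{\mc{G},C^n}^{\la}$, to record that its fibres over $C^n$ are smooth, and then to invoke the criterion that a morphism of finite type which is flat with geometrically regular fibres is smooth (in characteristic $0$, ``geometrically regular'' just means ``smooth'').

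The first step is to check that the orbit $\Gr_{\mc{G},C^n}^{\la}=L^+\mc{G}_{C^n}\cdot s_{\la}$ is open in $\Grb_{\mc{G},C^n}^{\la}$. Since $\la$ is fixed, the action relevant to this orbit factors through a jet group scheme $L^k\mc{G}_{C^n}$ of finite type over $C^n$, so $\Gr_{\mc{G},C^n}^{\la}$ is a locally closed $L^k\mc{G}_{C^n}$-stable subscheme of the finite-type scheme $\Grb_{\mc{G},C^n}^{\la}$, and it is dense by Definition \ref{046143}. By Corollary \ref{711570} together with the collision analysis of Proposition \ref{930915}, over every point $\vec{p}\in C^n$ the fibre $\Grb_{\mc{G},\vec{p}}^{\la}$ is a product of affine Schubert varieties (possibly twisted), inside which $\Gr_{\mc{G},\vec{p}}^{\la}$ is the open stratum given by the product of the open Schubert cells; hence the complement of $\Gr_{\mc{G},C^n}^{\la}$ in $\Grb_{\mc{G},C^n}^{\la}$ is, fibrewise and therefore globally, the finite union of the lower-dimensional boundary BD Schubert subvarieties, which is closed. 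Thus $\Gr_{\mc{G},C^n}^{\la}$ is open in $\Grb_{\mc{G},C^n}^{\la}$, and since $\Grb_{\mc{G},C^n}^{\la}$ is flat over $C^n$ by Theorem \ref{368017}, so is $\Gr_{\mc{G},C^n}^{\la}$.

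Next I would check that every schematic fibre $\Gr_{\mc{G},\vec{p}}^{\la}$ is smooth over $\C$. By the identification above (and again Corollary \ref{711570} and Proposition \ref{930915}) this fibre decomposes as a product of orbits of the form $\Gr_{G,p}^{\mu}=G(\mc{O}_p)\cdot e_\mu$ for $p$ unramified and $\Gr_{\ms{G},p}^{\bar\mu}=G(\mc{O}_p)^{\sigma}\cdot e_{\bar\mu}$ for $p$ ramified. Each such orbit is a finite-dimensional homogeneous space for a pro-smooth affine group scheme over the field $\C$: realising it as a quotient $L^k\mc{G}_p/H$ through a finite-type truncation, the stabiliser $H$ is a group scheme of finite type over a field of characteristic $0$, hence smooth, so the quotient is smooth over $\C$. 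Therefore $\Gr_{\mc{G},\vec{p}}^{\la}$ is smooth over $\C$, and the criterion of the first paragraph shows that $\Gr_{\mc{G},C^n}^{\la}\to C^n$ is smooth.

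The delicate point is the first step, i.e.\ establishing that the $L^+\mc{G}_{C^n}$-orbit of $s_{\la}$ is precisely the open stratum of $\Grb_{\mc{G},C^n}^{\la}$ and has no nilpotents in its fibres; this is exactly where one needs the flatness and fibrewise reducedness of $\Grb_{\mc{G},C^n}^{\la}$ (Theorem \ref{368017}) together with its explicit fibrewise structure (Corollary \ref{711570}, Proposition \ref{930915}). An alternative route, avoiding any discussion of openness, is to run the induction on $n$ through the convolution picture: the open orbit inside $\overline{\mr{Conv}}_{\mc{G},C^n}^{\la}$ is built from the lower-$n$ case and the $n=1$ orbit $\Gr_{\mc{G},C}^{\lambda_n}$ by iterated fibrations, hence is smooth over $C^n$ once the case $n=1$ is known, and the convolution map restricts to an $L^+\mc{G}_{C^n}$-equivariant surjection onto $\Gr_{\mc{G},C^n}^{\la}$ which is fibrewise a morphism of homogeneous spaces, hence (by the fibral flatness criterion) flat with smooth fibres, so smooth; smoothness of the target over $C^n$ then follows by descent. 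Either way, the only genuinely new input is the fibrewise analysis already carried out in Proposition \ref{930915} and Corollary \ref{711570}.
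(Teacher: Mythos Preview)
Your proposal is correct and follows essentially the same approach as the paper: the paper's proof is the terse three-line argument ``the orbit is open in $\Grb_{\mc{G},C^n}^{\la}$; by Theorem~\ref{368017} it is flat over $C^n$; every fibre is smooth, hence the orbit is smooth over $C^n$ by \cite[Theorem 10.2]{Hartshorne77}.'' You supply considerably more justification for the openness claim and for smoothness of the fibres than the paper does (the paper simply writes ``Observe that $\Gr_{\mc{G},C^n}^{\la}$ is open'' and ``since every fiber $\Gr_{\mc{G},\vec{p}}^{\la}$ is smooth''), and your alternative convolution route is not in the paper, but the core strategy is identical.
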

\begin{proof}
\!Observe that $\Gr_{\mc{G}, {C^n}}^{\la}$ is open in $\Grb_{\mc{G}, {C^n}}^{\la}$. By Theorem \ref{368017}, $\Gr_{\mc{G}, {C^n}}^{\la}$ is also flat over $C^n$. Since every fiber $\Gr_{\mc{G},\vec{p}}^{\la}$ is smooth, $\Gr_{\mc{G}, {C^n}}^{\la}$ is smooth over $C^n$, cf.\,\cite[Theorem 10.2]{Hartshorne77}.
\end{proof}
\begin{thm}\label{551564}
	The BD Schubert variety $\Grb^{\la}_{\mc{G},C^n}$ is normal.
\end{thm}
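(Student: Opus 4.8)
The plan is to deduce normality of $\Grb_{\mc{G},C^n}^{\la}$ from normality of the convolution variety $\overline{\Conv}_{\mc{G},C^n}^{\la}$, descending along the identification $\mf{m}_*\ms{O}_{\overline{\Conv}_{\mc{G},C^n}^{\la}}\simeq\ms{O}_{\Grb_{\mc{G},C^n}^{\la}}$ that was already obtained inside the proof of Theorem \ref{368017}.

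\emph{Step 1: $\overline{\Conv}_{\mc{G},C^n}^{\la}$ is normal.} I would argue by induction on $n$ using the fibration structure of Definition \ref{324923}. When $n=1$ we have $\overline{\Conv}_{\mc{G},C}^{\lambda}=\Grb_{\mc{G},C}^{\lambda}$, whose normality over $C=\A^1$ with the standard $\Gamma$-action is \cite[Corollary 6.14]{Zhu14}; for a general $\Gamma$-curve it follows by \'etale descent exactly as in the proof of Theorem \ref{231044}, since near a ramified point $\Grb_{\mc{G},C}^{\lambda}$ is an \'etale pullback of $\Grb_{\mc{G}_{\bar\A^1},\A^1}^{\lambda}$ (Proposition \ref{prop_etale_base}) and over the unramified locus one reduces to the classical normality of affine Schubert varieties of $G$. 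For $n\geq 2$, write $\overline{\Conv}_{\mc{G},C^n}^{\la}=\mb{E}|_{Z\times C}\times^{L^+\mc{G}_C}\Grb_{\mc{G},C}^{\lambda_n}$ with $Z:=\overline{\Conv}_{\mc{G},C^{n-1}}^{(\lambda_1,\ldots,\lambda_{n-1})}$. As an $L^+\mc{G}_C$-associated fibration over $B:=Z\times C$ it is, smooth-locally on $B$, isomorphic to the base change of the morphism $\Grb_{\mc{G},C}^{\lambda_n}\to C$; by Theorem \ref{231044} this morphism is flat with fibers that are (twisted) affine Schubert varieties, hence geometrically normal (normality of twisted affine Schubert varieties is \cite{PR08}, the untwisted case being classical). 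Since $B$ is normal by the inductive hypothesis together with smoothness of $C$, and smooth schemes over a normal scheme are normal, the total space is flat with geometrically normal fibers over a normal base, hence normal; therefore $\overline{\Conv}_{\mc{G},C^n}^{\la}$ is normal.

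\emph{Step 2: descent.} The convolution morphism $\mf{m}\colon X:=\overline{\Conv}_{\mc{G},C^n}^{\la}\to Y:=\Grb_{\mc{G},C^n}^{\la}$ is proper and surjective (Lemma \ref{432849}), and both $X$ and $Y$ are integral, being the reduced irreducible schematic images of $\tilde s_{\la}$ and $s_{\la}$ out of the irreducible reduced jet group scheme $L^+\mc{G}_{C^n}$ (Proposition \ref{234953} and Definition \ref{046143}). The proof of Theorem \ref{368017} gives $\mf{m}_*\ms{O}_X\simeq\ms{O}_Y$; comparing rings of sections over an affine open $V\subseteq Y$ shows $\ms{O}_Y(V)=\ms{O}_X(\mf{m}^{-1}V)$, whence $K(Y)=K(X)$ and $\mf{m}$ is birational. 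Since $X$ is normal, $\ms{O}_X(\mf{m}^{-1}V)$ is integrally closed in $K(X)=K(Y)$, so $\ms{O}_Y(V)$ is integrally closed in $K(Y)$ for every affine open $V$, i.e. $Y=\Grb_{\mc{G},C^n}^{\la}$ is normal.

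\emph{Main obstacle.} The delicate point is the claim in Step 1 that the $L^+\mc{G}_C$-associated fibration splits smooth-locally on $B$: this requires handling torsors under the pro-algebraic jet group scheme $L^+\mc{G}_C$ and using that normality is a smooth-local property, together with the normality of (twisted) affine Schubert varieties and geometric normality of the fibers of $\Grb_{\mc{G},C}^{\lambda_n}\to C$ at the (possibly non-closed) points of $C$. Alternatively one may bypass the convolution variety entirely: by Theorem \ref{368017} (equivalently Theorem \ref{445316}), $\Grb_{\mc{G},C^n}^{\la}$ is flat over the smooth base $C^n$ with fibers that are products of normal (twisted) affine Schubert varieties, and such products are geometrically normal, so normality again follows from the same flatness criterion used in Step 1.
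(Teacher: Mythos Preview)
Your proof is correct and follows a genuinely different route from the paper's. The paper argues by induction on $n$ directly on $\Grb^{\la}_{\mc{G},C^n}$: after reducing to $C=\A^1$ by \'etale base change, normality over each $\A^n_\xi$ (for a nontrivial partition $\xi=(I,J)$) follows from factorization and the inductive hypothesis, and at a point lying on a divisor $\{z_i=\sigma^k z_j\}$ with $i\in I$, $j\in J$ it applies the local criterion \cite[Lemma 9.12]{Hartshorne77}, using Corollary \ref{711570} to identify the reduced divisor with a BD Schubert variety over $\A^{n-1}$ (normal by induction) and Lemma \ref{215536} (smoothness of the open orbit) to supply regularity in codimension one.

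Your approach instead exploits the convolution resolution: you establish normality of $\overline{\Conv}^{\la}_{\mc{G},C^n}$ via its iterated-fibration structure and then descend along the identity $\mf{m}_*\ms{O}_{\overline{\Conv}}\simeq\ms{O}_{\Grb}$ already obtained inside the proof of Theorem \ref{368017}. This is more conceptual once that identity is in hand, and your closing alternative---flat over the regular base $C^n$ with normal fibers, invoking Theorem \ref{368017} and Corollary \ref{711570} directly---is the shortest route of all. Two points are worth making explicit. First, the criterion ``flat over normal with geometrically normal fibers implies normal'' requires geometric normality at \emph{all} scheme-theoretic points, not only closed ones; over $\mathbb{C}$ this follows because the locus of geometrically normal fibers is open for proper flat morphisms and the base is Jacobson, so density of closed points suffices. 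Second, the smooth-local splitting of the $L^+\mc{G}_C$-torsor $\mb{E}$ is justified by first factoring the action on $\Grb^{\lambda_n}_{\mc{G},C}$ through the smooth finite-type quotient $L^+_k\mc{G}_C$ (the $n=1$ analog of Lemma \ref{223442}), whose torsors are \'etale-locally trivial.
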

\begin{proof}
	
	When $n=1$, the theorem was proved in \cite[Section 3.3]{Zhu14}. For $n\geq2$, we prove it by induction on $n$. Given a non-trivial partition $\xi=(I,J)$ of $[n]$, by Proposition \ref{684242} there is a factorization over $C^n_{\xi}$
	\[\Grb^{\la}_{\mc{G}, C^n_{\xi}} \simeq \Big(\Grb^{\la_{I}}_{\mc{G}, C^I}\times \Grb^{\la_{J}}_{\mc{G}, C^J}\Big)\big|_{C^n_{\xi}}.\]
	By induction, $\Grb^{\la_{I}}_{\mc{G}, C^I}$ and $\Grb^{\la_{J}}_{\mc{G}, C^J}$ are normal, hence $\Grb^{\la}_{\mc{G}, C^n_{\xi}} $ is normal.  Note that the complement of the union of $C^n_\xi$ for all non-trivial partition $(I,J)$ in $C^n$ is $\Delta_{[n]}$. To show the normality around $\Delta_{[n]}$, in view of Corollary \ref{prop_etale_base} we can assume 	$C=\A^1$. For the rest of proof, we adapt an argument in \cite[Proposition 6.4]{Zhu14}. 
	
Let $X=\Grb^{\la}_{\mc{G},\A^n}$. For any $i\in I$, $j\in J$ and $0\leq k\leq m-1$, we regard $t:=z_i-\sigma^k (z_j)$ as a regular function on $X$. We shall show the local ring $\ms{O}_{X,x}$ is normal for any $x\in X$ such that $t(x)=0$. Denote by $(X_0)_{\mr{red}}$ the reduced fiber of $t:X\to \A^1$ at $0\in \A^1$. By Corollary \ref{711570}, $(X_0)_{\mr{red}}$ is isomorphic to a BD Schubert variety $\Grb^{\vec{\mu}}_{\mc{G},\A^{n-1}}$, where $\vec{\mu}$ consists of $\lambda_a$, $1\leq a\leq n$ with $a\not= i,j$, and $\lambda_i+\sigma^k(\lambda_j)$. Thus, $t\ms{O}_{X,x}$ has a unique minimal associated prime ideal $\mf{p}$. By induction, $(X_0)_{\mr{red}}$ is normal, hence $\ms{O}_{X,x}/\mf{p}$ is normal. By the dimension formula (\ref{eq_dim}), the complement of the orbit $\Gr_{\mc{G}, {C^n}}^{\la}$ in $X$ has codimension greater than or equal to $2$. Applying Lemma \ref{215536}, $X$ is regular in codimension one, which implies that the localization $(\ms{O}_{X,x})_\mf{p}$ at the minimal ideal $\mf{p}$ is regular. Thus, $\ms{O}_{X,x}$ is normal, since the three conditions of \cite[Lemma 9.12]{Hartshorne77} are satisfied. Since $i\in I$, $j\in J$ and $0\leq k\leq m-1$ are arbitrary, we conclude that $\Grb^{\la}_{\mc{G},\A^n}=X$ is normal.
\end{proof}
We conclude this subsection by showing that $\Gr_{\G,C^n}$ is a direct limit of $\Grb_{\G,C^n}^{\la}$ with respect to the standard partial order $\preceq$ on $(X_*(T)^+)^n$. The following Theorem is due to Wyatt Reeves \cite{Reeves23}.
\begin{thm}\label{498274}
	Let $S$ be a Noetherian scheme. Let $X \to Y$ be an ind-closed embedding of ind-schemes over  $S$. Suppose $X/S$ is ind-flat and $Y/S$ is ind-finite-type. Suppose $S$ admits a decomposition into a closed subscheme $Z$ and its open complement $U = S \setminus Z$ such that $X|_Z \to Y|_Z$ and $X|_U \to Y|_U$ are isomorphisms. Then $X \to Y$ is an isomorphism. \qed
\end{thm}

The following result was proved in \cite{Reeves23} in the untwisted case as a consequence of Theorem \ref{498274}.  The same method can be adapted in our twisted setting. 
\begin{prop}\label{423483}
The BD Grassmannian $\Gr_{\G,C^n}$ is a direct limit of $\Grb_{\G,C^n}^{\la}$ with respect to the standard partial order $\preceq$ on $(X_*(T)^+)^n$.
\end{prop}

\begin{proof}
	We prove by induction on $n$. When $n=1$, consider the ind-closed embedding 
	\begin{equation}\label{972364}
		\varinjlim_{\lambda}\Grb_{\G,C}^\lambda\to \Gr_{\G,C}.
	\end{equation}
	Let $R$ be the set of ramified points in $C$. Set $\mathring{C}=C\backslash R$. By Corollary \ref{prop_etale_base} (2), we have an isomorphism $\Gr_{\G, \mathring{C} }\simeq \Gr_{G, \mathring{C} }$. This isomorphism restricts to an isomorphism $\Grb^\lambda_{\G, \mathring{C} }\simeq \Grb^\lambda_{G, \mathring{C} }$. By \cite{Reeves23}, $\varinjlim\Grb_{G,\mathring{C}}^\lambda\to \Gr_{G,\mathring{C}}$ is an isomorphism. Thus, $\varinjlim\Grb_{\G,\mathring{C}}^\lambda\to \Gr_{\G,\mathring{C}}$ is also an isomorphism. For each $p\in R$,   $\varinjlim\Grb_{\G,p}^\lambda\to \Gr_{\G,p}$ is an isomorphism, cf.\,\cite[Proposition 9.9]{PR08}. Repeatedly applying Theorem \ref{498274} at ramified points, we conclude that \eqref{972364} is an isomorphism.	
	
	When $n\geq 2$, consider the following ind-closed embedding 
	\begin{equation}\label{879239}
		\varinjlim\Grb_{\G,C^n}^{\la}\to \Gr_{\G,C^n}.
	\end{equation}
	For any  nontrivial partition $\xi=(I,J)$  of $[n]$, by induction and the factorization maps \eqref{447893} \eqref{468789}, the  restriction of \eqref{879239} to $C^n_\xi$ is an isomorphism. Note that $\bigcup_\xi{C^n_\xi}=C^n\setminus \Gamma^n\cdot \Delta$, where $\Gamma^n$ acts on the diagonal $\Delta$ via the obvious action. We get an isomorphism 
	\[\varinjlim\Grb_{\G,C^n\setminus \Gamma^n\cdot \Delta}^{\la}\to \Gr_{\G,C^n\setminus \Gamma^n\cdot \Delta}.\]
	Note that $\Gr_{\G,\vec\gamma( \Delta)}\simeq \Gr_{\G,C}$. Moreover, by Corollary \ref{711570}, $\Grb_{\G,\vec\gamma( \Delta)}^{\la}\simeq\Grb_{\G,C}^{\lambda}$ for any $\vec\gamma=(\gamma_1,\ldots,\gamma_n)$, $\la=(\lambda_1,\ldots,\lambda_n)$, and $\lambda=\sum \gamma_i^{-1}(\lambda_i)$. By the case when $n=1$, \eqref{972364} induces an isomorphism $\varinjlim\Grb_{\G,\vec\gamma( \Delta)}^{\la}\to \Gr_{\G, \vec\gamma(\Delta)}$ for any $\vec\gamma\in\Gamma^n$.	 Repeatedly applying Theorem \ref{498274} at $\vec{\gamma}(\Delta)$, we conclude that \eqref{879239} is an isomorphism.	
\end{proof}
\begin{remark}
All results in this subsection remain true for any algebraically closed field $k$ of characteristic $p$, if $p\not \mid m$.  Moreover, if $\Gamma$ is trivial or generated by a diagram automorphism, all results still hold.
\end{remark}

\section{Line bundles on Beilinson-Drinfeld Grassmannian of $\G$}\label{717393}
In this section, we determine the rigidified Picard group of the BD Grassamnnian $\Gr_{\G,C^n}$ of $\G$ when $\G$ is generically simply-connected, along the way we construct the level one line bundle  $\mc{L}_{C^n}$ on $\Gr_{\G,C^n}$. We also establish the factorizable and $L^+\G_{C^n}$-equivariant structure on $\mc{L}_{C^n}$. This allow us to construct factorizable and equivariant level one line bundles on BD Schubert varieties of $\G$  when $C=\mathbb{A}^1$ and $\G$ is of adjoint type generically.
\subsection{Picard group of rigidified line bundles on BD Grassmannian of $\G$}\label{201752}
 We first make a digression to prove a relative version of seesaw principal, which is originally due to Mumford \cite[Corollary 6]{Mum85}. 
\begin{lem}[Seesaw Theorem]\label{086885}
	Let $\phi:X\to S$ and $\psi:Y\to S'$ be projective and flat morphisms of varieties over $\C$ such that, for any $s\in S$ and $s'\in S'$, the schematic fibers $X_{s}$ and $Y_{s'}$ are integral. Given an open subset $U\subset S\times S'$, let $L$ be a line bundle on $(X\times Y)|_U$ such that the restriction $L|_{X_s\times q}$ and $L|_{p\times Y_{s'}}$ are trivial for any $(s,q)\in (S\times Y)|_U$, $(p,s')\in (X\times S')|_U$. Then there is a line bundle $M$ on $U$ such that $L\simeq\pi^*M$, where $\pi=(\phi\times\psi )|_U: (X\times Y)|_U\to U$.
\end{lem}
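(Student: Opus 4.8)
The plan is to reduce the statement to Mumford's seesaw theorem (in the form of \cite[Corollary 6]{Mum85}) by constructing $M$ directly as a pushforward. First I would observe that the hypotheses make $\pi = (\phi\times\psi)|_U : (X\times Y)|_U \to U$ a projective, flat morphism whose schematic fibers $X_s\times Y_{s'}$ are integral (a product of integral $\C$-schemes of finite type is integral, using that $\C$ is algebraically closed). Then I would set $M := \pi_* L$ and argue that $M$ is a line bundle on $U$ and the canonical adjunction map $\pi^*M \to L$ is an isomorphism. The key local input is cohomology and base change: for every point $u = (s,s')\in U$, the restriction $L|_{X_s\times Y_{s'}}$ is a line bundle on the integral projective variety $X_s\times Y_{s'}$ whose restriction to each slice $X_s\times q$ and $p\times Y_{s'}$ is trivial; by the (absolute) seesaw principle applied fiberwise, $L|_{X_s\times Y_{s'}}$ is itself trivial, so $h^0(X_s\times Y_{s'}, L|_{X_s\times Y_{s'}}) = 1$ and $h^i = 0$ for $i>0$ on the structure sheaf side in the relevant range. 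With the fiber dimension of $h^0$ constant equal to $1$, Grauert's theorem (\cite[III.12.9]{Hartshorne77}) gives that $\pi_* L$ is locally free of rank $1$ and that its formation commutes with base change, so $(\pi_*L)\otimes\kappa(u) \simeq H^0(X_s\times Y_{s'}, L|_{X_s\times Y_{s'}})$.

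Next I would check that the adjunction morphism $\alpha: \pi^*\pi_*L \to L$ is an isomorphism. Since both sides are line bundles on $(X\times Y)|_U$, it suffices to show $\alpha$ is surjective, which can be checked after restricting to each fiber $X_s\times Y_{s'}$; there $\alpha$ becomes the evaluation map $H^0(X_s\times Y_{s'}, L|_{X_s\times Y_{s'}})\otimes \mathcal{O} \to L|_{X_s\times Y_{s'}}$ of a trivial line bundle with a global generating section, hence surjective. Therefore $L \simeq \pi^* M$ with $M = \pi_* L$, completing the argument.

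The main obstacle I expect is purely bookkeeping around base change and integrality of fibers in the relative-over-$U$ (rather than over a field) setting: one must be careful that $U$ is only an open subset of $S\times S'$, so $\pi$ need not be proper in the naive sense over $S\times S'$, but it is proper over $U$ since it is obtained by base change of the proper morphism $\phi\times\psi: X\times Y\to S\times S'$ along the open immersion $U\hookrightarrow S\times S'$. Once properness and flatness of $\pi$ over $U$ are in place, the fiberwise seesaw triviality together with constancy of $h^0$ feeds directly into Grauert's criterion, and the rest is the standard pushforward-is-a-line-bundle argument. I do not anticipate any genuinely new difficulty beyond verifying these hypotheses carefully.
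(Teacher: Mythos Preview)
Your proposal is correct, but it takes a slightly different route from the paper. The paper factors $\pi$ as the composition
\[
(X\times Y)|_U \xrightarrow{\ \pi_2\ } (S\times Y)|_U \xrightarrow{\ \pi_1\ } U,
\]
pushes $L$ forward along $\pi_2$ first (using only the hypothesis that $L|_{X_s\times q}$ is trivial to get a line bundle $L'=(\pi_2)_*L$ with $L\simeq \pi_2^*L'$), and then pushes $L'$ forward along $\pi_1$ (using that $L'|_{s\times Y_{s'}}$ is trivial). Each step is a single application of Grauert/base change along a map with integral fibers coming directly from the hypotheses on $\phi$ and $\psi$. By contrast, you push forward along $\pi$ in one step, which forces you to first invoke the \emph{absolute} seesaw theorem on each fiber $X_s\times Y_{s'}$ to conclude that $L$ restricts trivially there. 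Your argument is more conceptually direct but less self-contained, since it uses the classical seesaw as a black box; the paper's two-step version is essentially the proof of seesaw unfolded into the relative setting, and in particular needs no external input beyond cohomology and base change. Either approach is fine here.
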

\begin{proof}
	Consider the morphism $\pi_2=(\phi\times \mr{id})|_U: (X\times Y)|_U\to (S\times Y)|_U$. By assumption, for any $(s,q)\in (S\times Y)|_U$, the line bundle $L|_{X_s\times q}$ is trivial. Since $X$ is projective over $S$, we have $\dim\big( H^0(X_s\times q, L|_{X_s\times q})\big)=1$ for any $(s,q)\in (S\times Y)|_U$. It follows that $(\pi_2)_*L$ is a locally free sheaf of rank $1$ over $(S\times Y)|_U$. Let $L'=(\pi_2)_*L$. By adjunction, we have a morphism $(\pi_2)^*L'\to L$. This is an isomorphism since the map $H^0(X_s\times q, L)\to L_{(p,q)}$ is an isomorphism for any $(p,q)\in (X\times Y)|_U$.

	Similarly, consider the morphism $\pi_1= (\mr{id}\times \psi)|_U: (S\times Y)|_U\to U$ and note that $L'|_{s\times Y_{s'}}$ is trivial for any $(s,s')\in U$. By the same argument as above, $(\pi_1)_*L'$ is a locally free sheaf of rank $1$ over $U$, and $L'\simeq (\pi_1)^*M$ where $M=(\pi_1)_*L'$. Therefore, 
	\begin{equation*}
		\pushQED{\qed} L\simeq (\pi_2)^*L'\simeq (\pi_2)^*(\pi_1)^*M\simeq \pi^*M. \qedhere 
	\end{equation*}
\end{proof}

Let $\pi: X\to S$ be a projective and flat morphism of varieties such that every fiber is integral. Given a section $e:S\to X$, let $(L,\alpha)$ denote a rigidified line bundle on $X$, that is, a line bundle $L$ with an isomorphism $\alpha: \ms{O}_S \simeq e^*L$. We denote by $\Pic^e(X)$ the Picard groupoid of these rigidified line bundles on $X$ with respect to $e$.
 \begin{lem}\label{841770}
	Let $\pi:X\to S$ be a morphism as above. Let $(L,\alpha)$, $(L',\alpha')$ be rigidified line bundles in $\Pic^e(X)$. If $\psi:L\simeq L'$ is an isomorphism of line bundles on $X$, then there exists a unique isomorphism $\phi:(L,\alpha)\simeq (L',\alpha')$ in $\Pic^e(X)$, i.e.\,there is a unique isomorphism $\phi:L\simeq L'$ such that the following diagram commutes
	\begin{equation}\label{355605}
	\xymatrix{
		e^*L \ar[rr]^{e^*(\phi)} & & e^*L' \\
		&\ms{O}_S \ar[lu]^{\alpha} \ar[ru]_{\alpha'}&
	}.
	\end{equation}
\end{lem}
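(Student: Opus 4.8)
The plan is to reduce the claimed statement to the fact that the automorphism group of a rigidified line bundle is trivial, which in turn follows from the hypothesis that $\pi$ is proper, flat, with integral geometric fibers. First I would observe that since $L$ and $L'$ are abstractly isomorphic via $\psi$, the pullback $e^*\psi : e^*L \simto e^*L'$ combined with $\alpha,\alpha'$ produces an automorphism $\alpha'^{-1}\circ e^*\psi\circ\alpha$ of the trivial line bundle $\ms{O}_S$, i.e.\ an element $u\in \Gamma(S,\ms{O}_S)^\times$. I would like to scale $\psi$ by a global unit on $X$ so as to cancel $u$; for this I need to know that $\Gamma(X,\ms{O}_X)^\times \to \Gamma(S,\ms{O}_S)^\times$ is an isomorphism (or at least surjective).

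The key step is therefore the identification $\pi_*\ms{O}_X = \ms{O}_S$. Since $\pi$ is proper and flat with integral fibers, $\pi_*\ms{O}_X$ is a coherent sheaf of $\ms{O}_S$-algebras whose formation commutes with base change, and its fiber at each $s\in S$ is $H^0(X_s,\ms{O}_{X_s}) = \C$ because $X_s$ is a complete integral variety over $\C$. Hence $\pi_*\ms{O}_X$ is locally free of rank one, and the unit section $\ms{O}_S\hookrightarrow \pi_*\ms{O}_X$ is an isomorphism (it is a surjection of invertible sheaves, being an isomorphism on fibers, hence an isomorphism). Consequently $\Gamma(X,\ms{O}_X)=\Gamma(S,\ms{O}_S)$, and likewise for units: any $f\in\Gamma(X,\ms{O}_X)^\times$ has an inverse in $\Gamma(X,\ms{O}_X)$, and under the identification both $f$ and $f^{-1}$ come from $S$, so $\Gamma(X,\ms{O}_X)^\times = \Gamma(S,\ms{O}_S)^\times$.

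Given this, I would set $\phi := \pi^*(u^{-1})\cdot\psi$, where $u\in\Gamma(S,\ms{O}_S)^\times$ is the unit above and I view $\pi^*(u^{-1})$ as a global unit on $X$ via the identification just established; then $\phi:L\simto L'$ is an isomorphism of line bundles and by construction $e^*\phi = \pi^*(u^{-1})|_{e}\cdot e^*\psi = u^{-1}\cdot e^*\psi$, using $e^*\pi^* = \mr{id}$ since $\pi\circ e = \mr{id}_S$. Therefore $\alpha'^{-1}\circ e^*\phi\circ\alpha = u^{-1}\cdot(\alpha'^{-1}\circ e^*\psi\circ\alpha) = u^{-1}u = \mr{id}$, i.e.\ the diagram \eqref{355605} commutes. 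For uniqueness, if $\phi,\phi'$ both make \eqref{355605} commute, then $\phi'\circ\phi^{-1}$ is an automorphism of $L$ fixing the rigidification along $e$; it is multiplication by some $f\in\Gamma(X,\ms{O}_X)^\times$ with $e^*f = 1$, and by the identification $\Gamma(X,\ms{O}_X)^\times = \Gamma(S,\ms{O}_S)^\times$ this forces $f = 1$, so $\phi = \phi'$.

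The main obstacle is the step $\pi_*\ms{O}_X = \ms{O}_S$: one must invoke cohomology and base change (Grauert's theorem, or \cite[III.12]{Hartshorne77}) together with the flatness and integrality of the fibers to see that the pushforward is a line bundle, and then argue that the canonical map from $\ms{O}_S$ is an isomorphism. Everything else is formal bookkeeping with rigidifications and the identity $e^*\circ\pi^* = \mr{id}$. One should note that $X$ need not be irreducible globally for this argument — only the fibers must be integral — which is exactly the generality in which the lemma is stated and in which it will later be applied to $\Gr_{\mc{G},C^n}$ and its Schubert subvarieties.
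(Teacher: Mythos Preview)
Your proof is correct and follows essentially the same approach as the paper: both arguments reduce to the identification $\mr{Aut}(L)\simeq \Gamma(S,\ms{O}_S)^\times$ via pullback along $e$ (the paper phrases this as $\mr{Hom}(L,L)\simeq \mr{Hom}(e^*L,e^*L)$, obtained from $\pi^*\pi_*L\simeq L$ and $\pi_*L\simeq e^*L$), and then correct $\psi$ by the appropriate unit to enforce the diagram and deduce uniqueness. Your route through $\pi_*\ms{O}_X=\ms{O}_S$ is a direct and clean way to reach the same identification.
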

\begin{proof}
By a similar argument as in the proof of Lemma \ref{086885}, there exists an isomorphism $\pi^*\pi_* L\simeq L$. Applying $e^*$, we get $\pi_*L\simeq e^*L$. Thus, 
\begin{equation}\label{eq_iso_L}
	{\rm Hom}(L,L)\simeq {\rm Hom}(\pi^*\pi_*L,L)\simeq {\rm Hom}(\pi_*L, \pi_*L)\simeq {\rm Hom}(e^*L,e^*L). 
\end{equation}
If there are two isomorphisms $\phi_1, \phi_2:(L,\alpha)\simeq (L',\alpha')$, then $\phi_2^{-1}\circ\phi_1$ is an automorphism of $(L,\alpha)\in \Pic^e(X)$. In view of the isomorphism (\ref{eq_iso_L}), this must be an identity. This proves the uniqueness. 
	
Given an isomorphism $\psi: L\simeq L'$,	set $\beta=e^*(\psi)\circ \alpha: \ms{O}_S\simeq e^*L'$, which gives another rigidification of $L'$. By the isomorphism (\ref{eq_iso_L}), there exists a unique $\eta\in \mr{Aut}(L')$ such that $e^*(\eta)=\alpha'\circ \beta^{-1}$. Then, the isomorphism $\phi:= \eta\circ \psi:L\to L'$ satisfies the commutative diagram \eqref{355605}. Hence, it gives an isomorphism $(L,\alpha)\simeq (L',\alpha')$ in $\Pic^e(X)$.
\end{proof}
By Lemma \ref{841770}, the groupoid $\Pic^e(X)$ is discrete. We will use this lemma to glue rigidified line bundles. Let $\mr{Pic}^e(X)$ denote the group of isomorphism classes in $\Pic^e(X)$, and call it the rigidified Picard group of $X$ along $e$. Given an isomorphism class $[(\mc{L},\alpha)]\in \mr{Pic}^e(X )$, by abuse of notation, we simply denote it by $\mc{L}$. 
\medskip

From now on, we are in the setup of Section \ref{029728}. We further assume that the simple algebraic group $G$ is simply-connected. 

Let $\mr{Pic}^e(\Gr_{\mc{G},C^n})$ be the rigidified Picard group of $\Gr_{\mc{G},C^n}$ along the section $e:C^n\to \Gr_{\mc{G},C^n}$. Here, we abuse the notation $e$ for the section $e_{\vec{0}}$ over $C^n$. Given a rigidified line bundle $\mc{L}\in \Gr_{\mc{G},C^n}$, we denote by $\mc{L}|_{\Delta_{C^n}}$ the restriction of $\mc{L}$ to the BD Grassmannian $\Gr_{\mc{G},\Delta_{C^n}}$ over the diagonal.
By \cite[Proposition 4.1]{Zhu14}, the central charge of $\mc{L}|_{\vec{p}}$ for any $\vec{p}\in \Delta_{C^n}$ is constant. This gives a well-defined central charge map 
\begin{equation}\label{295279}
	c:\mr{Pic}^e(\Gr_{\mc{G},C^n}) \to \mb{Z} .
\end{equation} 
We shall show the map $c$ is an isomorphism.

 We first deal with the case when $C=\A^1$ with the standard $\Gamma$-action in (\ref{eq_standard}). Recall that when $C=\mathbb{P}^1$, $\mr{Pic}(\mr{Bun}_{\mc{G}})\simeq \mb{Z}$, where the positive generator will be denoted by $\mc{L}$. Consider the projection $\mr{pr}:\Gr_{\mc{G},(\mb{P}^1)^n}\to \mr{Bun}_{\mc{G}}$. Let 
\begin{equation}\label{eq_linebun_L}
	\mc{L}_{\A^n}:=(\mr{pr}^*\mc{L})|_{\Gr_{\mc{G},\A^n}}
\end{equation}
denote the restriction of the line bundle $\mr{pr}^*\mc{L}$ to $\Gr_{\mc{G},\A^n}$. Clearly, $\mc{L}_{\A^n}$ belongs to $\mr{Pic}^e(\Gr_{\mc{G},\A^n})$ since $e^*(\mc{L}_{\A^n})$ is a trivial bundle over $\A^n$. Note that the restriction $\mc{L}_{\A^n}|_{\Delta_{\A^n}}$ to the diagonal, regarded as a line bundle over $ \Gr_{\mc{G},\A^1}$, is isomorphic to $\mc{L}_{\A^1}$. Moreover, the restriction of the line bundle $\mc{L}_{\A^1}$ to each fiber is the ample generator in $\mr{Pic}(\Gr_{\mc{G},p})$ for any $p\in \A^1$, cf.\,\cite[Corollary 3.14]{BH}. Hence $c(\mc{L}_{\A^n})=1$, which implies that $c$ is a surjection. From now on, we call $\mc{L}_{\A^n}$ the level one line bundle over $\Gr_{\G,\A^n}$.
\begin{thm}\label{429430}
	Let $C$ be an irreducible smooth curve with a faithful $\Gamma$-action and assume that all ramified points in $C$ are totally ramified. Then the central charge map \eqref{295279} is an isomorphism 
		\[c:\mr{Pic}^e(\Gr_{\mc{G}, C})\simeq \mb{Z}.\]
\end{thm}
\begin{proof}
	Let $R:=\{p_1,\ldots, p_k\}$ be the set of ramified points in $C$. Set $\mathring{C}=C\backslash R$. By Corollary \ref{prop_etale_base}, $\Gr_{\G, \mathring{C} }\simeq \Gr_{G, \mathring{C} }$. Then, by \cite[Lemma 3.4.2]{Zhu17}, the following central charge map is an isomorphism 
	\begin{equation}\label{781234}
		\mr{Pic}^e(\Gr_{\mc{G}, \mathring{C} })\simeq \mb{Z}.
	\end{equation}
	We consider the following restriction map
	\begin{equation} \label{423790}
		\mr{Pic}^e(\Gr_{\mc{G},C})\to \mr{Pic}^e(\Gr_{\mc{G},\mathring{C}}).
	\end{equation}
	Then, the composition of \eqref{423790} and \eqref{781234} is the central charge map 
	\begin{equation}\label{349194}
		c: \mr{Pic}^e(\Gr_{\mc{G},C})\to \mathbb{Z}.
	\end{equation}
	
	We first show the restriction map \eqref{423790} is injective. Let $\mc{L}\in \mr{Pic}^e(\Gr_{\mc{G},C})$ such that the restriction $\mc{L}|_{\Gr_{\mc{G},\mathring{C} }}$ is trivial. We shall show $\mc{L}$ is also trivial.	By Proposition \ref{423483}, it suffices to show the restriction of $\mc{L}$ to $\Grb^{\lambda}_{\mc{G}, C}$ is trivial for all dominant coweight $\lambda$. We still denote the restriction by $\mc{L}$. By assumption, there is an isomorphism $\theta: \ms{O}_{C}\simeq e^*\mc{L}$, and $\mc{L}|_{\Gr_{\mc{G}, \mathring{C}}}$ is trivial. By Lemma \ref{841770}, there is a unique isomorphism 
	\[s: \ms{O}_{\Grb_{\G,\mathring{C}}^\lambda} \simeq \mc{L}|_{\Grb_{\G,\mathring{C}}^\lambda}\]
	such that $e^*(s)=\theta|_{\mathring{C}}$. Let $D_i$ denote the divisor $\Grb_{\G,p_i}^\lambda$. By Theorem \ref{551564}, there is a unique integer $n$ such that 
	\[s \in \Gamma(\Grb_{\mc{G},C\setminus\{p_1,\ldots,p_{k-1}\}}^\lambda,\mc{L}(nD_k))\setminus \Gamma(\Grb_{\mc{G},C\setminus\{p_1,\ldots,p_{k-1}\}}^\lambda,\mc{L}((n-1)D_k)). \] 
	Now, we regard $s$ as a regular section of $\mc{L}(nD_k)$. Then $e^*(s)=\theta|_{\mathring{C}}$ can be regarded a section of $(e^*\mc{L})|_{\mathring{C}}$ with pole order $n$ at $p_k\in C$. Since $\theta$ is regular at $p_k$, we must have 
	$n=0$, and hence $s$ induces an isomorphism 
	$$s:\ms{O}_{\Grb_{\G,C\setminus \{p_1,\ldots, p_{k-1}\} }^\lambda} \simeq \mc{L}|_{\Grb_{\G,C\setminus \{p_1,\ldots,p_{k-1}\}}^\lambda}.$$ 
	Repeating this process, we get a trivialization of $\mc{L}$. It follows that the restriction map (\ref{423790}) is injective. Hence, the central charge map \eqref{349194} is injective.
	
	We now prove the surjectivity. For each $p_i\in R$, there exists a $\Gamma$-stable open neighborhood $U_i$ of $p$ with a $\Gamma$-equivariant \'etale map $U_i\to \A^1$, where $\A^1$ carries the standard $\Gamma$-action as in (\ref{eq_standard}). Observe that $p_i$ is the only ramified point in $U_i$. By Corollary \ref{prop_etale_base} (1), there is a natural morphism $\pi_i:\Gr_{\mc{G}_{\bar{U}_i},U_i}\to\Gr_{\mc{G}_{\bar\A^1}, \A^1} $. Let $\mc{L}_{U_i}$ be the pull-back of the level one line bundle $\mc{L}_{\A^1}$ via the map $\pi_i$. By the injectivity of (\ref{349194}), the central charge map $c: \mr{Pic}^e(\Gr_{\mc{G}, U_i})\to \mathbb{Z}$ is injective. Since $c(\mc{L}_{U_i})=1$,  we must have $c: \mr{Pic}^e(\Gr_{\mc{G}, U_i})\simeq \mathbb{Z}$. Note that $\{\mathring{C}, U_1,\ldots ,U_r\}$ forms an open covering of $C$. Moreover, by Lemma \ref{841770} there are unique isomorphisms between the restrictions of $\mc{L}_{\mathring{C}}$ and $\mc{L}_{U_i}$ on their intersections, since their restrictions are the level one generator in $\mr{Pic}^e(\Gr_{\mc{G},U_i\setminus\{p_i\}})$ or $\mr{Pic}^e(\Gr_{\mc{G},U_i \cap U_j})$ for any $i,j$. Therefore, they can be glued to be a line bundle of central charge one in $\mr{Pic}^e(\Gr_{\mc{G},C}) $, which will be denoted by $\mc{L}_C$. It follows that the central charge map $c: \mr{Pic}^e(\Gr_{\mc{G},C})\to \mathbb{Z}$ is an isomorphism.
\end{proof}
In the proof of the injectivity of the map \eqref{423790}, we used a similar argument as in \cite{Zhu17}[Lemma 4.3.3]. In fact, using the same argument we have the following more general result, which will be used later.
\begin{prop}\label{984820}
	Let C be an irreducible smooth curve with a faithful $\Gamma$-action and assume that all ramified points in $C$ are totally ramified. Let $Z$ be a closed subvariety in $C^n$. Let $D$ be an irreducible closed subvariety of codimension one in $C^n\backslash Z $. Then, the following restriction map is an embedding
	\begin{equation*}
		\mr{Pic}^e(\Gr_{\G,C^n\setminus Z })\to\mr{Pic}^e(\Gr_{\G,C^n\setminus (Z\cup D)}).\qedhere
	\end{equation*}
\end{prop}

We define the action of the symmetric group $S_n$ and $\Gamma^n$ on $C^n$ by $s\cdot(p_1,\ldots,p_n):=(p_{s(1)},\ldots,p_{s(n)})$ and $(\gamma_1,\ldots,\gamma_n)\cdot (p_1,\ldots,p_n):=(\gamma_1p_1,\ldots,\gamma_np_n)$, where $s\in S_n$, $\gamma_i\in \Gamma$. Let $Z$ be a  closed subvariety in $C^n$ that is stable under the actions of $S_n$ and $\Gamma^n$. Recall that for the finest partition $\xi=(\{1\},\ldots,\{n\})$ of $[n]$, there is a factorization 
\begin{equation}
	\Gr_{\G,C^n_\xi\setminus Z}\simeq (\Gr_{\G,C}\times\cdots\times \Gr_{\G,C})|_{C^n_\xi\setminus Z}.
\end{equation}
Let $\mc{L}_1,\ldots,\mc{L}_n\in \mr{Pic}^e(\Gr_{\G,C})$. We define a map
\begin{equation}\label{420430}
	\mr{Pic}^e(\Gr_{\G,C})\times \cdots\times \mr{Pic}^e(\Gr_{\G,C})\to \mr{Pic}^e(\Gr_{\G,C^{n}_\xi\setminus Z})
\end{equation}
by $(\mc{L}_1,\ldots,\mc{L}_n)\mapsto (\mc{L}_1\boxtimes\ldots\boxtimes\mc{L}_n)|_{\Gr_{\G,C^n_\xi\setminus Z}}$.
\begin{prop}\label{122349}
	Let C be an irreducible smooth curve with a faithful $\Gamma$-action and assume that all ramified points in $C$ are totally ramified. Let $n\geq 2$ and $Z$ be a closed subvariety in $C^n$ which is stable under the actions of $S_n$ and $\Gamma^n$. We further assume that the diagonal $\Delta_{C^n}$ is contained in $C^n\setminus Z$. Let $\xi$ be the finest partition of $[n]$. Then, 
	\begin{enumerate}
		\item The map \eqref{420430} is an isomorphism, whose inverse induces the following isomorphism
		\begin{equation}\label{432760}
			c_\xi:\mr{Pic}^e(\Gr_{\G,C_\xi^n\setminus Z})\simeq\mb{Z}^n
		\end{equation}
given by $\mc{L}\mapsto (c_1,\ldots,c_n)$, where $c_i=c\big(\mc{L}|_{(e_1,\ldots,e_{i-1})\times \Gr_{\G,q_i}\times(e_{i+1},\ldots,e_n)}\big)$ and $e_j$ is the base point in $\Gr_{\G,q_j}$ for any $(q_1,\ldots,q_n)\in C^n_\xi\setminus Z$. We will call $c_\xi$ the multi-central charge map. 
		\item The restriction map $\mr{Pic}^e(\Gr_{\G,C^n\setminus Z})\hookrightarrow\mr{Pic}^e(\Gr_{\G,C_\xi^n\setminus Z})$ is an embedding. 
		Moreover, for any $\mc{L}\in \mr{Pic}^e(\Gr_{\G,C^n\setminus Z})$, if  $c(\mc{L}|_{\Gr_{\G,\Delta_{C^n}}})=a$, then $c_\xi( \mc{L}|_{\Gr_{\G,C_\xi^n\setminus Z} }  )=(a,a,\ldots,a)$. As a consequence, the central charge map \eqref{295279} is injective.
	\end{enumerate} 
\end{prop}
\begin{proof}
	When n=2, $\xi=(\{1\},\{2\})$ is the finest partition. We fix an arbitrary point $(q_1,q_2)\in C_\xi^2\setminus Z$ and denote $C_{q_1}:=\{p\in C\mid (q_1,p)\in C^2_\xi\setminus Z\}$, $C_{q_2}:=\{p\in C\mid (p,q_2)\in C^2_\xi\setminus Z\}$. Clearly, $C_{q_i}$ is open in $C$. If $C_{q_1}$ is empty, then $(q_1,p)\not \in C^2_\xi\setminus Z$ for all $p\in C$. It follows that  $(q_1,p)\in Z$ for all $p\in C\setminus \Gamma\cdot \{p\}$.  Since $Z$ is closed, we conclude that $\{q_1\}\times C\subseteq Z$. Hence, $(q_1,q_2)\in Z$, which is a contradiction. Therefore, $C_{q_1}$ is non-empty. Similarly, $C_{q_2}$ is also non-empty. Let $\pi_1:C^2\setminus Z\to C$ be the map given by $(p_1,p_2)\mapsto p_1$. Then $\pi_1$ must be surjective. Otherwise, there is a point $p\in C$ such that $\pi_1^{-1}(p)=\big(\{p\}\times C\big)\setminus Z$ is empty. Then, $p\times C\subseteq Z$ contradicts with $\Delta_{C^2}\subseteq Z$.
	
	Let $e_{q_i}\in \Gr_{\G,q_i}$ be the base point. Consider the following map 
	\begin{equation}\label{428922}
		\mr{Pic}^e((\Gr_{\G,C}\times \Gr_{\G,C})|_{C_\xi^2\setminus Z})\to \mr{Pic}^e(\Gr_{\G,C_{q_2}})\times\mr{Pic}^e(\Gr_{\G,C_{q_1}}) 
	\end{equation}
	given by $\mc{L}\mapsto (\mc{L}_1,\mc{L}_2)$, where $\mc{L}_1:=\mc{L}|_{\Gr_{\G,C_{q_2}}\times e_{q_2}}$ and $\mc{L}_2:=\mc{L}|_{e_{q_1}\times \Gr_{\G,C_{q_1}}}$ are regarded as line bundles on $\Gr_{\G,C_{q_i}}$ respectively. We claim \eqref{428922} is an isomorphism.
	
	We first prove the injectivity. By Theorem \ref{429430}, we have  isomorphisms
	\begin{equation}\label{324910}
		\mr{Pic}^e(\Gr_{\G,C})\simeq \mr{Pic}^e(\Gr_{\G,C_{q_i}})\simeq \mb{Z}.
	\end{equation}
	For any $\mc{L}\in \mr{Pic}^e((\Gr_{\G,C}\times \Gr_{\G,C})|_{C_\xi^2\setminus Z}))$, by the isomorphism \eqref{324910}, $\mc{L}_1$ and $\mc{L}_2$ uniquely extend to rigidified line bundles on $\Gr_{\G,C}$, which will be still denoted by $\mc{L}_1$ and $\mc{L}_2$. Set 
	$$\mc{L}':=\mc{L}^{-1}\otimes(\mc{L}_1\boxtimes\mc{L}_2)|_{C_\xi^2\setminus Z}.$$
	By Lemma \ref{086885}, $\mc{L}'$ is the pullback of some line bundle $M$ on $C_\xi^2\setminus Z$ via the projection $\pi:(\Gr_{\G,C}\times\Gr_{\G,C})|_{C_\xi^2\setminus Z}\to C_\xi^2\setminus Z$. Note that $M\simeq e^*\pi^*M\simeq e^*\mc{L}'$ is trivial. Then, $\mc{L}'\simeq \pi^*M$ is trivial. Thus, $\mc{L}\simeq (\mc{L}_1\boxtimes\mc{L}_2)|_{C_\xi^2\setminus Z}$. This proves the injectivity of \eqref{428922}.
	
	Given any $(\mc{L}_1,\mc{L}_2)\in  \mr{Pic}^e(\Gr_{\G,C_{q_2}})\times\mr{Pic}^e(\Gr_{\G,C_{q_1}}) $, we still denote their extensions to $\Gr_{\G,C}$ by $(\mc{L}_1, \mc{L}_2)$. Clearly, the line bundle $(\mc{L}_1\boxtimes\mc{L}_2)|_{C_\xi^2\setminus Z}\in \mr{Pic}^e((\Gr_{\G,C}\times \Gr_{\G,C})|_{C_\xi^2\setminus Z})$ is mapping to $(\mc{L}_1,\mc{L}_2)$ via the map \eqref{428922}. This shows that \eqref{428922} is also surjective.
	
	In summary, we have an isomorphism 
	\begin{equation}\label{421434}
		\mr{Pic}^e(\Gr_{\G,C_\xi^2\setminus Z})\simeq  \mr{Pic}^e(\Gr_{\G,C})\times\mr{Pic}^e(\Gr_{\G,C}).
	\end{equation}
	From construction, this map is exactly the inverse map of \eqref{420430}, and it gives rise to the multi-central charge map \eqref{432760}. This completes the proof of part (1) for $n=2$. 
	
	\medskip
	By Proposition \ref{984820},  the following composition map is injective,
	$$\iota:\mr{Pic}^e(\Gr_{\G, C^2\setminus Z})\to \mr{Pic}^e(\Gr_{\G,C_\xi^2\setminus Z})\xrightarrow{c_\xi} \mb{Z}\times \mb{Z}.$$
	Given $\mc{L}\in \mr{Pic}^e(\Gr_{\G, C^2\setminus Z})$ such that $\iota(\mc{L})=(a,b)$, i.e.\,$c(\mc{L}|_{\Gr_{\G,C_{q_2}}\times e_{q_2}})=a$ and  $c(\mc{L}|_{ e_{q_1}\times \Gr_{\G,C_{q_1}}})=b$ for any $(q_1,q_2)\in C^2\backslash Z$. Suppose that
	the central charge of $\mc{L}|_{\Gr_{\G,(q_1,q_1)}}$ is $d$. We follow an argument in \cite[Lemma 3.4.3]{Zhu17} to show $b=d=a$.
	Consider the following convolution and projection morphisms $$\mf{m}: \mr{Conv}_{\G, C^2\setminus Z}\to \Gr_{\G, C^2\setminus Z}, \quad  \mr{pr}:\mr{Conv}_{\G, C^2\setminus Z}\to \Gr_{\G, C} .$$
	Note that $\mr{pr}^{-1}(e_{q_1})\simeq \Gr_{\G,C_1}$, where $C_1:=C_{q_1}\cup\Gamma\cdot q_1$. We regard $\mc{L}':=(\mf{m}^*\mc{L})|_{\mr{pr}^{-1}(e_q)}$ as a line bundle on $\Gr_{\G,C_1}$. We denote by $\phi$ the composition map $\mr{pr}^{-1}(e_{q_1})\cap \mr{Conv}_{\G, (q_1,q_1)} \simeq \Gr_{\G,q_1}\simeq \Gr_{\G,(q_1,q_1)}$. Then, $\phi$ is exactly the restriction of the local convolution map $\mf{m}_{(q_1,q_1)}:\Conv_{\G,(q_1,q_1)}\to \Gr_{\G,(q_1,q_1)}$.  Thus, the central charge of $\mc{L}'|_{q_1}\simeq \phi^*(\mc{L}|_{(q_1,q_1)})$ is equal to the central charge of $\mc{L}_{(q_1,q_1)}$ which is $d$.
	
	On the other hand, for any $p\in C_{q_1}$, we have an isomorphism $\mr{Conv}_{\G,(q_1,p)}\xrightarrow{\mf{m}_{(q_1,p)}} \Gr_{\G,(q_1,p)}\simeq \Gr_{\G,q_1}\times \Gr_{\G,p}$. This isomorphism restricts to an isomorphism $\psi:\mr{pr}^{-1}(e_{q_1})\cap \mr{Conv}_{\G, (q_1,p)} \simeq e_{q_1}\times \Gr_{\G,p}$. Thus, the central charge of $\mc{L}'|_p\simeq \psi^*(\mc{L}|_{e_{q_1}\times\Gr_{\G,p}})$ is $b$. Since $\mc{L}'$ is a line bundle on $\Gr_{\G,C_1}$, the central charge of $\mc{L}'|_x$ is constant along $x\in C_1$. It follows that $b=d$. Since, $\Gr_{\G, C^2\setminus Z}$ is $S_2$-symmetric, we also have $a=d$. This completes the proof for $n=2$.
	\medskip
	
	When $n\geq 3$, let $\xi$ be the finest partition of $[n]$. Fix an arbitrary point $(q_1,\ldots, q_n)\in C_\xi\setminus Z$. Denote by $e_i$ the base point in $\Gr_{\G,q_i}$. By induction on $n$ and using the same argument as in proving the isomorphism \eqref{421434}, one can prove that \eqref{420430} is an isomorphism and its inverse map gives rise to the multi-central charge map \eqref{432760}.
	Moreover, there is a commutative diagram
	\begin{equation}\label{432982}
		\xymatrix{
		\mr{Pic}^e(\Gr_{\G,C^n_\xi\setminus Z})\ar[r]^-{} \ar[d] \ar[rd]^-{c_\xi}& \mr{Pic}^e(\Gr_{\G,C^{n-1}_{\xi'}\setminus Z_n}\times e_n)\times \mr{Pic}^e(x\times\Gr_{\G,C\setminus Z_x}) \ar[d]^{c_{\xi'}\times c} \\
		\mr{Pic}^e(\Gr_{\G,C\setminus Z_y}\times y)\times \mr{Pic}^e(e_1\times\Gr_{\G,C^{n-1}_{\xi'}\setminus Z_1}) \ar[r]_-{c\times c_{\xi'}}& \mb{Z}^n
	}.
	\end{equation}
	where $\xi'$ is the minimal partition of $[n-1]$, $x=(e_1,\ldots, e_{n-1})$, $y=(e_2,\ldots, e_{n})$, $Z_x=\{\,  p\in C\,|\,(q_1,\ldots,q_{n-1},p)\not \in C^n_\xi\setminus Z \, \}$,  $Z_y=\{\, p\in C\,|\,(p,q_2,\ldots,q_n)\not \in C^n_\xi\setminus Z  \,\}$, $Z_n=\{\vec{p}\in C^{n-1}\,|\,(\vec{p},q_n)\not\in C^n_\xi\setminus Z\}$, and $Z_1=\{\vec{p}\in C^{n-1}\,|\,(q_1,\vec{p})\not\in C^n_\xi\setminus Z\}$.
	By Proposition \ref{984820},  the following composition map is injective,
	$$\iota:\mr{Pic}^e(\Gr_{\G, C^n\setminus Z})\to \mr{Pic}^e(\Gr_{\G,C^n_\xi\setminus Z})\xrightarrow{c_\xi} \mb{Z}^n.$$
	Let $\mc{L}\in \mr{Pic}^e(\Gr_{\G, C^2\setminus Z})$. Suppose $\iota(\mc{L})=(a_1,\ldots,a_n)$. By induction on $n$ and the commutativity of \eqref{432982}, we have $a_1=a_2=\cdots=a_n=c(\mc{L})$.
\end{proof}

\begin{lem}\label{423913}
	Let $X$ be an irreducible smooth $\Gamma$-curve consisting of  exactly one ramified point $p$. Suppose $f:X\to \A^1$ is a $\Gamma$-equivariant \'etale morphism sending $p$ to $0$, where the $\Gamma$-action on $\A^1$ is defined by \eqref{eq_standard}.  Let $n\geq2$ and  $Z=\{(x_1,\ldots,x_n)\in X^n \,|\, \bar x_i\neq\bar x_j, \overline{f(x_i)}=\overline{ f(x_j)} \text{ for some } i,j\} $. There is an isomorphism 
	$$c:\mr{Pic}^e(\Gr_{\G_{\bar X},X^n\setminus Z})\simeq\mb{Z}.$$
\end{lem}
\begin{proof}
	We first show that $Z$ is a closed subvariety in $X^n$. When $n=2$, $Z=\{(x_1,x_2)\in X^2 \,|\, \bar x_1\neq\bar x_2, \overline{f(x_1)}=\overline{ f(x_2)} \} $. Note that $Z= (X\times_{\mathbb{A}^1} X) \backslash \Delta_X$. Since $f$ is \'etale,  the diagonal morphism $\Delta: X\to X\times_{\mathbb{A}^1} X$ is an open embedding, cf.\,\cite[\href{https://stacks.math.columbia.edu/tag/06CR}{Tag 06CR}]{stacks-project} and \cite[\href{https://stacks.math.columbia.edu/tag/05W1}{Tag 05W1}]{stacks-project}.   Thus, $Z$ is a closed subvariety of $X\times X$.  
	When $n\geq 3$, let $Z_{i,j}=\{(x_1,\ldots,x_n)\in X^n \,|\, \bar x_i\neq\bar x_j, \overline{f(x_i)}=\overline{ f(x_j)}\}$. Then, $Z_{i,j}\simeq X^{n-2}\times Z'$ is closed in $X^n$, where $Z':=\{(x_1,x_2)\in X^2 \,|\, \bar x_1\neq\bar x_2, \overline{f(x_1)}=\overline{ f(x_2)} \}$.  Thus, 	$Z=\bigcup_{i,j} Z_{i,j}$ is closed in $X^n$.
	
	Given any $x\in X$, in view of Proposition \ref{122349}, the map  $\mr{Pic}^e(\Gr_{X^n\setminus Z})\hookrightarrow \mb{Z}$ sending $\mc{L}$ to the central charge of $\mc{L}|_{(x,\ldots,x)}$ is an embedding.
	On the other hand, by  part (1) of Corollary \ref{prop_etale_base}, we have an isomorphism
	$$\Gr_{\G_{\bar X}, X^n\setminus Z}\simeq \Gr_{\G_{\bar\A^1}, \A^n}\times_{\A^n}(X^n\setminus Z).$$
	Consider the natural morphism $\phi:\Gr_{\G_{\bar X}, X^n\setminus Z}\to \Gr_{\G_{\bar\A^1}, \A^n}$. Pulling back $\mc{L}_{\A^n}$ via $\phi$, we get a line bundle $\phi^*\mc{L}_{\A^n}$ on $\Gr_{\G_{\bar X}, X^n\setminus Z}$. Note that $(\phi^*\mc{L}_{\A^n})|_{(x,\ldots,x)}$ has the same central charge as $(\mc{L}_{\A^n})|_{(f(x),\ldots,f(x))}$, which is $1$. It follows that $\mr{Pic}^e(\Gr_{\G_{\bar X},X^n\setminus Z})\hookrightarrow \mb{Z}$ is also surjective.
\end{proof}
\begin{prop}\label{284344}
	\begin{enumerate}
		\item Let $X$ be an irreducible smooth curve. Let $G$ be a simply-connected simple algebraic group. Then, the central charge map $c:\mr{Pic}^e(\Gr_{G,X^n})\to \mb{Z}$ is an isomorphism.
		\item Let $X$ be an irreducible smooth curve with a faithful $\Gamma$-action and assume that all points in $X$ are unramified. Then, the central charge map $c:\mr{Pic}^e(\Gr_{\G,X^n})\to \mb{Z}$ is an isomorphism.
	\end{enumerate}
\end{prop}
\begin{proof}
	Part (1) follows from \cite[Corollary 3.4.4.]{Zhu17}. It can also be shown by applying   Proposition \ref{122349} to the case when $\Gamma=\{e\}$.

	For part (2), by Corollary \ref{prop_etale_base} (2), we have an isomorphism $\alpha:\Gr_{\G, X^n\setminus Z }\simeq \Gr_{G, X^n\setminus Z }$ for $Z=\{\vec{p}\in X^n\mid p_i=\gamma\cdot p_j \text{ for some } 1\leq i,j\leq n, \gamma\neq e\}$. By part (1), the central charge map $c:\mr{Pic}^e(\Gr_{G, X^n })\hookrightarrow \mr{Pic}^e(\Gr_{G, X^n\setminus Z }) \to \mb{Z}$ is an isomorphism. Let $L_{X^n\setminus Z}\in \mr{Pic}^e(\Gr_{G,X^n\setminus Z})$ denote the restriction of the level one line bundle $L_{X^n}$ constructed in part (1). Pulling back $L_{X^n\setminus Z}$ along $\alpha$, we get a rigidified line bundle on $\Gr_{\G,X^n\setminus Z}$, denoted by $\mc{L}_{X^n\setminus Z}$. Let $\vec\gamma\in \Gamma^n$.  Consider the isomorphism $\vec\gamma:X^n\to X^n$ given by $(p_1,\ldots,p_n)\mapsto (\gamma_1p_1,\ldots,\gamma_np_n)$. It induces an isomorphism $\vec\gamma:\Gr_{\G,X^n\setminus Z}\to \Gr_{\G,X^n\setminus \vec\gamma(Z)}$. Pushing forward $\mc{L}_{X^n\setminus Z}$ along $\vec\gamma$, we get a rigidified line bundle $\mc{L}_{X^n\setminus \vec\gamma(Z)}\in \mr{Pic}^e(\Gr_{\G,X^n\setminus \vec\gamma(Z)})$. 
	
	Note that $\vec\gamma(Z)=\{\vec{p}\in X^n\mid \text{there exist } 1\leq i,j\leq n \text{ such that } p_i=\gamma\cdot p_j \text{ for some }  \gamma\neq \gamma_i\gamma_j^{-1}\}$. We have $\bigcap_{\vec\gamma} \vec\gamma(Z)=\emptyset$. Hence, $\{X^n\setminus \vec\gamma(Z)\}_{\vec\gamma}$ is a open cover of $X^n$. By Propostition \ref{984820}, the restiction map $\mr{Pic}^e(\Gr_{\G,X^n\setminus (\vec\gamma(Z)\cup\vec\gamma'(Z))})\to  \mr{Pic}^e(\Gr_{\G,X^n_{\xi}})$ is injective, where $\xi$ is the finest partition of $[n]$. By Proposition \ref{122349}, this induces an embedding 
	\begin{equation}\label{932484}
		\mr{Pic}^e(\Gr_{\G,X^n\setminus (\vec\gamma(Z)\cup\vec\gamma'(Z))})\hookrightarrow \mb{Z}^n
	\end{equation}
	for any $\vec\gamma,\vec\gamma'\in \Gamma^n$. From construction, the images of the restrictions of line bundles $\mc{L}_{X^n\setminus \vec\gamma(Z)}$ and $\mc{L}_{X^n\setminus \vec\gamma(Z)}$ to $\Gr_{\G,X^n\setminus (\vec\gamma(Z)\cup\vec\gamma'(Z))}$ under the map \eqref{932484} are $(1,1,\ldots,1)$. By Lemma \ref{841770}, we conclude that the restrictions of $\mc{L}_{X^n\setminus \vec\gamma(Z)}$ and $\mc{L}_{X^n\setminus \vec\gamma'(Z)}$ to the intersection $\Gr_{\G,X^n\setminus (\vec\gamma(Z)\cup\vec\gamma'(Z))}$ are isomorphic. Hence, we can glue these line bundles $\{\mc{L}_{X^n\setminus \vec\gamma(Z)}\}_{\vec\gamma}$. This produces a line bundle on $\Gr_{\G, X^n}$, denoted by $\mc{L}_{X^n}$. From construction, $c(\mc{L}_{X^n})=1$. Therefore, $c$ is an isomorphism.	
\end{proof}
\begin{thm}\label{797528}
	Let $C$ be an irreducible smooth curve with a faithful $\Gamma$-action and assume that all ramified points in $C$ are totally ramified.
	\begin{enumerate}
		\item For $n\geq 1$, the central charge map \eqref{295279} is an isomorphism 
		\[c:\mr{Pic}^e(\Gr_{\mc{G}, C^n})\simeq \mb{Z}.\]
		\item For $n\geq 2$, let $\xi=(I,J)$ be a partition of $[n]$. Let $\mc{L}$, $\mc{L}_I$, $\mc{L}_J$ be rigidified line bundles over $\Gr_{\G,C^n}$, $\Gr_{\G,C^I}$, $\Gr_{\G,C^J}$ respectively. Suppose that they have the same central charge. Then, we have an isomorphism of line bundles on $\Gr_{\mc{G}, C^n_\xi}$
		\[\mc{L}|_{C^n_{\xi}}\simeq (\mc{L}_{I}\boxtimes\mc{L}_{J})|_{C^n_{\xi}}.\]
	\end{enumerate}
\end{thm}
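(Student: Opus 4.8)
The plan is to prove part (1) by constructing an explicit level one rigidified line bundle $\mc{L}_{C^n}$ on $\Gr_{\mc{G},C^n}$, which gives surjectivity of the central charge map \eqref{295279}, and then showing that every rigidified line bundle of central charge zero is trivial, which gives injectivity (and shows $\mr{Pic}^e(\Gr_{\mc{G},C^n})$ is generated by $\mc{L}_{C^n}$); part (2) will follow from the relative seesaw principle together with part (1). Since $\Gr_{\mc{G},C^n}$ is only ind-projective over $C^n$, all arguments are carried out after restricting to the BD Schubert varieties $\Grb^{\la}_{\mc{G},C^n}$, which by Theorem \ref{368017} are projective and flat over $C^n$ with integral reduced fibers, so that Lemmas \ref{086885} and \ref{841770} apply; passing to the inverse limit over $\la$ at the end recovers statements on $\Gr_{\mc{G},C^n}$.

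For surjectivity, when $C=\A^1$ with the standard $\Gamma$-action we already have $\mc{L}_{\A^n}$ from \eqref{eq_linebun_L}, of central charge $1$. For a general $\Gamma$-curve $C$, I would build $\mc{L}_{C^n}$ by gluing: around each (totally ramified) point $p$ choose a $\Gamma$-stable open $U_p$ with a $\Gamma$-equivariant map $U_p\to\A^1$ sending $p$ to $0$, and pull back $\mc{L}_{\A^n}$ along the base-change map on BD Grassmannians of Proposition \ref{prop_etale_base}; over the unramified locus $\mathring C$ use the untwisted level one line bundle via $\Gr_{\mc{G},\mathring C^n}\simeq\Gr_{G,\mathring C^n}$. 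On overlaps the two local line bundles have the same central charge and the same fiberwise restrictions, hence their difference is fiberwise trivial; applying the relative seesaw principle (Lemma \ref{086885}) over each overlap shows this difference is pulled back from the base, and after absorbing such base line bundles into the transition data one obtains a well-defined $\mc{L}_{C^n}$ with $c(\mc{L}_{C^n})=1$.

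For injectivity, let $\mc{L}$ have $c(\mc{L})=0$; I claim $\mc{L}\simeq\ms{O}$ as a rigidified line bundle. First reduce to $n=1$: restricting to $C^n_{\xi_0}$, over which $\Grb^{\la}_{\mc{G},C^n}$ factors (Proposition \ref{684242}), and restricting $\mc{L}$ to the curve-direction slices of this product, one obtains rigidified line bundles of central charge zero on global Schubert varieties of $\G$ over $C$; granting the case $n=1$, these are trivial, so by Lemma \ref{086885} the bundle $\mc{L}$ is, over $C^n_{\xi_0}$, pulled back from the base, hence trivial there by Lemma \ref{841770}. The fiberwise description of Corollary \ref{711570} (including the merging of coweights along the diagonals) together with flatness then propagates triviality of $\mc{L}$ to every fiber over $C^n$, whence coherent pushforward to $C^n$ yields a line bundle $\mc{M}$ with $\pi^*\mc{M}\simeq\mc{L}$ and the rigidification along $e$ forces $\mc{M}\simeq\ms{O}_{C^n}$. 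For the base case $n=1$ one argues likewise: by \cite{Zhu14} extended to a general $\Gamma$-curve through Proposition \ref{prop_etale_base} and a gluing argument as in Theorem \ref{231044}, every fiber $\Gr_{\mc{G},p}$ — a twisted affine Grassmannian when $p$ is ramified and $\Gr_G$ otherwise — has Picard group $\mb{Z}$ for $G$ simply-connected (cf.\,\cite{BH}), so a central-charge-zero line bundle restricts trivially to each fiber, and coherent pushforward plus the rigidification finish the argument as in \cite[Section 3]{Zhu17}.

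Finally, for part (2), set $\mc{M}:=\mc{L}|_{C^n_\xi}\otimes\big((\mc{L}_I\boxtimes\mc{L}_J)|_{C^n_\xi}\big)^{-1}$, a rigidified line bundle on $\Gr_{\mc{G},C^n_\xi}\simeq(\Gr_{\mc{G},C^I}\times\Gr_{\mc{G},C^J})|_{C^n_\xi}$ by \eqref{447893}. Since $\mc{L}$, $\mc{L}_I$, $\mc{L}_J$ have the same central charge, the restriction of $\mc{M}$ to each slice $\Gr_{\mc{G},C^I}\times\{q\}$ and $\{p\}\times\Gr_{\mc{G},C^J}$ is a rigidified line bundle whose central charge vanishes, hence trivial by part (1); applying the relative seesaw principle (Lemma \ref{086885}) to the exhausting family of BD Schubert varieties $\Grb^{\la}_{\mc{G},C^n_\xi}$, projective and flat over $C^n_\xi$ with integral fibers and with the product structure of Proposition \ref{684242}, shows $\mc{M}$ is pulled back from $C^n_\xi$, and the rigidification along $e$ then forces it to be trivial, i.e.\ $\mc{L}|_{C^n_\xi}\simeq(\mc{L}_I\boxtimes\mc{L}_J)|_{C^n_\xi}$. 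The step I expect to be the main obstacle is the injectivity in part (1): making the reduction to $n=1$ and the fiberwise-triviality propagation rigorous stratum by stratum through the BD Schubert varieties while handling the coweight merging of Corollary \ref{711570} and the ramified fibers (twisted affine Grassmannians); the gluing construction of $\mc{L}_{C^n}$ on a general $\Gamma$-curve is the other technically delicate point.
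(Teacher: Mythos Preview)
Your overall architecture matches the paper's: glue a level-one line bundle for surjectivity, use the relative seesaw for part~(2), and argue injectivity by slicing. The $n=1$ injectivity you sketch (fiberwise triviality on each $\Gr_{\mc{G},p}$ since $\mr{Pic}\simeq\mathbb{Z}$ there, then $\pi_*$ plus rigidification) is a legitimate alternative to what the paper actually does, namely restrict to $\mathring{C}$ and extend the unique rigidified trivialization across each ramified point by a pole-order count along $e$, using normality of $\Grb^\lambda_{\mc{G},C}$. For the gluing, note that ``absorbing base line bundles into transition data'' still requires checking a cocycle condition; the paper sidesteps this by working entirely inside the rigidified groupoid (Lemma~\ref{841770}), where the overlap isomorphisms are \emph{unique} and hence automatically compatible on triple overlaps.

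The genuine gap is in your injectivity argument for $n\geq 2$. You assert that restricting a central-charge-zero $\mc{L}$ to $C^n_{\xi_0}$ and then to a curve-direction slice yields a rigidified bundle of central charge zero on $\Grb^{\lambda_i}_{\mc{G},C}$. But $c(\mc{L})$ is \emph{defined} on the diagonal $\Delta_{C^n}$, which is disjoint from $C^n_{\xi_0}$; nothing you have written connects the diagonal charge to the slice charges. The paper supplies this bridge via the convolution Grassmannian: for $n=2$, pulling $\mc{L}$ back along $\mf{m}:\Conv_{\mc{G},C^2}\to\Gr_{\mc{G},C^2}$ and restricting to the fiber $\mr{pr}^{-1}(x)\simeq\Gr_{\mc{G},C}$ of $\mr{pr}:\Conv_{\mc{G},C^2}\to\Gr_{\mc{G},C}$ over a point $x\in\Gr_{\mc{G},p}$ produces a rigidified bundle on $\Gr_{\mc{G},C}$ whose charge is the second coordinate $b$ of the image in $\mathbb{Z}\times\mathbb{Z}$; specializing to $p$ and using that $\mf{m}_p$ restricts to an isomorphism $\mr{pr}^{-1}(x)\cap\Conv_{\mc{G},(p,p)}\simeq\Gr_{\mc{G},(p,p)}$ identifies $b$ with the central charge of $\mc{L}|_{(p,p)}$, and symmetry gives $a=b=c(\mc{L})$. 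Without this step your reduction to $n=1$ does not start. Relatedly, your sentence ``Corollary~\ref{711570} together with flatness then propagates triviality of $\mc{L}$ to every fiber over $C^n$'' is not an argument: triviality on the dense open $C^n_{\xi_0}$ does not imply triviality on boundary fibers, and Corollary~\ref{711570} only describes those fibers, it does not compute $\mc{L}$ on them. The paper instead shows directly that the restriction $\mr{Pic}^e(\Gr_{\mc{G},C^2})\to\mr{Pic}^e(\Gr_{\mc{G},C^2_\xi})\simeq\mathbb{Z}\times\mathbb{Z}$ is injective (by the same extension-across-divisors argument as in $n=1$) and then, via the convolution identification, concludes that it factors through $c$.
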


\begin{proof}
	The case when $n=1$ has been proved in Theorem \ref{429430}. 
	
	For $n\geq 2$, the central charge map $c$ is injective by Proposition \ref{122349}. 	Let $\xi=(I_1,\ldots,I_k)$ be a nontrivial partition. Since $|I_j|<n$, by induction on $n$, we have an isomorphism $c:\mr{Pic}^e(\Gr_{\G,C^{I_j}})\simeq \mb{Z}$. Let $\mc{L}_{C^{I_j}}$ be the rigidified line bundle on $\Gr_{\G,C^{I_j}}$ such that $c(\mc{L}_{C^{I_j}})=1$. Recall that we have a factorization $\Gr_{\G,C^n_\xi\setminus Z}\simeq (\Gr_{\G,C^{I_1}}\times\cdots\times \Gr_{\G,C^{I_k}})|_{C^n_\xi\setminus Z}$. Set $\mc{L}_{C^n_\xi}:=(\mc{L}_{C^{I_1}}\boxtimes\cdots\boxtimes \mc{L}_{C^{I_k}})|_{\Gr_{\G,C^n_\xi}}$. 
	
	Let $R:=\{p_1,\ldots, p_k\}$ be the set of ramified points in $C$. Set $\mathring{C}=C\backslash R$. By Proposition \ref{284344}, $\mr{Pic}^e(\Gr_{\G,(\mathring{C})^n})\simeq \mb{Z}$. Let $\mc{L}_{\mathring{C}^n}$ denote the level one line bundle on $\Gr_{\G,(\mathring{C})^n}$ constructed in Proposition \ref{284344}. On the other hand, for each $p_i$, there is a neighborhood $U_i$ of $p_i$ with a $\Gamma$-equivariant  \'etale morphism $f_i:U_i\to \A^n$. Let 
	\begin{equation}\label{773922}
		Z_i:=\{(x_1,\ldots,x_n)\in U_i^n \,|\,\bar x_a\neq \bar x_b, \overline{f_i(x_a)}= \overline{f_i(x_b)} \text{ for some } 1\leq a<b\leq n\} 
	\end{equation}
	be a closed subvariety in $U_i^n$. By Lemma \ref{423913}, pulling back $\mc{L}_{\A^n}$ via the natural map $\Gr_{\mc{G}_{\bar{U}_i},(U_i)^n\setminus Z_i}\to\Gr_{\mc{G}_{\bar\A^1}, \A^n}$, we get a line bundle $\mc{L}_{(U_i)^n\setminus Z_i}\in \mr{Pic}^e(\Gr_{(U_i)^n\setminus Z_i})$ whose central charge is $1$. Note that $\big\{ C^n_\xi, (U_i)^n\setminus Z_i, (\mathring{C})^n\big\}_{\xi, i}$ is an open covering of $C^n$, and pairwise intersections are of the form $X^n_\xi$, $X^n_\xi\setminus Z_X$, or $X^n\setminus Z_X$ for some $\Gamma$-stable open subset $X$ in $C$ and some closed subset $Z_X$ in $X^n$. In view of Proposition \ref{122349} and Lemma \ref{423913}, the restrictions of rigidified line bundles $\{\mc{L}_{C^n_\xi}, \mc{L}_{(U_i)^n\setminus Z_i}, \mc{L}_{\mathring{C}^n}\}_{\xi,i,}$ are isomorphic on the pairwise intersections. Thus, we can glue these line bundles and get a factorizable rigidified line bundle $\mc{L}_{C^n}$ whose central charge is $1$. Hence, $c$ is also surjective. Part (2) follows from the fact that $\mc{L}_{C^n}|_{\Gr_{\G,C^n_\xi}}\simeq (\mc{L}_{C^{I_1}}\boxtimes\cdots\boxtimes \mc{L}_{C^{I_k}})|_{\Gr_{\G,C^n_\xi}}$.
\end{proof}
We remark that the proof of Theorem \ref{797528} also works when $\Gamma$ is a trivial group. We call the line bundle $\mc{L}_{C^n}$ constructed in Theorem \ref{797528} the level one line bundle on $\Gr_{\G,C^n}$. 

\subsection{$L^+\mc{G}_{C^n}$-equivariant structure on the line bundle $\mc{L}_{C^n}$} 
 In this subsection, we will show that when $G$ is simply-connected, there is a unique $L^+\mc{G}_{C^n}$-equivariant structure on the level one line bundle $\mc{L}_{C^n}$, where $L^+\mc{G}_{C^n}$ is the jet group scheme defined in Definition \ref{def_BD_jet}.

Let $\mc{L}$ be a line bundle on $\Gr_{\mc{G}, C^n}$ (here we think of line bundles from the point of view as in Appendix \ref{appendC}). An $L^+\mc{G}_{C^n}$-equivariant structure on $\mc{L}$ is a collection of isomorphisms $\{\phi_{g,x}:\mc{L}_x\simeq \mc{L}_{gx} \mid g\in L^+\mc{G}_{C^n}(S), x\in \Gr_{\mc{G}, {C}^n}(S) \}$ for any $\mathbb{C}$-scheme $S$,  satisfying the cocycle conditions $\phi_{g',gx}\circ\phi_{g,x}=\phi_{g'g, x}$. We define a group scheme
\begin{equation}
	\label{5.2_ext}
	\widehat{L^+\mc{G}}_{C^n}(S):=\left\{ \big(g,\{\phi_{g,x}\} \big) \,\middle|\, g\in L^+\mc{G}_{C^n}, \phi_{g,x}: \mc{L}_x\simeq \mc{L}_{gx}, x\in \Gr_{\mc{G}, {C}^n}(S)\right\}, 
\end{equation}
where the multiplication is defined by $(g',\{\phi_{g',x}\}) \cdot (g,\{\psi_{g,x}\})=(g'g, \{\phi_{g',gx}\circ \psi_{g,x}\}) $. One can easily check the associativity. By definition, $\mc{L}$ has an $L^+\mc{G}_{C^n}$-equivariant structure if and only if the following central extension has a splitting $\beta:L^+\mc{G}_{C^n} \to \widehat{L^+\mc{G}}_{C^n} $
\begin{equation}\label{899726}
	K\to \widehat{L^+\mc{G}}_{C^n}\xrightarrow{\mr{pr}} L^+\mc{G}_{C^n},
\end{equation}
where $K=\{\phi\mid \phi:\mc{L}\simeq\mc{L}\} $ is a group scheme over $C^n$, and $\mr{pr}: \widehat{L^+\mc{G}}_{C^n}\to L^+\mc{G}_{C^n}$ is the projection map. Conversely, if the extension \eqref{899726} has a splitting $\beta:L^+\mc{G}_{C^n} \to \widehat{L^+\mc{G}}_{C^n} $, say $\beta(g)=(g,\{\phi_{g,x}\})$, then it gives a collection of isomorphisms $\{\phi_{g,x}:\mc{L}_x\simeq \mc{L}_{gx}\mid g\in L^+\mc{G}_{C^n}(S), x\in \Gr_{\mc{G}, {C}^n}(S) \}$. Since $\beta$ is a group homomorphism, we have $(g',\{\phi_{g',x}\}) \cdot (g,\{\phi_{g,x}\})=(g'g, \{\phi_{g'g,x}\}) $. Hence $\phi_{g',gx}\circ \phi_{g,x}=\phi_{g'g, x}$, which implies that $\mc{L}$ is $L^+\mc{G}_{C^n}$-equivariant. Therefore, the extension \eqref{899726} has a splitting if and only if $\mc{L}$ is $L^+\mc{G}_{C^n}$-equivariant.
\begin{lem}\label{770893}
	If the extension \eqref{899726} has a splitting $\beta:L^+\mc{G}_{C^n} \to \widehat{L^+\mc{G}}_{C^n} $, then it is unique.
\end{lem}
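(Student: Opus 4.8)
The plan is to deduce uniqueness of the splitting from the vanishing of characters of $L^+\mc{G}_{C^n}$. Suppose $\beta_1,\beta_2\colon L^+\mc{G}_{C^n}\to \widehat{L^+\mc{G}}_{C^n}$ are two sections of $\mr{pr}$ in the central extension \eqref{899726}. Since $K$ is central, $c(g):=\beta_1(g)\,\beta_2(g)^{-1}$ takes values in $K$ and is a homomorphism of group schemes $c\colon L^+\mc{G}_{C^n}\to K$ over $C^n$, so it suffices to show that $c$ is trivial.

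I would first identify $K$. The base point section $e\colon C^n\to \Gr_{\mc{G},C^n}$ is pointwise fixed by $L^+\mc{G}_{C^n}$, and restricting $\mc{L}$ to the Schubert varieties $\Grb_{\mc{G},C^n}^{\la}$ — which are projective and flat over $C^n$ with integral fibers by Theorem \ref{368017} together with Corollary \ref{711570} — the argument of Lemma \ref{841770} (cf.\ \eqref{eq_iso_L}) shows that $\mc{L}$ is pulled back from $C^n$ along the structure map of each $\Grb_{\mc{G},C^n}^{\la}$ and that $e^*$ gives an isomorphism $K\simeq \mr{Aut}(e^*\mc{L})\simeq \mb{G}_{m,C^n}$. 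Hence $c$ is a character of the group scheme $L^+\mc{G}_{C^n}$ over $C^n$.

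It then remains to check $X^*(L^+\mc{G}_{C^n})=0$. The character $c$ factors through a finite jet group $L^r\mc{G}_{C^n}$, which is smooth and affine over $C^n$ with (geometrically) connected fibers; by separatedness of $\mb{G}_{m,C^n}$ over $C^n$ it is enough to verify $X^*(L^r\mc{G}_{\vec p})=0$ for every $\vec p\in C^n$. By the factorization \eqref{346288} and Corollary \ref{711570}, $L^+\mc{G}_{\vec p}$ is a finite product of jet groups of $G$ along unramified coordinates and of the special parahoric group scheme $\ms{G}$ along ramified coordinates; each of these is an extension of its reductive quotient by a pro-unipotent group, and the reductive quotient is semisimple — for $G$ because $G$ is simply connected, and for $\ms{G}$ because it is a special parahoric group scheme attached to a simply connected group (see Section \ref{029728} and \cite{BH}). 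Since an extension of a semisimple group by a unipotent group has no nontrivial characters, $X^*(L^r\mc{G}_{\vec p})=0$, hence $c=1$ and $\beta_1=\beta_2$.

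I expect the main obstacle to be the last step: one must know that the special (and, at ramified points, twisted) parahoric group schemes entering $L^+\mc{G}_{\vec p}$ have semisimple special-fiber reductive quotients, so that their jet groups carry no nontrivial characters. Once this structural input is granted, the rest is a formal manipulation of the central extension \eqref{899726} together with the identification of $K\simeq\mb{G}_{m,C^n}$ coming from Lemma \ref{841770} and Theorem \ref{368017}.
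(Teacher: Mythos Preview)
Your proposal is correct and follows essentially the same approach as the paper: both reduce uniqueness to showing that any homomorphism $L^+\mc{G}_{C^n}\to K$ is trivial, and both verify this fiberwise by observing that $L^+\mc{G}_{\vec p}$ is a product of jet groups whose reductive quotients are simple (namely $G$ at unramified points and $G^\sigma$ at ramified points) with pro-unipotent kernel, hence admit no nontrivial characters. The only difference is cosmetic: you explicitly identify $K\simeq\mb{G}_{m,C^n}$ via Lemma~\ref{841770} and pass to a finite jet level before checking fibers, whereas the paper works directly with the pro-group and leaves the nature of $K$ implicit.
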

\begin{proof}
	Let $\beta_1$, $\beta_2$ be two splittings, which give a group homomorphism $\phi:L^+\mc{G}_{C^n}\to K$ sending $g\in L^+\mc{G}_{C^n}$ to $\beta_1(g)\cdot\beta_2(g)^{-1}$. For any $\vec{p}\in C^n$, we have an isomorphism $L^+\mc{G}_{\vec{p}}\simeq \prod_{j=1}^k L^+\mc{G}_{q_j}$, where $\bigsqcup_j \Gamma\cdot q_j=\bigcup_i \Gamma\cdot p_i$. Over the fiber at $\vec{p}$, the morphism $\phi_{\vec{p}}:L^+\mc{G}_{\vec{p}}\to K_{\vec{p}}$ is trivial since the character of each $L^+\mc{G}_{q_j}$ is trivial, (when $q_j$ is unramified, the character of $L^+\mc{G}_{q_j}\simeq G(\mc{O})$ is trivial, cf.\,\cite[Corollary 9.1.3]{Sorger99}; when $q_j$ is ramified, there is a quotient map $L^+\mc{G}_{q_j}\to G^\sigma$ whose kernel is pro-unipotent, hence the character of $L^+\mc{G}_{q_j}$ is trivial since $G^\sigma$ is simple, cf.\,\cite[Table 2.3]{BH}.) Thus $\phi$ is trivial.
\end{proof}
From now on, we take $C$ to be $\mb{A}^1$ with the standard $\Gamma$-action as in (\ref{eq_standard}).
\begin{prop}\label{882893}
	Let $\mc{L}$ be a line bundle over $\Gr_{\mc{G}, \A^n}$. Given an integer $l>0$, if the $l$-th power $\mc{L}^l$ of the line bundle $\mc{L}$ has an $L^+\mc{G}_{\A^n}$-equivariant structure, then  so does $\mc{L}$.
\end{prop}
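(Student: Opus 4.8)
The plan is to reduce the question to one about the central extension \eqref{899726}: giving $\mc{L}$ an $L^+\mc{G}_{\A^n}$-equivariant structure amounts to splitting $K\to \widehat{L^+\mc{G}}_{\A^n}\to L^+\mc{G}_{\A^n}$, and by Lemma \ref{770893} such a splitting is unique when it exists. So I start with the splitting $\beta':L^+\mc{G}_{\A^n}\to\widehat{L^+\mc{G}}_{\A^n}$ for $\mc{L}^l$ (which exists by hypothesis, applied to the extension associated to $\mc{L}^l$), and try to extract an $l$-th root of it inside the extension associated to $\mc{L}$. Concretely, $\widehat{L^+\mc{G}}_{\A^n}$ for $\mc{L}$ maps to $\widehat{L^+\mc{G}}_{\A^n}$ for $\mc{L}^l$ by $\phi\mapsto\phi^{\otimes l}$, with kernel the $\mu_l$-bundle of $l$-th roots of unity in the automorphism scheme $K$; so the obstruction to lifting $\beta'$ to a splitting $\beta$ for $\mc{L}$ lives in a sheaf cohomology group with coefficients in $\mu_l$ over $L^+\mc{G}_{\A^n}$ (or over $\Gr^{\la}_{\mc{G},\A^n}$ after restricting to a Schubert variety to stay in finite type, as the paper indicates). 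The whole problem is thus to show this obstruction class vanishes.

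The key steps, in order, are: (i) restrict everything to a BD Schubert variety $\Grb^{\la}_{\mc{G},\A^n}$, using Theorem \ref{368017} (flatness, reduced fibers) so that the ambient geometry is of finite type and the structure sheaf pushes forward nicely; (ii) handle $n=1$ first — here one uses that $\Gr_{\mc{G},p}$ for both ramified and unramified $p$ has $\mr{Pic}$ generated by the level-one class (Theorem \ref{797528} and \cite[Corollary 3.14]{BH}), that $\mc{L}_{\A^1}$ is the level-one rigidified bundle, and that an $L^+\mc{G}_{p}$-equivariant structure exists on the level-one bundle fiberwise by the Kumar--Mathieu / Faltings theory invoked through \cite{BH}; the point is to promote this to an equivariant structure over the whole base $\A^1$, which is where the rigidification and the isomorphism $\mr{Pic}^e(\Gr_{\mc{G},\A^1})\simeq\mb{Z}$ are used to rigidify the choice and kill automorphisms; (iii) do $n=2$ by the same method, now additionally invoking the relative seesaw principle (Lemma \ref{086885}) and the factorization \eqref{447893}, \eqref{eq_sea} to pin down the equivariant structure over $\A^2_\xi$ and then extend across the diagonal using that the line bundle and the group scheme both degenerate compatibly (Corollary \ref{711570}, Proposition \ref{930915}); (iv) for general $n$, use Hartogs' extension: the locus in $\A^n$ where the $n=2$-type analysis already produces the equivariant structure (the union of $\A^n_\xi$'s and the small diagonals worked out inductively) has complement of codimension $\geq 2$, so the isomorphisms $\phi_{g,x}$, being sections of a line bundle on a normal variety (Theorem \ref{551564}), extend uniquely, and the cocycle condition extends by density.

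The main obstacle I expect is step (iii)/(iv): controlling the equivariant structure near the diagonal strata of $\A^n$, where $L^+\mc{G}_{\A^n}$ does \emph{not} simply factor and the line bundle $\mc{L}_{\A^n}$ restricts to a convolution-type bundle rather than an external product. One must check that the splitting constructed on $\A^n_{\xi_0}$ (where the group scheme and line bundle factor completely) glues with the splittings constructed on coarser strata, and that no nontrivial $\mu_l$-torsor obstruction appears when crossing a diagonal wall $z_i=\sigma^k(z_j)$; this is exactly where one needs that the relevant automorphism group $K_{\vec p}$ of the degenerated line bundle over $\Grb^{\la}_{\mc{G},\vec p}$ has no nontrivial characters, which in turn rests on the simplicity of $G$ and $G^\sigma$ (as in the proof of Lemma \ref{770893}) together with the reducedness and normality of the Schubert fibers. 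Once the $n=1$ and $n=2$ cases are done carefully, the Hartogs argument for $n\geq 3$ should be routine, so I would concentrate the technical effort on making the gluing across a single diagonal wall airtight.
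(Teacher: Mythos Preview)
Your overall architecture matches the paper's: restrict to BD Schubert varieties so the jet group action factors through a finite-type quotient $L_k^+\mc{G}_{\A^n}$, pass to the finite central extension by $\mu_l$ built from the given equivariance of $\mc{L}^l$, treat $n=1$ and $n=2$ separately, then induct with Hartogs for $n\geq 3$. But two crucial mechanisms are missing, one in each of the base cases.

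\textbf{The $n=1$ gap.} Your plan to ``promote fiberwise equivariant structures to a global one via rigidification'' does not work as stated. Knowing that $\mc{L}_p$ is $L^+\mc{G}_p$-equivariant for each $p\in\A^1$ does not by itself produce a splitting varying algebraically over the base; that is precisely the content to be proved. The paper's argument is different and concrete: over $\mathring{\A}^1$ the situation is untwisted, so the extension $\overline{L_k^+\mc{G}}_{\mathring{\A}^1}\to L_k^+\mc{G}_{\mathring{\A}^1}$ splits; take the \emph{schematic closure} of this splitting inside $\overline{L_k^+\mc{G}}_{\A^1}$ (and discard non-identity components over $0$). The closure is a flat, hence smooth, group scheme over $\A^1$, and one checks fiberwise that the projection to $L_k^+\mc{G}_{\A^1}$ is an isomorphism. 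At $0$ this uses that $L_k^+\mc{G}_0$ is \emph{simply-connected} (the evaluation map $L_k^+\mc{G}_0\to G^\sigma$ has pro-unipotent kernel and $G^\sigma$ is simply-connected, cf.\ \cite{HK22}), which forces a connected \'etale cover to be an isomorphism via Lemma~\ref{683027}. The closure-plus-simply-connectedness argument is the missing engine in your step (ii); nothing about $\mr{Pic}^e$ or rigidification addresses it.

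\textbf{The $n=2$ gap.} Your plan to get the splitting on $\A^2_\xi$ by factorization and then ``extend across the diagonal'' is blocked: the diagonal walls $\{z_1=\sigma^k z_2\}$ are codimension one in $\A^2$, so Hartogs is unavailable, and your appeal to compatible degeneration does not supply an argument. The paper circumvents this with a $\mb{G}_m$-dilation trick you do not mention. Via the isomorphism $\Gr_{\mc{G},\A^1\times\mathring{\A}^1}\simeq \Gr_{\mc{G},\A^1\times\{1\}}\times\mathring{\A}^1$ one reduces to the one-parameter slice $\A^1\times\{1\}$; on this slice the off-diagonal locus is $U\times\{1\}$ with $U=\A^1\setminus\{1,\sigma(1),\dots,\sigma^{m-1}(1)\}$, where factorization gives a splitting, and the $n=1$ closure-plus-simply-connectedness argument extends it across the finitely many missing points. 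Dilating by $\mb{G}_m$ spreads the splitting over $\A^1\times\mathring{\A}^1$; a symmetric argument covers $\mathring{\A}^1\times\A^1$; uniqueness (Lemma~\ref{770893}) glues on $(\mathring{\A}^1)^2$; and only now does Hartogs apply, across the single remaining point $(0,0)$, which has codimension $2$. Once these two base cases are repaired, your inductive Hartogs step for $n\geq 3$ is exactly what the paper does.
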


Before proving this proposition, we first make some preparations. By Proposition \ref{423483}, to show a line bundle $\mc{L}$ on $\Gr_{\mc{G},\A^n}$ has an $L^+\mc{G}_{\A^n}$-equivariant structure, it suffices to consider the restriction of $\mc{L}$ to each BD Schubert variety $\Grb^{\la}_{\mc{G},\A^n}$.

For any integer $k$, we define a group scheme $L_k^+\mc{G}_{\bar{\A}^n}$ as follows, which is of finite type over $\mathbb{C}$, for each $\C$-algebra $R$, 
\begin{equation}\label{eq_jet}
L_k^+\mc{G}_{\bar{\A}^n}(R):=G\bigg(\frac{R[t,z_1,\ldots,z_n]}{\big(\prod_{i=1}^n \prod_{\gamma\in \Gamma}(t- \gamma\cdot z_i)^k\big)}\bigg)^\Gamma.\end{equation}
Set $L^+_k\G_{\A^n}=L^+_k\G_{\bar{\A}^n}\times_{\bar{\A}^n }\A^n $. Clearly, $L^+\mc{G}_{\A^n}=\varprojlim L_k^+\mc{G}_{\A^n} $. When $\xi$ is the finest partition of $[n]$, we have a factorization
\[L_k^+\mc{G}_{\A_\xi^n}\simeq \big(L_k^+\mc{G}_{\A^1}\times\cdots\times L_k^+\mc{G}_{\A^1}\big)|_{\A^n_\xi}.\]
\begin{lem}\label{223442}
	The action of $L^+\mc{G}_{\A^n}$ on $\Grb^{\la}_{\mc{G},\A^n}$ factors through the action of $L^+_k\mc{G}_{\A^n}$ for sufficiently large integer $k$.
\end{lem}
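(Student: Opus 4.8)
The plan is to run a standard pro-finiteness (limit) argument, exploiting the fact that $\Grb^{\la}_{\mc{G},\A^n}$ is of finite type: the action morphism can only detect finitely many jets. First I would record that $\Grb^{\la}_{\mc{G},\A^n}$ is indeed of finite type over $\A^n$. The orbit $\Gr^{\la}_{\mc{G},\A^n}=L^+\mc{G}_{\A^n}\cdot s_{\la}$ is bounded by $\la$, hence is contained in a quasi-compact closed sub-ind-scheme of the ind-of-finite-type $\Gr_{\mc{G},\A^n}$, so its reduced closure $\Grb^{\la}_{\mc{G},\A^n}$ is a closed subscheme of a finite-type scheme; alternatively this is immediate from Theorem \ref{445316}, which presents $\Grb^{\la}_{\mc{G},\A^n}$ as flat over $\A^n$ with fibers finite unions of products of (possibly twisted) affine Schubert varieties. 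Since $\A^n$ is Noetherian, $\Grb^{\la}_{\mc{G},\A^n}$ is of finite presentation over $\A^n$, and it is separated, being closed in the ind-projective $\Gr_{\mc{G},\A^n}$.

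Next, using $L^+\mc{G}_{\A^n}=\varprojlim_k L_k^+\mc{G}_{\A^n}$ from \eqref{eq_jet}, with all $L_k^+\mc{G}_{\A^n}$ affine over $\A^n$ and all transition maps affine, I would write
\[
L^+\mc{G}_{\A^n}\times_{\A^n}\Grb^{\la}_{\mc{G},\A^n}\;\simeq\;\varprojlim_k\Big(L_k^+\mc{G}_{\A^n}\times_{\A^n}\Grb^{\la}_{\mc{G},\A^n}\Big),
\]
an inverse limit over $\A^n$ of quasi-compact, quasi-separated $\A^n$-schemes with affine transition morphisms. The action map $a\colon L^+\mc{G}_{\A^n}\times_{\A^n}\Grb^{\la}_{\mc{G},\A^n}\to \Grb^{\la}_{\mc{G},\A^n}$ is then a morphism of $\A^n$-schemes whose target is of finite presentation over $\A^n$, so by the scheme-theoretic limit formalism (cf.\ \cite[\href{https://stacks.math.columbia.edu/tag/01ZC}{Tag 01ZC}]{stacks-project}) it factors as $a=\bar a\circ(q_k\times\mr{id})$ through $L_k^+\mc{G}_{\A^n}\times_{\A^n}\Grb^{\la}_{\mc{G},\A^n}$ for $k\gg0$, where $q_k\colon L^+\mc{G}_{\A^n}\to L_k^+\mc{G}_{\A^n}$ is the projection.

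It then remains to check that $\bar a$ is an action of $L_k^+\mc{G}_{\A^n}$. Since $\mc{G}$ is smooth, each $L_{k'}^+\mc{G}_{\A^n}\to L_k^+\mc{G}_{\A^n}$ is a torsor under a vector group, hence faithfully flat and quasi-compact; passing to the limit, $q_k$ is faithfully flat and quasi-compact, hence an epimorphism of schemes, and so is $q_k\times q_k\times\mr{id}$. The unit axiom $\bar a(e,-)=\mr{id}$ follows from $a(e,-)=\mr{id}$ because $q_k$ sends the identity section to the identity section; the associativity axiom is an equality of two morphisms $L_k^+\mc{G}_{\A^n}\times_{\A^n}L_k^+\mc{G}_{\A^n}\times_{\A^n}\Grb^{\la}_{\mc{G},\A^n}\to\Grb^{\la}_{\mc{G},\A^n}$ which, after pulling back along the epimorphism $q_k\times q_k\times\mr{id}$, reduces to the associativity of $a$; since the target is separated, the two morphisms coincide. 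This produces the $L_k^+\mc{G}_{\A^n}$-action through which the $L^+\mc{G}_{\A^n}$-action factors, and the factorization is compatible with the finest partition of $[n]$ by functoriality of all the constructions involved.

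I expect the only genuinely non-formal ingredient to be the finite-typeness of $\Grb^{\la}_{\mc{G},\A^n}$, i.e.\ the boundedness of the orbit $\Gr^{\la}_{\mc{G},\A^n}$; granting that (already available here), the rest is an exercise in the limit formalism for schemes plus faithfully flat descent of the group-action axioms. A more hands-on alternative would be fiberwise: over each $\vec p\in\A^n$ the jet group acts on $\Grb^{\la}_{\mc{G},\vec p}$, a product of (possibly twisted) affine Schubert varieties, through a finite-level quotient by the classical theory — but that route would additionally require bounding the level uniformly in $\vec p$, e.g.\ via a constructibility argument, which the limit approach sidesteps.
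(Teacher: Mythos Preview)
Your proof is correct and takes a genuinely different route from the paper's. You invoke the abstract limit formalism: since $\Grb^{\la}_{\mc{G},\A^n}$ is of finite presentation over $\A^n$ and $L^+\mc{G}_{\A^n}\times_{\A^n}\Grb^{\la}_{\mc{G},\A^n}$ is a cofiltered limit with affine transition maps, the action morphism descends to some finite level, after which you verify the action axioms by fpqc descent along $q_k$. The paper instead argues geometrically: it first checks the factorization over a dense open (the unramified locus $\mathring{\A}^1$ when $n=1$, the locus $\A^n_{\xi_0}$ when $n\ge2$), using as input the classical fact that $G(\mc{O})$ acts on $\Grb_G^\lambda$ through $G(\mc{O}_k)$, and then extends across the complement by observing that the kernel $K_{\A^n}$ of $L^+\mc{G}_{\A^n}\to L^+_k\mc{G}_{\A^n}$ is irreducible, so two morphisms $K_{\A^n}\to\Gr_{\mc{G},\A^n}$ agreeing on a dense open coincide. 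Your approach is cleaner and does not rely on the untwisted statement as an input; the paper's approach is exactly the ``hands-on alternative'' you describe at the end, with the uniformity-in-$\vec p$ issue handled not by constructibility but by the irreducibility/density trick.
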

\begin{proof}
	Let $\mathring{\A}=\A^1\setminus \{0\}$. When $n=1$, we have $\Grb^{\lambda}_{\mc{G},\mathring{\A}^1}\simeq \Grb^\lambda_{G}\times \mathring{\A}^1$ and $L^+\mc{G}_{\mathring{\A}^1}\simeq G_{\mc{O}}\times \mathring{\A}^1$, cf.\,\cite[Lemma 3.4.2]{Zhu17}. For any integer $k$, we have $L_k^+\mc{G}_{\mathring{\A}^1}\simeq G_{\mc{O}_k}\times\mathring{\A}^1$, where $\mc{O}_k:=\C[t]/(t^k)$. It is well-known that the $G_\mc{O}$-action on $\Grb_G^\lambda$ factors through $G_{\mc{O}_k}$ for some $k>0$. Thus, the $L^+\mc{G}_{\mathring{\A}^1}$-action on the Schubert variety $\Grb^\lambda_{\mc{G},\mathring{\A}^1}$ factors through the action of $L^+_{k}\mc{G}_{\mathring{\A}^1}$ for this $k$. 
	Denote by $K_{\A^1}$ the kernel of the morphism $L^+\mc{G}_{\A^1}\to L^+_{k}\mc{G}_{{\A^1}}$. Then it suffices to show $K_{\A^1}$ acts trivially on the Schubert variety $\Grb^\lambda_{\mc{G},{\A}^1}$. This is equivalent to that the composition of morphisms $K_{\A^1}\simeq K_{\A^1}\times_{\A^1} {\A^1} \xrightarrow{s^{\lambda}} K_{\A^1}\times _{\A^1}\Gr_{\mc{G},{\A^1}}\xrightarrow{\mr{act}} \Gr_{\mc{G},{\A^1}}$ agrees with the constant morphism $K_{\A^1} \to {\A^1}\xrightarrow{s^\lambda} \Gr_{\mc{G},{\A^1}}$. This is true over $\mathring{\A}^1$. Hence they must agree over ${\A^1}$, since $K_{\A^1}$ is irreducible.

	When $n\geq2$, let $\xi=(\{1\},\ldots,\{n\})$ be the finest partition of $[n]$. We have 
	\[\Grb^{\la}_{\mc{G},{\A}^n_\xi}\simeq \Big(\Grb^{\lambda_1}_{\mc{G},\A^1}\times\dots\times \Grb^{\lambda_n}_{\mc{G},\A^1}\Big)\big|_{\A^n_\xi}.\] 
	Thus, the $L^+\mc{G}_{\A_\xi^n}$-action on the Schubert variety $\Grb^\lambda_{\mc{G},{\A}_\xi^n}$ factors through the action of $L^+_k\mc{G}_{\A_\xi^n}$ for some $k>0$. By the same argument as before, for this $k$, the $L^+\mc{G}_{\A^n}$-action on the Schubert variety $\Grb^\lambda_{\mc{G}, {\A}^n}$ factors through the action of $L^+_k\mc{G}_{\A^n}$.
\end{proof}
\begin{lem}\label{109830}
	Let $X$ and $Y$ be smooth schemes over $S$, where $X,Y$ and $S$ are all schemes of finite type over $\mathbb{C}$. Suppose that $f:X\to Y$ is a morphism over $S$ such that $f_s:X_s\to Y_s$ is an isomorphism on the fiber for each $s\in S$, then $f:X\to Y$ is an isomorphism.
\end{lem}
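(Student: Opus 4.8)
The plan is to reduce the statement to four properties of $f$ --- flatness, being locally of finite presentation, surjectivity, and being a monomorphism --- and then to invoke that a flat monomorphism locally of finite presentation is an open immersion ([EGA IV, 17.9.1]), while a surjective open immersion is an isomorphism. All four properties, and the conclusion itself, are local on $Y$, and the morphisms in sight are of finite type over $S$, so I may freely assume all schemes occurring are Noetherian (in every situation where the lemma is applied $S$ is a variety over $\C$, so this is automatic).

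The first and only substantive step is flatness of $f$, which is nontrivial precisely because the hypotheses say nothing about $f$ itself, only about $X\to S$, $Y\to S$, and the fibres $f_s$. Here I would apply the fibre-wise criterion for flatness ([EGA IV, 11.3.10]): since $X\to S$ is smooth it is flat, and since each $f_s\colon X_s\to Y_s$ is an isomorphism it is flat; hence $f\colon X\to Y$ is flat. That $f$ is locally of finite presentation is then formal: $X\to S$ is locally of finite presentation and $Y\to S$ is locally of finite type, both being smooth, so $f$ is locally of finite presentation.

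The remaining two properties are visible on fibres. Surjectivity is immediate: $Y$ is the union of the fibres $Y_s$, $s\in S$, and $Y_s$ is the image of $X_s$ under the surjection $f_s$. For the monomorphism property I would argue via the diagonal: $\Delta_f\colon X\to X\times_Y X$ is an immersion, and its base change along $\mr{Spec}\,\kappa(s)\to S$ is $\Delta_{f_s}\colon X_s\to X_s\times_{Y_s}X_s$, which is an isomorphism because $f_s$ is. Writing $\Delta_f$ locally as a closed immersion with coherent ideal sheaf $\mc{I}$, the sheaf $\mc{I}$ then restricts to $0$ on every fibre over $S$; but a coherent sheaf on a scheme over $S$ that restricts to $0$ on every fibre over $S$ must vanish (apply Nakayama's lemma at a point of its support lying over some $s\in S$), so $\mc{I}=0$ and $\Delta_f$ is an open immersion, which is moreover surjective, hence an isomorphism. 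Thus $f$ is a monomorphism, and therefore, being also flat, locally of finite presentation, and surjective, an isomorphism. The one place where care is needed is the application of the fibre-wise flatness criterion; once flatness of $f$ is in hand the argument is entirely standard, and I expect no further obstacle.
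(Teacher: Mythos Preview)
Your proof is correct and follows essentially the same arc as the paper's: both first establish flatness of $f$ via the fibre-wise criterion, then argue that $f$ is an open immersion, and conclude from surjectivity. The only difference is in the middle step. The paper (citing a Mathematics Stack Exchange answer) observes that $f$ is unramified --- since $\Omega_{X/Y}$ vanishes on each fibre over $S$, hence vanishes by Nakayama --- so $f$ is \'etale, and an \'etale morphism injective on $\C$-points is an open immersion. You instead show that $f$ is a monomorphism by analysing the diagonal, and invoke that a flat monomorphism locally of finite presentation is an open immersion. The two arguments are equivalent in spirit (both kill a coherent sheaf by checking on fibres and applying Nakayama), and both are standard; yours is slightly more self-contained in that it does not appeal to the characteristic-zero setting for the injectivity $\Rightarrow$ open-immersion step.
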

\begin{proof}
	The proof is taken from \cite{Dracula}. By assumption $X$ and $Y$ are flat over $S$ and $f_s$ is an isomorphism for each $s\in S$, then  $f$ is a bijection, unramified, flat morphism. In particular, $f$ is \'etale and injective on the $\C$ points of $X$ and $Y$. It follows that $f$ is an open immersion and also a bijection. Hence $f$ is an isomorphism.
\end{proof}
\begin{lem}\label{683027}
 Let $\phi: G\to H$ be a morphism of connected affine algebraic groups over $\mathbb{C}$, such that the induced morphism $d\phi: \mr{Lie}\,(G)\to \mr{Lie}\,(H)$ of their Lie algebras is an isomorphism. If $H$ is simply-connected, then $\phi$ is an isomorphism. 
\end{lem}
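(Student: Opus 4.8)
The plan is to show that $\phi$ is an étale isogeny with finite central kernel, and then to conclude from the simple-connectedness of $H$ that this kernel is trivial, so that $\phi$ is a bijective étale morphism and hence an isomorphism.

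First I would observe that, since $d\phi$ is an isomorphism, $\phi$ is flat and unramified at the identity $e\in G$, hence étale at $e$. As $\phi$ is a group homomorphism it intertwines left translations, $\phi\circ L_g=L_{\phi(g)}\circ\phi$, and translations are isomorphisms; therefore $\phi$ is étale at every point of $G$, i.e. $\phi$ is an étale morphism. Consequently $\phi(G)$ is open in $H$; being a subgroup it is also closed (its complement is a union of cosets, each of which is open), so by connectedness of $H$ we get $\phi(G)=H$. Thus $\phi$ is surjective, and being flat and surjective it is faithfully flat.

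Next I would examine the kernel $N=\ker\phi$. Its Lie algebra is $\ker(d\phi)=0$; since we work in characteristic $0$, $N$ is smooth by Cartier's theorem, hence it is a smooth group scheme of dimension $0$, i.e. a finite group. Because $N$ is normal in the connected group $G$, the conjugation morphism $G\to\mathrm{Aut}(N)$ has connected source and finite target, so it is trivial and $N$ is central. Moreover the quotient map $G\to G/N$ is finite, and $G/N\cong H$ because $\phi$ is faithfully flat with scheme-theoretic kernel $N$; hence $\phi$ is a finite étale morphism of degree $|N(\mathbb{C})|$ with $G$ connected.

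Finally, $\mathrm{Lie}(G)\cong\mathrm{Lie}(H)$ is semisimple, so $G$ is a connected semisimple group and $\phi$ is a central isogeny from the connected group $G$ onto the simply-connected group $H$; by the defining property of simply-connected semisimple groups every such isogeny is an isomorphism, so $N$ is trivial and $\phi$ is a bijective étale morphism, hence an isomorphism of varieties over $\mathbb{C}$. (Equivalently one may argue that a connected finite étale cover of a simply-connected $H$ must have degree one, or, analytically, that $\phi$ is a finite covering map of the topologically simply-connected complex manifold $H(\mathbb{C})$ and hence a homeomorphism, from which one upgrades to an algebraic isomorphism as before.) I do not anticipate a genuine obstacle here; the only delicate point is fixing which incarnation of "simply-connected" is in force and running the matching version of the last step.
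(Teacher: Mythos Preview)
Your argument is essentially correct and is more detailed than the paper's own proof, which simply cites \cite[Exercise 5.3.5]{Springer98}. The steps establishing that $\phi$ is \'etale, surjective, and has finite central kernel are clean and correct.

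There is one small slip in your final paragraph: you assert that $\mr{Lie}(G)\cong\mr{Lie}(H)$ is semisimple, but nothing in the hypotheses forces this. Indeed, in the paper's application of this lemma (in the proof of Proposition \ref{882893}), the group $H=L_k^+\mc{G}_0$ has a nontrivial pro-unipotent kernel under the evaluation map to $G^\sigma$, so it is certainly not semisimple. Fortunately you already supply the fix in your parenthetical: once you know $\phi$ is a connected finite \'etale cover of $H$, the topological simple-connectedness of $H(\mathbb{C})$ (which is the sense used in the paper, as the argument just before the invocation of this lemma makes clear) forces the degree to be one. So drop the semisimplicity claim and lead with the covering-space argument, and the proof is complete.
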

\begin{proof}
	cf.\,\cite[Exercise 5.3.5]{Springer98}
\end{proof}

\begin{proof}[\textbf{Proof of Proposition \ref{882893}}]
	It suffices to prove this proposition for each variety $\Grb^{\la}_{\mc{G},\A^n}$. By Lemma \ref{223442}, the $L^+\mc{G}_{\A^n}$-action factors through the action of the group scheme $L_k^+\mc{G}_{\A^n}$ for some $k$ (depending on $\la$). We still denote by $\mc{L}$ its restriction to $\Grb^{\la}_{\mc{G},\A^n}$.

	By assumption, $\mc{L}^l$ has an $L_k^+\mc{G}_{\A^n}$-equivariant structure on $\Grb^{\la}_{\mc{G},\A^n}$, i.e.\,there are isomorphisms $\Phi_{g,x}:\mc{L}^l_{x}\simeq \mc{L}^l_{gx}$ such that $\Phi_{g',gx}\circ\Phi_{g,x}=\Phi_{g'g, x}$ for any $g, g'\in L_k^+\mc{G}_{\A^n}$, $x\in \Grb^{\la}_{\mc{G},\A^n}$.
	We define a groups scheme $\overline{L_k^+\mc{G}}_{\A^n}$ as follows, for each $\C$-algebra $R$,
	\begin{equation}\label{653942}
		\overline{L_k^+\mc{G}}_{\A^n}(R):=\left\{ \big(g,\{\phi_{g,x}\} \big) \,\middle|\,
		\begin{aligned}
		&g\in L_k^+\mc{G}_{\A^n}(R), \phi_{g,x}: \mc{L}_x\simeq \mc{L}_{gx} \text{ such that } \\
		&\phi_{g,x}^l=\Phi_{g,x} \text{ for } x\in \Grb^{\la}_{\mc{G},\A^n}(R)
		\end{aligned}
		\right\}.
	\end{equation}
	There is a central extension
	\begin{equation}\label{001553}
		1_{\A^n}\to Z_l \to \overline{L_k^+\mc{G}}_{\A^n}\xrightarrow{p} {L_k^+\mc{G}}_{\A^n}\to 1_{\A^n},
	\end{equation}
	where $Z_l=\{\phi:\mc{L}\to\mc{L}\mid \phi^l=\mr{id}:\mc{L}^l\to \mc{L}^l\}$ is a finite group scheme over $\A^n$. To prove our proposition, it suffices to show that this central extension splits.

	When $n=1$, set $\mathring{\A}^1=\A^1\setminus\{0\}$. There is a splitting over $\mathring{\A}^1$
	\[\xymatrix{
		1\ar[r] &Z_l|_{\mathring{\A}^1} \ar[r] & \overline{L_k^+\mc{G}}_{\mathring{\A}^1} \ar[r]_{p} &{L_k^+\mc{G}}_{\mathring{\A}^1} \ar@<0.5ex>@/_1pc/[l]_{\beta} \ar[r] &1
	},\]
	since $\Grb^{\lambda}_{\mc{G},\mathring{\A}^1}\simeq \Grb^\lambda_{G}\times \mathring{\A}^1$ and $L^+\mc{G}_{\mathring{\A}^1}\simeq G_{\mc{O}}\times \mathring{\A}^1$. 
	Let $L_k^+\mc{G}_{\A^1}^\dag$ be the schematic image of $\beta:L_k^+\mc{G}_{\mathring{\A}^1}\to \overline{L_k^+\mc{G}}_{\A^1}$ (we remove the non-identity components over $0$ if they occur). By \cite[Proposition 9.8]{Hartshorne77}, $L_k^+\mc{G}^\dag_{\A^1}$ is a flat group scheme over ${\A^1}$. By Cartier's Theorem and \cite[Theorem 10.2]{Hartshorne77}, $L_k^+\mc{G}^\dag_{\A^1}$ is smooth over ${\A^1}$. We claim that the natural morphism $p: L_k^+\mc{G}^\dag_{\A^1}\rightarrow L_k^+\mc{G}_{\A^1}$ is an isomorphism. By Lemma \ref{109830}, it suffices to check the isomorphism $L_k^+\mc{G}^\dagger_{x} \simeq L_k^+\mc{G}_{x}$ of fibers over any $x\in {\A^1}$. It is true over $\mathring{\A}^1$, since there is splitting $\beta$ over $\mathring{\A}^1$. When $x=0$, we have 
	\begin{equation}\label{eq_fiber_0}
		1\to \mu_l \to \overline{L_k^+\mc{G}}_{0}\to {L_k^+\mc{G}}_{0}\to 1 ,
	\end{equation}
	where $\mu_\ell$ denote the group of $\ell$-th root of unity. Since $L_k^+\mc{G}^\dag_{\A^1}$ and $L_k^+\mc{G}_{\A^1}$ are both smooth group schemes over ${\A^1}$ and isomorphic over $\mathring{\A}^1$, their fibers at $0$ have the same dimension. By (\ref{eq_fiber_0}), $\mr{Lie}(\overline{L_k^+\mc{G}}_{0})\simeq \mr{Lie}({L_k^+\mc{G}}_{0})$. By dimension consideration, the injective map 
	\[ \mr{Lie}(L_k^+\mc{G}^\dag_{0})\hookrightarrow \mr{Lie}(\overline{L_k^+\mc{G}}_{0})\simeq \mr{Lie}(L_k^+\mc{G}_{0})\] 
	must be an isomorphism. Moreover, the kernel of the quotient map $L_k^+\mc{G}_{0}\to G^\sigma$ is unipotent. By \cite[Lemma 6.1]{HK22}, $G^\sigma$ is simply-connected. Thus, $L_k^+\mc{G}_{0}$ is also simply-connected. In view of Lemma \ref{683027}, the morphism $L_k^+\mc{G}^\dag_{0}\to L_k^+\mc{G}_{0}$ is an isomorphism. Finally, by Lemma \ref{109830}, the morphism $p:L_k^+\mc{G}^\dag_{\A^1}\to L_k^+\mc{G}_{\A^1}$ is also an isomorphism. This gives a splitting of the central extension \eqref{001553}.
	
	\medskip
	
	When $n= 2$, there exists an isomorphism 
	\begin{equation}\label{329962}
		\Gr_{\mc{G},{\A^1}\times \mathring{\A}^1}\simeq \Gr_{\mc{G}, {\A^1}\times \{1\}}\times \mathring{\A}^1
	\end{equation}
	given by $(a,b, \mc{F}, \beta)\mapsto \big( (b^{-1}a,1, (\bar\rho_{b^{-1}})_*\mc{F}, (\bar\rho_{b^{-1}})_* \beta), b\big)$, where $\bar\rho_{b^{-1}}:\bar {\A}^1\to \bar{\A}^1$ is given by $\bar{c}\mapsto \overline{b^{-1} c}$. This isomorphism is $\mathbb{G}_m$-equivariant, where on the left the action is induced from the simultaneous dilation on ${\A^1}\times \mathring{\A}^1$ and on the right the action is induced from the dilation on $\mathring{\A}^1$. 
	
	Let $U={\A^1}\setminus \{1,\sigma (1),\ldots, \sigma^{m-1}(1)\}$. By the factorization property, we have $\Grb^{\lambda_1,\lambda_2}_{\mc{G}, U\times \{1\}}\simeq \Grb^{\lambda_1}_{\mc{G},U}\times \Grb_G^{\lambda_2}$. Thus, we have a splitting over $U\times\{1\}$,
	\begin{equation}\label{splitting_C'}
	 \xymatrix{
		1\ar[r] &Z_l|_{U} \ar[r] & \overline{L_k^+\mc{G}}_{U\times\{1\}} \ar[r]_{p} &{L_k^+\mc{G}}_{U\times\{1\}} \ar@<0.5ex>@/_1pc/[l]_{\beta} \ar[r] &1 }.
	\end{equation}
	By the same argument as in the case when $n=1$, we can extend the splitting $\beta$ to a splitting $\alpha$ over $\A^1\times \{1\}$.	Via the isomorphism \eqref{329962}, the $\mathbb{G}_m$-dilation produces a splitting $\alpha_1: L_{k}^+\mc{G}_{\A^1\times\mathring{\A}^1} \to \overline{L_{k}^+\mc{G}}_{\A^1\times \mathring{\A}^1}$ from $\alpha$. By a similar argument, there exists a splitting $\alpha_2: L_k^+\mc{G}_{\mathring{\A}^1\times \A^1} \to \overline{L_k^+\mc{G}}_{ \mathring{\A}^1\times \A^1}$. By the uniqueness of splitting, $\alpha_1$ and $\alpha_2$ agree on $\mathring{\A}^1\times \mathring{\A}^1$. Thus, we get a splitting over $(\A^1\times \mathring{\A}^1)\cup (\mathring{\A}^1\times \A^1)$. By Hartogs' Lemma, this splitting extends to $\A^1\times \A^1$. 

	\medskip
	
	When $n>2$, we prove it by induction on $n$. For any nontrivial partition $\xi=(I_1,\ldots, I_j)$ of $[n]$, we have factorizations $\Gr_{\mc{G},\A_{\xi}^n}\simeq (\Gr_{\mc{G},\A^{I_1}}\times\cdots\times \Gr_{\mc{G},\A^{I_j}})|_{\A_{\xi}^n}$ and $L_k^+\G_{\A_{\xi}^n}\simeq (L^+_k\G_{\A^{I_1}}\times\cdots\times L^+_k\G_{\A^{I_j}} )|_{\A_{\xi}^n}$. By induction, there is a splitting 
	\[\xymatrix{
		1\ar[r] & Z_l|_{\A_{\xi}^n} \ar[r] & \overline{L_k^+\mc{G}}_{\A_{\xi}^n} \ar[r]_{p} & {L_k^+\mc{G}}_{\A_{\xi}^n} \ar@<0.5ex>@/_1pc/[l]_{\beta_{\xi}} \ar[r] &1
	}.\]
	Moreover, $\beta_\xi$ and $\beta_{\xi'}$ agree on $\A^n_{\xi}\cap \A^n_{\xi'}$ by the uniqueness of splitting. Hence, we can glue these morphisms $\{\beta_\xi\}_\xi$ and obtain a splitting $\beta: L_k^+\mc{G}_{\bigcup \A_{\xi}^n} \to \overline{L_k^+\mc{G}}_{ \bigcup \A_{\xi}^n}$ over $\bigcup_\xi \A_{\xi}^n$. Note that the complement of $\bigcup_{\xi} \A^n_{\xi}$ is $\Delta_{[n]}$, which has codimension $\geq 2$. By Hartogs' Lemma, $\beta$ extends to a unique morphism $L^+\mc{G}_{\A^n} \to \overline{L^+\mc{G}}_{ \A^n}$.
\end{proof}

\begin{remark}
	When $\Gamma$ is taken to be the trivial group, we have $\Gr_{\G,C^n}=\Gr_{G,C^n}$. Clearly the proofs of Lemma \ref{770893} and Proposition \ref{882893} are still valid.
\end{remark}

Following \cite{Faltings03}, we can construct a determinant line bundle $\mc{L}_{\mr{det}}$ on $\Gr_{\mc{G}, {C}^n}$, which has a natural $L^+\mc{G}_{C^n}$-action. The adjoint representation $\mr{Ad}:\mc{G}\to \mr{GL}(\mc{V}_0)$ induces a morphism $\phi:\Gr_{\mc{G},C^n}\to \Gr_{\mr{GL}(\mc{V}_0),C^n}$. For each morphism $\mr{Spec}(R)\to \Gr_{\mr{GL}(\mc{V}_0),C^n}$, i.e.\,a triple $\big(\vec{p}=(p_1,\ldots, p_n), \mc{V}, \beta:\mc{V}|_{\bar C_R\setminus\cup\Gamma_{\bar p_i}} \simeq \mc{V}|_{\bar C_R\setminus\cup\Gamma_{\bar p_i}}\big)$, where $p_i\in C_R$, and $\mc{V}$ is a vector bundle on $\bar C_R$, there exists $N>0$ such that 
$\mc{V}_0(-N(\sum \Gamma_{\bar p_i}))\subseteq \mc{V} \subseteq \mc{V}_0(N(\sum \Gamma_{\bar p_i}))$. We associate a line bundle 
\begin{equation}\label{823479}
	\textstyle\bigwedge^{\mr{top}}(\mc{V}_0(N(\sum \Gamma_{\bar p_i})/\mc{V})\otimes 
	\bigwedge^{\mr{top}}(\mc{V}_0(N(\sum \Gamma_{\bar p_i})/\mc{V}_0)^{-1}
\end{equation} 
on $\mr{Spec}(R)$. This assignment gives a line bundle $\mc{L}_{\mr{det}}$ on $ \Gr_{\mr{GL}(\mc{V}_0),C^n}$. Then the pullback $\phi^*\mc{L}_{\mr{det}}$ is a line bundle on $\Gr_{\mc{G},C^n}$ with a natural $L^+\mc{G}_{C^n}$-action, which will be still denoted by $\mc{L}_{\mr{det}}$. 
\begin{thm}\label{206120}
	Let $\Gamma$ be either trivial or generated by $\sigma$. Let $C$ be an irreducible smooth curve with a faithful $\Gamma$-action such  that all ramified points in $C$ are totally ramified.  Then, there is an $L^+\mc{G}_{C^n}$-equivariant structure on the level one line bundle $\mc{L}_{C^n}$ over $\Gr_{\mc{G}, {C}^n}$.
\end{thm} 
\begin{proof}
	We first prove the case when $\Gamma$ is trivial and $\G$ is the constant group scheme. Let $\{U_i\}$ be a cover of $C$ with \'etale morphisms $U_i\to \A^1$. By Proposition \ref{329321}, we have the following isomorphism for some $Z_i$ closed in $U_i^n$,
	\begin{equation}\label{582873}
		\Gr_{G, U_i^n\setminus Z_i}\simeq \Gr_{G,\A^n}\times_{\A^n} (U_i^n\setminus Z_i).
	\end{equation}
	Note that the determinant line bundle $\mc{L}_{\mr{det}}$ on $\Gr_{G,\A^n}$ automatically has a trivialization along the section $e$. By Theorem \ref{284344}(1), we have $\mc{L}_{\mr{det}}\simeq L_{\A^n}^l$ for some $l>0$, where $L_{\A^n}$ denotes the level one line bundle on $\Gr_{G,\A^n}$. Applying Proposition \ref{882893} to the case when $\Gamma$ is trivial, we get an $L^+G_{\A^n}$-equivariant structure on $L_{\A^n}$. Since the isomorphism  \eqref{582873} commutes with the $L^+G_{U_i^n\setminus Z_i}$ action, the level one line bundle $\mc{L}_{U_i^n\setminus Z_i}:=\phi^*(L_{\A^n})$ on $\Gr_{G, U_i^n\setminus Z_i}$ is equipped with an $L^+G_{U_i^n\setminus Z_i}$-equivariant structure, where $\phi:\Gr_{G, U_i^n\setminus Z_i}\to \Gr_{G,\A^n}$ is the natural morphism induced from \eqref{582873}. When $n=1$,  $Z_i$ is empty, the level one line bundle $\mc{L}_C$ on $\Gr_{G,C}$ can be recovered by gluing these rigidified line bundles $\mc{L}_{U_i}$. Hence, $\mc{L}_C$ also has an $L^+G_{C}$-equivariant structure  by the uniqueness of splitting, cf.\,Lemma \ref{770893}. When $n\geq 2$, for any non-trivial partition $\xi=(I_1,\ldots,I_k)$ of $[n]$, the restriction of the line bundle $\mc{L}_{C^n_\xi}\simeq (\mc{L}_{C^{I_1}}\boxtimes\cdots\boxtimes\mc{L}_{C^{I_k}})|_{C^n_\xi}$ has an ${L}^+G_{C^n_\xi}$-equivariant structure by induction. Note that $\{U_i^n\setminus Z_i, C^n_\xi\}_{i,\xi}$ is a cover of $C^n$ and the level one line bundle $\mc{L}_{C^n}$ can be recovered by gluing the rigidified line bundles $\{\mc{L}_{(U_i)^n\setminus Z_i}, \mc{L}_{C^n_\xi}\}_{i,\xi}$. Again, by the uniqueness of splitting, $\mc{L}_{C^n}$ admits an $L^+G_{C^n}$-equivariant structure.
	
	\medskip
	
	When $\Gamma$ is the cyclic group generated by the standard automorphism $\sigma$, let $\{\mathring{C},U_i \}_{i=1}^r$ be the covering of $C$ constructed in Theorem \ref{797528}. Recall Proposition \ref{284344} for the construction of the level one line bundle $\mc{L}_{(\mathring{C})^n}$ on $\Gr_{\G,(\mathring{C})^n}$. By the case when $\Gamma$ is trivial and the uniqueness of splitting, $\mc{L}_{(\mathring{C})^n}$ has an $L^+\G_{(\mathring{C})^n}$-equivariant structure. For each $i$, by Corollary \ref{prop_etale_base}, we have a base change map
	\begin{equation}
		\Gr_{\G, U_i^n\setminus Z_i}\simeq \Gr_{\G,\A^n}\times_{\A^n} (U_i^n\setminus Z_i),
	\end{equation}
	where $Z_i$ is define in \eqref{773922}. By the same argument as in the case when $\Gamma$ is trivial, the level one line bundle 
	$\mc{L}_{(U_i)^n\setminus Z_i}$ on $\Gr_{\G, U_i^n\setminus Z_i}$ has an ${L}^+\mc{G}_{(U_i)^n\setminus Z_i}$-equivariant structure  for $1\leq i\leq r$. We prove the theorem by induction on $n$. When $n=1$, the level one line bundle $\mc{L}_{C}$ is constructed by gluing the rigidified line bundles $\{\mc{L}_{\mathring{C}},\mc{L}_{U_i}\}_{i=1}^r$. Thus, $\mc{L}_C$ is equipped with an $L^+\mc{G}_{C}$-equivariant structure naturally by the uniqueness of splitting, cf.\,Lemma \ref{770893}. When $n\geq 2$, for any non-trivial partition $\xi=(I_1,\ldots,I_k)$ of $[n]$, the restriction of the line bundle $\mc{L}_{C^n_\xi}\simeq (\mc{L}_{C^{I_1}}\boxtimes\cdots\boxtimes\mc{L}_{C^{I_k}} )|_{C^n_\xi}$ has an ${L}^+\mc{G}_{C^n_\xi}$-equivariant structure by induction. As in the proof of Theorem \ref{797528}, the level one line bundle $\mc{L}_{C^n}$ can constructed by gluing the rigidified line bundles $\{\mc{L}_{(\mathring{C})^n},\mc{L}_{(U_i)^n\setminus Z_i}, \mc{L}_{C^n_\xi}\}_{i,\xi}$. Again, by the uniqueness of splitting, $\mc{L}_{C^n}$ is equipped with an $L^+\mc{G}_{C^n}$-equivariant structure.
\end{proof}
\subsection{Level one line bundle on BD Grassmannians of adjoint type}\label{528475}
In this subsection, we assume that $G$ is of adjoint type. Let $G'$ be the simply-connected cover of $G$. Let $T'$ be the maximal torus of $G'$ which is the preimage of $T$ via the projection $G'\to G$. We have the natural embedding $X_*(T')\subseteq X_*(T)$. 

Recall that the components of affine Grassmannian $\Gr_{G}$ can be parameterized by the set $X_*(T)/\check{Q}$. Let $\A^1$ be the affine line with a coordinate $z$ and the standard $\Gamma$-action as in (\ref{eq_standard}).  Set $\mathring{\A}^1=\mb{A}^1-0$. Then,  $\Gr_{\mc{G},\mathring{\A}^1}\simeq\Gr_G\times \mathring{\A}^1$, cf.\,\cite[Lemma 3.4.2]{Zhu17}. Thus, the components of $\Gr_{\mc{G},\mathring{\A}^1}$ can be parameterized by the elements of $X_*(T)/\check{Q}$. For any $\kappa\in X_*(T)/\check{Q}$, denote by $\Gr_{\mc{G},\mathring{\A}^1} [\kappa]$ the component corresponding to $\kappa$. Let $\Gr_{\mc{G},\mathbb{A}^1}[\kappa]$ denote the closure of $\Gr_{\mc{G},\mathring{\A}^1}[\kappa]$ in $\Gr_{\mc{G},\A^1}$, and we call it the $\kappa$-component of $\Gr_{\mc{G},\A^1}$.

More generally, for $n\geq 2$, let $\xi$ be the finest partition of $[n]$, then there is a factorization over the open subset $\mathring{\A}^n_\xi:=\{\vec{p}\in (\mathring{\A}^1)^n\mid \bar{p}_i\neq\bar{p}_j \ \forall i\neq j\}$ of $\A^n$
\[\Gr_{\G,\mathring{\A}^n_\xi}\simeq (\Gr_{\G,\mathring{\A}^1}\times\cdots\times \Gr_{\G,\mathring{\A}^1})|_{\mathring{\A}^n_\xi}.\]
For any $\vec{\kappa}=(\kappa_1,\ldots,\kappa_n)\in \big(X_*(T)/\check{Q}\big)^n$, let $\Gr_{\G,\mathring{\A}_\xi^n}[\vec{\kappa}]:=(\prod\Gr_{\G,\mathring{\A}^1}[\kappa_i])|_{\mathring{\A}^n_\xi}$. Define $\Gr_{\G,\A^n}[\vec{\kappa}]$ to be the closure of $\Gr_{\G,\mathring{\A}_\xi^n}[\vec{\kappa}]$ in $\Gr_{\G,\A^n}$, and again call it the $\vec{\kappa}$-component of $\Gr_{\G,\A^n}$. When $\vec{\kappa}=\vec{0}$, we denote it by $\Gr_{\G,\A^n}[\vec{o}]$.

Let $\mc{G}'$ be the group scheme $\mr{Res}_{C/\bar{C}}(G'_C)^\Gamma$. There is a natural morphism $\mc{G}'\to \mc{G}$.
Let $\iota:\Gr_{\G',\A^n}\to \Gr_{\G,\A^n}$ be the morphism given by $(\vec{p},\mc{F},\beta)\mapsto (\vec{p}, \bar{\mc{F}},\bar\beta)$, where $\bar{\mc{F}}=\mc{F}\times^{\mc{G}'}\mc{G}$ and $\bar\beta$ is the trivialization induced from $\beta$.

\begin{lem}
	\label{lem_emb}
	The morphism $\iota :\Gr_{\G',\A^n}\to \Gr_{\G,\A^n}$ is a closed embedding.
\end{lem}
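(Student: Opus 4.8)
The plan is to reduce the question to the corresponding statement over a single point of $\A^n$ and then use the smoothness/flatness results already established together with a standard criterion for when a morphism of schemes is a closed immersion. First I would observe that $\iota$ is a morphism of ind-schemes over $\A^n$, and since $\Gr_{\G',\A^n}$ and $\Gr_{\G,\A^n}$ are ind-projective over $\A^n$, the morphism $\iota$ is proper; in particular it is closed on the level of topological spaces. So it suffices to show $\iota$ is a \emph{monomorphism} and that it is a closed immersion when restricted to each BD Schubert variety $\Grb^{\la}_{\G',\A^n}$, because the source is the increasing union of these and on each closed piece we are in finite type.

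Next I would check injectivity on $R$-points. Given $(\vec p,\mc F,\beta)$ and $(\vec p,\mc F',\beta')$ in $\Gr_{\G',\A^n}(R)$ mapping to the same point of $\Gr_{\G,\A^n}(R)$, one gets an isomorphism $\mc F\times^{\G'}\G\simeq \mc F'\times^{\G'}\G$ compatible with the induced trivializations $\bar\beta,\bar\beta'$ away from the graphs $\cup\Delta_{\bar p_i}$. Since $\G'\to\G$ has central kernel $\mu$ (the kernel of $G'\to G$), the obstruction to lifting this to an isomorphism $\mc F\simeq\mc F'$ compatible with $\beta,\beta'$ lies in an $H^1$ of the constant group scheme $\mu$ over $\bar C_R\setminus\cup\Delta_{\bar p_i}$; the point is that the two trivializations over the punctured formal discs already pin down the gluing, and a diagram chase shows the ambiguity is killed. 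Concretely, $\G'$-torsors trivialized away from $\cup\Delta_{\bar p_i}$ form a sheaf and the map to $\G$-torsors trivialized away from the same locus is injective on isomorphism classes because any automorphism of the trivial $\G$-torsor compatible with the chosen generic trivialization that lifts to $\G'$ is unique. This gives that $\iota$ is a monomorphism.

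To upgrade a proper monomorphism to a closed immersion, the clean route is to use that a proper monomorphism of locally Noetherian schemes is automatically a closed immersion (it is proper, hence universally closed, and a monomorphism of finite type which is universally closed is a closed immersion). Applying this on each BD Schubert variety $\Grb^{\la}_{\G',\A^n}$, which by Theorem~\ref{368017} is a flat, reduced (hence reasonable, in particular Noetherian) scheme over $\A^n$, and noting that $\iota$ restricted there is proper (being a morphism of schemes proper over $\A^n$) and a monomorphism by the previous paragraph, we conclude $\iota|_{\Grb^{\la}_{\G',\A^n}}$ is a closed immersion. Taking the union over all $\la$ shows $\iota$ is a closed immersion of ind-schemes.

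The main obstacle I anticipate is the injectivity step: one has to be careful that $\G'\to\G$, although a central isogeny on the group, induces a map on torsors over the (possibly singular, non-reduced) base $\bar C_R$ with the correct fibers, and that the presence of the fixed generic trivializations rigidifies things enough to kill the $H^1(\mu)$-ambiguity. In the untwisted affine-Grassmannian setting this is the familiar statement that $\Gr_{G'}\to\Gr_{G}$ is a union of connected components (each a closed immersion onto a component); here over $\A^n$ the components get spread out, and the content of the lemma is exactly that the image is still closed. I would handle this by working component by component in $\vec\kappa\in(X_*(T)/\check Q)^n$ as set up just before the lemma: $\iota$ identifies $\Gr_{\G',\A^n}$ with the $\vec o$-component $\Gr_{\G,\A^n}[\vec o]$ (and its relevant translates), and the closedness of a connected component gives the closed immersion directly. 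A short argument with the factorization over $\A^n_\xi$ and Hartogs across the small diagonal $\Delta_{[n]}$ then patches the generic identification to all of $\A^n$.
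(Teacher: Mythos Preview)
Your overall strategy (proper monomorphism $\Rightarrow$ closed immersion, applied Schubert-variety by Schubert-variety) is sound in principle, but it is both more complicated than the paper's argument and has a real gap in the monomorphism step. You never actually establish injectivity on $R$-points: the sentence ``a diagram chase shows the ambiguity is killed'' is exactly where the content would be, and the $H^1(\mu)$ obstruction argument you sketch is not carried out. Your fallback at the end---identifying $\Gr_{\G',\A^n}$ with the $\vec o$-component $\Gr_{\G,\A^n}[\vec o]$---is circular: in the paper that identification is deduced \emph{from} this lemma, not used to prove it.

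The paper takes a much more direct route that sidesteps the monomorphism issue entirely. For each $\la\in (X_*(T')^+)^n$ one has an induced map $\iota_{\la}:\Grb^{\la}_{\G',\A^n}\to\Grb^{\la}_{\G,\A^n}$ between BD Schubert varieties. Over every point $\vec p\in\A^n$, Corollary~\ref{711570} identifies both fibers with the same product of (possibly twisted) affine Schubert varieties, and the fiberwise map $\iota_{\la,\vec p}$ is an isomorphism. Since both $\Grb^{\la}_{\G',\A^n}$ and $\Grb^{\la}_{\G,\A^n}$ are flat over $\A^n$ by Theorem~\ref{368017}, the criterion of Lemma~\ref{109830} (whose proof only uses flatness) gives that $\iota_{\la}$ is an \emph{isomorphism}, not merely a closed immersion. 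Passing to the direct limit over $\la$ yields that $\iota$ is a closed embedding. The payoff of the paper's approach is that it avoids any torsor-theoretic argument with the central kernel and instead leverages the flatness theorem already proved; your approach, even if completed, would give a weaker conclusion (closed immersion rather than isomorphism onto each Schubert variety) with more work.
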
 
\begin{proof}
From $\iota$, we naturally get a morphism $\iota_{\la}:\Grb^{\la}_{\G',\A^n}\to \Grb^{\la}_{\G,\A^n}$ for any $\la \in (X_*(T')^+)^n$. Over each point $\vec{p}\in \A^n$, the morphism $\iota_{\la,\vec{p} }: \Grb^{\la}_{\G',\vec{p}}\to \Grb^{\la}_{\G,\vec{p}}$ is an isomorphism, cf.\,Corollary \ref{711570}. By Lemma \ref{109830}, $\iota_{\la}$ must be an isomorphism. Taking the direct limit of $\iota_{\la}$ with respect to the standard partial order $\preceq $, by Proposition \ref{423483} we recover the map $\iota$. Thus, $\iota$ is a closed embedding. 
\end{proof}
From Lemma \ref{lem_emb}, the morphism $\iota$ restricts to an isomorphism $\Gr_{\G',\mathring{\A}_\xi^n}\simeq \Gr_{\G,\mathring{\A}_\xi^n}[\vec{o}]$. Thus, we get an isomorphism 
\[\Gr_{\G',\A^n}\simeq \Gr_{\G,\A^n}[\vec{o}].\]
For each $1\leq i\leq n$, let $z_i$ be the $i$-th coordinates of $\A^n$. For any $\vec\mu=(\mu_1,\ldots,\mu_n)\in (X_*(T))^n$, we define 
\[n_{\vec\mu}:=\prod_{i=1}^n\prod_{\gamma\in \Gamma}( t-\gamma^{-1}\cdot z_i)^{\gamma(\mu_i)}\] 
to be a section $\A^n\to L\mc{G}_{\A^n}$, where $t$ is the standard coordinate of $C$. Point-wisely, it sends $\vec{p}$ to $\prod_{i,\gamma}( t-\gamma(p_i))^{\gamma(\mu_i)}\in L\mc{G}_{\vec{p}}$. Let $\kappa_i$ be the image of $\mu_i$ in $X_*(T)/\check{Q}$, then $\Grb_{\G,\A^n}^{\vec\mu}\subseteq \Gr_{\G,\A^n}[ \vec{\kappa}]$, and the translation by $n_{\vec\mu}$ gives an isomorphism
\begin{equation}
	\label{eq_T_tran}
	T_{\vec{\mu}} :\Gr_{\mc{G},\A^n}[\vec{o}] \simeq \Gr_{\mc{G},\A^n}[\vec{\kappa}].
\end{equation}
Let $\mc{L}'_{\A^n}$ be the level one line bundle over $\Gr_{\G',\A^n}$, which is constructed in Section \ref{201752}. It gives rise to a line bundle $\mc{L}_{\A^n}$ over $\Gr_{\mc{G},\A^n}[\vec{o}]$ via the isomorphism $\Gr_{\mc{G}',\A^n} \simeq \Gr_{\mc{G},\A^n}[\vec{o}]$. Denote by $\mc{L}_{\A^n,\vec\mu}:=(T_{\vec\mu})_*\mc{L}_{\A^n}$ the push forward of $\mc{L}_{\A^n}$ via $T_{\vec\mu}$. 
\begin{lem}
	Given any $\vec{\mu}, \vec\nu\in \big(X_*(T)^+\big)^n$ whose images in $\big(X_*(T)/\check{Q}\big)^n$ are $\vec{\kappa}$, there is an isomorphism $\mc{L}_{\A^n,\vec\nu}\simeq \mc{L}_{\A^n,\vec{\mu}}$ of line bundles over $\Gr_{\mc{G},\A^n,\vec{\kappa}}$.
\end{lem}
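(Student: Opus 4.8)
The plan is to reduce the statement, via the translation isomorphisms $T_{\vec\mu}$, to a single fact about loop group translations on the level one line bundle over $\Gr_{\mc{G}',\A^n}$. Set $\vec\eta:=\vec\mu-\vec\nu$. Since $\vec\mu$ and $\vec\nu$ have the same image $\vec\kappa$ in $(X_*(T)/\check Q)^n$, every $\eta_i$ lies in $\check Q=X_*(T')$, so $n_{\vec\eta}$ is a section $\A^n\to L\mc{G}'_{\A^n}$ and translation by $n_{\vec\eta}$ is an automorphism $T_{\vec\eta}$ of $\Gr_{\mc{G},\A^n}[\vec{o}]\simeq\Gr_{\mc{G}',\A^n}$ (it preserves this component because $\vec\eta$ maps to $\vec 0$ in $(X_*(T)/\check Q)^n$). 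From the definition $\mc{L}_{\A^n,\vec\mu}=(T_{\vec\mu})_*\mc{L}_{\A^n}$ one gets $T_{\vec\mu}^*\mc{L}_{\A^n,\vec\mu}\simeq\mc{L}_{\A^n}$ and $T_{\vec\mu}^*\mc{L}_{\A^n,\vec\nu}\simeq(T_{\vec\nu}^{-1}\circ T_{\vec\mu})^*\mc{L}_{\A^n}=T_{\vec\eta}^*\mc{L}_{\A^n}$, using that the abelian elements $n_{\vec\mu},n_{\vec\nu}$ satisfy $n_{\vec\nu}^{-1}n_{\vec\mu}=n_{\vec\eta}$. Applying $(T_{\vec\mu})_*$ therefore reduces the claim to the isomorphism $T_{\vec\eta}^*\mc{L}_{\A^n}\simeq\mc{L}_{\A^n}$ of line bundles on $\Gr_{\mc{G}',\A^n}$.

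Next I would observe that $\mr{Pic}(\Gr_{\mc{G}',\A^n})\simeq\mb Z$, generated by $\mc{L}_{\A^n}$: any line bundle has trivial restriction to the section $e$ because $\mr{Pic}(\A^n)=0$, hence is rigidifiable and thus lies in $\mr{Pic}^e(\Gr_{\mc{G}',\A^n})\simeq\mb Z$ by Theorem \ref{797528} (applied to the simply connected group $G'$ and $C=\A^1$). Consequently $T_{\vec\eta}^*\mc{L}_{\A^n}\simeq\mc{L}_{\A^n}^{c'}$, where $c'$ is its central charge, and it remains only to check $c'=1$.

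To compute $c'$ I would restrict everything to the diagonal $\Gr_{\mc{G}',\Delta_{\A^n}}\simeq\Gr_{\mc{G}',\A^1}$, over which $T_{\vec\eta}$ restricts to translation $T_\eta$ by $n_\eta$ with $\eta:=\sum_i\eta_i\in\check Q$ (because $\gamma$ is linear, $\prod_i\prod_{\gamma}(t-\gamma^{-1}z)^{\gamma(\eta_i)}=\prod_{\gamma}(t-\gamma^{-1}z)^{\gamma(\eta)}$), and then to the fibre $\Gr_{\mc{G}',p}\simeq\Gr_{G'}$ over an unramified point $p$, using part (2) of Proposition \ref{prop_etale_base}; since the central charge along the diagonal is constant, this computes $c'$. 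On this fibre the automorphism is translation by the loop group element $n_\eta|_p\in G'(\mc{K}_{\bar p})$, and because $\mr{Pic}(\Gr_{G'})\simeq\mb Z$ is discrete while $G'(\mc{K}_{\bar p})$ is connected (equivalently, the level one line bundle on $\Gr_{G'}$ carries an equivariant structure for the central extension of the loop group, cf.\ Faltings), we have $g^*\mc{L}\simeq\mc{L}$ for every $g\in G'(\mc{K}_{\bar p})$; in particular $c'=1$. Untranslating by $(T_{\vec\mu})_*$ then yields $\mc{L}_{\A^n,\vec\nu}\simeq\mc{L}_{\A^n,\vec\mu}$.

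I expect the main obstacle to be the final point: identifying the restriction of the section $n_{\vec\eta}$ to an unramified fibre as an honest element of the loop group $G'(\mc{K}_{\bar p})$ under the isomorphism $\Gr_{\mc{G}',p}\simeq\Gr_{G'}$ (one must keep track of a uniformizer near $\bar p$ and of the Weil restriction identification $\mc{G}'_{\bar p}\simeq G'_{\mc{O}}$ at an unramified point), together with invoking the standard fact that loop group translations fix the isomorphism class of the level one line bundle on the affine Grassmannian of a simply connected group. The remaining ingredients -- the bookkeeping with the maps $T_{\vec\mu}$, the triviality of $\mr{Pic}(\A^n)$, and the constancy of the central charge along the diagonal -- are routine.
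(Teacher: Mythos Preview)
Your approach is essentially the same as the paper's: both reduce to showing that translation by $n_{\vec\mu-\vec\nu}$ (resp.\ $n_{\vec\nu-\vec\mu}$) on the neutral component $\Gr_{\mc{G},\A^n}[\vec{o}]\simeq\Gr_{\mc{G}',\A^n}$ preserves the level one line bundle, use that $\mr{Pic}(\A^n)=0$ to get a rigidification automatically, and then invoke Theorem~\ref{797528} for the simply-connected group $G'$ so that it suffices to match central charges. The only difference is that the paper simply asserts ``they have the same central charge at any point'' in one line, whereas you spell this out by restricting to the diagonal and then to an unramified fibre, where it becomes the classical fact that loop group translations on $\Gr_{G'}$ preserve the level one line bundle; your version is more detailed but not a different argument.
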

\begin{proof}
	Consider the isomorphism $T_{\vec\nu-\vec{\mu}}:\Gr_{\G,\A^n}[\vec{o}]\to \Gr_{\G,\A^n}[\vec{o}]$. Regard $(T_{\vec\nu-\vec{\mu}})_*\mc{L}_{\A^n}$ and $ \mc{L}_{\A^n}$ as line bundles over $\Gr_{\G',\A^n}$. Then they have rigidified structures automatically since any line bundle over $\A^n$ is trivializable. Moreover, they have the same central charge at any point. By Theorem \ref{797528}, $(T_{\vec\nu-\vec{\mu}})_*\mc{L}_{\A^n}\simeq \mc{L}_{\A^n}$. Pushing forward via $T_{\vec{\mu}}$, we get an isomorphism $\mc{L}_{\A^n,\vec\nu}\simeq \mc{L}_{\A^n,\vec{\mu}}$. 
\end{proof}
From this lemma, the line bundle $(T_{\vec\mu})_*\mc{L}_{\A^n}$ over $\Gr_{\mc{G},\A^n}[\vec{\kappa}]$ only depends on $\vec{\kappa}$. We shall denote it by $\mc{L}_{\A^n,\vec{\kappa}}$. 
\begin{prop}\label{324995}
	For any $\vec{\kappa}=(\kappa_1,\ldots,\kappa_n)\in \big(X_*(T)/\check{Q}\big)^n$, set $\kappa:=\sum \kappa_i$. Then
	\begin{enumerate}
		\item There is an isomorphism of line bundles over $\Gr_{\G,\Delta_{\A^n}}[\vec{\kappa}]$
		\[\mc{L}_{\A^n,\vec{\kappa}}|_{\Delta_{\A^n}}\simeq \mc{L}_{\A^1,\kappa}.\]
		\item Given a partition $\xi=(I,J)$ of $[n]$,  there is an isomorphism of line bundles over $\Gr_{\G, C_\xi^n}[\vec{\kappa}]$
		\[\mc{L}_{\A^n,\vec{\kappa}}|_{\A^n_\xi}\simeq \big(\mc{L}_{\A^I,\vec{\kappa}_I}\boxtimes \mc{L}_{\A^J,\vec{\kappa}_J}\big)|_{\A^n_\xi}.\]
	\end{enumerate} 
\end{prop}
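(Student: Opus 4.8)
The plan is to transport both statements from the known simply-connected case along the translation isomorphisms $T_{\vec\mu}$ of \eqref{eq_T_tran}. Fix a lift $\vec\mu=(\mu_1,\dots,\mu_n)\in(X_*(T)^+)^n$ of $\vec\kappa$; by the lemma preceding the proposition the line bundle $\mc{L}_{\A^n,\vec\kappa}=(T_{\vec\mu})_*\mc{L}_{\A^n}$ is independent of this choice, so we may use any convenient lift, observing that then $\vec\mu_I$, $\vec\mu_J$ and $\nu:=\sum_{i}\mu_i$ are lifts of $\vec\kappa_I$, $\vec\kappa_J$ and $\kappa$ respectively. Throughout we identify $\mc{L}_{\A^n}$ on $\Gr_{\mc{G},\A^n}[\vec{o}]$ with the level one line bundle $\mc{L}'_{\A^n}$ of $G'$ via $\Gr_{\mc{G}',\A^n}\simeq\Gr_{\mc{G},\A^n}[\vec{o}]$.

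The heart of the argument is the compatibility of the norm section $n_{\vec\mu}\colon\A^n\to L\mc{G}_{\A^n}$, and hence of the translation $T_{\vec\mu}$, with restriction to the diagonal and with the factorization over $\A^n_\xi$. For part (1): on $\Delta_{\A^n}\simeq\A^1$ all the coordinates $z_i$ specialize to a single coordinate $z$, so the defining product formula gives $n_{\vec\mu}|_{\Delta_{\A^n}}=\prod_{\gamma\in\Gamma}(t-\gamma^{-1}z)^{\gamma(\nu)}$, i.e.\ the norm section attached to the single coweight $\nu$ on $\Gr_{\mc{G},\A^1}$; hence, under the canonical identification $\Gr_{\mc{G},\A^n}\times_{\A^n}\Delta_{\A^n}\simeq\Gr_{\mc{G},\A^1}$, the translation $T_{\vec\mu}$ restricts on the $\vec{o}$-component to $T_{\nu}$. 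For part (2): over $\A^n_\xi$ the divisors $\bigcup_{\gamma}\Delta_{\gamma,i}$ with $i\in I$ are disjoint from those with $i\in J$, so by Beauville--Laszlo the loop group scheme factors, $L\mc{G}_{\A^n_\xi}\simeq(L\mc{G}_{\A^I}\times L\mc{G}_{\A^J})|_{\A^n_\xi}$, and under this factorization $n_{\vec\mu}|_{\A^n_\xi}$ corresponds to $(n_{\vec\mu_I},n_{\vec\mu_J})$; consequently $T_{\vec\mu}|_{\A^n_\xi}$ corresponds to $T_{\vec\mu_I}\times T_{\vec\mu_J}$ under the factorization isomorphism \eqref{447893} of $\Gr_{\mc{G},\A^n_\xi}$, which in turn matches the component decomposition $\Gr_{\mc{G},\A^n_\xi}[\vec\kappa]\simeq(\Gr_{\mc{G},\A^I}[\vec\kappa_I]\times\Gr_{\mc{G},\A^J}[\vec\kappa_J])|_{\A^n_\xi}$. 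I expect this last compatibility to be the main obstacle: although essentially a bookkeeping check, it requires carefully matching the Beauville--Laszlo gluing data defining the translation action with the one defining the factorization isomorphism.

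Granting these two compatibilities, part (1) follows from the chain
\[ \mc{L}_{\A^n,\vec\kappa}|_{\Delta_{\A^n}}=(T_{\vec\mu})_*\big(\mc{L}_{\A^n}\big)|_{\Delta_{\A^n}}=(T_{\nu})_*\big(\mc{L}_{\A^n}|_{\Delta_{\A^n}}\big), \]
together with the isomorphism $\mc{L}'_{\A^n}|_{\Delta_{\A^n}}\simeq\mc{L}'_{\A^1}$ already observed in the construction of the level one line bundle in Section \ref{201752} (its fiberwise restrictions are the ample generators and it is rigidified, so this is forced by Theorem \ref{797528}); pushing $\mc{L}'_{\A^1}$ forward by $T_{\nu}$ gives $\mc{L}_{\A^1,\kappa}$ by definition. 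Similarly, part (2) follows from
\[ \mc{L}_{\A^n,\vec\kappa}|_{\A^n_\xi}=(T_{\vec\mu_I}\times T_{\vec\mu_J})_*\big(\mc{L}_{\A^n}|_{\A^n_\xi}\big), \]
together with Theorem \ref{797528}(2) applied to $\mc{L}'_{\A^n}$ on $\Gr_{\mc{G}',\A^n}$, which gives $\mc{L}_{\A^n}|_{\A^n_\xi}\simeq(\mc{L}'_{\A^I}\boxtimes\mc{L}'_{\A^J})|_{\A^n_\xi}$; transporting by $T_{\vec\mu_I}\times T_{\vec\mu_J}$ turns this into $(\mc{L}_{\A^I,\vec\kappa_I}\boxtimes\mc{L}_{\A^J,\vec\kappa_J})|_{\A^n_\xi}$. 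Finally, independence of the auxiliary lift $\vec\mu$ is precisely the content of the lemma preceding the proposition, so the isomorphisms produced are canonical in the required sense.
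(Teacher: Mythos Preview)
Your proposal is correct and follows essentially the same approach as the paper: reduce to the simply-connected case via Theorem \ref{797528}, then transport along the translation $T_{\vec\mu}$, using that $n_{\vec\mu}$ restricts to $n_{\nu}$ on the diagonal and factors as $(n_{\vec\mu_I},n_{\vec\mu_J})$ over $\A^n_\xi$. The paper's proof is in fact terser than yours---for part (2) it simply says ``follows from the factorization properties of $\mc{L}'_{\A^n}$ and $n_{\vec\mu}$''---so your more explicit unpacking of the Beauville--Laszlo compatibility is not a gap but additional detail.
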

\begin{proof}
	(1) By Theorem \ref{797528}, the restriction of $\mc{L}'_{\A^n}|_{\Delta_{\A^n}}$ to the diagonal $\Gr_{\G',\Delta_{\A^n}}\simeq \Gr_{\G',\A^1}$ is isomorphic to $\mc{L}'_{\A^1}$. Thus, $\mc{L}_{\A^n}|_{\Delta_{\A^n}}\simeq \mc{L}_{\A^1}$ as line bundles over $ \Gr_{\G,\Delta_{\A^n}}[\vec{o}]\simeq \Gr_{\G,\A^1}[o]$. For each $i$, choose $\mu_i\in X_*(T)$ such that its image in $X_*(T)/\check{Q}$ is $\kappa_i$. Set $\vec\mu=(\mu_1,\ldots,\mu_n)$ and $\mu=\sum\mu_i$. Restricting the section $n_{\vec\mu}$ to the diagonal, we get a map $\A^1\simeq \Delta_{\A^n}\xrightarrow{n_{\vec\mu}} L\mc{G}_{\Delta_{\A^n}}\simeq L\mc{G}_{\A^1}$ sending $p$ to $\prod_\gamma(t-\gamma(p))^{\gamma(\sum\mu_i)}$. Let $\mu=\sum \mu_i\in X_*(T)$, then this composition map coincides with the section $n_{\mu}:\A^1\to L\mc{G}_{\A^1}$. Therefore, 
	\[\mc{L}_{\A^n,\vec{\kappa}}|_{\Delta_{\A^n}}\simeq\big((T_{\vec\mu})_*\mc{L}_{\A^n}\big)|_{\Delta_{\A^n}}\simeq (T_{\mu})_*\mc{L}_{\A^1}\simeq \mc{L}_{\A^1,\kappa}\] 
	as line bundles over $\Gr_{\G,\Delta_{\A^n}}[\vec{\kappa}]$.
	
	(2) It follows from the factorization properties of $\mc{L}'_{\A^n}$ and $n_{\vec\mu}$.
\end{proof}
\begin{prop}\label{235859}
	For any $\vec{\kappa}\in \big(X_*(T)/\check{Q}\big)^n$, there is a unique $L^+\mc{G}'_{\A^n}$-equivariant structure on $\mc{L}_{\A^n,\vec{\kappa}}$.
\end{prop}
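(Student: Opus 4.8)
The plan is to deduce the existence of the $L^+\mc{G}'_{\A^n}$-equivariant structure on $\mc{L}_{\A^n,\vec\kappa}$ from the $\vec o$-component case, which is precisely the content of Theorem \ref{206120} (applied to $\Gr_{\mc{G}',\A^n}$, where $G'$ is simply-connected). First I would recall that by Lemma \ref{lem_emb} we have a canonical identification $\Gr_{\mc{G}',\A^n}\simeq \Gr_{\mc{G},\A^n}[\vec o]$, and under this identification $\mc{L}_{\A^n}$ (on the $\vec o$-component) corresponds to the level one line bundle $\mc{L}'_{\A^n}$ on $\Gr_{\mc{G}',\A^n}$. By Theorem \ref{206120}, $\mc{L}'_{\A^n}$ carries a (unique) $L^+\mc{G}'_{\A^n}$-equivariant structure, hence so does $\mc{L}_{\A^n}$ on $\Gr_{\mc{G},\A^n}[\vec o]$, where the $L^+\mc{G}'_{\A^n}$-action is via the natural morphism $\mc{G}'\to\mc{G}$, i.e.\ via $\iota$, which is $L^+\mc{G}'_{\A^n}$-equivariant.

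Next I would transport this structure along the translation isomorphism $T_{\vec\mu}:\Gr_{\mc{G},\A^n}[\vec o]\simeq \Gr_{\mc{G},\A^n}[\vec\kappa]$ of \eqref{eq_T_tran}, where $\vec\mu=(\mu_1,\dots,\mu_n)\in (X_*(T)^+)^n$ is any lift of $\vec\kappa$. Since $\mc{L}_{\A^n,\vec\kappa}=(T_{\vec\mu})_*\mc{L}_{\A^n}$ by definition, the equivariant structure would be inherited provided $T_{\vec\mu}$ intertwines the $L^+\mc{G}'_{\A^n}$-action on the two components. This is the step to check carefully: the point is that left multiplication by the section $n_{\vec\mu}:\A^n\to L\mc{G}_{\A^n}$ lands in the \emph{center} of $L\mc{G}_{\A^n}$ in the relevant sense — $n_{\vec\mu}$ is a $T$-valued loop, and conjugation of $L^+\mc{G}'_{\A^n}$ (which acts through $\mc{G}'$, landing in the derived group) by $n_{\vec\mu}$ preserves $L^+\mc{G}'_{\A^n}$ and, restricted to the relevant jet quotients, is trivial on characters; more simply, the action of $L^+\mc{G}'_{\A^n}$ on $\Gr_{\mc{G},\A^n}$ commutes with the translation $T_{\vec\mu}$ because $\mc{G}'$ maps into $\mc{G}$ and $n_{\vec\mu}$ is a section of the torus part, so $g\cdot (n_{\vec\mu}x)=n_{\vec\mu}\cdot(g x)$ on $R$-points. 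Hence $T_{\vec\mu}$ is $L^+\mc{G}'_{\A^n}$-equivariant and $(T_{\vec\mu})_*$ carries the equivariant structure over. One should also note that the resulting structure on $\mc{L}_{\A^n,\vec\kappa}$ is independent of the choice of lift $\vec\mu$: two lifts differ by $\vec\nu-\vec\mu\in(\check Q)^n$, and the corresponding translations differ by an automorphism of $\Gr_{\mc{G},\A^n}[\vec o]$ induced by a $\check Q$-valued loop of $\mc{G}'$, under which $\mc{L}'_{\A^n}$ is equivariantly isomorphic to itself (its $L^+\mc{G}'$-equivariant structure being unique, and such a translation being itself realized inside $L\mc{G}'$).

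For uniqueness, I would argue exactly as in Lemma \ref{770893}. Given $\mc{L}_{\A^n,\vec\kappa}$ on the component $\Gr_{\mc{G},\A^n}[\vec\kappa]$, two $L^+\mc{G}'_{\A^n}$-equivariant structures differ by a group homomorphism $L^+\mc{G}'_{\A^n}\to Z$ into the group scheme $Z$ of automorphisms of the line bundle (a $\mathbb{G}_m$-torsor over $\A^n$); restricting to a fiber at $\vec p\in\A^n$, one has $L^+\mc{G}'_{\vec p}\simeq \prod_j L^+\mc{G}'_{q_j}$, and each $L^+\mc{G}'_{q_j}$ has only trivial characters — for unramified $q_j$, $L^+\mc{G}'_{q_j}\simeq G'(\mc{O})$ with $G'$ semisimple (cf.\ \cite[Corollary 9.1.3]{Sorger99}), and for the ramified point the quotient is the semisimple group $G'^\sigma$ with pro-unipotent kernel (cf.\ \cite[Lemma 6.1]{HK22}, \cite[Table 2.3]{BH}). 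Hence the homomorphism is trivial fiberwise, so trivial, and uniqueness follows.

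The main obstacle I anticipate is the verification that the translation $T_{\vec\mu}$ is compatible with the $L^+\mc{G}'_{\A^n}$-action, i.e.\ that conjugation by the torus-valued loop $n_{\vec\mu}$ is trivial on the equivariant structure — this requires being careful about the fact that $n_{\vec\mu}$ is a loop of $\mc{G}$ (not of $\mc{G}'$) but the acting group $L^+\mc{G}'$ factors through the derived group, so the conjugation action on $L^+\mc{G}'_{\A^n}$ is an inner automorphism landing in $L^+\mc{G}'_{\A^n}$ itself (as $n_{\vec\mu}$ normalizes the torus $T$ and hence the whole of $\mc{G}'$), under which $\mc{L}'_{\A^n}$ with its equivariant structure is carried to an isomorphic one by the uniqueness statement. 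A secondary technical point is the independence of the choice of lift $\vec\mu$, handled as sketched above via uniqueness of the equivariant structure on $\mc{L}'_{\A^n}$.
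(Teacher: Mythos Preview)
Your argument has a genuine gap at the key step. You claim that the translation $T_{\vec\mu}$ is $L^+\mc{G}'_{\A^n}$-equivariant, writing ``$g\cdot(n_{\vec\mu}x)=n_{\vec\mu}\cdot(gx)$'' because $n_{\vec\mu}$ is torus-valued. But torus-valued is not the same as central: conjugation by $n_{\vec\mu}$ acts nontrivially on root subgroups, so $g\,n_{\vec\mu}\neq n_{\vec\mu}\,g$ in $L\mc{G}$. What is actually true is that $T_{\vec\mu}$ intertwines the $L^+\mc{G}'_{\A^n}$-action on $\Gr_{\mc{G},\A^n}[\vec\kappa]$ with the action of the \emph{conjugated} group $\mr{Ad}_{n_{-\vec\mu}}(L^+\mc{G}'_{\A^n})$ on $\Gr_{\mc{G},\A^n}[\vec o]$. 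Your attempted fix --- that this conjugate ``lands in $L^+\mc{G}'_{\A^n}$ itself'' and uniqueness then applies --- also fails: $\mr{Ad}_{n_{-\vec\mu}}(L^+\mc{G}'_{\A^n})$ is in general a \emph{different} bounded subgroup of $L\mc{G}'_{\A^n}$ (conjugation by a non-integral loop does not preserve the jet group; e.g.\ $\mr{Ad}_{t^\mu}$ sends $e_{-\alpha}\in\mf{g}(\mc{O})$ to $e_{-\alpha}\,t^{-\langle\mu,\alpha\rangle}\notin\mf{g}(\mc{O})$ for $\langle\mu,\alpha\rangle>0$). So the $L^+\mc{G}'_{\A^n}$-equivariance of $\mc{L}'_{\A^n}$ furnished by Theorem~\ref{206120} does not, by itself, yield the required $\mr{Ad}_{n_{-\vec\mu}}(L^+\mc{G}'_{\A^n})$-equivariance.

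This is exactly why the paper's proof takes a different route. It first observes (as you should) that the problem is equivalent to producing an $\mr{Ad}_{n_{-\vec\mu}}(L^+\mc{G}'_{\A^n})$-equivariant structure on $\mc{L}'_{\A^n}$. It then uses the determinant line bundle $\mc{L}_{\mr{det}}$ on $\Gr_{\mc{G},\A^n}$, which is naturally $L^+\mc{G}_{\A^n}$-equivariant; restricting to the $\vec\kappa$-component and transporting back by $T_{\vec\mu}^{-1}$ shows that $\mc{L}_{\mr{det}}|_{\Gr_{\mc{G},\A^n}[\vec o]}\simeq(\mc{L}'_{\A^n})^l$ is $\mr{Ad}_{n_{-\vec\mu}}(L^+\mc{G}'_{\A^n})$-equivariant. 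The passage from $(\mc{L}'_{\A^n})^l$ to $\mc{L}'_{\A^n}$ then requires rerunning the $l$-th-root argument of Proposition~\ref{882893} for the conjugated group ${}_{\vec\mu}L^+\mc{G}'_{\A^n}:=\mr{Ad}_{n_{-\vec\mu}}(L^+\mc{G}'_{\A^n})$ acting on the translated Schubert varieties ${}_{\vec\mu}\Grb^{\la}_{\mc{G}',\A^n}$. Your uniqueness argument, on the other hand, is correct and is what the paper does as well via Lemma~\ref{770893}.
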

\begin{proof}
	In view of Lemma \ref{770893}, the uniqueness is clear. We now prove the existence. 
	Choose $\vec{\mu}\in (X_*(T))^n$ such that its image in $(X_*(T)/\check{Q})^n$ is $\vec{\kappa}$. Recall that $\mc{L}_{\A^n,\vec{\kappa}}$ is the push forward of $\mc{L}_{\A^n}$ via $T_{\vec{\mu}}:\Gr_{\mc{G},\A^n}[\vec{o}] \simeq \Gr_{\mc{G},\A^n}[\vec{\kappa}]$. Thus, an $L^+\mc{G}'_{\A^n}$-equivariant structure on $\mc{L}_{\A^n,\vec{\kappa}}$ is equivalent to an $\mr{Ad}_{n_{-\vec{\mu}}}(L^+\mc{G}'_{\A^n})$-equivariant structure on the line bundle $\mc{L}_{\A^n}$ over $\Gr_{\G,\A^n}[\vec{o}]$. By the definition of $\mc{L}_{\A^n}$, this is equivalent to an $\mr{Ad}_{n_{-\vec{\mu}}}(L^+\mc{G}'_{\A^n})$-equivariant structure on the level one line bundle $\mc{L}'_{\A^n}$ over $\Gr_{\G',\A^n}$.
	
	Consider the morphism $\phi:\Gr_{\G,\A^n}\to \Gr_{\mr{GL}(\mc{V}_0)}$ induced from the adjoint representation $\mc{G}\to \mr{GL}(\mc{V}_0)$. One can define a determinant line bundle $\mc{L}_{\mr{det}}$ over $\Gr_{\mr{GL}(\mc{V}_0)}$ by \eqref{823479}. Then the pullback $\phi^*\mc{L}_{\mr{det}}$ is a line bundle over $\Gr_{\G,\A^n}$ with a natural $L^+\mc{G}_{\A^n}$-equivariant structure, which will still be denoted by $\mc{L}_{\mr{det}}$. Restricting to the $\vec{\kappa}$-component, we get an $L^+\mc{G}'_{\A^n}$-equivariant structure on $\mc{L}_{\mr{det}}|_{\Gr_{\G,\A^n}[\vec{\kappa}]}$. This implies that $\mc{L}_{\mr{det}}|_{\Gr_{\G,\A^n}[\vec{o}]}$ has an $\mr{Ad}_{n_{-\vec{\mu}}}(L^+\mc{G}'_{\A^n})$-equivariant structure. 
	Via the identification $\Gr_{\G',\A^n}\simeq \Gr_{\G,\A^n}[\vec{o}]$, by Theorem \ref{797528} we have $\mc{L}_{\mr{det}}|_{\Gr_{\G,\A^n}[\vec{o}]}\simeq (\mc{L}'_{\A^n})^l$ for some $l$. Thus $(\mc{L}'_{\A^n})^l$ is $\mr{Ad}_{n_{-\vec{\mu}}}(L^+\mc{G}'_{\A^n})$-equivariant.

	For any $\la\in (X_*(T))^n$ whose image in $(X_*(T)/\check{Q})^n$ is $\vec{\kappa}$, by the same proof of Lemma \ref{223442}, one can show the $L^+\G'_{\A^n}$-action on $\Grb^{\la}_{\G,\A^n}\subseteq \Gr_{\G,\A^n}[\vec{\kappa}]$ factors through a finite type group scheme $L_k^+\G'_{\A^n}$. Let $_{\vec{\mu}}L^+\mc{G}'_{\A^n}$ denote $\mr{Ad}_{n_{-\vec{\mu}}}(L^+\mc{G}'_{\A^n})$, and regard $_{\vec{\mu}}\Grb^{\la}_{\G',\A^n}:=T_{\vec{\mu}}^{-1}\big(\Grb^{\la}_{\G,\A^n}\big)$ as a subscheme of $\Gr_{\G',\A^n}$. Then the $_{\vec{\mu}}L^+\mc{G}'_{\A^n}$-action on $_{\vec{\mu}}\Grb^{\la}_{\G',\A^n}$ factors through a finite type group scheme $_{\vec{\mu}}L_{k}^+\mc{G}'_{\A^n}$. Define $_{\vec{\mu}}\overline{L_k^+\mc{G}'}_{\A^n}$ similarly as in \eqref{653942}, by the same argument as in Proposition \ref{882893}, one can show the following central extension splits
	\begin{equation}
		1\to Z_l \to _{\vec{\mu}}\overline{L_k^+\mc{G}'}_{\A^n}\xrightarrow{p} _{\vec{\mu}}L_k^+\mc{G}'_{\A^n}\to 1.
	\end{equation}
	It follows that the line bundle $\mc{L}'_{\A^n}$ over $\Gr_{\G',\A^n}$ is $\mr{Ad}_{n_{-\vec{\mu}}}(L^+\mc{G}'_{\A^n})$-equivariant. Equivalently, 
	the line bundle $\mc{L}_{\A^n,\vec{\kappa}}$ is $L^+\mc{G}'_{\A^n}$-equivariant. 
\end{proof}
\begin{definition}\label{436248}
	For any BD Schubert variety $\Grb^{\la}_{\G,\A^n}$, it is contained in a unique $\vec{\kappa}$-component $\Gr_{\G,\A^n}[\vec{\kappa}]$. With no confusion, we will denote by $\mathcal{L}_{\A^n}$ the restriction of $\mc{L}_{\A^n,\vec{\kappa}}$ on $\Grb^{\la}_{\G,\A^n}$, and call it the level one line bundle over $\Grb^{\la}_{\G,\A^n}$. 
\end{definition}
\begin{cor}\label{432749}
	The space $H^0\big(\Grb_{\mc{G}, \A^n}^{\la}, \mc{L}_{\A^n}\big) $ is a $\gs$-module. 
\end{cor}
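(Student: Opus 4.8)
The plan is to deduce Corollary \ref{432749} from the $L^+\mc{G}'_{\A^n}$-equivariant structure on $\mc{L}_{\A^n}$ established in Proposition \ref{235859}, together with the finiteness of the group action on a fixed BD Schubert variety (Lemma \ref{223442}) and the flatness statement (Theorem \ref{368017}). The key point is that an $L^+\mc{G}'_{\A^n}$-equivariant structure on the line bundle gives, for every $\C$-scheme $S$, every $g\in L^+\mc{G}'_{\A^n}(S)$ and every $S$-point $x$ of $\Grb^{\la}_{\G,\A^n}$, an isomorphism $\phi_{g,x}:\mc{L}_{\A^n,x}\xrightarrow{\sim}\mc{L}_{\A^n,gx}$ satisfying the cocycle condition; this is exactly the datum needed to differentiate the action and produce a Lie-algebra action on global sections.

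First I would observe that $\Grb^{\la}_{\G,\A^n}$ is projective over $\A^n$ by the ind-projectivity of $\Gr_{\G,\A^n}$, so $H^0\big(\Grb^{\la}_{\G,\A^n},\mc{L}_{\A^n}\big)$ is a well-defined $\C[\A^n]$-module (finitely generated, by properness and flatness from Theorem \ref{368017}, though only the module structure is needed here). Next, by Lemma \ref{223442} the $L^+\mc{G}'_{\A^n}$-action on $\Grb^{\la}_{\G,\A^n}$ factors through a finite-type group scheme $L^+_k\mc{G}'_{\A^n}$ for $k\gg 0$; since $\mathcal L_{\A^n}$ is the restriction of $\mc{L}_{\A^n,\vec{\kappa}}$ and the equivariant structure of Proposition \ref{235859} descends compatibly, we get an algebraic action of the finite-type group scheme $L^+_k\mc{G}'_{\A^n}$ on the total space of $\mc{L}_{\A^n}$ covering its action on $\Grb^{\la}_{\G,\A^n}$. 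Then the standard construction of the coadjoint/pushforward action of a group scheme on global sections of an equivariant line bundle gives a linear action of $L^+_k\mc{G}'_{\A^n}$ on $H^0\big(\Grb^{\la}_{\G,\A^n},\mc{L}_{\A^n}\big)$, and differentiating at the identity produces an action of $\mr{Lie}\big(L^+_k\mc{G}'_{\A^n}\big)$.

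Finally I would identify this Lie algebra with the relevant twisted current algebra: by the description of $L^+\mc{G}'_{\A^n}$ as the Weil-restriction-type jet group scheme (Definition \ref{def_BD_jet}, and the explicit form \eqref{eq_jet}), its Lie algebra surjects onto, and in the relevant completed/pro sense is built from, $\gs=\mf{g}[t]^\sigma$ — more precisely, restricting to the diagonal or to a single ramified point $0\in\A^1$ recovers the twisted jet algebra whose associated graded/quotient is $\gs$ as in Section \ref{029728}. Taking the limit over $k$ and over the BD Schubert varieties (the action is compatible with the transition maps, since $\mc{L}_{\A^n}$ is compatibly defined), the $\gs$-action on $H^0\big(\Grb^{\la}_{\G,\A^n},\mc{L}_{\A^n}\big)$ follows; the $c$-th power case is identical since $\mc{L}^c_{\A^n}$ inherits the equivariant structure. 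I expect the only real point requiring care to be the bookkeeping that differentiating the finite-type group-scheme action yields precisely an action of $\gs$ (and not merely of some larger or smaller Lie algebra), i.e.\ matching $\mr{Lie}\big(L^+_k\mc{G}'_{\A^n}\big)$ in the limit with the twisted current algebra via the identifications already set up in Section \ref{029728} and Section \ref{133740}; everything else is a routine application of equivariant-sheaf formalism.
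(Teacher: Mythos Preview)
Your approach is essentially the same as the paper's: use the $L^+\mc{G}'_{\A^n}$-equivariant structure from Proposition \ref{235859}, differentiate to obtain a $\mr{Lie}(L^+\mc{G}'_{\A^n})$-action on global sections, and then pass to $\gs$. The one place where your write-up is muddled is the last step. You speak of the jet Lie algebra ``surjecting onto'' $\gs$, or of ``restricting to the diagonal or to a single ramified point'' to recover $\gs$; this is the wrong direction and the wrong mechanism. What is actually needed (and what the paper does) is simply to observe that the natural inclusion $\C[t]\hookrightarrow \C[t,z_1,\ldots,z_n]$ induces, after tensoring with $\mf{g}$ and taking $\Gamma$-invariants, a Lie algebra homomorphism
\[
\gs=\mf{g}[t]^\sigma \longrightarrow \mr{Lie}\big(L^+\mc{G}'_{\A^n}\big)\simeq \varprojlim_{k}\ \Big(\mf{g}\otimes \C[t,z_1,\ldots,z_n]\big/\textstyle\prod_{i,\gamma}(t-\gamma\cdot z_i)^k\Big)^{\Gamma}\otimes_{\C[\bar{\A}^n]}\C[\A^n].
\]
Restricting the $\mr{Lie}(L^+\mc{G}'_{\A^n})$-module structure along this map immediately gives the $\gs$-module structure. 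No identification ``in the limit'' or passage to a fiber is required; the map $\gs\to\mr{Lie}(L^+\mc{G}'_{\A^n})$ exists globally over $\A^n$ and is all that is needed.
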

\begin{proof}
 By Proposition \ref{235859}, the group scheme $L^+\G'_{\A^n}$ acts on $H^0\big(\Grb_{\mc{G}, \A^n}^{\la}, \mc{L}_{\A^n}\big) $. This induces an action of ${\rm Lie} (L^+\G’_{\A^n})$ on $H^0\big(\Grb_{\mc{G}, \A^n}^{\la}, \mc{L}_{\A^n}\big) $. Note that $L^+\G’_{\A^n}\simeq \varprojlim L_k^+\G’_{\A^n}$, where $L_k^+\G'_{\A^n}$ is defined in (\ref{eq_jet}). Thus, 
 $$ {\rm Lie } (L^+ \G'_{\A^n})\simeq \varprojlim_{k} \mathfrak{g} \bigg( \mathbb{C}[t, z_1,\cdots, z_n] /\prod_{i=1}^n \prod_{\gamma\in \Gamma}(t- \gamma\cdot z_i)^k \bigg)^\Gamma \otimes_{\mathbb{C}[\bar{\A}^n ] }\mathbb{C}[\A^n]  .$$
There is a natural morphism $\gs\to {\rm Lie } (L^+\G'_{\A^n})$ induced from the natural inclusion $\mathbb{C}[t]\to \mathbb{C}[t,z_1,\cdots, z_n]$. Therefore, $H^0\big(\Grb_{\mc{G}, \A^n}^{\la}, \mc{L}_{\A^n}\big)$ is a $\gs$-module. 
\end{proof}

\section{Global Demazure modules and BD Schubert varieties}\label{753118}
In this section, we first equip a $\mathbb{G}_m$-equivariant structure on level one line bundles on $\Gr_{\G,\A^n}$. Then, we prove a Borel-Weil type theorem on BD Schubert varieties for twisted global Demazure modules. 

\subsection{$\mathbb{G}_m$-equivariance on the level one line bundle $\mc{L}_{\A^n}$}\label{324893}
Let $\sigma$ be a standard automorphism $G$ of order $m$ as in Section \ref{133740}, and let $\Gamma$ be the cyclic group generated by $\sigma$. Denote by $\C[t]$ the coordinate ring of the affine space $\A^1$. Recall the standard $\Gamma$-action on $\A^1$ given by $\sigma(t)=\epsilon^{-1}t$. Let $\pi:\A^1\to \bar \A^1$ denote the quotient map sending $a$ to $\bar{a}$. We can identify $\bar \A^1$ with $\A^1$. Under this identification, $\pi$ can be regarded as the map $\A^1\to \A^1$ given by $a\mapsto a^m$. Similarly, we can define the $\Gamma$-action on $\mb{G}_m$ and the quotient space $\bar{\mb{G}}_m$.

Let $\mc{G}=\mr{Res}_{\A^1/\bar{\A}^1}\big( G\times \A^1\big) ^\Gamma$ be the parahoric Bruhat-Tits group scheme over $\bar{\A}^1$ defined as in Section \ref{BD_Gr}.  For any scheme $S$ over $\mathbb{C}$,  Set $\A^1_S:=\A^1\times S$ and $\bar \A^1_S:=\bar\A^1\times S$. We denote by $\mc{G}_S$ the group scheme $\mc{G}\times_{\bar\A^1} \bar\A^1_S$ over $\bar\A^1_S$.
For any $h\in \mb{G}_m(S)$, let $\rho_h: \A^1_S \to \A^1_S$ be the map sending $(a,s)$ to $(h(s)a,s)$. It induces a map $\bar{\rho}_h:\bar{\A}_S^1\to \bar{\A}_S^1$ sending $(\bar{a},s)$ to $(\bar{h}(s) \bar{a},s)$, where $\bar{h}$ is image of $h$ via $\mathbb{G}_m(S)\to \bar{\mathbb{G}}_m(S)$. 

\begin{lem}\label{452674}
For any $h\in \mb{G}_m(S)$, there is an isomorphism of group schemes over $\bar \A^1_S$
	\[\bar{\rho}_h^*(\mc{G}_S)\simeq \mc{G}_S.\] 
	As a consequence, the pullback $(\bar{\rho}_h)^* \mc{F}$ of a $\mc{G}$-torsor $\mc{F}$ over $\bar\A^1\times S$ for any scheme $S$ over $\mb{C}$, via $\bar{\rho}_h$ is also a $\mc{G}$-torsor.
\end{lem}
\begin{proof}
	For any scheme $T$ over $\bar \A_S^1$, denote by $(\bar{\rho}_h)_*T$ its pull-forward via $\bar{\rho}_h$, i.e.\,$(\bar{\rho}_h)_*T$ is regarded as the scheme over $\bar{\A}_S^1$ via the composition $T\to \bar{\A}_S^1\xrightarrow{\bar{\rho}_h} \bar{\A}_S^1$. There is a $\Gamma$-equivariant isomorphism $f:T\times_{\bar \A_S^1}\A_S	^1\simeq ((\bar{\rho}_h)_*T)\times _{\bar \A_S^1}\A_S^1$ of schemes, given by $(t,a)\mapsto (t, h^{m}(a))$. This induces an isomorphism of groups
	\[ (\bar{\rho}_h^* \mc{G}_S)(T)= \mc{G}_S\big((\bar{\rho}_h)_*T\big)=G\Big(\big((\bar{\rho}_h)_*T\big)\times _{\bar \A_S^1}\A_S^1\Big)^\Gamma\xrightarrow{f} G\big(T\times_{\bar \A_S^1}\A_S^1\big)^{\Gamma}=\mc{G}_S(T). \]
	Thus, $\bar{\rho}_h^*(\mc{G}_S)\simeq \mc{G}_S$. 
	
	For any $\mc{G}_S$-torsor $\mc{F}$ over $\bar \A^1_S$, the pull-back $\bar{\rho}_h^*\mc{F}$ is an $\bar{\rho}_h^*\mc{G}_S$-torsor, and hence is a $\mc{G}_S$-torsor via isomorphism $ \bar{\rho}_h^*\mc{G}_S\simeq \G_S$. 

\end{proof}

From the proof, one can see that this proposition also holds for $\bar{\mb{P}}^1$. Let $\mr{Bun}_{\mc{G}}$ be the moduli stack of $\mc{G}$-torsors on $\bar{\mb{P}}^1$. For any scheme $S$ over $\mb{C}$ and $h\in \mb{G}_m(S)$, we define a map $\phi_h: \mr{Bun}_{\mc{G}}(S)\to \mr{Bun}_{\mc{G}}(S)$, sending $\mc{F}$ to $\bar{\rho}_h^*\mc{F}$. This gives an $\mb{G}_m$-action on $\mr{Bun}_{\mc{G}}$.

By Lemma \ref{452674}, for any $h\in \mb{G}_m(S)$, one can define the following map 
\begin{align}\label{432859}
	\psi_h:\Gr_{\G,\A^1}(S) & \to \Gr_{\G,\A^1}(S) \\
	\nonumber (p,\mc{F},\beta) & \mapsto (\,h\cdot p, \bar{\rho}_h^*\mc{F}, \bar{\rho}_h^*\beta\,).
\end{align}
Recall that $\Gr_{\G,0}=\Gr_{\ms{G}}$, the restriction of $\psi_h$ to the $0$-fiber induces a map $\Gr_{\ms{G}}(S)\to \Gr_{\ms{G}}(S)$, which will still be denoted by $\psi_h$. This gives a $\mb{G}_m$-action on $\Gr_{\ms{G}}$. In fact, one can check this action agrees with the action induced from the natural $\mb{G}_m$-action on $\mc{O}=\C[[t]]$ sending $t$ to $a^{-1}t$ for any $a\in \mb{G}_m(\C)$.

Now, we assume $G$ is simply connected. By the uniformization theorem \cite{Heinloth10}, we have $\mr{Bun}_{\mc{G}}\simeq G[t^{-1}]^\sigma\backslash \Gr_{\ms{G}}$ as algebraic stacks. Then, we have the following commutative diagram
\begin{equation*}
	\xymatrix{
		\Gr_{\ms{G}}(S) \ar[d] \ar[r]^{\psi_h} & \Gr_{\ms{G}}(S) \ar[d] \\
		\mr{Bun}_{\mc{G}}(S) \ar[r]^{\phi_h} & \mr{Bun}_{\mc{G}}(S)
	}.
\end{equation*}
Let $\ms{L}$ be the level one line bundle on $\Gr_{\ms{G}}$. From \cite[Theorem 3.13]{BH}, the line bundle has a natural $G[t^{-1}]^\sigma\rtimes \mathbb{G}_m$-equivariant structure. The $G[t^{-1}]^\sigma$-equivariant structure on the line bundle $\ms{L}$ descends to the ample generator $\mc{L}$ of $\mr{Pic}(\mr{Bun}_{\mc{G}})$. Moreover, Lemma \ref{708537} implies that $\mathbb{G}_m$-action on $\ms{L}$ also descends to a $\mathbb{G}_m$-action on $\mc{L}$.

Let $\Gr_{\mc{G}, \A^n}$ be the Beilinson-Drinfeld Grassmannian over $\A^n$. For $h\in \mb{G}_m$, we define a map $\Gr_{\mc{G}, \A^n}\to \Gr_{\mc{G}, \A^n}$ sending $(a_1,\ldots,a_n,\mc{F},\beta)$ to $(ha_1,\ldots,ha_n, (h^m)^*\mc{F},(h^m)^*\beta)$. It gives a $\mb{G}_m$-action on $\Gr_{\mc{G},\A^n}$. Recall that the level one line bundle $\mc{L}_{\A^n}$ on $\Gr_{\mc{G},\A^n}$ is the pull-back of $\mc{L}$ via the projection $\Gr_{\mc{G},\A^n}\to \mr{Bun}_{\mc{G}}$. Then the $\mb{G}_m$-action on $\mc{L}$ induces a $\mb{G}_m$-action on $\mc{L}_{\A^n}$. One can check this action is compatible with the $\mb{G}_m$-action on $\Gr_{\mc{G}, \A^n}$. In summary, we have the following result.
\begin{lem}\label{374239}
	When $G$ is simply connected, there exists a $\mb{G}_m$-equivariant structure on the level one line bundle $\mc{L}_{\A^n}$ over $\Gr_{\G,\A^n}$, which restricts to the natural $\mb{G}_m$-equivariant structure on the line bundle $\mc{L}_{\vec{p}}$ over $\Grb^{\la}_{\G,\vec{p}}$ for any $\vec{p}\in \A^n $. \qed
\end{lem}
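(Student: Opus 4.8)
The plan is to transport equivariant structures along the maps already constructed in the paragraphs above rather than build one from scratch, since those constructions supply all the ingredients. First I would record that the projection $\mr{pr}\colon \Gr_{\mc{G},(\mb{P}^1)^n}\to \mr{Bun}_{\mc{G}}$, $(\vec{p},\mc{F},\beta)\mapsto \mc{F}$, is $\mb{G}_m$-equivariant: the action on the source sends $h\cdot(a_1,\ldots,a_n,\mc{F},\beta)$ to $(ha_1,\ldots,ha_n,(h^m)^*\mc{F},(h^m)^*\beta)$ while the action on $\mr{Bun}_{\mc{G}}$ is $\phi_h(\mc{F})=\bar{\rho}_h^*\mc{F}$, both well defined by Proposition \ref{452674} applied over $\mb{P}^1$, and the identity $\mr{pr}\circ h=\phi_h\circ\mr{pr}$ is immediate on $R$-points. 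Since $\Gr_{\mc{G},\A^n}\subseteq \Gr_{\mc{G},(\mb{P}^1)^n}$ is $\mb{G}_m$-stable and $\mc{L}_{\A^n}=(\mr{pr}^*\mc{L})|_{\Gr_{\mc{G},\A^n}}$, it suffices to equip the ample generator $\mc{L}$ of $\mr{Pic}(\mr{Bun}_{\mc{G}})$ with a $\mb{G}_m$-equivariant structure compatible with $\phi_h$, and then pull it back and restrict.

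Next I would obtain this structure on $\mc{L}$ by descent from $\Gr_{\ms{G}}$. By the uniformization theorem \cite{Heinloth10} we have $\mr{Bun}_{\mc{G}}\simeq G[t^{-1}]^\sigma\backslash \Gr_{\ms{G}}$, and the $\mb{G}_m$-action $\psi_h$ on $\Gr_{\ms{G}}$ — which agrees with loop rotation $t\mapsto h^{-1}t$ on $\mc{O}$ — descends through this quotient to $\phi_h$; this is the commutative square recorded just before the statement. By \cite[Theorem 3.13]{BH} the line bundle $\ms{L}$ carries a $G[t^{-1}]^\sigma\rtimes \mb{G}_m$-equivariant structure, whose $G[t^{-1}]^\sigma$-part descends to $\mc{L}$, and Lemma \ref{708537} shows the $\mb{G}_m$-part descends as well. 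Pulling back the resulting $\mb{G}_m$-equivariant $\mc{L}$ along $\mr{pr}$ and restricting to $\Gr_{\mc{G},\A^n}$ produces the desired $\mb{G}_m$-equivariant structure on $\mc{L}_{\A^n}$, and by construction it covers the dilation action on $\A^n$.

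For the `moreover' clause I would pass to the fiber over $\vec{p}\in\A^n$. When $\vec{p}=0$ the fiber is $\Gr_{\mc{G},0}=\Gr_{\ms{G}}$ with $\mc{L}_{\A^n}|_{0}=\ms{L}$, and $\psi_h|_{t=0}$ is precisely the loop rotation used to define the natural $\mb{G}_m$-structure on $\ms{L}$ in \cite{BH}, so there is nothing to check. For general $\vec{p}$ I would invoke Corollary \ref{711570} and Proposition \ref{684242}: $\Grb^{\la}_{\mc{G},\vec{p}}$ is a product of (possibly twisted) affine Schubert varieties and $\mc{L}_{\vec{p}}$ is the external tensor product of the corresponding level one bundles, each carrying its standard loop-rotation $\mb{G}_m$-equivariant structure; tracing the equivariant isomorphisms of Proposition \ref{prop_etale_base} and Proposition \ref{684242} then identifies the induced structure on $\mc{L}_{\vec{p}}$ with this natural one.

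I expect the main obstacle to be bookkeeping rather than a genuine difficulty: one must verify carefully that the $\mb{G}_m$-equivariant structure on $\ms{L}$ descends to $\mc{L}$ on $\mr{Bun}_{\mc{G}}$ compatibly with the already-constructed $G[t^{-1}]^\sigma$-equivariant structure — this is exactly what Lemma \ref{708537} is for — and that the $\mb{G}_m$ appearing as $\psi_h|_{t=0}$ is literally the one used in \cite[Theorem 3.13]{BH}. Once the conventions are matched, the remainder is a formal transport of structure along the maps $\mr{pr}$, the uniformization quotient, and the factorization and \'etale base change isomorphisms.
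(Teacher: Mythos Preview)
Your proposal is correct and follows essentially the same approach as the paper: the paragraphs preceding Lemma~\ref{374239} already construct the $\mb{G}_m$-equivariant structure on $\mc{L}$ over $\mr{Bun}_{\mc{G}}$ by descending from the $G[t^{-1}]^\sigma\rtimes\mb{G}_m$-equivariant structure on $\ms{L}$ via Lemma~\ref{708537}, and then pull back along the $\mb{G}_m$-equivariant projection $\mr{pr}$, exactly as you outline. Your treatment of the ``moreover'' clause is more explicit than the paper's (which simply records the statement with a \qed), but the reasoning via the fiber at $0$ and factorization is the natural way to unpack it.
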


Let $G$ be adjoint, and let $G'$ be its simply connected cover. With the same setup as in Section \ref{528475}, for any BD Schubert variety $\Grb^{\la}_{\G,\A^n}$, it is contained in a unique $\vec{\kappa}$-component $\Gr_{\G,\A^n}[\vec{\kappa}]$. Recall Definition \ref{436248}, the level one line bundle $\mc{L}_{\A^n}$ over $\Grb^{\la}_{\G,\A^n}$ is defined to be the restriction of $\mc{L}_{\A^n,\vec\kappa}$ to $\Grb^{\la}_{\G,\A^n}$.
\begin{prop}\label{543573}
	When $G$ is adjoint, there exists a $\mb{G}_m$-equivariant structure on the level one line bundle $\mc{L}_{\A^n}$ over $\Grb^{\la}_{\G,\A^n}$, which restricts to the natural $\mb{G}_m$-equivariant structure on the line bundle $\mc{L}_{\vec{p}}$ over $\Grb^{\la}_{\G,\vec{p}}$ for any $\vec{p}\in \A^n $. 
\end{prop}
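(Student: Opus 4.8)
The plan is to reduce the adjoint-type statement to the simply-connected case handled in Lemma \ref{374239}, using the same translation device that was used in Section \ref{528475} to transport equivariant structures between components of $\Gr_{\G,\A^n}$. First I would recall that $\Grb^{\la}_{\G,\A^n}$ lies in a unique component $\Gr_{\G,\A^n}[\vec\kappa]$, that there is an isomorphism $\Gr_{\G',\A^n}\simeq \Gr_{\G,\A^n}[\vec o]$ coming from Lemma \ref{lem_emb}, and that for a lift $\vec\mu\in (X_*(T))^n$ of $\vec\kappa$ the translation $T_{\vec\mu}:\Gr_{\G,\A^n}[\vec o]\simeq \Gr_{\G,\A^n}[\vec\kappa]$ (by the section $n_{\vec\mu}$) carries the level one line bundle $\mc{L}_{\A^n}$ on the identity component to $\mc{L}_{\A^n,\vec\kappa}$, by definition of the latter.

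Next I would observe that the $\mb{G}_m$-action on $\Gr_{\G,\A^n}$ defined just before Lemma \ref{374239} — namely $(a_1,\ldots,a_n,\mc{F},\beta)\mapsto(ha_1,\ldots,ha_n,(h^m)^*\mc{F},(h^m)^*\beta)$ — does \emph{not} preserve the component decomposition, but it does not quite conjugate to the $\mb{G}_m$-action on the identity component either, because the section $n_{\vec\mu}=\prod_{i,\gamma}(t-\gamma^{-1}z_i)^{\gamma(\mu_i)}$ scales under the dilation on $(t,z_1,\ldots,z_n)$. So the key computation is to identify the effect of conjugating $T_{\vec\mu}$ by the $\mb{G}_m$-action: one gets the composition of the standard $\mb{G}_m$-action on $\Gr_{\G',\A^n}$ with a $\mb{G}_m$-worth of automorphisms coming from the central torus $Z(G')$, since $n_{h\cdot\vec\mu}\cdot n_{\vec\mu}^{-1}$ takes values in $T$ and its image in $X_*(T)/X_*(T')$ is trivial (both represent $\vec\kappa$), so up to a $Z(G')$-valued cocycle it is a $T'$-translation. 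As $Z(G')$ acts on $\mc{L}'_{\A^n}$ through a character and this action commutes with everything, it only twists the $\mb{G}_m$-equivariant structure by a character of $\mb{G}_m$, which can be absorbed. Thus the $\mb{G}_m$-equivariant structure on $\mc{L}'_{\A^n}$ over $\Gr_{\G',\A^n}$ provided by Lemma \ref{374239} transports through $T_{\vec\mu}$ to a $\mb{G}_m$-equivariant structure on $\mc{L}_{\A^n,\vec\kappa}$ over $\Gr_{\G,\A^n}[\vec\kappa]$, and restricting to $\Grb^{\la}_{\G,\A^n}$ gives the desired structure.

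For the second assertion — that this structure restricts to the natural $\mb{G}_m$-equivariant structure on $\mc{L}_{\vec p}$ over $\Grb^{\la}_{\G,\vec p}$ at every $\vec p\in\A^n$ — I would argue fiberwise: at a point $\vec p$ the translation $T_{\vec\mu}$ restricts to translation by $n_{\vec\mu}(\vec p)\in L\G_{\vec p}$, identifying $\Grb^{\la}_{\G,\vec p}$ with a Schubert variety in the identity component, and the naturality statement in Lemma \ref{374239} together with the fact that $n_{\vec\mu}$ intertwines the $\mb{G}_m$-actions on the two components (again up to a $Z(G')$-character, which acts trivially on $\mc{L}_{\vec p}$ after restriction since $Z(G')$ acts through $G^\sigma$-characters which are trivial on the relevant loop group) yields the claim. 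The main obstacle I anticipate is precisely this bookkeeping of the $Z(G')$-ambiguity: one must check carefully that the discrepancy between $T_{\vec\mu}$-conjugation of the $\mb{G}_m$-action and the standard $\mb{G}_m$-action on the identity component is (a) valued in a group acting on $\mc{L}'_{\A^n}$ through a \emph{character} of $\mb{G}_m$ only, and (b) that this character does not obstruct compatibility with the fiberwise natural structures — equivalently, that one may normalize the equivariant structure so the character is trivial. Everything else is a formal consequence of Lemma \ref{374239}, Proposition \ref{235859}, and the factorization and base-change properties already established, so I would keep the exposition short and refer back to Section \ref{528475} for the translation formalism.
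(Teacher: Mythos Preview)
Your overall strategy matches the paper's: transport the $\mb{G}_m$-equivariant structure on $\mc{L}'_{\A^n}$ through the translation $T_{\la}:\Gr_{\G',\A^n}\simeq\Gr_{\G,\A^n}[\vec\kappa]$. The paper's argument is much shorter than what you sketch: it simply asserts that the square with horizontal maps $T_{\la}$ and vertical maps $\psi'_h,\psi_h$ commutes, so the equivariant structure from Lemma~\ref{374239} pushes forward directly, and the fiberwise statement follows immediately from the second assertion of Lemma~\ref{374239}.

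Your instinct that $T_{\la}$ might not strictly intertwine the two $\mb{G}_m$-actions has real substance: under simultaneous dilation of $t$ and the $z_i$, the section $n_{\la}=\prod_{i,\gamma}(t-\gamma^{-1}z_i)^{\gamma(\lambda_i)}$ scales by the constant $c_h=h^{\sum_{i,\gamma}\gamma(\lambda_i)}\in T^\sigma$, so $\psi_h\circ T_{\la}$ and $T_{\la}\circ\psi'_h$ differ by left translation by $c_h$, which does not act trivially on the Grassmannian. However, your diagnosis of this discrepancy is wrong in two respects. First, the $\mb{G}_m$-action \emph{does} preserve each $\vec\kappa$-component (pullback of a torsor preserves its topological type); the paper states this explicitly, and your opening claim to the contrary is false. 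Second, and more seriously, the discrepancy $c_h$ lies in $T^\sigma\subset L^+\G'$, not in $Z(G')$: your reasoning via ``$n_{h\cdot\vec\mu}\cdot n_{\vec\mu}^{-1}$'' and $X_*(T)/X_*(T')$ is beside the point, since the issue is a scalar factor picked up by $n_{\la}$ under dilation, not any change of lift of $\vec\kappa$. Because $c_h$ is not central, it does not act on $\mc{L}'_{\A^n}$ through a mere character of $\mb{G}_m$, so your proposed ``absorption'' does not work as stated.

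The correct repair uses the $L^+\G'_{\A^n}$-equivariant structure on $\mc{L}'_{\A^n}$ established in Theorem~\ref{206120}: since $c_h\in T^\sigma\subset L^+\G'$, left translation by $c_h$ lifts canonically to $\mc{L}'_{\A^n}$, and combining this lift with the $\psi'_h$-lift from Lemma~\ref{374239} yields an equivariant structure for the conjugated action $T_{\la}^{-1}\circ\psi_h\circ T_{\la}$. For the cocycle condition one must also verify that the $L^+\G'_{\A^n}$- and $\mb{G}_m$-equivariant structures on $\mc{L}'_{\A^n}$ are compatible; this is reasonable since both arise from the level one line bundle on $\mr{Bun}_{\G'}$, but it is an extra step neither you nor the paper spells out.
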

\begin{proof}
	For any $h\in \mb{G}_m$, we define $\psi_{h}:\Gr_{\G,\A^n}\to \Gr_{\G,\A^n}$ and $\psi'_{h}:\Gr_{\G',\A^n}\to \Gr_{\G',\A^n}$ similarly as in \eqref{432859}. Clearly, $\psi_{h}$ preserves the $\kappa$-component. Hence, it restricts to an morphism $\psi_{h}:\Gr_{\G,\A^n}[\vec\kappa]\to \Gr_{\G,\A^n}[\vec\kappa]$. One can easily check that the following diagram commutes,	
	\begin{equation*}
		\xymatrix{
			\Gr_{\G',\A^n} \ar[d]_{\psi'_{h}} \ar[r]^-{T_{\la}} & \Gr_{\G,\A^n}[\vec\kappa] \ar[d]_{\psi_{h}} \\
			\Gr_{\G',\A^n} \ar[r]^-{T_{\la}} & \Gr_{\G,\A^n}[\vec\kappa]
		},
	\end{equation*}
	where $T_{\vec{\lambda}}$ is defined in (\ref{eq_T_tran}). Let $\mc{L}'_{\A^n}$ be the level one line bundle over $\Gr_{\G',\A^n} $. Then, the $\mb{G}_m$-equivariant structure on $\mc{L}'_{\A^n}$ induces a $\mb{G}_m$-equivariant structure on the line bundle $\mc{L}_{\A^n,\vec\kappa}:=(T_{\la})_*\mc{L}'_{\A^n}$ over $\Gr_{\G,\A^n}[\vec\kappa]$. Restricting to $\Grb^{\la}_{\G,\A^n}$, we get a $\mb{G}_m$-equivariant structure on the line bundle $\mc{L}_{\A^n}$. The second statement of the proposition directly follows from Lemma \ref{374239}.
\end{proof}

\begin{thm}\label{243693}
	With the same setup as in Proposition \ref{543573}, the space $H^0(\Grb^{\la}_{\G,\A^n},\mc{L}_{\A^n})$ is a graded free $\C[\A^n]$-module.
\end{thm}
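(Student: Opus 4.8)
The plan is to prove that $H^0(\Grb^{\la}_{\mc{G},\A^n},\mc{L}^c_{\A^n})$ is a graded free $\C[\A^n]$-module by combining the flatness of $\Grb^{\la}_{\mc{G},\A^n}$ over $\A^n$ (Theorem \ref{368017}) with the $\mb{G}_m$-equivariant structure on the line bundle (Proposition \ref{543573}). First I would note that the structure morphism $f:\Grb^{\la}_{\mc{G},\A^n}\to \A^n$ is projective and flat, with reduced fibers, and $\mc{L}^c_{\A^n}$ restricts on each fiber $\Grb^{\la}_{\mc{G},\vec{p}}$ to the $c$-th power of the level one line bundle on a product of (possibly twisted) affine Schubert varieties. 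By the Borel-Weil theory for twisted affine Schubert varieties (cf.\,\eqref{eq_sch_dem} and the known results for the untwisted case), the higher cohomology $H^i(\Grb^{\la}_{\mc{G},\vec{p}},\mc{L}^c_{\vec{p}})$ vanishes for $i\geq 1$ and all $\vec{p}\in\A^n$. Combined with flatness of $f$, cohomology and base change (cf.\,\cite[Theorem 12.11]{Hartshorne77}) then implies that $f_*(\mc{L}^c_{\A^n})$ is a locally free $\C[\A^n]$-module, that its formation commutes with base change, and in particular that the fiber of $f_*(\mc{L}^c_{\A^n})$ at $\vec{p}$ is $H^0(\Grb^{\la}_{\mc{G},\vec{p}},\mc{L}^c_{\vec{p}})$.

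Next I would promote the local freeness to freeness using the grading. The $\mb{G}_m$-action on $\Gr_{\G,\A^n}$ from Section \ref{324893} covers the simultaneous dilation action on $\A^n$, and by Proposition \ref{543573} the line bundle $\mc{L}_{\A^n}$ carries a compatible $\mb{G}_m$-equivariant structure that restricts fiberwise to the natural one. Hence $M:=H^0(\Grb^{\la}_{\mc{G},\A^n},\mc{L}^c_{\A^n})=\Gamma(\A^n, f_*\mc{L}^c_{\A^n})$ is a $\mb{Z}$-graded $\C[\A^n]$-module, where $\C[\A^n]=\C[z_1,\dots,z_n]$ is graded with each $z_i$ in degree $1$ (or degree $m$, depending on normalization — I would fix the convention consistently with Section \ref{716168}). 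Since $\A^n$ is affine, $M$ is a graded $\C[\A^n]$-module which is locally free, hence projective, hence (being graded over a graded polynomial ring with the irrelevant ideal the unique graded maximal ideal) free by the graded Nakayama lemma (cf.\,\cite[\href{https://stacks.math.columbia.edu/tag/0EKB}{Tag 0EKB}]{stacks-project}), exactly as in the proof of Theorem \ref{616072}. A graded basis can be read off from a homogeneous lift of any basis of the fiber $M\otimes_{\C[\A^n]}\C_0=H^0(\Grb^{\la}_{\mc{G},0},\mc{L}^c_0)$.

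I expect the main obstacle to be the fiberwise cohomology vanishing $H^i(\Grb^{\la}_{\mc{G},\vec{p}},\mc{L}^c_{\vec{p}})=0$ for $i\geq 1$. By Corollary \ref{711570}, each fiber is a product of (possibly twisted) affine Schubert varieties, and by Proposition \ref{235859} the restricted line bundle is the appropriate power of the level one line bundle on each factor. For the untwisted affine Schubert varieties this vanishing is classical (Kumar, Mathieu); for the twisted ones it follows from \cite[Theorem 3.11]{BH} together with the identification \eqref{eq_sch_dem}. The genuinely delicate point is that the line bundle on a product factor need not be a box product of the canonical generators on the factors unless one knows the factorization of $\mc{L}_{\A^n}$ is compatible with the factorization of $\Grb^{\la}_{\mc{G},\A^n}$; this is handled by Proposition \ref{324995} (for the adjoint case) respectively Theorem \ref{797528}(2) and Theorem \ref{346721} (for the simply-connected case). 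Granting these inputs, the vanishing on products follows from the Künneth formula and the vanishing on each factor, and the rest of the argument is the standard cohomology-and-base-change plus graded-Nakayama routine sketched above.
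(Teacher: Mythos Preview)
Your proposal is correct and follows the same overall architecture as the paper's proof: establish that $f_*\mc{L}^c_{\A^n}$ is locally free over $\A^n$, then use the $\mb{G}_m$-equivariant structure from Proposition~\ref{543573} together with the graded Nakayama lemma to upgrade local freeness to graded freeness. The second half of your argument matches the paper verbatim.

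Where you diverge is in the first step. The paper does not invoke higher cohomology vanishing on fibers; instead it argues directly that $\dim_\C H^0(\Grb^{\la}_{\mc{G},\vec{p}},\mc{L}_{\vec{p}})$ is \emph{constant} in $\vec{p}$, by combining Corollary~\ref{711570} (description of each fiber as a product of Schubert varieties), the identification~\eqref{eq_sch_dem}, and the fusion theorem~\ref{257230} for twisted Demazure modules, which forces $\dim D^\sigma(c,\bar\lambda)=\prod_i\dim D(c,\lambda_i)$. Constancy of $h^0$ then gives local freeness of the pushforward by Grauert's theorem. Your route instead uses the vanishing $H^i(\Grb^{\la}_{\mc{G},\vec{p}},\mc{L}^c_{\vec{p}})=0$ for $i\geq 1$ together with cohomology and base change. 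Both are valid: the paper's argument leans on the representation theory already developed in Section~\ref{716168}, whereas yours is more self-contained geometrically but imports the higher-cohomology vanishing for twisted affine Schubert varieties (a standard Kac--Moody fact, though not stated explicitly in the paper). Your discussion of the factorization of the line bundle via Proposition~\ref{324995} is on point and is indeed what is needed to reduce the fiberwise vanishing to the case of a single (possibly twisted) Schubert variety.
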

\begin{proof}
	By Theorem \ref{257230}, Corollary \ref{711570} and the isomorphism (\ref{eq_sch_dem}), the function $\varphi(\vec{p}):=\dim_{ \C}H^0\big(\Grb^{\la}_{\G,\vec{p}},\mc{L}_{\vec{p}}\big)$ is a constant function over $\vec{p}\in \A^n$. Thus, $H^0(\Grb^{\la}_{\G,\A^n},\mc{L}_{\A^n})$ is a projective $\C[\A^n]$-module. 
	
	Moreover, the $\mb{G}_m$-equivariant structure on $\mc{L}_{\A^n}$ endows $H^0(\Grb^{\la}_{\G,\A^n},\mc{L}_{\A^n})$ a graded structure, which is compatible with the natural grading of the polynomial ring $\C[\A^n]$. Using the graded Nakayama lemma (cf.\,\cite[\href{https://stacks.math.columbia.edu/tag/0EKB}{Tag 0EKB}]{stacks-project}), $H^0(\Grb^{\la}_{\G,\A^n},\mc{L}_{\A^n})$ is a graded free $\C[\A^n]$-module.
\end{proof}

\subsection{Borel-Weil type theorem for twisted global Demazure modules }
With the same setup as in Proposition \ref{543573}, by Theorem \ref{243693}, the space $H^0(\Gr^{\la}_{\G,\A^n},\mc{L}_{\A^n})$ is a graded free $\C[\A^n]$-module. Denote by $H^0(\Gr^{\la}_{\G,\A^n},\mc{L}_{\A^n})^\vee$ its dual as a $\C[\A^n]$-module. Then, $H^0(\Gr^{\la}_{\G,\A^n},\mc{L}_{\A^n})^\vee$ is still a graded free $\C[\A^n]$-module, and is a $\gs$-module by Corollary \ref{432749}. 

In the following context, we will simply denote by $\mc{L}$ the level one line bundle $\mc{L}_{\A^n}$ over $\Grb^{\la}_{\G,\A^n}$. By the factorization of Schubert varieties in Theorem \ref{684242} and the factorization of line bundles in Proposition \ref{324995}, we have an isomorphism
\[f_\xi:H^0\big( \Grb^{\la}_{\mc{G},\A^n_{\xi}}, \mc{L}^c\big)^\vee \simeq \Big(H^0\big( \Grb^{\la_{I_1}}_{\mc{G},\A^{I_1}}, \mc{L}^c_{\A^{I_1}}\big)^\vee\otimes \cdots\otimes H^0\big( \Grb^{\la_{I_k}}_{\mc{G},\A^{I_k}}, \mc{L}^c_{\A^{I_k}}\big)^\vee\Big) \otimes_{\C[\A^n]} \C[\A^n_\xi],\]
By Theorem \ref{616072}, the following $\gs$-module 
\begin{equation}\label{234923}
	\Ds(c,\la)_{\A^n}:=\Ds(c,\la)\otimes_{\As(c,\la)}\C[\A^n]
\end{equation}
is a graded and free over $\C[\A^n]$. Finally, we have the following theorem. 

\begin{thm}\label{055073}
	For any $n>0$, there exists an isomorphism of $(\gs, \C[\A^n])$-bimodules:
	\begin{equation}
	\label{eq_final_thm}
	\Phi_{\A^n}: \Ds(c,\la)_{\A^n}\simeq H^0\big( \Grb_{\mc{G},\A^n}^{\la}, \mc{L}^c\big)^\vee.\end{equation}
	Moreover, for any nontrivial partition $\xi=(I_1,\ldots, I_k)$ of $[n]$, the following diagram commutes
	\begin{equation}\label{eq_thm_diag}
		\xymatrixcolsep{4pc}
		\xymatrix{
			 \Ds(c,\la)_{\A^n}\otimes_{\C[\A^n]} \C[\A^n_\xi] \ar@{^{(}->}[d]_{\psi_\xi} \ar[r]^-{\Phi_{\A^n}|_{\A^n_\xi}}& H^0\big( \Grb_{\mc{G},\A_\xi^n}^{\la}, \mc{L}^c\big)^\vee \ar[d]_-[@]{\sim}^{f_\xi} \\
			 \Big(\bigotimes_j\Ds(c,{\la}_{I_j})_{\A^{I_j}} \Big)\otimes_{\C[\A^n]} \C[\A^n_{\xi}] \ar[r]^-{\sim}_-{(\prod\Phi_j)|_{\A^n_\xi}} & \Big(\bigotimes_j H^0\Big( \Grb^{\la_{I_j}}_{\mc{G},\A^{I_j}}, \mc{L}^c_{\A^{I_j}}\Big)^\vee\Big) \otimes_{\C[\A^n]} \C[\A^n_\xi]
			},
	\end{equation}
where $\la_{I_j}$ is defined in \eqref{249296}, $\Phi_j:=\Phi_{\A^{I_j}}$, and the map $\psi_\xi$ is defined in (\ref{eq_psi}) in Proposition \ref{348932}. As a consequence, $\psi_\xi$ is an isomorphism. 
\end{thm}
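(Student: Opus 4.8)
The plan is to follow the strategy of Dumanski--Feigin--Finkelberg \cite{DFF21}, building the isomorphism \eqref{eq_final_thm} from its known behaviour over the generic loci and then using flatness and freeness to propagate it to all of $\A^n$. First I would construct the map in one direction. Using the $L^+\mc{G}_{\A^n}$-equivariant structure on $\mc{L}$ (Proposition \ref{235859}, which guarantees by Corollary \ref{432749} that $H^0(\Grb^{\la}_{\mc{G},\A^n},\mc{L}^c)^\vee$ is a $\gs$-module) together with the $\mb{G}_m$-equivariant structure (Proposition \ref{543573}), the space $H^0(\Grb^{\la}_{\mc{G},\A^n},\mc{L}^c)^\vee$ is a graded free $\C[\A^n]$-module (Theorem \ref{243693}) and a $(\gs,\C[\A^n])$-bimodule. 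I would identify a cyclic vector: the section $s_{\la}$ of \eqref{926470} gives a distinguished point in $\Grb^{\la}_{\mc{G},\A^n}$, and evaluation of sections of $\mc{L}^c$ there produces a linear functional whose dual vector $\tilde w$ should be annihilated by the same relations that define the global Demazure generator $w$. Checking these relations (the analogue of the relations in \eqref{989363} and the Demazure relations from Theorem \ref{401910}) at the generic fibers, where by Corollary \ref{711570}, Theorem \ref{257230} and the classical fact \eqref{eq_sch_dem} the fiber of $H^0(\Grb^{\la}_{\mc{G},\vec p},\mc{L}^c)^\vee$ is an honest tensor product of affine Demazure modules, then spreading out by flatness, yields a surjective $(\gs,\C[\A^n])$-bimodule map $\Phi_{\A^n}:\Ds(c,\la)_{\A^n}\twoheadrightarrow H^0(\Grb^{\la}_{\mc{G},\A^n},\mc{L}^c)^\vee$.

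Next I would prove $\Phi_{\A^n}$ is an isomorphism by a dimension count over a generic point. Both sides are graded free $\C[\A^n]$-modules (the left by Theorem \ref{616072}, the right by Theorem \ref{243693}), so it suffices to check that the fiber map $\Phi_{\vec p}$ is an isomorphism at a single $\vec p\in \A^n_{\xi_0}$ with $p_i^m\neq p_j^m\neq 0$. At such $\vec p$, Theorem \ref{616072}(1) identifies $\Ds(c,\la)_{\vec p}\simeq \bigotimes_i D(c,\lambda_i)_{p_i}$, while the right side is $\bigotimes_i D(c,\lambda_i)$ by the factorization of $\Grb^{\la}_{\mc{G},\A^n}$ over $C^n_{\xi_0}$ (Corollary \ref{711570}(1), noting $\Gr_{\G,\mathring C^n}\simeq \Gr_{G,\mathring C^n}$ by Proposition \ref{prop_etale_base}) combined with the untwisted Borel--Weil statement for $\mf{g}[t]$ (the $n=1$ case of \cite{DFF21}, or Kumar's theorem \cite{Ku} via \eqref{eq_sch_dem}). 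Since $\Phi_{\vec p}$ is a surjection between spaces of equal dimension, it is an isomorphism; hence $\Phi_{\A^n}$ is an isomorphism of graded free $\C[\A^n]$-modules, and being $\gs$-equivariant it is a bimodule isomorphism.

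For the compatibility diagram \eqref{eq_thm_diag}, I would argue that both $\Phi_{\A^n}|_{\A^n_\xi}\circ\psi_\xi^{-1}$-adjusted composites and $(\prod_j\Phi_j)|_{\A^n_\xi}\circ f_\xi^{-1}$ send the respective cyclic generators to cyclic generators and are $\gs$-equivariant, so the diagram commutes once it commutes on the generator $w[1]\otimes g$. The geometric side $f_\xi$ comes from the factorization of Schubert varieties (Proposition \ref{684242}) and of line bundles (Proposition \ref{324995}), both of which were set up so that $\Phi\circ s_{\la}|_{C^n_\xi}=s_{\la_I}\times s_{\la_J}|_{C^n_\xi}$; on the algebraic side $\psi_\xi$ (Proposition \ref{348932}) is defined precisely by $w[1]\otimes g\mapsto (w_I[1]\otimes w_J[1])\otimes g$ via the Leibniz rule. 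So the two routes around \eqref{eq_thm_diag} agree on the generators and hence everywhere. Finally, since $\Phi_{\A^n}$, $f_\xi$, and the bottom arrow are all isomorphisms and $\psi_\xi$ is injective, commutativity of \eqref{eq_thm_diag} forces $\psi_\xi$ to be surjective, hence an isomorphism.

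\textbf{Main obstacle.} I expect the genuinely delicate point to be the construction of the surjection $\Phi_{\A^n}$: one must verify that the functional dual to evaluation at $s_{\la}$ satisfies all the twisted global Demazure relations defining $\Ds(c,\la)$, not merely fiberwise but as a statement over $\C[\A^n]$, and this requires knowing that the sections of $\mc{L}^c$ on $\Grb^{\la}_{\mc{G},\A^n}$ restrict compatibly to all fibers --- which is exactly where the flatness of $\Grb^{\la}_{\mc{G},\A^n}$ over $\A^n$ (Theorem \ref{368017}) and reducedness of the fibers are used to control base change for $H^0$. The twisted relations (especially the $A_{2\ell}$ case, where the generators of $\gs$ mix $e_\alpha$ and $e_{\tau(\alpha)}$) must be checked against the geometry of the twisted affine Grassmannian fibers, mirroring the computations already carried out in the proof of Theorem \ref{398063}.
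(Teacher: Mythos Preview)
Your plan diverges from the paper's proof in a way that creates a real gap. You propose to build $\Phi_{\A^n}$ directly by sending the cyclic generator $w$ to a distinguished vector $\tilde w\in H^0(\Grb^{\la}_{\mc{G},\A^n},\mc{L}^c)^\vee$ and ``checking relations at generic fibers, then spreading out by flatness.'' But $\Ds(c,\la)$ is not presented by an explicit finite list of relations---it is defined as the $\U(\gs)$-submodule generated by $w$ inside $\bigotimes_i D(c,\lambda_i)[z_i]$---so to get a well-defined map out of it you must show that \emph{every} element of the annihilator of $w$ kills $\tilde w$. Verifying this fiberwise does not produce a global map: two free $\C[\A^n]$-modules with isomorphic fibers need not admit a $\gs$-equivariant map taking a prescribed vector to a prescribed vector. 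Likewise, ``surjective'' does not follow from checking relations; it would require knowing a priori that $H^0(\Grb^{\la}_{\mc{G},\A^n},\mc{L}^c)^\vee$ is cyclic over $\U(\gs)\otimes\C[\A^n]$, which is essentially what you are trying to prove.

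The paper avoids this by never constructing a direct generator-to-generator map. Instead it embeds \emph{both} modules into a common overmodule (for $n=1$, into $D(c,\lambda)\otimes\C[\mathring{\A}^1]$; for $n\ge 2$, into the dual global sections over the open locus $\A^n_\xi$), and compares their images. The comparison is made tractable by restricting to the lowest $\mf{h}^\sigma$-weight space, where both images become rank-one free modules; they then differ by a unit, and a graded-character equality forces that unit to be $1$. For $n=2$ there is an additional subtlety you have missed entirely: the two rank-one images over $\A^2_\xi$ differ a priori by $\prod_{i=0}^{m-1}(z_1-\epsilon^i z_2)^{a_i}$, and one needs the $\Gamma$-equivariance of all the maps to force $a_0=\cdots=a_{m-1}$, after which the character argument gives $\sum a_i=0$. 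For $n>2$ the proof is inductive: one builds $\Phi_\xi$ over each $\A^n_\xi$ from the lower-$n$ isomorphisms, checks they agree on overlaps by refining partitions (this is precisely why the commutativity of diagram \eqref{eq_thm_diag} must be proved simultaneously with the isomorphism, not afterwards), glues over $\A^n\setminus\Delta_{[n]}$, and extends across the codimension-$(n-1)$ diagonal by Hartogs. Your treatment of \eqref{eq_thm_diag} as a separate afterthought would not support this induction.
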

\begin{proof}
	When $n=1$, set $\mathring{\A}^1=\A^1\setminus\{0\}$. We have an isomorphism $\Grb^\lambda_{\mc{G},\mathring{\A}^1}\simeq \Grb^{\lambda}_{G}\times \mathring{\A}^1$, cf.\,\cite[Lemma 3.4.2]{Zhu17}. Thus, there is an $\gs$-isomorphism
	\[ H^0\big( \Grb^\lambda_{\mc{G},\mathring{\A}^1}, \mc{L}^c\big)^\vee \simeq D(c, \lambda)\otimes_{\C} \C[\mathring{\A}^1],\]
 where the $\gs$-action on two sides are given by Corollary \ref{432749} and equation \eqref{554908} respectively. 
	Composing with the map $H^0\big( \Grb^\lambda_{\mc{G},\A^1}, \mc{L}^c\big)^\vee \hookrightarrow H^0\big( \Grb^\lambda_{\mc{G},\mathring{\A}^1}, \mc{L}^c\big)^\vee$, we get an embedding of $\gs$-modules
	\begin{equation}\label{340883}
		H^0\big( \Grb^\lambda_{\mc{G},\A^1}, \mc{L}^c\big)^\vee \hookrightarrow D(c, \lambda)\otimes_{\C} \C[\mathring{\A}^1].
	\end{equation}
	Recall Proposition \ref{348932} that there is an embedding of $\gs$-modules
	\begin{equation}\label{233497}
		 \Ds(c,\lambda)_{\A^1} \hookrightarrow  D(c, \lambda)\otimes_{\C} \C[\mathring{\A}^1].
	\end{equation}
	Taking the lowest weight parts of these modules in \eqref{340883} and \eqref{233497} with respect to the $\mathfrak{h}^\sigma$-action, we get the following embeddings:
	\begin{gather*}
		\phi:H^0\big( \Grb^\lambda_{\mc{G},\A^1}, \mc{L}^c\big)^\vee_{-c\iota(\lambda)} \hookrightarrow D(c, \lambda)_{-c\iota(\lambda)}\otimes_{\C} \C[\mathring{\A}^1] ,\\
		\psi: (\Ds(c,\lambda)_{\A^1})_{-c\iota(\lambda)} \hookrightarrow D(c, \lambda)_{-c\iota(\lambda)}\otimes_{\C} \C[\mathring{\A}^1].
	\end{gather*}
	We shall show $\mr{Im}(\phi)=\mr{Im}(\psi)$. Note that $\mr{Im}(\phi)$ and $\mr{Im}(\psi)$ are free rank one $\C[z]$-modules, and $ D(c, \lambda)_{c\iota(\lambda)} \otimes \C[\mathring{\A}^1]$ is a free rank one $\C[z,z^{-1}]$-module. We must have 
	\[ \mr{Im}(\phi)=z^a\mr{Im}(\psi).\]
	We claim $a=0$. By Theorem \ref{243693} and Theorem \ref{616072}, $H^0\big( \Grb_{\mc{G},\A^1}^{\lambda}, \mc{L}^c\big)^\vee$ and $\Ds(c,\lambda)_{\A^1}$ are graded free $\C[\A^1]$-modules. By Theorem \ref{398063} and Corollary \ref{711570}, we can compute their characters via the fibers at $0\in \A^1$:
	\begin{equation}\label{524378}
		\mr{ch}\big( H^0\big( \Grb_{\mc{G},\A^1}^{\lambda}, \mc{L}^c\big)^\vee\big)=\mr{ch}\big(D^\sigma(c,\lambda)\big)\cdot (1-q)^{-1}=\mr{ch}\big(\Ds(c,\lambda)_{\A^1}\big).
	\end{equation}
	In particular, on the lowest weight part, we have 
	\[\mr{ch}\big( H^0\big( \Grb_{\mc{G},\A^1}^{\lambda}, \mc{L}^c\big)^\vee_{-c\iota(\lambda)}\big)=\mr{ch}\big((\Ds(c,\lambda)_{\A^1})_{-c\iota(\lambda)}\big).\]
	Hence $\mr{ch}\big(\mr{Im}(\phi)\big)=\mr{ch}\big(\mr{Im}(\psi)\big)$. It follows that $a=0$. In other words, $\mr{Im}(\phi)=\mr{Im}(\psi)$, and furthermore 
	\[H^0\big( \Grb_{\mc{G},\A^1}^{\lambda}, \mc{L}^c\big)^\vee_{-c\iota(\lambda)} \simeq (\Ds(c,\lambda)_{\A^1})_{-c\iota(\lambda)}.\]
	Consider the following embedding of $\gs$-modules
	\begin{align*}
		\Ds(c,\lambda)_{\A^1}= \U(\gs)\bu (\Ds(c,\lambda)_{\A^1})_{-c\iota(\lambda)} &\simeq \U(\gs) \cdot H^0\big( \Grb_{\mc{G},\A^1}^{\lambda}, \mc{L}^c\big)^\vee_{-c\iota(\lambda)}\\
		&\hookrightarrow H^0\big( \Grb_{\mc{G},\A^1}^{\lambda}, \mc{L}^c\big)^\vee. 
	\end{align*}
	By the equality \eqref{524378} of characters, this embedding must be an isomorphism, which will be denoted by $\Phi_{\A^1}$.	
	\medskip
	
	When $n = 2$, let $\xi=(\{1\},\{2\})$ be the nontrivial partition. From the case when $n=1$, we have an isomorphism $(f_\xi)^{-1}\circ(\Phi_{\A^1}\times\Phi_{\A^1})|_{\A^2_\xi}$ of $\C[\A^2_\xi]$-modules
	\begin{equation}\label{423849}
		\big(\Ds(c,{\lambda}_{1})_{\A^1} \otimes \Ds(c,{\lambda}_{2})_{\A^1} \big)\otimes_{\C[\A^2]} \C[\A^2_{\xi}] \simeq H^0\big( \Grb^{\la}_{\mc{G},\A^2_{\xi}}, \mc{L}^c\big)^\vee.
	\end{equation}
	Combining with Proposition \ref{348932}, we get an embedding
	\begin{equation}\label{529673}
		\Ds(c,\la)_{\A^2} \hookrightarrow H^0\big( \Grb^{\la}_{\mc{G},\A^2_{\xi}}, \mc{L}^c\big)^\vee.
	\end{equation}
	Consider the restriction map
	\begin{equation}\label{327489}
		H^0\big( \Grb^{\la}_{\mc{G},\A^2}, \mc{L}^c\big)^\vee\hookrightarrow H^0\big( \Grb^{\la}_{\mc{G},\A^2_{\xi}}, \mc{L}^c\big)^\vee.
	\end{equation}
	Taking the lowest weight parts of these modules in \eqref{327489} and \eqref{529673} with respect to the $\mf{g}^\sigma$-action, we get the following embeddings:
	\begin{gather*}
		\phi:H^0\big( \Grb^{\la}_{\mc{G},\A^2}, \mc{L}^c\big)^\vee_{-c\iota(\lambda)}\hookrightarrow H^0\big( \Grb^{\la}_{\mc{G},\A^2_{\xi}}, \mc{L}^c\big)_{-c\iota(\lambda)}^\vee,\\
		\psi: (\Ds(c,\la)_{\A^2})_{-c\iota(\lambda)} \hookrightarrow H^0\big( \Grb^{\la}_{\mc{G},\A^2_{\xi}}, \mc{L}^c\big)_{-c\iota(\lambda)}^\vee.
	\end{gather*}
	We shall show $\mr{Im}(\phi)=\mr{Im}(\psi)$. Note that $\mr{Im}(\phi)$ and $\mr{Im}(\psi)$ are free rank one $\C[z_1,z_2]$-modules, and $ H^0\big( \Grb^{\la}_{\mc{G},\A^2_{\xi}}, \mc{L}^c\big)_{-c\iota(\lambda)}^\vee$
	is a free rank one $\C[z_1,z_2]_{\mf{a}}$-modules, where $\mf{a}$ is the ideal generated by $z_1- \epsilon^i z_2$, with $i=0,\cdots, m-1$. We must have
	\begin{equation}\label{eq_im_eq}
		\mr{Im}(\phi)=\prod_{i=0}^{m-1} (z_1-\epsilon^i z_2)^{a_i} \mr{Im}(\psi).
	\end{equation}
	We claim $a_i=0$ for any $i$. Consider the $\Gamma$-action on $\A^2$ defined by $\sigma* (x,y):=(x,\epsilon(y))$. This action extends to an action on $\Gr_{\G, \A^2 }$. From the construction of $\mc{L}$ (cf.\,(\ref{eq_linebun_L}) and Section \ref{528475}), there is a natural $\Gamma$-equivariant structure on $\mc{L}$. When $n=1$, it is clear that $\Phi_{\A^1}$ is equivariant under the standard action of $\Gamma$ defined in (\ref{eq_standard}). Hence, the morphism \eqref{423849} is $\Gamma$-equivariant. Then the induced map \eqref{529673} and hence $\psi$ is $\Gamma$-equivariant. 
	From (\ref{eq_im_eq}), the $\Gamma$-equivariance of $\phi$ and $\psi$ imply that 
	\[\textstyle\prod_i (z_1-\epsilon^{i-1} z_2)^{a_i}=\prod_i (z_1-\epsilon^i z_2)^{a_i}.\]
	It follows that $a_i=a_{i-1}$ for any $i$. Since $\mr{ch}\big(\mr{Im}(\phi)\big)=\mr{ch}\big(\mr{Im}(\psi)\big)$, we must have $\sum a_i=0$. Therefore, $a_i=0$ and $\mr{Im}(\phi)=\mr{Im}(\psi)$. By the same argument as in the case $n=1$, this leads to an isomorphism $\Phi_{\A^2}:\Ds(c,\la)_{\A^2}\simeq H^0\big( \Grb_{\mc{G},\A^2}^{\la}, \mc{L}^c\big)^\vee$. From the construction, $\Phi_{\A^2}$ satisfies diagram \eqref{eq_thm_diag}.
	
	\medskip
	
	When $n > 2$, we prove it by induction. Let $\xi=(I_1,I_2, \ldots, I_k)$ be any nontrivial partition, i.e. $\xi\neq ([n])$. 
	By induction, we get an embedding 
	\[\textstyle\Phi_{\xi}:= f_\xi^{-1}\circ(\prod\Phi_\alpha)|_{\A^n_\xi}\circ\psi_\xi: \Ds(c,\la)_{\A^n_{\xi}}\hookrightarrow H^0\big( \Grb^{\la}_{\mc{G},\A^n_{\xi}}, \mc{L}^c\big)^\vee. \]
	Let $\xi'=(I_1',\ldots, I_{k'}')$ be another nontrivial partition. Consider a new partition as follows (with the lexicographical order), 
	\[\xi'':=(I_\alpha\cap I'_\beta\mid1\leq\alpha\leq k,1\leq \beta\leq k').\]
	It is easy to check $\A^n_{\xi''}=\A^n_{\xi}\cap \A^n_{\xi'}$. For any $1\leq \alpha\leq k$, we denote by $\xi''_\alpha$ the partition $(I_\alpha\cap I'_{\beta}\mid 1\leq \beta\leq k')$ of $I_\alpha$. By induction, we have $\Phi_{\alpha}|_{\A^{I_\alpha}_{\xi''_\alpha}}=f_{\xi_\alpha}^{-1}\circ(\prod_{\beta}\Phi_{\alpha,\beta})|_{\A^{I_\alpha}_{\xi''_\alpha}}\circ\psi_{\xi_\alpha}$ over $\A^{I_\alpha}_{\xi''_\alpha}$ for any $1\leq \alpha \leq k$. It follows that
	\begin{align*}
	\textstyle \Phi_{\xi}|_{\A^n_{\xi''}} &\textstyle =f_\xi^{-1}\circ (\prod_\alpha \Phi_{\alpha})|_{\A^n_{\xi''}} \circ\psi_\xi \\
	& \textstyle =f_\xi^{-1}\circ 
	(\prod_\alpha f_{\xi''_\alpha}^{-1})|_{\A^n_{\xi''}} \circ
	(\prod_{\alpha,\beta}\Phi_{\alpha,\beta})|_{\A^n_{\xi''}} \circ 
	(\prod_\alpha\psi_{\xi''_\alpha} )|_{\A^n_{\xi''}}\circ\psi_{\xi}\\
	& \textstyle = f_{\xi''}^{-1} \circ
	(\prod_{\alpha,\beta}\Phi_{\alpha,\beta})|_{\A^n_{\xi''}} \circ 
	\psi_{\xi''}\\
	& =\Phi_{\xi''}.
	\end{align*}
	Similarly $\Phi_{\xi'}|_{\A^n_{\xi''}}=\Phi_{\xi''}$. Thus,
	$\Phi_\xi$ and $\Phi_{\xi'}$ agree on $\A^n_{\xi}\cap \A^n_{\xi'}$. Hence, we can glue these $\{\Phi_\xi\}_\xi$ together to get an embedding over $\bigcup_\xi \A^n_\xi=\A^n- \Delta_{[n]}$, i.e.
	\[\Phi: \Ds(c,\la)_{\A^n} \otimes_{\C[\A^n]} \C[\A^n-\Delta_{[n]} ] \hookrightarrow H^0\big( \Grb^{\la}_{\mc{G},\A^n-\Delta_{[n]}}, \mc{L}^c\big)^\vee,\]
	where $\Delta_{[n]}$ is defined in \eqref{348782}. Since $\Delta_{[n]}$ has dimension $1$, by Hartogs' Lemma, $\Phi$ extends to an embedding over $\A^n$,
	\[\Phi_{\A^n}: \Ds(c,\la)_{\A^n} \hookrightarrow H^0\big( \Grb^{\la}_{\mc{G},\A^n}, \mc{L}^c\big)^\vee.\]
	Note that both sides are graded free $\C[\A^n]$-modules, and have the same graded $T^\sigma$-character as the same argument as in the cases for $n=1,2$. Thus,  $\Phi_{\A^n}$ must be an isomorphism.
\end{proof}
\begin{remark}\label{rem_difference}
\begin{enumerate}
\item The proof of this theorem is similar to the method used in \cite[Section 5.1]{DFF21}. It is worth noting that, in the twisted case, it is already nontrivial when $n=1$, and when $n=2$ the argument is more subtle. Moreover, in our argument, in order to prove the isomorphism (\ref{eq_final_thm}), the commutativity of the diagram (\ref{eq_thm_diag}) needs to be involved in the induction. 
\item In \cite{DFF21}, a version of Theorem \ref{055073} is proved for symmetric BD Schubert varieties in the untwisted case.  In our paper, we only work with the twisted BD Schubert varieties over $\A^n$, where the coordinates of points are ordered. The main reason is that the algebra $\As(c,\vec{\lambda})$ is more complicated than the untwisted case, see Remark \ref{rem_hw_alg}. In particular, when $n=1$, $H^0\big( \Grb^{\lambda}_{\mc{G},\A^1}, \mc{L}^c\big))$ is naturally an $\mathbb{C}[z]$-module, while $\Ds(c,\lambda)$ is a $\mathbb{C}[z^r]$-module for some integer $r\geq 1$, where $r$ depends on whether $\lambda$ is $\sigma$-invariant, see Example \ref{ex_algebra}.
\end{enumerate}
\end{remark}
The following corollary immediately follows from Theorem \ref{055073}.
\begin{cor} \label{367230}
	Let $\la=(\lambda_1,\ldots,\lambda_n)\in (X_*(T)^+)^n$. 
	\begin{enumerate}
		\item Let $\xi=(I_1,\ldots, I_k)$ be a partition on $[n]$. For any point $\vec{p}\in\A^n_\xi$, there is an isomorphism of $\gs$-modules as follows,
		\[ \Ds(c,\la)_{\vec{p}}=\bigotimes_{j=1}^k \Ds(c,\la_{I_j})_{\vec{p}_{I_j}},\]
		where $\vec{p}_{I_j}$ and $\la_{I_j}$ are defined in \eqref{053076} and \eqref{249296}.
		\item For any $\vec{p}=(p,\gamma_2(p),\ldots, \gamma_n(p))\in \A^n$ such that $p\neq 0$, there is an isomorphism of $\gs$-modules 
		\[ \Ds(c,\la)_{\vec{p}}=D(c,\lambda_{\vec{p}})_{p},\]
		where $\lambda_{\vec{p}}:=\lambda_1+ \gamma^{-1}_2(\lambda_2)+\cdots+\gamma_n^{-1}(\lambda_n)$ and $D(c,\lambda_{\vec{p}})_{p}$ denotes the module $D(c,\lambda_{\vec{p}})$ with a new $\gs$-action given by \eqref{342834}. 
	\end{enumerate}
\end{cor}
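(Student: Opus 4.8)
The plan is to prove Corollary~\ref{367230} as a formal consequence of the Borel-Weil type isomorphism in Theorem~\ref{055073} together with the already established geometric facts about BD Schubert varieties. The starting observation is that $\Phi_{\A^n}:\Ds(c,\la)_{\A^n}\simeq H^0(\Grb_{\mc{G},\A^n}^{\la},\mc{L}^c)^\vee$ is an isomorphism of $(\gs,\C[\A^n])$-bimodules, so that after base-changing along any $\vec{p}\in\A^n$ we obtain a $\gs$-isomorphism
\[\Ds(c,\la)_{\vec{p}}\simeq \Big(H^0(\Grb_{\mc{G},\A^n}^{\la},\mc{L}^c)^\vee\Big)\otimes_{\C[\A^n]}\C_{\vec{p}}.\]
Since $H^0(\Grb_{\mc{G},\A^n}^{\la},\mc{L}^c)^\vee$ is graded \emph{free} over $\C[\A^n]$ by Theorem~\ref{243693} (applied to $\mc{L}^c$), base change commutes with taking cohomology, so the right-hand side is $H^0\big((\Grb_{\mc{G},\A^n}^{\la})_{\vec{p}},\mc{L}^c_{\vec{p}}\big)^\vee=H^0\big(\Grb_{\mc{G},\vec{p}}^{\la},\mc{L}^c_{\vec{p}}\big)^\vee$, where in the last equality we use that the schematic fiber $\Grb_{\mc{G},\vec{p}}^{\la}$ is reduced by Theorem~\ref{368017}. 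Thus the whole problem reduces to identifying this fiberwise section space.

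For part~(1), I would feed in the factorization of BD Schubert varieties over $C^n_\xi$ from Corollary~\ref{711570}(1) (or Proposition~\ref{684242}), which gives $\Grb_{\mc{G},\vec{p}}^{\la}\simeq\prod_{j}\Grb_{\mc{G},\vec{p}_{I_j}}^{\la_{I_j}}$ for $\vec{p}\in\A^n_\xi$, combined with the factorization of $\mc{L}^c$ over $C^n_\xi$ from Proposition~\ref{324995}(2) (applied iteratively to the partition $\xi$). By the Künneth formula for global sections of an external tensor product of line bundles on proper schemes, one gets
\[H^0\big(\Grb_{\mc{G},\vec{p}}^{\la},\mc{L}^c_{\vec{p}}\big)^\vee\simeq\bigotimes_{j=1}^k H^0\big(\Grb_{\mc{G},\vec{p}_{I_j}}^{\la_{I_j}},\mc{L}^c_{\vec{p}_{I_j}}\big)^\vee,\]
and each tensor factor is $\Ds(c,\la_{I_j})_{\vec{p}_{I_j}}$ by applying the reduction of the previous paragraph to the smaller index set $I_j$. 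Assembling these gives $\Ds(c,\la)_{\vec{p}}\simeq\bigotimes_j\Ds(c,\la_{I_j})_{\vec{p}_{I_j}}$ as $\gs$-modules. Alternatively — and perhaps more cleanly — part~(1) can be read off directly from the commutative diagram~\eqref{eq_thm_diag} in Theorem~\ref{055073}, which asserts exactly that $\Phi_{\A^n}$ intertwines the embedding $\psi_\xi$ with $f_\xi$ and the product of the $\Phi_{I_j}$'s; since $\psi_\xi$ is an isomorphism (the last assertion of Theorem~\ref{055073}), base-changing the diagram to $\vec{p}\in\A^n_\xi$ yields the claimed tensor decomposition. I expect to use this second route, as it avoids re-deriving Künneth and keeps the bookkeeping minimal.

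For part~(2), take $\vec{p}=(p,\gamma_2(p),\ldots,\gamma_n(p))$ with $p\neq 0$, which lies in the diagonal stratum $\Delta_{[n]}$ (after renaming). By Corollary~\ref{711570}(2), $\Grb_{\mc{G},\vec{p}}^{\la}\simeq\Grb_{\mc{G},p}^{\lambda_{\vec{p}}}$ with $\lambda_{\vec{p}}=\lambda_1+\gamma_2^{-1}(\lambda_2)+\cdots+\gamma_n^{-1}(\lambda_n)$, and by Proposition~\ref{324995}(1) (iterated over the diagonal) the restriction of $\mc{L}^c$ to this fiber is the level $c$ line bundle on $\Grb_{\mc{G},p}^{\lambda_{\vec{p}}}$. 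Since $p\neq 0$ is either unramified or, if $C=\A^1$, it is never a ramified point (the only ramified point of the standard action is $0$), we have $\Grb_{\mc{G},p}^{\lambda_{\vec{p}}}\simeq\Grb_{G}^{\lambda_{\vec{p}}}$ by \eqref{200864}, and the section space of the level $c$ line bundle there is dual to the untwisted affine Demazure module $D(c,\lambda_{\vec{p}})$. The remaining point is to match the $\gs$-action: the $\gs$-module structure on the fiber is obtained by restricting the $L^+\mc{G}_{\A^n}$-equivariant structure along the morphism $\gs\to\mathrm{Lie}(L^+\mc{G}_{\A^n})$ from Corollary~\ref{432749}, and specializing at $\vec{p}$ with $p_1=p$ twists the loop coordinate $t\mapsto t-p$, which is precisely the shifted action \eqref{342834}; hence the fiber is $D(c,\lambda_{\vec{p}})_p$. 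This can also be seen purely algebraically from Theorem~\ref{616072}(1) in the case $n=1$ via part~(1): reducing to the single point $p$ on the diagonal, $\Ds(c,\la)_{\vec p}$ is a cyclic $\gs$-module (Corollary~\ref{906266}) which by Theorem~\ref{616072} is $\bigotimes_i D(c,\lambda_i)_{\gamma_i(p)}$, and this tensor product, being supported at a single $\Gamma$-orbit, collapses to $D(c,\lambda_{\vec p})_p$ by the same fusion argument as in the proof of Proposition~\ref{930915}(1).

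The only genuinely nontrivial input is the base-change step — that forming the fiber of $H^0(\Grb_{\mc{G},\A^n}^{\la},\mc{L}^c)^\vee$ at $\vec p$ returns $H^0$ of the fiber — which is where flatness (Theorem~\ref{368017}), reducedness of the fibers (same theorem), and the freeness of the global section module over $\C[\A^n]$ (Theorem~\ref{243693}) are all used simultaneously; everything else is a formal manipulation of the factorization diagrams. I do not anticipate a serious obstacle: the main risk is purely expository, namely keeping the two descriptions of the $\gs$-action (geometric, via the jet group; algebraic, via the shift \eqref{342834}) consistently identified, and this identification is already implicit in Corollary~\ref{432749} and the construction preceding it.
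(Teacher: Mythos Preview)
Your proposal is correct and takes essentially the same approach as the paper: part~(1) via the fact that $\psi_\xi$ is an isomorphism (the final assertion of Theorem~\ref{055073}), and part~(2) via Corollary~\ref{711570}(2) combined with Theorem~\ref{055073}. One small caution: your alternative algebraic route for part~(2) invokes Theorem~\ref{616072}(1), but that result requires $p_i^m\neq p_j^m$ for $i\neq j$, which fails precisely on the locus $\Delta_{[n]}$ you are considering; stick with your primary geometric argument.
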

\begin{proof}
	Part (1) follows from the fact that the $\gs$-morphism $\psi_\xi$ defined in Proposition \ref{348932} is an isomorphism. Part (2) follows from Corollary \ref{711570} and Theorem \ref{055073}.
\end{proof}
\begin{remark}\label{remark_fiber}
Note that when $\vec{p}=(0,\cdots, 0)\in \A^n$, the fiber $ \Ds(c,\la)_{\vec{p}}$ is already determined in Theorem \ref{398063}, and this fact is used in the proof of Theorem \ref{055073}. This fact together with Corollary \ref{367230} determines the fiber $ \Ds(c,\la)_{\vec{p}}$ at any point $\vec{p}\in \A^n$. 
\end{remark}

\appendix
\section{Tannakian interpretation for $(\Gamma, G)$-torsors}
\label{appendixA}
Let $G$ be an affine algebraic group over an algebraically closed field $k$. 
Recall a classical result that a $G$-torsors is equivalent to a faithful exact tensor functor from $\mr{Rep}(G)$ to the category of locally free sheaves, cf.\,\cite{Nori76,Broshi13}. In this appendix, we generalize this equivalence to an equivariant setting.

Let $\Gamma$ be a finite group acting on a $k$-scheme $Y$ and acting on an affine algebraic group $G$ over $k$. 

\begin{definition}\label{653962}
	A $(\Gamma,G)$-torsor is a scheme $\mc{F}$ which is faithfully flat and affine over $Y$, and equipped with a $\Gamma$-action and a right $G$-action such that
	\begin{enumerate}
		\item The map $\mc{F}\to Y$ is both $\Gamma$-equivariant and $G$-equivariant.
		\item The $G$-action on $\mc{F}$ is compatible with the $\Gamma$-action, i.e.\,the $G$-action map $\mc{F}\times G\to \mc{F}$ is $\Gamma$-equivariant, where $\Gamma$-action on $\mc{F}\times G$ is given by $\gamma(p,g)=(\gamma p,\gamma g)$ for $\gamma\in \Gamma$.
		\item The map $\mc{F}\times G\to \mc{F}\times_Y\mc{F}$ sending $(p,g)$ to $(p,pg)$ is an isomorphism.
	\end{enumerate}
Given $(\Gamma, G)$-torsors $\mc{F}$ and $\mc{F}'$, a $(\Gamma, G)$-equivariant map $f:\mc{F}\to \mc{F'}$ is a morphism of $G$-torsors which is also 
 $\Gamma$-equivariant. 
\end{definition}
We denote by $\mathbf{Bun}_{\Gamma,G}(Y)$ the category of $(\Gamma,G)$-torsors over $Y$ in which the morphisms are $(\Gamma, G)$-equivariant maps. 
\begin{definition}\label{325042}
	Let $\mf{C}$, $\mf{D}$ be two tensor categories, and let $F, F':\mf{C}\to \mf{D}$ be two tensor functors. A morphism $f: F\to F'$ of tensor functors is a natural transformation from $F$ to $F'$ such that, for any finite families $(V_i)_{i\in I}$ of objects in $\mf{C}$, the diagram
	\begin{equation}\label{405833}
		\xymatrix{
		\bigotimes_{i\in I} F(V_i) \ar[d]_{\bigotimes_{i\in I}f_{V_i}}^[@]{\sim} \ar[r]^{\sim} & F \big(\bigotimes_{i\in I} V_i \big) \ar[d]_[@]{\sim}^{f_{\bigotimes_{i\in I} V_i}} \\
		\bigotimes_{i\in I} F'(V_i) \ar[r]^{\sim} & F' \big(\bigotimes_{i\in I} V_i \big)
		}
	\end{equation}
	commutes. (See \cite[Definition 1.12]{DM18})
\end{definition}
\begin{remark}
	This definition is equivalent to the commutativity of the diagram (\ref{405833})  for $I=\{1,2\}$ and $I=\emptyset$.
\end{remark}
Denote by $\mr{Rep}(G)$ the category of finite dimensional algebraic representations of $G$ over $k$, and by $\mr{Coh}^{\mr{lf}}(Y)$ the category of locally free sheaves of finite rank on $Y$. For each $\gamma\in \Gamma$, denote by $\gamma_G$ (resp.\,$\gamma_Y$) the automorphism on $G$ (resp.\,$Y$) associated to $\gamma$. Let $\gamma_Y^*:\mr{Coh}^{\mr{lf}}(Y)\to \mr{Coh}^{\mr{lf}}(Y)$ be the pullback functors induced from $\gamma_Y$. We define a functor $\gamma_G^*:\mr{Rep}(G)\to \mr{Rep}(G)$ as follows, for any $(V,\rho) \in \mr{Rep}(G) $, 
\[\gamma_G^*(V, \rho):=(V, \rho\circ \gamma_G).\] The natural tensor structure on the category $\mr{Rep}(G)$ (resp.\,$\mr{Coh}^{\mr{lf}}(Y)$) is preserved by $\gamma^*_G$ (resp.\,$\gamma_Y^*$).
\begin{definition}\label{964424}
	We say a tensor functor $F:\mr{Rep}(G)\to\mr{Coh}^{\mr{lf}}(Y)$ is $\Gamma$-equivariant if it is equipped with a collection of isomorphisms between tensor functors $\{\theta_\gamma: F \gamma_G^*\to\gamma_Y^* F\}_{\gamma\in \Gamma}$ such that, for any $\eta,\gamma\in \Gamma$, the following diagram 
	\begin{equation}\label{230892}
		\xymatrix{
		F\eta_G^*\gamma_G^* \ar[d]_{F\delta}^[@]{\sim} \ar[r]^{\sim}_{\theta_{\eta}} & \eta_Y^*F\gamma_G^* \ar[r]^{\sim}_{\eta^*\theta_{\gamma}} & \eta_Y^*\gamma_Y^*F \ar[d]_[@]{\sim}^{\xi} \\
		F(\gamma\eta)_G^* \ar[rr]^{\sim}_{\theta_{\gamma\eta}}& & (\gamma\eta)_Y^*F }
	\end{equation}
	commutes, where $\delta: \eta_G^*\gamma_G^*\simeq (\gamma\eta)_G^*$ and $\xi: \eta_Y^*\gamma_Y^*\simeq (\gamma\eta)_Y^*$ are natural isomorphisms between these functors. In fact, the commutativity of this diagram implies that $\theta_e=\mr{id}: F=Fe^*\to e^* F=F$.
\end{definition}
Let $\mc{F}$ be a $G$-torsor over $Y$. For each object $V$ in $\mr{Rep}(G)$, we denote by $\ms{F}_V$ the sheaf of sections of the vector bundle $\mc{F}\times^G V$ over $Y$. Then, $\ms{F}_V$ is locally free and it is well-known that
\[\mr{Spec}\,S(\check{\ms{F}}_V)\simeq \mc{F}\times^G V,\] 
where $S(\check{\ms{F}}_V)$ denotes the symmetric algebra of the dual sheaf $\check{\ms{F}}_V$ of $\ms{F}_V$, cf.\,\cite[Exercise 5.18]{Hartshorne77}. There is a natural functor 
$\Phi(\mc{F}): \mr{Rep}(G)\to\mr{Coh}^{\mr{lf}}(Y)$ associated to $\mc{F}$, which sends any representation $V$ to the locally free sheaf $\ms{F}_V$ on $Y$.
\begin{lem}\label{001938}
	Let $\mc{F}$ be a $(\Gamma, G)$-torsor over $Y$. The functor $\Phi(\mc{F}): \mr{Rep}(G) \to \mr{Coh}^{\mr{lf}}(Y)$ is a faithful exact tensor functor and it is $\Gamma$-equivariant.
\end{lem}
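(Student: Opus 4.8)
The plan is to separate the statement into the classical (non-equivariant) content and the genuinely new equivariant content, the latter being the heart of the matter.

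For the non-equivariant part I would recall, following Nori and Broshi, why $\Phi(\mc{F})$ is a faithful exact tensor functor. The tensor structure is supplied by the canonical isomorphisms $\mc{F}\times^G(V\otimes W)\simeq(\mc{F}\times^G V)\otimes(\mc{F}\times^G W)$ and $\mc{F}\times^G\mathbf{1}\simeq\mc{O}_Y$ for the trivial representation $\mathbf{1}$, which are readily seen to be compatible with the associativity, commutativity and unit constraints; since these isomorphisms can be checked fppf-locally on $Y$, where $\mc{F}$ is trivial, there is nothing subtle here. For exactness and faithfulness I would use that $\mc{F}\to Y$ is faithfully flat together with condition (3) of Definition \ref{653962}: after the base change $\mc{F}\to Y$ the torsor trivializes, $\mc{F}\times_Y\mc{F}\simeq\mc{F}\times G$, so $\ms{F}_V$ pulls back to the free sheaf $\mc{O}_\mc{F}\otimes_k V$. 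Exactness of a short exact sequence $0\to V'\to V\to V''\to 0$ of $G$-modules then transfers to $0\to\ms{F}_{V'}\to\ms{F}_V\to\ms{F}_{V''}\to 0$ by faithfully flat descent, and the same descent argument shows that a morphism of $G$-modules which is sent to $0$ already vanishes, giving faithfulness.

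The main point is the construction of the $\Gamma$-equivariant structure. For $\gamma\in\Gamma$ let $a_\gamma:\mc{F}\to\mc{F}$ be the action of $\gamma$, so that conditions (1) and (2) give $\pi\circ a_\gamma=\gamma_Y\circ\pi$ and $a_\gamma(p\cdot g)=a_\gamma(p)\cdot\gamma_G(g)$. Writing $\mc{F}^{\gamma_G^{-1}}$ for the scheme $\mc{F}$ equipped with the twisted right $G$-action $p\star g:=p\cdot\gamma_G^{-1}(g)$, the assignment $p\mapsto(a_\gamma(p),\pi(p))$ defines a $G$-equivariant isomorphism $\mu_\gamma:\mc{F}^{\gamma_G^{-1}}\xrightarrow{\ \sim\ }\gamma_Y^*\mc{F}$ of $G$-torsors over $Y$, where $\gamma_Y^*\mc{F}=\mc{F}\times_{Y,\gamma_Y}Y$. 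Applying $(-)\times^G V$ and using the elementary identifications $\mc{F}\times^{G}(\gamma_G^*V)\simeq\mc{F}^{\gamma_G^{-1}}\times^G V$ together with base change $(\gamma_Y^*\mc{F})\times^G V\simeq\gamma_Y^*(\mc{F}\times^G V)$, one obtains, for each $V\in\mr{Rep}(G)$, a natural isomorphism $\theta_{\gamma,V}:\Phi(\mc{F})(\gamma_G^*V)\xrightarrow{\ \sim\ }\gamma_Y^*\Phi(\mc{F})(V)$. Naturality in $V$ and compatibility with the tensor structure in the sense of diagram \ref{405833} are immediate, since all of these isomorphisms are induced from the single torsor isomorphism $\mu_\gamma$ and the associated-bundle construction is monoidal.

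Finally I would verify the cocycle condition, diagram \ref{230892}, together with $\theta_e=\mr{id}$. Using $a_{\gamma\eta}=a_\gamma\circ a_\eta$, $(\gamma\eta)_Y=\gamma_Y\circ\eta_Y$ and $(\gamma\eta)_G=\gamma_G\circ\eta_G$, the composite isomorphism $\mc{F}^{(\gamma\eta)_G^{-1}}\to\gamma_Y^*(\eta_Y^*\mc{F})\to(\gamma\eta)_Y^*\mc{F}$ assembled from $\mu_\gamma$ and $\gamma_Y^*\mu_\eta$ agrees with $\mu_{\gamma\eta}$; pushing this identity through $(-)\times^G V$ yields exactly the commutativity of \ref{230892}. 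I expect the only real obstacle to be purely organizational: keeping straight the twists by $\gamma_G$ versus $\gamma_G^{-1}$ and the order in which the automorphisms compose, which is why I would fix the explicit model $\mc{F}^{\gamma_G^{-1}}$ and the explicit formula for $\mu_\gamma$ once and for all; with those conventions pinned down, the verification is a routine diagram chase with no genuine mathematical difficulty.
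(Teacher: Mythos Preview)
Your proposal is correct and follows the same approach as the paper, which simply cites Broshi for the non-equivariant content and writes $\theta_\gamma$ as $(p,v)\mapsto(\gamma p,v)$---precisely the map your torsor isomorphism $\mu_\gamma$ induces on associated bundles. The only wrinkle is the one you already flag: in the cocycle check the intermediate object should read $\eta_Y^*(\gamma_Y^*\mc{F})$, assembled from $\mu_\eta$ and $\eta_Y^*\mu_\gamma$ to match the order in diagram~\eqref{230892}, rather than $\gamma_Y^*(\eta_Y^*\mc{F})$.
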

\begin{proof}
		By \cite[Lemma 4.1]{Broshi13}, $F$ is a faithful exact tensor functor. Thus, it suffices to show $F$ satisfies Definition \ref{964424}. Let $\theta_\gamma$ be the natural isomorphism between the vector bundles $\mc{F}\times^G \gamma_G^*V$ and $\gamma_Y^*(\mc{F}\times^G V)$ over $Y$ sending $(p,v)$ to $(\gamma p,v)$. It is easy to check the collection of isomorphisms $\{\theta_\gamma\}_{\gamma\in \Gamma}$ satisfies \ref{964424}.
\end{proof}
\begin{definition}\label{176768}
	Let $F$, $F'$ be two $\Gamma$-equivariant tensor functors from $\mr{Rep}(G)$ to $\mr{Coh}^{\mr{lf}}(Y)$. A morphism from $F$ to $F'$ as $\Gamma$-equivariant tensor functors is a morphism $f:F\to F'$ of tensor functors such that for any $\gamma\in \Gamma$, the following diagram commutes 
	\begin{equation}\label{302075}
		\xymatrix{
		F\gamma_G^* \ar[d]_{\theta_{\gamma}}^[@]{\sim} \ar[r]^{f\circ \mr{id}} & F'\gamma_G^* \ar[d]_[@]{\sim}^{\theta'_{\gamma}} \\
		\gamma_Y^*F \ar[r]^{\mr{id} \circ f} & \gamma_Y^*F' }
	\end{equation}
	Denote by $\mathbf{TF}_{\Gamma}(G,Y)$ the category of tensor functors from $\mr{Rep}(G)$ to $\mr{Coh}^{\mr{lf}}(Y)$ that are faithful, exact and $\Gamma$-equivariant. The morphisms between these objects are defined to be morphisms between $\Gamma$-equivariant tensor functors.
\end{definition} 
Recall that given a $(\Gamma,G)$-torsor $\mc{F}$ over $Y$, for each object $V$ in $\mr{Rep}(G)$, one can associate a locally free sheaf $\ms{F}_V$. By Lemma \ref{001938}, the functor $\Phi(\mc{F}): \mr{Rep}(G)\to\mr{Coh}^{\mr{lf}}(Y)$ is an object in $\mathbf{TF}_{\Gamma}(G,Y)$. Let $f:\mc{F}\to \mc{F}'$ be a morphism of $(\Gamma,G)$-torsors. 
We naturally get a morphism $\Phi(f): \Phi (\mc{F})\to \Phi (\mc{F}')$ of $\Gamma$-equivariant tensor functors. 
As a summary, we have the following result.
\begin{prop}\label{401313}
	$\Phi$ is a functor from $\mathbf{Bun}_{\Gamma,G}(Y)$ to $\mathbf{TF}_{\Gamma}(G,Y)$.  \qed
\end{prop}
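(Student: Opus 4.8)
The statement to prove (Proposition \ref{401313}) is that $\Phi$ is a functor from $\mathbf{Bun}_{\Gamma,G}(Y)$ to $\mathbf{TF}_{\Gamma}(G,Y)$. Since the work on objects and morphisms has essentially been carried out in Lemma \ref{001938} and the paragraph following Definition \ref{176768}, the proof is mainly a matter of verifying functoriality axioms. Let me plan it out.

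\textbf{Plan.} The plan is to assemble the pieces already established and then check the two functoriality identities: that $\Phi$ preserves identity morphisms and composition. First I would recall that for a $(\Gamma,G)$-torsor $\mc{F}$, Lemma \ref{001938} shows $\Phi(\mc{F})$ is a faithful exact tensor functor equipped with the canonical isomorphisms $\{\theta_\gamma\}$ sending $(p,v)\mapsto(\gamma p, v)$, hence an object of $\mathbf{TF}_\Gamma(G,Y)$. Next, given a $(\Gamma,G)$-equivariant morphism $f:\mc{F}\to\mc{F}'$, for each $V\in\mr{Rep}(G)$ the map $f\times^G\mr{id}_V:\mc{F}\times^G V\to\mc{F}'\times^G V$ is a well-defined morphism of vector bundles over $Y$ (it descends because $f$ is $G$-equivariant), giving a morphism of locally free sheaves $\Phi(f)_V:\ms{F}_V\to\ms{F}'_V$; naturality in $V$ and compatibility with the tensor structure are immediate from the construction of $f\times^G\mr{id}$, so $\Phi(f)$ is a morphism of tensor functors. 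The one genuinely new thing to check here is the compatibility square \eqref{302075}: that $\theta'_\gamma\circ(\Phi(f)\circ\mr{id})=(\mr{id}\circ\Phi(f))\circ\theta_\gamma$ as maps $\Phi(\mc{F})\gamma_G^*\to\gamma_Y^*\Phi(\mc{F}')$. This reduces to the point-level identity: tracing $(p,v)\in\mc{F}\times^G\gamma_G^*V$ one way gives $(\gamma f(p),v)$ and the other way gives $f(\gamma p,v)=(\gamma f(p),v)$, which holds precisely because $f$ is $\Gamma$-equivariant. Finally, I would check functoriality: $\Phi(\mr{id}_{\mc{F}})_V=\mr{id}_{\ms{F}_V}$ since $\mr{id}_{\mc{F}}\times^G\mr{id}_V=\mr{id}$, and $\Phi(g\circ f)_V=(g\times^G\mr{id}_V)\circ(f\times^G\mr{id}_V)=\Phi(g)_V\circ\Phi(f)_V$ by functoriality of the associated-bundle construction $\mc{E}\mapsto\mc{E}\times^G V$.

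\textbf{Main obstacle.} The only step with any subtlety is verifying the commutativity of \eqref{302075}, i.e. that $\Phi(f)$ respects the $\Gamma$-equivariant structures $\theta_\gamma,\theta'_\gamma$ rather than merely being a morphism of tensor functors. This is where the hypothesis that $f$ is $\Gamma$-equivariant (not just $G$-equivariant) is actually used. The verification itself is a short diagram chase on the associated bundles, comparing the two composites $(p,v)\mapsto (\gamma p,v)\mapsto (\gamma f(p),v)$ and $(p,v)\mapsto (f(p),v)\mapsto (\gamma f(p),v)$; since both $\theta_\gamma$ and $\theta'_\gamma$ are the ``move the torsor-coordinate by $\gamma$'' maps and $f$ commutes with the $\Gamma$-action on torsors, the square closes. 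After that, the identity and composition axioms are formal consequences of the functoriality of $\mc{E}\mapsto\mc{E}\times^G V$, so no further work is needed. Thus the proof is essentially: ``combine Lemma \ref{001938} with the construction of $\Phi(f)$ in the preceding paragraph, and note that \eqref{302075} commutes because $f$ is $\Gamma$-equivariant; functoriality is then immediate.''
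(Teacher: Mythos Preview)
Your proposal is correct and matches the paper's approach: the paper marks this proposition with \qed, i.e.\ treats it as an immediate consequence of Lemma \ref{001938} and the construction of $\Phi(f)$ just before the statement, which is exactly what you have unpacked. The only substantive check you identify---commutativity of \eqref{302075} via the $\Gamma$-equivariance of $f$---is indeed the sole nontrivial point, and the functoriality axioms are formal, so nothing is missing.
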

Conversely, for any object $F$ in $\mathbf{TF}_{\Gamma}(G,Y)$, we would like to construct a $(\Gamma,G)$-torsor corresponding to $F$. 
\begin{definition}
	An affine $G$-scheme over $Y$ is a scheme affine and flat over $Y$ with a left $G$-action. Moreover, an affine $(\Gamma,G)$-scheme over $Y$ is an affine $G$-scheme $f:X\to Y$ over $Y$ equipped with a $\Gamma$-action such that 
	\begin{enumerate}
		\item $f:X\to Y$ is $\Gamma$-equivariant.
		\item The $G$-action map $G\times X\to X$ is $\Gamma$-equivariant.
	\end{enumerate}
\end{definition} 
Denote by $\mr{Rep}^{\infty}(G)$ the category of locally finite dimensional representations of $G$, and by $\mr{QCoh}(Y)$ the category of quasi-coherent sheaves on $Y$. Let $F:\mr{Rep}(G)\to\mr{Coh}^{\mr{lf}}(Y)$ be a faithful exact tensor functor. One can extend it uniquely to a faithful exact tensor functor $\mr{Rep}^{\infty}(G)\to \mr{QCoh}(Y)$ which will still be denoted by $F$, see \cite[Lemma 4.4]{Broshi13}. For an affine $G$-scheme $X$ over $k$, we define a scheme over $Y$
\[\tilde F(X):=\mr{Spec}\, F(k[X]),\]
where $k[X]$ denotes the coordinate ring of $X$. 
It was shown in \cite[Lemma 4.5]{Broshi13} that $\tilde F(X)$ is flat and affine over $Y$. Thus, $\tilde F$ gives to a functor from the category of affine $G$-schemes over $k$ to the category of schemes affine and flat over $Y$. Given any representation $V$ in ${\rm Rep}(G)$, it can be regarded as a $G$-scheme. Then $\tilde{F}(V)=\tilde{F(V)}$, where $\tilde{F(V)}$ represents the affine bundle associated to the locally free sheaf $F(V)$.

Let $X$ be an affine $G$-scheme over $k$ and for any $\gamma\in \Gamma$, we define $\gamma_G^*X$ to be the scheme $X$ with a left $G$-action twisted by the morphism $\gamma_G:G\to G$, and define $\gamma_Y^* \tilde FX$ to be the pullback of the scheme $\tilde FX$ via the morphism $\gamma_Y:Y\to Y$. Then we have $\gamma_G^*X=\mr{Spec}(\gamma_G^*(k[X]))$ and $\gamma_Y^* \tilde FX=\mr{Spec}(\gamma_Y^*F(k[X]))$. If $F:\mr{Rep}(G)\to\mr{Coh}^{\mr{lf}}(Y)$ is $\Gamma$-equivariant, then the extended functor $F: \mr{Rep}^{\infty}(G)\to \mr{QCoh}(Y)$ is also $\Gamma$-equivariant, i.e.\,there is a collection of natural isomorphisms between tensor functors $\{\theta_\gamma: F\gamma_G^*\to \gamma_Y^*F\}_{\gamma\in\Gamma}$ satisfying the diagram \eqref{230892} in Definition \ref{964424}. For each $\gamma\in \Gamma$, $\theta_\gamma$ induces an isomorphism $\tilde\theta_{\gamma,X}: \tilde F\gamma_G^*X \simeq \gamma_Y^*\tilde FX$ of schemes which is functorial in $X$.
\begin{lem}\label{931735}
	Let $F:\mr{Rep}(G)\to\mr{Coh}^{\mr{lf}}(Y)$ be a $\Gamma$-equivariant faithful and exact tensor functor. The induced functor $\tilde F$ satisfies the following properties: 
	\begin{enumerate}
		\item $\tilde F$ respects products and if $X$ has a trivial $G$-action then $\tilde F(X)=Y\times X$ with the obvious $G$-action.
		\item For any $\gamma\in \Gamma$ and any two affine $G$-schemes $X_1,X_2$ over $k$, the following diagram commutes
		\[\xymatrix{
		\tilde F\gamma_G^*(X_1\times X_2) \ar[d]_{\tilde\theta_{\gamma,X_1\times X_2}}^[@]{\sim} \ar[r]^-{\sim} & \tilde F\gamma_G^*(X_1)\times_Y \tilde F\gamma_G^*(X_2) \ar[d]_[@]{\sim}^{(\tilde\theta_{\gamma,X_1},\tilde\theta_{\gamma, X_2})} \\
		\gamma_Y^*\tilde F(X_1\times X_2) \ar[r]^-{\sim}& \gamma_Y^*\tilde F(X_1)\times_Y \gamma_Y^*\tilde F(X_2) 
		}.\]
		\item For any $\eta,\gamma\in \Gamma$, the following diagram 
		\[
		\xymatrix{
		\tilde F\eta_G^*\gamma_G^* \ar[d]_{\tilde F\delta}^[@]{\sim} \ar[r]^{\sim}_{\tilde \theta_{\eta}} & \eta_Y^*\tilde F\gamma_G^* \ar[r]^{\sim}_{\eta^*\tilde \theta_{\gamma}} & \eta_Y^*\gamma_Y^*\tilde F \ar[d]_[@]{\sim}^{\xi} \\
		\tilde F(\gamma\eta)_G^* \ar[rr]^{\sim}_{\tilde \theta_{\gamma\eta}}& & (\gamma\eta)_Y^*\tilde F }
		\]
		commutes, where $\delta: \eta_G^*\gamma_G^*\simeq (\gamma\eta)_G^*$ and $\xi: \eta_Y^*\gamma_Y^*\simeq (\gamma\eta)_Y^*$ are the natural isomorphisms between functors.
	\end{enumerate}
\end{lem}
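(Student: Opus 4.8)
The plan is to deduce all three parts from the non‑equivariant statements of \cite{Broshi13} (in particular Lemma 4.4, which extends $F$ to a faithful exact tensor functor $\mr{Rep}^{\infty}(G)\to\mr{QCoh}(Y)$ respecting filtered colimits, and Lemma 4.5 on flatness and affineness of $\tilde F(X)$), together with a systematic translation between diagrams of quasi‑coherent $\ms{O}_Y$‑algebras and diagrams of affine $Y$‑schemes via the contravariant equivalence $\mr{Spec}$. By construction $\tilde F(X)=\mr{Spec}\,F(k[X])$, $\gamma_G^*X=\mr{Spec}(\gamma_G^*k[X])$, $\gamma_Y^*\tilde FX=\mr{Spec}(\gamma_Y^*F(k[X]))$, and $\tilde\theta_{\gamma,X}=\mr{Spec}(\theta_{\gamma,k[X]})$; so each of the three claimed diagrams of schemes is obtained by applying $\mr{Spec}$ to a corresponding diagram of $\ms{O}_Y$‑algebras, and since $\mr{Spec}$ is a contravariant equivalence onto affine $Y$‑schemes, commutativity on the scheme side is equivalent to commutativity on the algebra side. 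Since the coordinate ring of a product of affine $G$‑schemes is the $k$‑tensor product of the coordinate rings, and $\mr{Spec}$ of a tensor product of $\ms{O}_Y$‑algebras over $\ms{O}_Y$ is the fiber product over $Y$, every ``product'' structure morphism below is simply $\mr{Spec}$ of a monoidality isomorphism of $F$, $\gamma_G^*$ or $\gamma_Y^*$. It also suffices to treat the finite‑dimensional case, since $k[X]$ is the filtered colimit of its finite‑dimensional $G$‑stable subspaces and $F$, $\gamma_G^*$, $\gamma_Y^*$ and the $\theta_\gamma$ all commute with filtered colimits.

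For part (1): the isomorphism $\tilde F(X_1\times X_2)\simeq\tilde F(X_1)\times_Y\tilde F(X_2)$ is $\mr{Spec}$ of the monoidal constraint $F(k[X_1]\otimes_k k[X_2])\xrightarrow{\sim}F(k[X_1])\otimes_{\ms{O}_Y}F(k[X_2])$, and it carries along the $G$‑action, which is encoded entirely by the $G$‑comodule structures to which $F$ is applied functorially. When $G$ acts trivially on $X$, the $G$‑algebra $k[X]$ is a filtered colimit of finite‑dimensional trivial representations, each a direct sum of copies of the unit object; by the unit axiom $F$ sends the unit to $\ms{O}_Y$, and the extended $F$ commutes with filtered colimits, so $F(k[X])\simeq\ms{O}_Y\otimes_k k[X]$ as $\ms{O}_Y$‑algebras, whence $\tilde F(X)=\mr{Spec}(\ms{O}_Y\otimes_k k[X])=Y\times_{\mr{Spec}\,k}X$ with the obvious $G$‑action.

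For part (2): the endofunctor $\gamma_G^*$ of $\mr{Rep}^{\infty}(G)$ is strictly monoidal because $\gamma_G$ is a group automorphism (one has $\gamma_G^*(V_1\otimes V_2)=\gamma_G^*V_1\otimes\gamma_G^*V_2$ on the nose), and likewise $\gamma_Y^*$ is monoidal; moreover $\theta_\gamma\colon F\gamma_G^*\to\gamma_Y^*F$ is a morphism of tensor functors by Definition \ref{964424}, so it satisfies the coherence diagram \eqref{405833}. Evaluating \eqref{405833} at the pair $V_i=k[X_i]$, $i=1,2$, and applying $\mr{Spec}$ — using that $\mr{Spec}$ converts the $k$‑tensor product of coordinate rings into the $Y$‑fiber product and is compatible with the monoidality isomorphisms of $\gamma_G^*$, $\gamma_Y^*$ and $F$ — produces exactly the square asserted in part (2).

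For part (3): the natural isomorphisms $\delta\colon\eta_G^*\gamma_G^*\simeq(\gamma\eta)_G^*$ and $\xi\colon\eta_Y^*\gamma_Y^*\simeq(\gamma\eta)_Y^*$ of functors are given by the identity on underlying spaces together with the evident relabelling of (co)actions, so their $\mr{Spec}$'s are precisely the scheme isomorphisms $\delta$, $\xi$ named in (3). The extended $F$ is $\Gamma$‑equivariant on $\mr{Rep}^{\infty}(G)$ (as recorded just before the lemma), so diagram \eqref{230892}, evaluated at $k[X]$, commutes as a diagram of $\ms{O}_Y$‑algebras; applying $\mr{Spec}$ reverses every arrow but preserves commutativity, yielding the diagram of part (3). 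The only care required — and the main, mild, obstacle — is to confirm that the extension of $F$ together with its equivariance data $\{\theta_\gamma\}$ to $\mr{Rep}^{\infty}(G)$ still satisfies the coherence and cocycle diagrams \eqref{405833} and \eqref{230892}; this follows from the uniqueness of the extension in \cite[Lemma 4.4]{Broshi13} and its compatibility with all monoidal and colimit structures, so no genuinely new argument beyond bookkeeping is needed.
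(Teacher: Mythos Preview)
Your proposal is correct and follows essentially the same approach as the paper: part (1) is cited from \cite[Lemma 4.5]{Broshi13}, part (2) is deduced from $\theta_\gamma$ being a morphism of tensor functors (diagram \eqref{405833}), and part (3) from the cocycle condition \eqref{230892}. You have simply spelled out in detail the passage via $\mr{Spec}$ and the extension to $\mr{Rep}^{\infty}(G)$ that the paper leaves implicit.
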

\begin{proof}
	Part (1) was established in \cite[Lemma 4.5]{Broshi13}. Part (2) follows from that fact that $\theta_{\gamma}$ is a morphism of tensor functors. Part (3) follows from the commutativity \eqref{230892} for $\{\theta_{\gamma}\}_{\gamma\in\Gamma}$.
	\end{proof}
\begin{definition}
	An affine $\Gamma$-scheme $X$ over $Y$ is a scheme affine and flat over $Y$ with a $\Gamma$-action such that the map $X\to Y$ is $\Gamma$-equivariant.
\end{definition}
\begin{cor}
\label{cor_gamma_Y}
	With the same assumption as in Lemma \ref{931735}, $F$ induces a functor $\tilde F$ from the category of affine $(\Gamma,G)$-schemes over $k$ to the category of affine $\Gamma$-schemes over $Y$.
\end{cor}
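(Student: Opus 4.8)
The plan is to promote the functor $\tilde F$ of Lemma \ref{931735}, which is already known to send affine $G$-schemes over $k$ to schemes affine and flat over $Y$, to the $\Gamma$-equivariant setting by equipping $\tilde F(X)$ with a $\Gamma$-action whenever $X$ is an affine $(\Gamma,G)$-scheme. Let $X$ be such a scheme, so we have the structure morphism $X\to\operatorname{Spec}k$ (with $\Gamma$ and $G$ actions), and an affine-scheme isomorphism $g_\gamma:\gamma_G^*X\simeq X$ for each $\gamma\in\Gamma$ encoding the $\Gamma$-action on $X$; here I use that the $\Gamma$-action on $X$ is compatible with the $G$-action precisely in the sense that $g_\gamma$ is a morphism of $G$-schemes $\gamma_G^*X\to X$, and that $g_\eta\circ\gamma_G^*(g_\gamma)$ agrees with $g_{\gamma\eta}$ up to the canonical identification $\eta_G^*\gamma_G^*\simeq(\gamma\eta)_G^*$. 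Applying $\tilde F$ and composing with the comparison isomorphism $\tilde\theta_{\gamma,X}:\tilde F\gamma_G^*X\simeq\gamma_Y^*\tilde F X$ of Lemma \ref{931735}, I obtain for each $\gamma$ an isomorphism
\[
\tilde g_\gamma:\ \gamma_Y^*\tilde F X\ \xrightarrow{\ \tilde\theta_{\gamma,X}^{-1}\ }\ \tilde F\gamma_G^*X\ \xrightarrow{\ \tilde F(g_\gamma)\ }\ \tilde F X
\]
of schemes over $Y$. The family $\{\tilde g_\gamma\}_{\gamma\in\Gamma}$ is exactly a descent-type datum for a $\Gamma$-action on $\tilde F X$ over $Y$; I would package it that way.

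The steps, in order: (1) record that the data of a $\Gamma$-action on an affine scheme $X$ over $k$ compatible with a $G$-action is the same as a family of $G$-scheme isomorphisms $g_\gamma:\gamma_G^*X\simeq X$ satisfying the cocycle identity $g_{\gamma\eta}=g_\eta\circ\eta_G^*(g_\gamma)$ under $\delta:\eta_G^*\gamma_G^*\simeq(\gamma\eta)_G^*$, and $g_e=\mathrm{id}$; (2) form $\tilde g_\gamma$ as above and check that the $\{\tilde g_\gamma\}$ again satisfy the cocycle identity $\tilde g_{\gamma\eta}=\tilde g_\eta\circ\eta_Y^*(\tilde g_\gamma)$ with respect to $\xi:\eta_Y^*\gamma_Y^*\simeq(\gamma\eta)_Y^*$ — this is where part (3) of Lemma \ref{931735} (the hexagon relating $\tilde F\delta$, $\tilde\theta_\eta$, $\eta^*\tilde\theta_\gamma$, $\xi$, $\tilde\theta_{\gamma\eta}$) is used to transport the cocycle identity for $g_\bullet$ across, together with functoriality of $\tilde\theta_{\gamma,-}$ in its scheme argument; (3) conclude that $\{\tilde g_\gamma\}$ defines a $\Gamma$-action on $\tilde F X$, that the structure map $\tilde F X\to Y$ is $\Gamma$-equivariant (this is automatic from the construction, since $\tilde\theta_{\gamma,X}$ and $\tilde F(g_\gamma)$ are morphisms over $Y$), so $\tilde F X$ is an affine $\Gamma$-scheme over $Y$ (flatness and affineness are Lemma \ref{931735}(1) and \cite[Lemma 4.5]{Broshi13}); (4) finally, check functoriality: a morphism $X_1\to X_2$ of affine $(\Gamma,G)$-schemes over $k$ is a $G$-morphism commuting with the $g_\gamma$'s, and applying $\tilde F$ plus functoriality of $\tilde\theta_{\gamma,-}$ shows $\tilde F X_1\to\tilde F X_2$ commutes with the $\tilde g_\gamma$'s, hence is a morphism of affine $\Gamma$-schemes over $Y$.

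The main obstacle is Step (2), the cocycle compatibility. It is not a one-line diagram chase: one must carefully interleave the hexagon of Lemma \ref{931735}(3) (which governs how the $\tilde\theta_\gamma$ compose) with the cocycle identity for the $g_\gamma$ (which governs how the $\Gamma$-action on $X$ composes) and with naturality of $\tilde\theta_{\gamma,-}$ in the scheme variable applied to the morphism $g_\gamma$. Concretely, one expands $\tilde g_{\gamma\eta}=\tilde F(g_{\gamma\eta})\circ\tilde\theta_{\gamma\eta,X}^{-1}$, rewrites $g_{\gamma\eta}$ via the $G$-side cocycle, pushes $\tilde F$ through, uses naturality to slide $\tilde\theta$'s past the image of $g_\gamma$, and then invokes the hexagon to reassemble the result as $\tilde g_\eta\circ\eta_Y^*(\tilde g_\gamma)$. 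Everything else — functoriality, equivariance of the structure map, preservation of flatness/affineness — is routine and inherited directly from the cited lemmas. I would therefore state the corollary as: for $F$ as in Lemma \ref{931735}, the assignment $X\mapsto\tilde F X$ with the $\Gamma$-action $\{\tilde g_\gamma\}$ is a functor from affine $(\Gamma,G)$-schemes over $k$ to affine $\Gamma$-schemes over $Y$, and give the proof as the four steps above, spending the bulk of the argument on the cocycle verification.
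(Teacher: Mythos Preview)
Your proposal is correct and follows essentially the same approach as the paper: both construct the $\Gamma$-action on $\tilde F X$ by composing $\tilde F$ applied to the $G$-equivariant isomorphism between $X$ and $\gamma_G^*X$ with the comparison $\tilde\theta_{\gamma,X}$, and then verify the cocycle identity via the hexagon of Lemma \ref{931735}(3). The only cosmetic difference is direction: the paper builds $\varphi_\gamma:\tilde F X\to\gamma_Y^*\tilde F X$ from $X\to\gamma_G^*X$, while you build $\tilde g_\gamma:\gamma_Y^*\tilde F X\to\tilde F X$ from $g_\gamma:\gamma_G^*X\to X$; these are inverse conventions. Your write-up is in fact more complete than the paper's, since you explicitly address functoriality on morphisms and $\Gamma$-equivariance of the structure map, which the paper leaves implicit.
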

\begin{proof}
	Let $X$ be any $(\Gamma, G)$-scheme over $k$. For any $\gamma\in \Gamma$, the map $X \simto \gamma_G^*X$ sending $x$ to $\gamma(x)$ is an isomorphism of $G$-schemes. Applying the functor $\tilde{F}$, we get the following isomorphism of $G$-schemes over $Y$
	\[\varphi_{\gamma}: \tilde F X \simto \tilde F \gamma_G^*X \xrightarrow{\tilde{\theta}_{\gamma,X}} \gamma^*_Y \tilde F X. \]
	Moreover, for any $\eta, \gamma\in \Gamma$, the following diagram commutes
	\[\xymatrix{
		X \ar[d]^[@]{\sim} \ar[r]^-{\sim} & \eta_G^* X \ar[d]_[@]{\sim} \\
		(\gamma\eta)_G^*X \ar[r]^-{\sim} & \eta_G^*\gamma_G^*X
	}.\]
	Applying $\tilde{F}$, we have the commutative diagram 
	\[\xymatrix{
		\tilde FX \ar[d]_{\varphi_{\gamma\eta}}^-[@]{\sim} \ar[r]^{\sim}_{\varphi_\eta} & \eta_Y^*\tilde F X \ar[d]_[@]{\sim}^{\eta^*_G\varphi_\gamma} \\
		(\gamma\eta)_Y^*\tilde FX \ar[r]^{\sim}_{\xi^{-1}}& \eta_Y^*\gamma_Y^*\tilde FX
	},\]
	where $\xi: \eta_Y^*\gamma_Y^*\simto (\gamma\eta)_Y^*$ is the natural isomorphism between functors. This amounts to a $\Gamma$-action on $\tilde FX$. 
\end{proof}
 Denote by $G_0$ the underlying scheme of the algebraic group $G$ with the trivial right $G$-action. Let $a: G\times G_0\to G$ be the map sending $(g_1,g_2)$ to $g_1g_2$. Applying $\tilde F$ and by part (1) of Lemma \ref{931735}, this gives rise to a map 
\[\tilde a: \tilde F G\times G=\tilde F G\times_Y \tilde F G_0\to \tilde F G.\] It was shown in \cite[Lemma 4.7]{Broshi13} that $\tilde a$ gives a right $G$-action on $\tilde{F}G$, and furthermore $\tilde F G$ is a $G$-torsor over $Y$. In fact, we have more.
\begin{prop}
	$\tilde F G$ is a $(\Gamma,G)$-torsor over $Y$.
\end{prop}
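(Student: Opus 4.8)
The plan is to verify the three defining conditions of Definition \ref{653962} for $\tilde{F}G$, building on the fact, already established via \cite[Lemma 4.7]{Broshi13}, that $\tilde{F}G$ is an ordinary $G$-torsor over $Y$, and on Corollary \ref{cor_gamma_Y}, which equips $\tilde{F}X$ with a $\Gamma$-action for any affine $(\Gamma,G)$-scheme $X$ over $k$. First I would note that $G$ itself, with left multiplication as the $G$-action, right multiplication as a second $G$-action, and the $\Gamma$-action by the automorphisms $\gamma_G$, is an affine $(\Gamma,G)$-scheme over $k$ in the sense needed: the $\Gamma$-action commutes with projections to the point and is compatible with both multiplications since each $\gamma_G$ is a group automorphism. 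Hence Corollary \ref{cor_gamma_Y} gives a $\Gamma$-action on $\tilde{F}G$ making $\tilde{F}G\to Y$ a $\Gamma$-equivariant morphism; this is condition (1).

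Next I would check condition (2), that the right $G$-action map $\tilde{a}:\tilde{F}G\times G\to \tilde{F}G$ is $\Gamma$-equivariant, where $\Gamma$ acts on $\tilde{F}G\times G$ by $\gamma\cdot(p,g)=(\gamma p,\gamma_G g)$. Here $\tilde{a}$ is obtained by applying $\tilde{F}$ to the map $a:G\times G_0\to G$, $(g_1,g_2)\mapsto g_1 g_2$, together with the identification $\tilde{F}(G\times G_0)\simeq \tilde{F}G\times_Y\tilde{F}G_0=\tilde{F}G\times G$ from part (1) of Lemma \ref{931735} (note $\tilde{F}G_0=Y\times G$ since $G_0$ has trivial $G$-action). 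Since $a$ is $\Gamma$-equivariant for the diagonal $\Gamma$-action $\gamma_G\times\gamma_G$ on $G\times G_0$ (again because $\gamma_G$ is a group automorphism), functoriality of $\tilde{F}$ and of the isomorphisms $\tilde{\theta}_\gamma$ — precisely the compatibilities recorded in parts (2) and (3) of Lemma \ref{931735} and used to define the $\Gamma$-action in Corollary \ref{cor_gamma_Y} — propagate this equivariance to $\tilde{a}$. I would spell out the relevant commuting square: applying $\tilde{F}$ to the commutative square relating $a$ and its $\Gamma$-twist, and then pasting in the $\tilde{\theta}_\gamma$ isomorphisms, yields exactly $\tilde{a}\circ(\varphi_\gamma\times\gamma_G)=\varphi_\gamma\circ\tilde{a}$, which is the $\Gamma$-equivariance of the action map.

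Finally, for condition (3), I would show the shearing map $\tilde{F}G\times G\to \tilde{F}G\times_Y\tilde{F}G$, $(p,g)\mapsto(p,pg)$, is an isomorphism. Since $\tilde{F}G$ is already a $G$-torsor over $Y$, this map is an isomorphism of schemes; there is nothing more to prove here — condition (3) does not involve $\Gamma$ at all, so it is inherited verbatim from \cite[Lemma 4.7]{Broshi13}. Assembling the three conditions, $\tilde{F}G$ is a $(\Gamma,G)$-torsor over $Y$. I expect the only genuinely delicate point to be the bookkeeping in condition (2): one must be careful that the $\Gamma$-action on the product $\tilde{F}G\times_Y\tilde{F}G_0$ supplied by Corollary \ref{cor_gamma_Y} (applied to the $(\Gamma,G)$-scheme $G\times G_0$) matches, under the product decomposition of part (1) of Lemma \ref{931735}, the action $\gamma\cdot(p,g)=(\varphi_\gamma(p),\gamma_G g)$ one wants — this is exactly the content of part (2) of Lemma \ref{931735}, so the compatibility is available, but writing the diagram chase cleanly is where the care is needed. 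Everything else is either a direct citation of the non-equivariant statement or an immediate consequence of $\gamma_G$ being an automorphism of $G$.
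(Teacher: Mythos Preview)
Your proposal is correct and follows essentially the same approach as the paper: both invoke Corollary \ref{cor_gamma_Y} for the $\Gamma$-action on $\tilde{F}G$, cite the non-equivariant torsor statement from \cite{Broshi13}, and then reduce everything to showing $\tilde{a}$ is $\Gamma$-equivariant by applying $\tilde{F}$ to the $\Gamma$-equivariance of $a:G\times G_0\to G$ and using the compatibilities of Lemma \ref{931735}. The paper writes out the two commuting squares explicitly while you describe the same diagram chase in prose, but the argument is the same.
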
 
\begin{proof}
By Corollary \ref{cor_gamma_Y}, $\tilde{F}G$ is a $\Gamma$-scheme over $Y$. 
Also, as discussed above, $\tilde FG$ is a right $G$-torsor. Thus, it suffices to show that $\tilde a:\tilde FG\times G\to \tilde FG$ is $\Gamma$-equivariant. Since $G$ is a $(\Gamma,G)$-scheme, we have the following commutative diagram of $G$-schemes over $k$
	\[\xymatrix{
		G\times G_0 \ar[d]^[@]{\sim} \ar[r]^-{a} & G\ar[d]_[@]{\sim} \\
		\gamma_G^*G\times G_0 \ar[r]^-{a} & \gamma_G^*G
	},\]
	where the left vertical map is given by $(g_1,g_2)\mapsto (\gamma g_1, \gamma g_2)$, the right vertical map is given by $g\mapsto \gamma g$. Applying $\tilde F$, by Lemma \ref{931735} the following diagram commutes
	\[\xymatrix{
		\tilde F G\times G \ar[d]_{\varphi_\gamma\times \gamma}^-[@]{\sim} \ar[r]^-{\tilde a} & \tilde F G\ar[d]_-[@]{\sim}^{\varphi_\gamma} \\
		\gamma^*_Y \tilde F G\times G \ar[r]^-{\tilde a} & \gamma_Y^*\tilde F G 
	}.\]
	This implies that the map $\tilde a:\tilde FG\times G\to \tilde FG$ is $\Gamma$-equivariant.
\end{proof}

Given a morphism $\beta:F_1\to F_2$ in $\mathbf{TF}_{\Gamma}(G,Y)$, one can further show there is a morphism $\tilde \beta_G: \tilde F_1G\to \tilde F_2G$ between two $(\Gamma, G)$-torsors over $Y$. Define $\Psi(F):=\tilde FG$ and $\Psi(\beta):=\tilde \beta_G$. This gives a functor
\[\Psi:\mathbf{TF}_{\Gamma}(G,Y)\to \mathbf{Bun}_{\Gamma,G}(Y).\]

\begin{thm}\label{926976}
	The functor $\Psi:\mathbf{TF}_{\Gamma}(G,Y)\to \mathbf{Bun}_{\Gamma,G}(Y)$ is an equivalence. 
\end{thm}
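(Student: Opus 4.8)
The plan is to exhibit $\Phi$ and $\Psi$ as mutually quasi-inverse functors, following the strategy of the classical (non-equivariant) Tannakian reconstruction of torsors as in \cite{Broshi13}, and then checking that all the constructions are compatible with the $\Gamma$-structures. The non-equivariant statement, namely that $\Phi_0:\mathbf{Bun}_G(Y)\to\mathbf{TF}(G,Y)$ and $\Psi_0:\mathbf{TF}(G,Y)\to\mathbf{Bun}_G(Y)$ are mutually quasi-inverse equivalences, is already available from \cite[Section 4]{Broshi13}; so the content here is entirely the bookkeeping of the group action. First I would recall that by Proposition \ref{401313} we have a functor $\Phi$ and by the discussion preceding the theorem we have a functor $\Psi$, and both commute with the forgetful functors to the non-equivariant categories. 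Hence the underlying natural transformations $\mathrm{id}\to \Psi_0\circ\Phi_0$ and $\mathrm{id}\to\Phi_0\circ\Psi_0$ (which are isomorphisms by \cite{Broshi13}) already exist; what remains is to show that these isomorphisms are morphisms in the equivariant categories, i.e.\ that they intertwine the $\Gamma$-actions on torsors in one case and the collections $\{\theta_\gamma\}$ on tensor functors in the other.

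The key steps, in order, would be: (1) For a $(\Gamma,G)$-torsor $\mc F$, show that the canonical isomorphism $\mc F\simeq \Psi\Phi(\mc F)=\widetilde{\Phi(\mc F)}G$ is $\Gamma$-equivariant. Here the $\Gamma$-action on the right side is the one produced in Corollary \ref{cor_gamma_Y}, built from the isomorphisms $\tilde\theta_{\gamma,G}$ coming from $\theta_\gamma$ on $\Phi(\mc F)$; and $\theta_\gamma$ for $\Phi(\mc F)$ is, by the proof of Lemma \ref{001938}, literally the map $(p,v)\mapsto(\gamma p,v)$ on associated bundles. Tracing this through the canonical identification $\mc F\simeq\widetilde{\Phi(\mc F)}G$ (which sends a point of $\mc F$ to the corresponding $G$-equivariant fibre functor on coordinate rings) one sees the two $\Gamma$-actions agree; this is a diagram chase with no real content once the identifications are written out. (2) Symmetrically, for $F\in\mathbf{TF}_\Gamma(G,Y)$, show the canonical isomorphism of tensor functors $F\simeq\Phi\Psi(F)=\Phi(\tilde FG)$ is a morphism in $\mathbf{TF}_\Gamma(G,Y)$, i.e.\ satisfies the compatibility \eqref{302075}: one must check that under this identification the distinguished isomorphisms $\theta_\gamma$ of $F$ correspond to the $\theta_\gamma$ of $\Phi(\tilde FG)$. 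The latter are again the ``$(p,v)\mapsto(\gamma p,v)$'' maps on bundles associated to the $(\Gamma,G)$-torsor $\tilde FG$, and by construction of the $\Gamma$-action on $\tilde FG$ in Corollary \ref{cor_gamma_Y} these unwind to exactly the $\tilde\theta_\gamma$, hence to $\theta_\gamma$ by part (3) of Lemma \ref{931735} and the functoriality of $\theta_\gamma$ in the object. (3) Finally, check that on morphisms $\Phi$ and $\Psi$ act as required and that the two natural isomorphisms are themselves natural in $\mc F$ and $F$ respectively; this is automatic from naturality in the non-equivariant setting together with steps (1)–(2).

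I expect the main obstacle to be purely organizational rather than mathematical: it is the careful verification in step (2) that the coherence data match, because $\theta_\gamma$ lives at the level of tensor functors and one must be sure that the isomorphism $F\simeq\Phi\Psi(F)$, which a priori is only constructed object-by-object, respects the tensor structure and the $\{\theta_\gamma\}$ simultaneously. Concretely, one needs that the extension of $F$ to $\mathrm{Rep}^\infty(G)$ used to define $\tilde F$ carries along the $\Gamma$-structure (so that $\tilde\theta_{\gamma}$ is well-defined and functorial, which is asserted in Lemma \ref{931735}), and that evaluating $\Phi(\tilde FG)$ on a representation $V$ recovers $F(V)$ compatibly with $\theta_\gamma$; the identity $\tilde F(V)=\widetilde{F(V)}$ noted before Lemma \ref{931735} is the crucial input making this work. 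Since \cite{Broshi13} already supplies the underlying equivalence and Lemmas \ref{001938}, \ref{931735} and Corollary \ref{cor_gamma_Y} package all the equivariant compatibilities, the proof reduces to assembling these pieces and performing the two diagram chases described above; I would keep the writeup short, citing \cite{Broshi13} for the non-equivariant core and spelling out only the $\Gamma$-equivariance of the unit and counit.
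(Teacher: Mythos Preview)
Your proposal is correct and follows essentially the same approach as the paper: both show $\Phi$ and $\Psi$ are mutually quasi-inverse by establishing the unit and counit isomorphisms from the non-equivariant theory (the paper even cites \cite[Theorem 4.8]{Broshi13} for the key step in $\Phi\Psi(F)\simeq F$) and then verifying $\Gamma$-equivariance via the compatibilities packaged in Lemmas \ref{001938}, \ref{931735} and Corollary \ref{cor_gamma_Y}. The only minor difference is that for $\Psi\Phi(\mc F)\simeq\mc F$ the paper spells out an explicit bijection $\ms F_{k[G]}\simeq\pi_*\ms O_{\mc F}$ (via the correspondence between sections $U\to\mc F_U\times^G k[G]$ and regular functions $\mc F_U\to k$) rather than citing Broshi directly, but this is a presentational choice rather than a different argument.
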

\begin{proof}
	We will show $\Psi$ is inverse to the functor $\Phi$, which is well-defined by Proposition \ref{401313}. Let $\pi: \mc{F}\to Y$ be a $(\Gamma, G)$-torsor over $Y$, we have $\Psi\Phi(\mc{F}) = \mr{Spec}\,(\ms{F}_{k[G]})$, where $\ms{F}_{k[G]}$ denotes the sheaf of sections of $\mc{F}\times^G k[G]$ over $Y$. We would like to show $ \ms{F}_{k[G]}\simeq \pi_* \ms{O}_{\mc{F}}$, where $\ms{O}_\mc{F}$ denotes the structure sheaf of $\mc{F}$. For any open subset $U$ of $Y$, let $\mc{F}_U$ denote the $G$-torsor $\pi^{-1}(U)$ over $U$. Each section $s:U\to \mc{F}_U \times^Gk[G]$ uniquely corresponds to a section:
	\[\mc{F}_U \simeq \mc{F}_U \times_U U \xrightarrow{\mr{id}\times s} \mc{F}_U \times_U\mc{F}_U\times^Gk[G] \simeq (\mc{F}_U\times G)\times^G k[G] \simeq \mc{F}_U\times k[G].\]
	This is equivalent to a $G$-equivariant map $\phi_s:\mc{F}_U\to k[G]$. Composing with $\mr{ev}_e:k[G]\to k$, we get an morphism $\mr{ev}_e\circ \phi_s:\mc{F}_U\to k$. Conversely, for any morphism $f:\mc{F}_U\to k$, there is a $G$-equivariant map $\tilde{f}:\mc{F}_U\to k[G]$ defined by $\tilde{f}(p): g\mapsto f(pg)$. By the arguement above, this uniquely corresponds a section $U\to \mc{F}_U\times^G k[G]$. Thus, we have a $1$-$1$ correspondence between the sections $U\to \mc{F}_U\times^Gk[G]$ and the morphisms $\mc{F}_U\to k$. Thus, $ \ms{F}_{k[G]}\simeq \pi_* \ms{O}_{\mc{F}}$. One may further check this natural isomorphism is $(\Gamma,G)$-equivariant. 
	It follows that $\Psi\Phi(\mc{F}) =\mr{Spec}\,(\ms{F}_{k[G]})\simeq \mr{Spec}\,(\ms{O}_{\mc{F}})= \mc{F}$.

	Conversely, given any object $F\in \mathbf{TF}_{\Gamma}(G,Y)$, we will show there is an isomorphism $\Phi\Psi(F)\to F$ between $\Gamma$-equivariant tensor functors. 
	For each object $V$ in $\mr{Rep}(G)$, by definition $\Phi\Psi(F)(V)$ is the sheaf of sections of $\tilde{F}G\times^G V$ over $Y$. Observe that $F(V)$ is the sheaf of sections of $\tilde F(V)$ over $Y$. Thus, it suffices to show there are $\Gamma$-equivariant maps $\{\tilde{F}G\times^G V\to \tilde F(V)\mid V\in \mr{Rep}(G)\}$ functorial in $V$. Denote by $V_0$ (resp.\,$G_0$) the scheme $V$ (resp.\,$G$) with a trivial $G$-action. Consider the diagram 
	\[\xymatrix{
		G\times G_0 \times V_0 \ar[d]_{m\times \mr{id}} \ar[r]^-{\mr{id}\times a} & G\times V_0\ar[d]^{a} \\
		G\times V_0 \ar[r]^-{a} & V
	},\]
	where $a:G\times V_0\to V$ sends $(g,v)$ to $gv$, and $m:G\times G_0\to G$ sends $(g_1,g_2)$ to $g_1g_2$. Applying $\tilde{F}$, it induces an isomorphism $\tilde a:\tilde FG\times^G V\to \tilde{F}V$, cf.\,\cite[Theorem 4.8]{Broshi13}. From the construction of $\tilde{a}$, one may check $\tilde{a}$ is $\Gamma$-equivariant and functorial in $V$. 
\end{proof}

\section{A generalization of cohomology and base change theorem}\label{419108}
\begin{definition}
	An additive functor $T:\mf{C}\to \mf{D}$ between two additive abelian categories is called half-exact if for any short exact sequence $0\to M\to N\to L\to 0$ in $\mf{C}$ the sequence $T(M)\to T(N)\to T(L)$ is exact.
\end{definition}
Let $f:A \rightarrow B$ be a local homomorphism of Noetherian local rings. Let $\mf{m}$ be the maximal ideal of $A$, and $k=A/\mf{m}$. Let $\mr{Mod}^{\mr{fg}}(A)$ (resp.\,$\mr{Mod}^{\mr{fg}}(B)$) denote the category of finitely generated $A$-modules (resp.\,$B$-modules). Given an $A$-linear functor $T: \mr{Mod}^{\mr{fg}}(A) \to \mr{Mod}^{\mr{fg}}(B)$, one can attach a natural transformation 
\[T(A)\otimes_A\cdot \to T.\] 
This is constructed as follows. For any $A$-module $M$ and any element $m\in M$, they give rise to an $A$-morphism $\phi_{m}:A\to M$, given by $a\mapsto am$. Applying $T$, we get $T(\phi_m):T(A)\to T(M)$. This induces an $A$-bilinear map $T(A)\times M\to T(M)$ given by $(t,m)\mapsto T(\phi_m)(t)$. Thus, this induces a map  $T(A)\otimes_A M\to T(M)$. One may check this map is functorial in $M$.
\begin{prop}\label{259856}
	Let $T$ be a half-exact $A$-linear functor from $\mr{Mod}^{\mr{fg}}(A)$ to $\mr{Mod}^{\mr{fg}}(B)$. Assume that $T$ commutes with direct limits. Then,
	\begin{enumerate}
		\item If $T(k)=0$, then $T(M)=0$ for any $M\in \mr{Mod}^{\mr{fg}}(A)$.
		\item The following conditions are equivalent:
		\begin{enumerate}
		\item $T(A)\to T(k)$ is surjective.
		\item The functor $T$ is right exact.
		\item $T(A)\otimes_A\cdot \to T$ is an isomorphism.
		\end{enumerate}
	\end{enumerate}
\end{prop}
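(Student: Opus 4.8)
The plan is to mirror the classical Grothendieck cohomology-and-base-change machinery (cf.\,\cite[III.12]{Hartshorne77}), but organized around the abstract half-exact functor $T$ rather than the specific functors $R^if_*$. The hypotheses we get to use are: $T$ is additive, $A$-linear, half-exact on $\mr{Mod}^{\mr{fg}}(A)$, and commutes with direct limits. The key technical device, exactly as in the classical proof, is to produce for each $M$ a natural right exact sequence expressing $T$ in terms of its values on free modules; here, since we do not have a complex of projectives on hand, the substitute will be a devissage argument using half-exactness together with the Artin--Rees / Noetherian induction structure of finitely generated $A$-modules.

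For part (1), I would argue by Noetherian induction on the support. Assume $T(k)=0$. First, using $A$-linearity and commutation with direct limits, reduce to showing $T(A/\mf{p})=0$ for every prime $\mf{p}$ (any finitely generated $M$ has a finite filtration with quotients of the form $A/\mf{p}_i$, and half-exactness propagates vanishing along short exact sequences: if $T$ kills the ends of $0\to M'\to M\to M''\to 0$ then $T(M')\to T(M)\to T(M'')\to 0$ forces $T(M)=0$). For $A/\mf{p}$ with $\mf{p}$ not maximal, pick $x\in\mf{m}\setminus\mf{p}$; then $0\to A/\mf{p}\xrightarrow{x} A/\mf{p}\to A/(\mf{p}+(x))\to 0$ and by Noetherian induction $T$ kills the cokernel, so $T(x):T(A/\mf{p})\to T(A/\mf{p})$ is surjective; since $T(A/\mf{p})$ is a finitely generated $B$-module and $x$ maps into the maximal ideal of $B$ (as $f$ is local), Nakayama gives $T(A/\mf{p})=0$. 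The base case $\mf{p}=\mf{m}$ is the hypothesis. This is the same idea as \cite[Prop.\,III.12.5]{Hartshorne77}.

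For part (2), I would prove (a)$\Rightarrow$(c)$\Rightarrow$(b)$\Rightarrow$(a). The implications (c)$\Rightarrow$(b) and (b)$\Rightarrow$(a) are formal: tensoring is right exact, and a right exact functor applied to $A\twoheadrightarrow k$ gives a surjection. The content is (a)$\Rightarrow$(c). Here is where the main obstacle lies. Define $Q$ to be the cokernel of the natural transformation $T(A)\otimes_A(\cdot)\to T$; this is a functor on $\mr{Mod}^{\mr{fg}}(A)$. I would first check $Q$ is half-exact: given $0\to M'\to M\to M''\to 0$, chase the diagram with the right exact top row $T(A)\otimes M'\to T(A)\otimes M\to T(A)\otimes M''\to 0$ and the half-exact bottom row $T(M')\to T(M)\to T(M'')\to 0$; a snake-lemma-style argument yields $Q(M')\to Q(M)\to Q(M'')\to 0$ exact. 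It also commutes with direct limits since both $T$ and $\otimes$ do. Now evaluate at $k$: the map $T(A)\otimes_A k\to T(k)$ factors as $T(A)\to T(k)$ modulo $\mf{m}T(A)$; hypothesis (a) says $T(A)\to T(k)$ is surjective, hence $T(A)\otimes_A k\to T(k)$ is surjective, i.e.\,$Q(k)=0$. By part (1) applied to $Q$, we get $Q\equiv 0$, which is precisely the statement that $T(A)\otimes_A(\cdot)\to T$ is surjective for all finitely generated modules. To upgrade surjectivity to an isomorphism, I would run the same trick on the kernel functor $N(M):=\ker\big(T(A)\otimes_A M\to T(M)\big)$: one shows $N$ is half-exact (now using that the surjection is already established, so the relevant rows are short exact on the right and one gets left-exactness of $N$ from a kernel chase) and commutes with direct limits, and that $N(k)=0$ by a dimension count — $T(A)\otimes_A k$ and $T(k)$ are finite-dimensional $k$-vector spaces and the surjection between them is an isomorphism because $T(A)$ is generated over $A$ by lifts of a basis of $T(k)$, forcing equality of dimensions via Nakayama applied to $T(A)$. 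Then part (1) forces $N\equiv 0$. The delicate point throughout — and the one I expect to absorb most of the work — is verifying the half-exactness of the auxiliary functors $Q$ and $N$ cleanly, since this requires the correct diagram chase combining the right-exactness of $\otimes$ with the merely half-exact behavior of $T$; getting the connecting maps and the exactness positions right is the crux.
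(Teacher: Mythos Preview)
The paper gives no proof of its own here, only the citation to \cite{OB72}. Your devissage for part~(1) is correct and standard. For part~(2), the formal implications (c)$\Rightarrow$(b)$\Rightarrow$(a) are fine, and your $Q$-argument correctly shows that (a) forces the natural transformation $T(A)\otimes_A(\cdot)\to T$ to be surjective; note that this already yields (b), since for any surjection $M\twoheadrightarrow M''$ the composite $T(A)\otimes M\twoheadrightarrow T(A)\otimes M''\twoheadrightarrow T(M'')$ factors through $T(M)$. (Aside: the paper's definition of ``half-exact'' literally includes a trailing $\to 0$, which would make it coincide with right-exactness and trivialize (2)(b); you should read it as the standard ``exact at the middle term only'', as in \cite{OB72}.)

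The gap is in the $N$-argument for injectivity. The kernel functor $N$ is not half-exact by the chase you sketch: given $n\in N(M)$ with image zero in $N(M'')$, it lifts to some $n'\in T(A)\otimes_A M'$, but the image of $n'$ in $T(M')$ lies only in $\ker\big(T(M')\to T(M)\big)$, which need not vanish since $T$ is not left exact; one cannot conclude $n'\in N(M')$, nor modify it to land there. Your dimension count for $N(k)=0$ is also unjustified: $T(A)\otimes_A k=T(A)/\mf{m}T(A)$ need not be finite-dimensional over $k$ (there is no finiteness hypothesis on $B/\mf{m}B$), and Nakayama does not apply to $T(A)$ as an $A$-module since it is only assumed finitely generated over $B$. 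The clean fix is to abandon $N$ altogether: once (b) is established, choose a presentation $A^r\to A^s\to M\to 0$; additivity identifies $T(A)\otimes_A A^s\to T(A^s)$ with the identity on $T(A)^s$, and right-exactness of both $T$ and $T(A)\otimes_A(\cdot)$ then forces the natural map to be an isomorphism on $M$ by comparison of cokernels.
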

\begin{proof}
	cf.\,\cite[Theorem 2.2, Theorem 4.1]{OB72}.
\end{proof}
The following theorem is a generalization of cohomology and base change theorem, cf.\,\cite[Theorem 12.11]{Hartshorne77}.
\begin{thm}\label{259758}
	Let $\pi:X\to Y$ be a proper morphism of schemes over a Noetherian scheme $S$. Let $s\in S$, and let $X_s$ (resp.\,$Y_s$) be the schematic fiber $X\times_S \mr{Spec}\, k(s)$ (resp.\,$Y\times_S \mr{Spec}\, k(s)$), where $k(s)$ is the residue field of $s$; let $\pi_s:X_s\to Y_s$ be the induced morphism from $\pi$. Let $\ms{F}$ be the coherent sheaf, which is flat over $S$. Then,
	\begin{enumerate}
		\item If $R^i\pi_*\ms{F}|_{X_s}\xrightarrow{\varphi^i(s)} R^i(\pi_s)_*\ms{F}|_{X_s}$ is surjective, then $\varphi^i(s)$ is an isomorphism and it is true in a neighborhood of $s$ in $S$.
		\item Assume that $\varphi^i(s)$ is surjective. Then the following are equivalant:
		\begin{enumerate}
		\item\label{249662} $\varphi^{i-1}(s)$ is surjective.
		\item $R^i\pi_*(\ms{F})$ is flat in a neighborhood of $s$ in $S$.
		\end{enumerate}
	\end{enumerate}
\end{thm}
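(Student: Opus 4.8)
The plan is to reduce both statements to commutative algebra over Noetherian local rings and then apply Proposition~\ref{259856}. Both assertions are local on $S$ and on $Y$, so I would first fix a point $y\in Y_s$ and localize, replacing $S$ by $\mr{Spec}\,A$ and $Y$ by $\mr{Spec}\,B$ with $A=\ms{O}_{S,s}$, $B=\ms{O}_{Y,y}$, so that $A\to B$ is a local homomorphism of Noetherian rings; replacing $X$ by $X\times_Y\mr{Spec}\,B$ and keeping the names $\pi,\ms{F}$, the sheaf $\ms{F}$ is now coherent on a scheme $X$ that is proper over $\mr{Spec}\,B$ and flat over $\mr{Spec}\,A$ (flatness over $S$ being a stalkwise condition preserved by this base change). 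Since $R^i\pi_*$ commutes with the flat base change $\mr{Spec}\,B\to Y$, the stalks of $R^i\pi_*\ms{F}$ and of $R^i(\pi_s)_*(\ms{F}|_{X_s})$ at $y$ are computed after this localization, so it suffices to establish the stalkwise form of (1) and (2) for every such $y$.

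Next I would introduce, for each $i$, the functor $T^i\colon\mr{Mod}^{\mr{fg}}(A)\to\mr{Mod}^{\mr{fg}}(B)$ given by $T^i(M):=H^i(X,\ms{F}\otimes_A M)$, where $\ms{F}\otimes_A M:=\ms{F}\otimes_{\ms{O}_X}p^*\widetilde M$ for $p\colon X\to\mr{Spec}\,A$. One checks readily that $T^i$ is $A$-linear; that it lands in finitely generated $B$-modules because $X$ is proper over $\mr{Spec}\,B$; that it commutes with direct limits because $X$ is Noetherian; and that it is half-exact because $\ms{F}$ is $A$-flat, so $\ms{F}\otimes_A(-)$ is exact on quasicoherent sheaves and the long exact cohomology sequence applies. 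The tautological identifications $T^i(A)=(R^i\pi_*\ms{F})_y$ and $T^i(k)=(R^i(\pi_s)_*(\ms{F}|_{X_s}))_y$ for $k=A/\mf{m}$, together with the fact that the canonical map $T^i(A)\otimes_A k\to T^i(k)$ is the stalk of $\varphi^i(s)$ at $y$ (again by flat base change along $\mr{Spec}(B\otimes_A k)\to Y_s$), provide the dictionary with Proposition~\ref{259856}: surjectivity of $\varphi^i(s)$ at $y$ is precisely condition (2)(a) of that proposition for $T^i$.

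Given this dictionary, part (1) is immediate at the stalk level: if $\varphi^i(s)$ is surjective, Proposition~\ref{259856}(2) shows $T^i(A)\otimes_A(-)\to T^i$ is an isomorphism of functors, and evaluating at $M=k$ gives that $\varphi^i(s)$ is an isomorphism at $y$, hence (as this holds for every $y\in Y_s$) an isomorphism of sheaves on $Y_s$. For part (2), assume $\varphi^i(s)$ is surjective, so $T^i\cong T^i(A)\otimes_A(-)$; this functor is automatically right exact, so $T^i$ is exact if and only if $T^i(A)$ is $A$-flat, and exactness of $T^i$ is equivalent to left exactness, which by the boundary maps in the long exact cohomology sequence holds if and only if $T^{i-1}$ is right exact, i.e.\ (Proposition~\ref{259856}(2) for $T^{i-1}$) if and only if $\varphi^{i-1}(s)$ is surjective. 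As ``$T^i(A)$ is $A$-flat'' is exactly ``$R^i\pi_*\ms{F}$ is flat over $S$ at $y$'', this yields the equivalence in (2) stalkwise.

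It remains to upgrade the stalkwise statements to the ``in a neighborhood of $s$'' clauses. For this I would fall back on the classical cohomology complex in the relative setting: localizing $S$ and $Y$ to affine Noetherian schemes, the Čech complex of $\ms{F}$ for a finite affine open cover of $X$ is a bounded complex of $B$-modules flat over $A$, and the usual dévissage from the top degree (as in \cite[III.12.3]{Hartshorne77}, using that the $H^i$ are finitely generated over $B$ by properness) replaces it by a bounded complex $L^\bullet$ of $B$-modules that are finitely generated over $B$ and flat over $A$, with $H^i(L^\bullet\otimes_A M)\cong H^i(X,\ms{F}\otimes_A M)$ functorially in $M$. Then $R^i\pi_*\ms{F}$ corresponds to $H^i(L^\bullet)$ and $\varphi^i(s)$ to the natural map $H^i(L^\bullet)\otimes_A k(s)\to H^i(L^\bullet\otimes_A k(s))$, and the openness assertions become the standard commutative-algebra lemmas on bounded complexes of finitely generated flat modules over a Noetherian ring (\cite[III.12.5, III.12.7--12.9]{Hartshorne77}), whose proofs do not see the extra $B$-module structure; the flatness of $R^i\pi_*\ms{F}$ in a neighborhood of $s$ then follows from the openness of the flat locus. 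I expect the main obstacle to be exactly this step — carrying out the cohomology-complex construction and the spreading-out lemmas in the relative setup, where finiteness is measured over $Y$ but flatness over $S$ — whereas the pointwise content of the theorem is a formal consequence of Proposition~\ref{259856}.
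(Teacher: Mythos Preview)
Your approach is essentially the same as the paper's: localize to a local homomorphism $A\to B$, define the half-exact $A$-linear functors $T^i(M)=H^i(X,\ms{F}\otimes_A M)$ with values in $\mr{Mod}^{\mr{fg}}(B)$, and deduce both parts from Proposition~\ref{259856} together with the observation that $T^i$ is exact if and only if both $T^{i-1}$ and $T^i$ are right exact. The paper's proof is terser and does not explicitly spell out the spreading-out from the stalk at $s$ to an open neighborhood; your extra paragraph invoking the bounded complex $L^\bullet$ of finitely generated $B$-modules flat over $A$ is a reasonable way to fill this in, and is indeed the only place where any additional care is needed beyond Proposition~\ref{259856}.
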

\begin{proof}
	Since the statements of the theorem are local, we may assume that $Y=\mr{Spec} B$, $S=\mr{Spec } A$, and the morphism $Y\to S$ is given by a local homomorphism $A\to B$ of local rings, where $A$ is Noetherian. We consider the functor 
	\[T^i(M):=R^i\pi_*(X, \ms{F}\otimes_A M).\] 
	Then $T^i$ is a half-exact $A$-linear functor from $\mr{Mod}^{\mr{fg}}(A)$ to $\mr{Mod}^{\mr{fg}}(B)$, as $\ms{F}$ is flat over $S$ and $\pi$ is proper. Part (1) immediately follows from part (2) of Proposition \ref{259856}.

	We now prove part (2). By part (1), $T^i(A)$ is flat if and only if $T^i$ is exact. Then part (2) follows from Proposition \ref{259856} and the following basic fact: $T^i$ is exact if and only if $T^{i-1}$ and $T^i$ are both right exact.
\end{proof}
\begin{cor}\label{173291}
	Let $\pi:X\to Y$ be a proper morphism of schemes over a Noetherian scheme $S$. Suppose that $X$ is flat over $S$. Given a point $s\in S$, if $R^1(\pi_s)_*(\ms{O}_{X_s})=0$, then 
	\begin{enumerate}
		\item $\pi_*(\ms{O}_X)|_{Y_s}\simeq (\pi_s)_*(\ms{O}_{X_s})$
		\item $\pi_*(\ms{O}_X)$ is flat over a neighborhood of $s\in S$.
	\end{enumerate}
\end{cor}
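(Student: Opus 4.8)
The plan is to deduce both statements directly from Theorem~\ref{259758}, applied to the functor $T^i(M) = R^i\pi_*(\ms{O}_X \otimes_S M)$ in a neighborhood of the given point $s$. Since the claim is local on $S$, I would first reduce to the case $S = \mr{Spec}\,A$ with $A$ Noetherian local, $s$ the closed point, and $Y = \mr{Spec}\,B$ with $A \to B$ a local homomorphism, exactly as in the proof of Theorem~\ref{259758}. The coherent sheaf $\ms{F} = \ms{O}_X$ is flat over $S$ by hypothesis, so all the hypotheses of Theorem~\ref{259758} are in place.

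For part (1), the key observation is that the base change map $\varphi^1(s): R^1\pi_*(\ms{O}_X)|_{X_s} \to R^1(\pi_s)_*(\ms{O}_{X_s})$ is automatically surjective: its target $R^1(\pi_s)_*(\ms{O}_{X_s})$ is zero by assumption. Now I would invoke part (2)(a)$\Rightarrow$(b) of Theorem~\ref{259758} in degree $i=1$: since $\varphi^1(s)$ is surjective, the equivalence there shows that $\varphi^0(s)$ being surjective is equivalent to $R^1\pi_*(\ms{O}_X)$ being flat near $s$. But wait — I actually want to conclude $\varphi^0(s)$ is an \emph{isomorphism}. The cleaner route: apply part (1) of Theorem~\ref{259758} in degree $i=1$, which gives that $\varphi^1(s)$ is an isomorphism (since it is surjective), hence $R^1\pi_*(\ms{O}_X)|_{X_s} = 0$; by Nakayama this forces $R^1\pi_*(\ms{O}_X)$ to vanish in a neighborhood of $s$, in particular it is flat there. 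Then condition (b) of part (2) holds in degree $i=1$, so condition (a) holds, i.e.\ $\varphi^0(s)$ is surjective; applying part (1) in degree $i=0$ then upgrades surjectivity of $\varphi^0(s)$ to the isomorphism $\pi_*(\ms{O}_X)|_{Y_s} \simeq (\pi_s)_*(\ms{O}_{X_s})$. This establishes (1).

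For part (2), I want flatness of $\pi_*(\ms{O}_X) = T^0(A)$ in a neighborhood of $s$. From the argument just given, $\varphi^0(s)$ is surjective and $R^1\pi_*(\ms{O}_X)$ vanishes near $s$, so $\varphi^1(s)$ is trivially surjective. Then the equivalence in part (2) of Theorem~\ref{259758}, with $i=1$, says precisely that surjectivity of $\varphi^0(s)$ ($=\varphi^{i-1}(s)$) is equivalent to $R^1\pi_*(\ms{O}_X)$ ($=R^i\pi_*(\ms{F})$) being flat near $s$ — but I need this in degree $i=0$, not $i=1$. The correct application is: translate "$T^0$ is flat" via Proposition~\ref{259856}(2) into "$T^0$ is right exact", which holds iff $T^0(A) \to T^0(k)$ is surjective, which is exactly surjectivity of $\varphi^0(s)$ after identifying $T^0(k)$ with $H^0$ of the fiber. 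So flatness of $\pi_*(\ms{O}_X)$ near $s$ follows from the surjectivity of $\varphi^0(s)$ already proved in part (1).

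The only real subtlety I anticipate is bookkeeping with the indices in Theorem~\ref{259758}: one must be careful to apply part (2) of that theorem with the correct degree, since the flatness of $R^i\pi_*(\ms{F})$ is controlled by the \emph{lower} base-change map $\varphi^{i-1}(s)$, whereas here I want to control $R^0$ via $\varphi^0$ and $R^1$ via $\varphi^1$ simultaneously. A clean presentation bootstraps from the top: vanishing of $R^1(\pi_s)_*(\ms{O}_{X_s})$ forces $R^1\pi_*(\ms{O}_X) = 0$ near $s$ via part (1), which feeds the hypothesis of part (2) in degree $1$, yielding surjectivity — hence, by part (1) again, bijectivity — of $\varphi^0(s)$, and then flatness of $\pi_*(\ms{O}_X)$ falls out of Proposition~\ref{259856}(2). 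No serious calculation is required; it is purely an assembly of the cited results.
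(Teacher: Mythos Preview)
Your proposal is correct and follows essentially the same route as the paper: both arguments observe that $\varphi^1(s)$ is trivially surjective, apply Theorem~\ref{259758}(1) to force $R^1\pi_*(\ms{O}_X)=0$ near $s$, then use Theorem~\ref{259758}(2) with $i=1$ to obtain surjectivity of $\varphi^0(s)$, and finally Theorem~\ref{259758}(1) with $i=0$ for the isomorphism. The only discrepancy is in your justification of part~(2): Proposition~\ref{259856}(2) does not itself assert the equivalence ``$T^0(A)$ flat $\Leftrightarrow$ $T^0$ right exact'' --- that equivalence is the content of Theorem~\ref{259758}(2) applied with $i=0$ (where $\varphi^{-1}(s)$ is vacuously surjective), which is exactly how the paper phrases it.
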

\begin{proof}
	By part (1) of Theorem \ref{259758}, $R^1\pi_*(\ms{O}_X)$ is $0$ in a neighborhood of $s\in S$. In particular, $R^1\pi_*(\ms{O}_X)$ is flat over a neighborhood of $s\in S$. By part (2) of Theorem \ref{259758}, $\pi_*(\ms{O}_X)|_{Y_s}\to (\pi_s)_*(\ms{O}_{X_s})$ is surjective. Then part (1) follows from part (1) of Theorem \ref{259758}.

For part (2), by part (2) of Theorem \ref{259758} again, $\pi_*(\ms{O}_X)$ is flat over a neighborhood of $s\in S$ (note that condition \eqref{249662} in Proposition \ref{259856} holds automatically when $i=0$). 
\end{proof}

\section{Equivariant line bundles on stacks}\label{appendC}
In this appendix, we collect some basics of line bundles over $k$-stacks, where $k$ is an algebraically closed field. We also prove that for any semi-direct $k$-group $G\rtimes H$, any $G\rtimes H$-equivariant line bundle on a $k$-space $X$ can descend to a $H$-equivariant line bundle on the quotient stack $[G\backslash X]$. 

Denote by $\mr{Aff}_k$ the category of affine $k$-schemes. A $k$-space (resp. $k$-group) is a functor from $\mr{Aff}_k^{\mr{op}}$ to the category of sets (resp. groups) which is a sheaf with respect to the fppf topology, cf.\,\cite{BL94, Zhu17}.

Let $X$ be a $k$-space. Following \cite[(3.7)]{BL94}, a line bundle $\ms{L}$ over $X$ consists of following data: a line bundle $\ms{L}_\eta$ over $S$ for each scheme $S$ with a morphism $\eta:S\to X$, a collection of isomorphisms $\{\phi_{\eta,f}: f^*\ms{L}_\eta\simeq \ms{L}_{\eta\circ f}\}$ for each morphism $ f:S'\to S$ of schemes, subject to obvious compatibility conditions. In fact, this definition coincides with the usual definition of line bundles when $X$ is either a scheme or an ind-scheme.

More generally, a line bundle $\mc{L}$ on a stack $Y$ is defined to be the following data: a line bundle $\mc{L}_\eta$ over $S$ for each scheme $S$ and $\eta\in Y(S)$, an isomorphism $\phi_\theta:f^*\mc{L}_\eta \simeq \mc{L}_{\eta'}$ for each morphism $f:S'\to S$ of schemes and each arrow $\theta:f^*\eta \to \eta'$ in $Y(S')$, subject to obvious compatibility conditions, cf.\,\cite[(3.7)]{BL94}. 

Let $G$ be a $k$-group, and let $X$ be a $k$-space with a left $G$-action. Denote by $a:G\times X\to X$ the action map, and by $\mr{pr}:G\times X\to X$ the projection map. A line bundle $\ms{L}$ over $X$ is called $G$-equivariant if there is an isomorphism $u: a^*\ms{L}\simeq \mr{pr}^*\ms{L}$, satisfying the cocycle condition. Let $[G\backslash X]$ be the quotient stack. By descent theory, a line bundle over $[G\backslash X]$ is the same as a $G$-equivariant line bundle over $X$, cf.\,\cite[Section 7]{BL94}. 

Suppose there is a $k$-group $H$ acting on the $k$-group $G$ preserving the group structure, and there is a $(G\rtimes H)$-action on the $k$-space $X$. Then there is an $H$-action on $[G\backslash X]$ defined as follows. For any scheme $S$, an $S$-point $\eta$ of $[G\backslash X]$ is equivalent to a $G$-torsor $\pi:\mc{P}\to S$ together with a $G$-equivariant morphism $\alpha:\mc{P}\to X$. This produces a $G$-equivariant morphism
\[\tilde\eta:\mc{P}\to X\times S.\]
For any $h\in H(S)$, there is a $G$-equivariant map 
\begin{equation}\label{258623}
	h:X\times S\to X\times S,
\end{equation}
sending $(x,s)$ to $(h(s)^{-1}x,s)$. Here the domain $X\times S$ is equipped with a new $G$-action by $g\bu (x,s):=(h(s)gh^{-1}(s)x,s)$. Let $\mc{P}_h$ denote the $G$-torsor $P$ with a new $G$-action given by $g \bu p:= h(\pi(p))gh^{-1}(\pi(p))p$. Then, the following composition map is $G$-equivariant
\[\alpha_h:\mc{P}_h\xrightarrow{\tilde\eta} X\times S\xrightarrow{h} X\times S\xrightarrow{\mr{pr}_X} X,\]
which sends $p$ to $h(\pi(p))^{-1}\alpha(p)$. Hence, the $G$-torsor $\mc{P}_h$ over $S$ together with the $G$-equivariant morphism $\alpha_h:\mc{P}_h\to X$ defines an $S$-point of $[G\backslash X]$, which will be denoted by $h\cdot\eta$. This gives us an $H$-action on $[G\backslash X]$. In this case, a line bundle $\mc{L}$ over $[G\backslash X]$ is called $H$-equivariant if there are isomorphisms $\phi_{h,\eta}:\mc{L}_\eta\simeq \mc{L}_{h\cdot \eta}$ satisfying the cocycle condition.

\begin{lem}\label{708537}
	Let $H$ be a $k$-group, and let $G$ be a $k$-group with an $H$-action preserving the group structure. Suppose $(G\rtimes H)$ acts on a $k$-space $X$, and let $\ms{L}$ be a $(G\rtimes H)$-equivariant line bundle over $X$. Denote by $\mc{L}$ the line bundle over the quotient stack $[G\backslash X]$ descending from $\ms{L}$. Then there is an $H$-equivariant structure on $\mc{L}$.
\end{lem}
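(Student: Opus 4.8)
The plan is to unwind the definitions and produce the required isomorphisms $\phi_{h,\eta}:\mc{L}_\eta\simeq \mc{L}_{h\cdot\eta}$ directly from the $(G\rtimes H)$-equivariant structure on $\ms{L}$, then check the cocycle condition. Recall that an $S$-point $\eta$ of $[G\backslash X]$ is a $G$-torsor $\pi:\mc{P}\to S$ with a $G$-equivariant map $\alpha:\mc{P}\to X$, and that $\mc{L}_\eta$ is by descent the line bundle on $S$ obtained from $\alpha^*\ms{L}$ on $\mc{P}$ via the $G$-equivariant structure $u:a^*\ms{L}\simeq \mr{pr}^*\ms{L}$. For $h\in H(S)$, the point $h\cdot\eta$ is represented by the same underlying torsor with the twisted $G$-action ($\mc{P}_h$, with $g\bu p = h(\pi(p))\,g\,h(\pi(p))^{-1}p$) together with $\alpha_h(p)=h(\pi(p))^{-1}\alpha(p)$. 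So the first step is to compare $\alpha^*\ms{L}$ and $\alpha_h^*\ms{L}$ on $\mc{P}$.

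First I would use the $H$-equivariant part of the $(G\rtimes H)$-equivariant structure on $\ms{L}$: since $H\subseteq G\rtimes H$, there is an isomorphism $v:\rho^*\ms{L}\simeq \mr{pr}_X^*\ms{L}$ on $H\times X$, where $\rho(h,x)=h^{-1}x$ (the sign conventions matching \eqref{258623}). Pulling this back along the map $\mc{P}\to H\times X$, $p\mapsto (h(\pi(p)),\alpha(p))$, gives a canonical isomorphism $\alpha_h^*\ms{L}\simeq \alpha^*\ms{L}$ of line bundles on $\mc{P}$. The crucial point is that this isomorphism is compatible with the respective descent data: the descent datum for $\mc{L}_\eta$ uses $u$ along the original $G$-action, while the descent datum for $\mc{L}_{h\cdot\eta}$ uses $u$ along the twisted $G$-action, and the compatibility of $v$ with $u$ — which is exactly encoded in the semidirect-product relation inside the $(G\rtimes H)$-equivariance of $\ms{L}$ — guarantees that the isomorphism $\alpha_h^*\ms{L}\simeq\alpha^*\ms{L}$ intertwines them. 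Hence it descends to the desired isomorphism $\phi_{h,\eta}:\mc{L}_\eta\simeq\mc{L}_{h\cdot\eta}$ on $S$. One also checks functoriality in $S$ (base change along morphisms $S'\to S$ and along arrows in the groupoid $[G\backslash X](S)$), which is routine since everything is built by pullback from the fixed data $(\ms{L},u,v)$ on $X$.

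Finally I would verify the cocycle condition $\phi_{h',h\cdot\eta}\circ\phi_{h,\eta}=\phi_{h'h,\eta}$ for $h,h'\in H(S)$. This reduces, after unwinding, to the cocycle condition satisfied by $v$ on $H\times H\times X$ (equivalently, the cocycle condition for the $H$-part of the $(G\rtimes H)$-equivariant structure on $\ms{L}$), combined with the bookkeeping identity $h(\pi(p))\cdot$ for the composed twists of the $G$-action on $\mc{P}$; both are formal consequences of the axioms already imposed on $\ms{L}$.

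The main obstacle I expect is the compatibility in the second paragraph: one must be careful that the $G$-action on $\mc{P}_h$ is twisted by conjugation (not by translation) precisely so that the isomorphism coming from $v$ is $G$-equivariant for the relevant actions, and that this matches the mixed relation $h g h^{-1}$ appearing in the multiplication of $G\rtimes H$. Getting the variances and the sign conventions in \eqref{258623} to line up with the chosen form of the $(G\rtimes H)$-equivariant structure on $\ms{L}$ is where all the actual content sits; once that is pinned down, the descent and the cocycle check are mechanical.
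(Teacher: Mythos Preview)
Your proposal is correct and follows essentially the same approach as the paper: both use the $H$-part of the $(G\rtimes H)$-equivariant structure to produce an isomorphism $\alpha_h^*\ms{L}\simeq\alpha^*\ms{L}$ (the paper writes this pointwise as $\zeta_{h,x}=\phi_{h',x}$ after base-changing to $X\times S$), and both identify the semidirect-product relation $hg h^{-1}$ as exactly the compatibility needed for this isomorphism to be $G$-equivariant with respect to the twisted action on $\mc{P}_h$ (the paper's diagram \eqref{436587}), so that it descends. The only difference is packaging: the paper works on $X\times S$ via $\tilde\eta$ and writes the equivariance data pointwise, while you phrase it as pulling back a fixed isomorphism $v$ on $H\times X$ along $p\mapsto(h(\pi(p)),\alpha(p))$; these are equivalent.
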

\begin{proof}
	For any $\eta\in [G\backslash X](S)$, we have an morphism $\tilde\eta:\mc{P}\to X\times S$. Let $\ms{L}_S$ be the pull-back of  $\ms{L}$ via the projection $X\times S\to X$. As a $G$-equivariant line bundle, $\tilde\eta^*(\ms{L}_S)$ exactly descends to the line bundle $\mc{L}_\eta$ over $S$, where $\mc{L}_\eta$ is obtained from $\mc{L}$ via $\eta$. 

	Since $\ms{L}$ is $G\rtimes H$-equivariant, its pull-back $\ms{L}_S$ is also $G\rtimes H$-equivariant, i.e.\,given any scheme $T$, for any $h'\in H(T)$, $g\in G(T)$ and $x\in (X\times S)(T)$, there are isomorphisms $\phi_{h',x}:(\ms{L}_S)_x\simeq (\ms{L}_S)_{h'x}$ and $\psi_{g,x}: (\ms{L}_S)_x\simeq (\ms{L}_S)_{gx}$ such that the following diagram commutes
	\begin{equation}\label{405823}
		\xymatrix{
			(\ms{L}_S)_x \ar[r]^-{\phi_{h',x}} \ar[d]_{\psi_{g,x}} & (\ms{L}_S)_{h'x} \ar[d]^-{\psi_{h'gh'^{-1},h'x}} \\
			(\ms{L}_S)_{gx} \ar[r]^-{\phi_{h',gx}}& (\ms{L}_S)_{h'gx} 
		}.
	\end{equation}
	
	For any $h\in H(S)$, let $h:X\times S\to X\times S$ be the $G$-equivariant morphism as in \eqref{258623}. Take $h'$ to be the morphism $h\circ \mr{pr}_S\circ x: T\to H$. Then $h'\cdot x=h\circ x$. Hence 
	\[(h^*\ms{L}_S)_x= (\ms{L}_S)_{h\circ x}=(\ms{L}_S)_{h'\cdot x}.\]
	Similarly, $(h^*\ms{L}_X)_{gx}=(\ms{L}_S)_{h'\cdot gx}$. Set $\xi_{h,x}=\psi_{h'gh'^{-1},h'x}:(h^*\ms{L}_S)_x\simeq (h^*\ms{L}_X)_{gx}$, and $\zeta_{h,x}=\phi_{h',x}:(\ms{L}_S)_x\to (h^*\ms{L}_S)_{x}$.
	Then diagram \eqref{405823} becomes 
	\begin{equation}\label{436587}
		\xymatrix{
			(\ms{L}_S)_x \ar[r]^-{\zeta_{h,x}} \ar[d]_{\psi_{g,x}} & (h^*\ms{L}_S)_{x} \ar[d]^-{\xi_{h,x}} \\
			(\ms{L}_S)_{gx} \ar[r]^-{\zeta_{h,gx}}& (h^*\ms{L}_S)_{gx} 
		}.
	\end{equation}
	Note that the data of isomorphisms $\xi_{h,x}:(h^*\ms{L}_S)_x\to (h^*\ms{L}_S)_{gx}$ of line bundles over $S$ defines a $G$-equivariant structure on the line bundle $h^*\ms{L}_S$ over $X\times S$. Moreover, the data of isomorphism $\zeta_{h,x}:(\ms{L}_S)_x\to (h^*\ms{L}_S)_x$ amounts to an isomorphism 
	\[\zeta_h:\ms{L}_S\to h^*\ms{L}_S.\] Then, the commutativity of the diagram \eqref{436587} implies that $\zeta_h$ is $G$-equivariant. 
	
	Pulling back $\zeta_h$ via $\tilde\eta:\mc{P}\to X\times S$, we get a $G$-equivariant isomorphism \[\tilde\eta^*\ms{L}_S\to \tilde\eta^*h^*\ms{L}_S=(h\circ\tilde\eta)^*\ms{L}_S.\]
	Note that $\mr{pr}_X\circ h\circ\tilde\eta=\alpha_{h}$. This implies $\widetilde{h\cdot\eta}=h\circ\tilde\eta$. Hence, there is an $G$-equivariant isomorphism
	\[\tilde\eta^*\ms{L}_S\simeq (\widetilde{h\cdot\eta})^*\ms{L}_S,\]
	and it descends to an isomorphism $\mc{L}_\eta\simeq\mc{L}_{h\cdot\eta}$ of line bundles over $S$. One may check this isomorphism satisfies the standard cocycle condition.
\end{proof}

\end{document}